\numberwithin{equation}{section}
\crefname{equation}{Eq.}{Eqs.}
\crefname{eqnarray}{Eq.}{Eqs.}
\crefname{algo}{Algorithm}{Algorithms}
\crefname{conj}{Conjecture}{Conjectures}
\crefname{lem}{Lemma}{Lemmas}
\crefname{thm}{Theorem}{Theorems}
\crefname{claim}{Claim}{Claims}
\crefname{rmk}{Remark}{Remarks}
\crefname{prop}{Proposition}{Propositions}
\crefname{section}{Section}{Sections}
\crefname{appendix}{Appendix}{Appendices}
\crefname{cor}{Corollary}{Corollaries}
\crefname{figure}{Figure}{Figures}
\crefname{table}{Table}{Tables}
\crefname{example}{Example}{Examples}
\crefname{prob}{Problem}{Problems}
\crefname{assm}{Assumption}{Assumptions}
\crefname{defn}{Definition}{Definitions}
\newcommand{\de}{{\partial}}
\newcommand{\rd}{\mathrm{d}}
\newcommand{\ri}{\mathrm{i}}
\newcommand{\re}{\mathrm{e}}
\newcommand{\bbA}{\mathbb{A}}
\newcommand{\bbN}{\mathbb{N}}
\newcommand{\bbZ}{\mathbb{Z}}
\newcommand{\bbR}{\mathbb{R}}
\newcommand{\bbC}{\mathbb{C}}
\newcommand{\bbP}{\mathbb{P}}
\newcommand{\bbF}{\mathbb{F}}
\newcommand{\bbQ}{\mathbb{Q}}
\def\bary{\begin{array}} 
\def\eary{\end{array}} 
\def\ben{\begin{enumerate}} 
\def\een{\end{enumerate}}
\def\bit{\begin{itemize}} 
\def\eit{\end{itemize}}
\def\nn{\nonumber} 
\newcommand{\cY}{\mathcal{Y}}
\newcommand{\cZ}{\mathcal{Z}}
\newcommand{\cO}{\mathcal{O}}
\newcommand{\cT}{\mathcal{T}}
\newcommand{\cE}{\mathcal{E}}
\newcommand{\cP}{\mathcal{P}}
\newcommand{\cN}{\mathcal{N}}
\newcommand{\cW}{\mathcal{W}}
\newcommand{\cV}{\mathcal{V}}
\newcommand{\cM}{\mathcal M}
\def\beq{\begin{equation}}                     %
\def\eeq{\end{equation}}                       %
\def\bea{\begin{eqnarray}}                     
\def\eea{\end{eqnarray}}
\def\bary{\begin{array}} 
\def\eary{\end{array}} 
\def\ben{\begin{enumerate}} 
\def\een{\end{enumerate}}
\def\bit{\begin{itemize}} 
\def\eit{\end{itemize}}
\def\nn{\nonumber} 
\def\de {\partial}
\def\IP{{\mathbb P}}
\def\a{\alpha}
\def\b{\beta}
\theoremstyle{plain}
\newtheorem{thm}{Theorem}[section]
\newtheorem{lem}[thm]{Lemma}
\newtheorem{prop}[thm]{Proposition}
\newtheorem{conj}[thm]{Conjecture}
\newtheorem*{conj*}{Conjecture}
\newtheorem*{cor*}{Corollary}
\theoremstyle{definition}
\newtheorem*{rem*}{Remark}
\newtheorem*{rems*}{Remarks}
\newtheorem{rmk}[thm]{Remark}
\newtheorem{example}{Example}[section]
\newcommand{\GITl}[1]{\backslash \!\! \backslash _{\kern-.2em #1 \kern0.1em}}
\newcommand{\GIT}[1]{/\!\!/_{\kern-.2em #1 \kern0.1em}}
\renewcommand{\l}{\left}
\renewcommand{\r}{\right}
\newcommand{\bra}{\left\langle}
\newcommand{\ket}{\right\rangle}
\newcommand{\ev}{\operatorname{ev}}
\newcommand{\qbinom}{\genfrac{[}{]}{0pt}{}}
\def\bred{\begin{color}{red}}
\def\ered{\end{color}}
\def\bes{\begin{subequations}}
\def\ees{\end{subequations}}
\newcommand\A{\mathbb A}
\newcommand\C{\mathbb C}
\newcommand\F{\mathbb F}
\newcommand\NN{\mathbb N}
\newcommand\PP{\mathbb P}
\newcommand\Q{\mathbb Q}
\newcommand\Z{\mathbb Z}
\newcommand\Aut{\operatorname{Aut}}
\newcommand\virt{\mathrm{virt}}
\newcommand\pt{\mathrm{pt}}
\newtheorem{theorem}{Theorem}[section]
\newtheorem{lemma-definition}[theorem]{Lemma-Definition}
\theoremstyle{definition}
\newtheorem{defn}[theorem]{Definition}
\newtheorem{construction}[theorem]{Construction}
\theoremstyle{remark}
\numberwithin{equation}{section}
\numberwithin{figure}{section}
\newcommand{\ZZ} {\mathbb{Z}}
\newcommand{\QQ} {\mathbb{Q}}
\newcommand{\RR} {\mathbb{R}}
\renewcommand{\AA} {\mathbb{A}}
\newcommand {\shD}  {\mathcal{D}}
\newcommand {\shO}  {\mathcal{O}}
\newcommand {\loc} {\mathrm{loc}}
\newcommand {\ol} {\overline}
\newcommand {\Tot}  {\operatorname{Tot}}
\DeclareMathOperator {\hhh} {H}
\newcommand\bS{\mathbb S}
\newcommand\ccL{\mathcal L}
\newcommand\fd{\mathfrak d}
\newcommand{\sstyle}{\scriptstyle}
\def\mydate{\ifcase\month \or January\or February\or March\or
April\or May\or June\or July\or August\or September\or October\or 
November\or December\fi \space\number\day,\space\number\year}
\newcommand{\vir}{{\rm vir}}
\newcommand{\ptwo}{\IP^2}
\newcommand{\pone}{\IP^1}
\DeclareMathOperator{\pic}{Pic}
\DeclareMathOperator{\tot}{Tot}
\DeclareMathOperator{\mmm}{M}
\newcommand{\wt}{\widetilde}
\DeclareMathOperator{\fzero}{\mathbb{F}_0}
\DeclareMathOperator{\fone}{\mathbb{F}_1}
\DeclareMathOperator{\ftwo}{\mathbb{F}_2}
\DeclareMathOperator{\fn}{\mathbb{F}_n}
\DeclareMathOperator{\dpr}{dP_r}
\DeclareMathOperator{\delp}{dP}
\DeclareMathOperator{\obstr}{Ob}
\DeclareMathOperator{\scatt}{Scatt}
\DeclareMathOperator{\fend}{\scriptscriptstyle{end}}
\newcommand*\circleed[1]{\tikz[baseline=(char.base)]{
            \node[shape=circle,draw,inner sep=1pt] (char) {#1};}}
\begin{document}

\title{
  Stable maps to Looijenga pairs
  }

\author{Pierrick Bousseau}

\address{\tiny
Universit\'e Paris-Saclay, CNRS, Laboratoire de mathématiques d'Orsay, 91405, Orsay, France}


\email{pierrick.bousseau@u-psud.fr}


\author{Andrea Brini}
\address{\tiny 
University of Sheffield, School of Mathematics and Statistics, S11 9DW, Sheffield, United Kingdom\\
On leave from CNRS, DR 13, Montpellier, France}
\email{a.brini@sheffield.ac.uk}

\author{Michel van Garrel}

\address{\tiny University of Birmingham, School of Mathematics, B15 2TT, Birmingham, United Kingdom}
\email{m.vangarrel@bham.ac.uk}

\thanks{This project has been supported by the European Union's Horizon
  2020 research and innovation programme under the Marie Sklodowska-Curie
  grant agreement No 746554 (M.~vG.), the Engineering and Physical Sciences
  Research Council under grant agreement ref.~EP/S003657/2 (A.~B.) and by Dr.\ Max R\"ossler, the Walter Haefner Foundation and the ETH Z\"urich Foundation (P.~B. and M.~vG.).}

\begin{abstract}
A log Calabi--Yau surface with maximal boundary, or Looijenga pair, is a pair $(Y,D)$ with $Y$ a smooth rational projective complex surface and $D=D_1+\dots + D_l \in |-K_Y|$ an anticanonical singular nodal curve. Under some natural conditions on the pair, we propose a series of correspondences relating five different classes of enumerative invariants attached to $(Y,D)$:
\ben
\item the log Gromov--Witten theory of the pair $(Y,D)$, 
\item the Gromov--Witten theory of the total space of $\bigoplus_i \cO_Y(-D_i)$,
\item the open Gromov--Witten theory of special Lagrangians in a Calabi--Yau 3-fold determined by $(Y,D)$,  
\item the Donaldson--Thomas theory of a symmetric quiver specified by $(Y,D)$, and 
\item a class of BPS invariants considered in different contexts by Klemm--Pandharipande, Ionel--Parker, and Labastida--Mari\~no--Ooguri--Vafa. \een We furthermore provide a complete closed-form solution to the calculation of all these invariants.
\end{abstract}

\maketitle
\setcounter{tocdepth}{1}
\tableofcontents

\section{Introduction}

\subsection{Looijenga pairs}
A log Calabi--Yau surface with maximal boundary,  or Looijenga pair, is a pair $Y(D) \coloneqq (Y,D)$ consisting of a smooth rational projective complex surface $Y$ and an anticanonical singular nodal curve $D=D_1+\dots + D_l \in |-K_Y|$.
A prototypical example of Looijenga pair is given by $
(Y,D)=(\ptwo,D_1+D_2)$ for $D_1$ a line and $D_2$ a conic not tangent to $D_1$. 

Looijenga pairs were first 
systematically studied in relation with resolutions and deformations of elliptic surface singularities \cite{MR632841} and with degenerations of K3 surfaces \cite{MR813580}. 
More recently, Looijenga pairs have played  an important role as two-dimensional examples for 
mirror symmetry \cites{GHKlog, MR3518552, MR3563248, yu2016enumeration,barrott2020explicit,MR4048291, hacking2020homological} and for the theory of cluster varieties \cites{GHKclu, MR3954363, zhou2019weyl}. These new developments have had in return non-trivial applications to the classical geometry of Looijenga pairs \cites{GHKlog,GHKmod,MR3825608,Fri}.

\subsection{Summary of the main results}

\label{intro:overview}

In this paper we develop a series of correspondences relating different enumerative invariants associated to a given Looijenga pair. We start off by giving a very succinct summary of the main objects we will consider, and the main statements we shall prove.

\subsubsection{Geometries}

Let $(Y,D=D_1+\cdots+D_l)$ be a Looijenga pair with $l\geq2$. In this paper we will construct four different geometries out of $(Y,D)$:\medskip

\begin{itemize}
    \item
    the log Calabi--Yau surface geometry $Y(D)$;
    \item
    the local Calabi--Yau $(l+2)$-fold geometry $E_{Y(D)}\coloneqq {\rm Tot}\begin{pmatrix}\shO_Y(-D_1)\oplus\cdots\oplus\shO_Y(-D_l)\end{pmatrix}$;
    \item
    a non-compact Calabi--Yau threefold geometry canonically equipped with a disjoint union of $l-1$ Lagrangians,
    \[
    Y^{\rm op}(D) \coloneqq  \begin{pmatrix}{\rm Tot}\begin{pmatrix}\shO(-D_l)\to Y\setminus\left(D_1\cup\cdots\cup D_{l-1}\right)\end{pmatrix},L_1\sqcup\cdots\sqcup L_{l-1}\end{pmatrix},
    \]
    where $L_i$ are fibred over real curves in $D_i$;
    \item
    for $l=2$, a non-commutative geometry given by a symmetric quiver $\mathsf{Q}(Y(D))$ made from the combinatorial data of the divisors $D_i$ and their intersections. 
\end{itemize}
\medskip

\subsubsection{Enumerative theories}

Our main focus will be on the enumerative geometry of curves in these geometries.
More precisely, to a Looijenga pair $Y(D)$ 
satisfying some natural positivity conditions, we shall associate several classes of \emph{a priori} different enumerative invariants:\medskip

\begin{enumerate}[leftmargin=2.5cm]
    \item[(\tt log GW)] all genus log GW invariants of $Y(D)$, counting curves in the surface $Y$ with maximal tangency conditions along the divisors $D_i$;
    \item[(\tt local GW)] genus zero local GW invariants of the ${\rm CY}(l+2)$-fold $E_{Y(D)}$;
    \item[(\tt open GW)] all genus open GW invariants counting open Riemann surfaces in the CY3-fold $Y^{\rm op}(D)$ with $l-1$ boundary components mapping to $L_1\sqcup\cdots \sqcup L_{l-1}$; 
\item [(\tt local BPS)] genus zero local BPS invariants of $E_{Y(D)}$, in the form of Gopakumar--Vafa/Klemm--Pan\-dha\-ri\-pan\-de/Ionel--Parker (GV/KP/IP) BPS invariants;
    \item [(\tt open BPS)] all genus open BPS invariants of $Y^{\rm op}(D)$, in the form of Labastida--Mari\~no--Ooguri--Vafa (LMOV) BPS invariants;
    \item [(\tt quiver DT)] if $l=2$, Donaldson--Thomas (DT) invariants of $\mathsf{Q}(Y(D))$.
        
\end{enumerate}
\medskip

\subsubsection{Correspondences}
Under some positivity conditions on $(Y,D)$, we will prove that the invariants above essentially coincide with one another. In particular, we shall show:

\begin{itemize}
    \item[(i)] an equality between ({\tt log~GW}) and ({\tt local~GW}) in genus zero ({\bf \cref{thm:log_local_intro}});
    \item[(ii)] an equality between ({\tt log~GW}) and ({\tt open~GW}) in all genera ({\bf \cref{thm:logopen_intro}});
    \item[(iii)]  an equality
    between ({\tt local BPS}) and ({\tt open BPS}) in genus zero for all $l$;
    \item[(iv)] an equality between ({\tt local BPS}) and ({\tt quiver DT}) for $l=2$, i.e. when the local geometry $E_{Y(D)}$ is CY$_4$ ({\bf \cref{thm:kpdt_intro}}).
\end{itemize}
The equality (i) establishes for log~CY surfaces a version of a conjecture of van~Garrel--Graber--Ruddat about log and local GW invariants \cite{vGGR}, while (ii) and (iv) are new. Equality (iii) follows from (i)-(ii) after a BPS-type change of variables.

\subsubsection{Integrality}
Furthermore, we shall prove that the enumerative invariants of Looijenga pairs considered in this paper obey strong integrality constraints ({\bf \cref{thm:openbps_intro}}), reflecting the conjectured integrality of the ({\tt open BPS}) and ({\tt local BPS}) counts. This shows the existence of novel integral structures underlying the higher genus log~GW theory of $Y(D)$. Restricting to genus zero, we will obtain as a corollary an algebro-geometric proof of the conjectured integrality  of the genus zero Gopakumar--Vafa invariants of the CY-$(l+2)$-fold $E_{Y(D)}$. In particular, for $l=2$, this proves for CY4 local surfaces an integrality conjecture of Klemm--Pandharipande \cite[Conjecture~0]{Klemm:2007in}.

\subsubsection{Solutions}
Moreover, we will completely solve the enumerative counts for these geometries ({\bf \cref{thm:log_local_intro,thm:logopen_intro}}), by finding explicit closed-form, non-recursive expressions for the generating series of the invariants associated to our Looijenga pairs. 
\\

The rest of the introduction is organised as follows. \medskip

\bit
\item \cref{intro:enum} sets the stage by giving a self-contained account of the enumerative theories we shall consider. 
\item \cref{sec:inter-intro} illustrates the geometric picture underpinning the web of correspondences explored in the paper. We spell out the enumerative relations (i)-(iv) in the broadest generality where we believe them to hold, and describe in detail the geometric heuristics which led us to (i) in \cref{sec:introloglocal} ({\bf \cref{conj:loglocgen}}), to (ii) in \cref{sec:intrologopen} ({\bf \cref{conj:ocd,conj:refocd}}), and to (iii)-(iv) in \cref{sec:introquiverbps}. 
\item \cref{sec:introres} we put these ideas on a rigorous footing. We first place a natural positivity condition on the irreducible components $D_i$ by requiring them to be all smooth and nef; depending on the context, we often supplement this with a mild condition of ``quasi-tameness'', whose rationale is justified in \cref{sec:loggw_intro_res,sec:logopen_intro_res}. The statements of the proof of the correspondences, the integrality results, and the full solutions for our enumerative counts are spelled out in {\bf  \cref{thm:openbps_intro,thm:kpdt_intro,thm:log_local_intro,thm:logopen_intro}}.
\item \cref{sec:intro_implications} surveys the implications of our results for related work, with emphasis on the possible sheaf-theoretic interpretations of the BPS invariants we consider.
\eit

\subsection{Enumerative problems}\label{intro:enum}
\subsubsection{Higher genus log Gromov--Witten invariants}
\label{sec:loggw_intro}
Log Gromov--Witten theory, developed by Abramovich--Chen \cites{Chen14,AbramChen14}
and Gross--Siebert \cite{GS13}, provides a deformation-invariant way to count curves with prescribed tangency conditions along a normal crossings divisor, by virtual intersection theory on moduli spaces of stable log maps. For 
$Y(D)$ a Looijenga pair where $D$ has $l \geq 2$ irreducible components, we consider rational curves in $Y$ with given degree $d\in\hhh_2(Y,\ZZ)$ that meet each component $D_j$ in
one point of maximal tangency $d \cdot D_j$ and pass through $l-1$ given points in $Y$.
Counting such curves is an enumerative problem of expected dimension $0$ and we denote by $N_{0,d}^{\rm log}(Y(D))$ the corresponding log Gromov--Witten invariants. By \cite[Proposition 6.1]{Man19}, these log Gromov--Witten invariants are enumerative: they are simply equal to the naive count of rational curves in $Y$ when the $l-1$ given points are in sufficiently general position in $Y$.

For $g \geq 0$, the expected dimension of the moduli space of genus $g$ curves in $Y$ with given degree $d\in\hhh_2(Y,\ZZ)$ that meet each component $D_j$ in
one point of maximal tangency $d \cdot D_j$ and pass through $l-1$ given points in $Y$, is $g$. On the other hand, assigning to every stable log map $f \colon C \rightarrow Y(D)$ the vector space $\hhh^0(C,\omega_C)$ of sections of the dualising sheaf of the domain curve defines a rank $g$ vector bundle over the moduli space, called the Hodge bundle, and we denote by 
$\lambda_g$ its top Chern class. 
We define log Gromov--Witten invariants $N^{\rm log}_{g,d}(Y(D))$ by integration
of $(-1)^g \lambda_g$
over the virtual fundamental class of the moduli space. For $g=0$, $N^{\rm log}_{0,d}(Y(D))$ recovers the naive count of rational curves but for $g>0$, the log Gromov--Witten invariants $N^{\rm log}_{g,d}(Y(D))$ no longer have an obvious interpretation in terms of naive enumeration of curves.
Fixing the degree $d$ and summing over all genera, we define  generating series 
\beq
\mathsf{N}^{\rm log}_d(Y(D))(\hbar) \coloneqq \frac{1}{\left( 
2 \sin \left( \frac{\hbar}{2} \right) \right)^{l-2}}
\sum_{g \geqslant 0} N^{\rm log}_{g,d} (Y(D)) \hbar^{2g-2+l}\,.
\label{eq:GWexp_intro}
\eeq
The term $\left( 2 \sin \left( \frac{\hbar}{2} \right) \right)^{2-l}$ is natural from the point of view of the $q$-refined scattering diagrams of \cref{sec:loggw}. It is accounted for in the correspondence with the open invariants.

\subsubsection{Local Gromov--Witten invariants}
\label{sec:locgw_intro}
To a Looijenga pair $Y(D)=(Y,D=D_1+\cdots+D_l)$, we associate the $(l+2)$-dimensional 
non-compact 
Calabi--Yau variety 
$E_{Y(D)} \coloneqq \mathrm{Tot}(\oplus_{i=1}^l (\cO_Y(-D_i)))$. 
We view $Y$ in $E_{Y(D)}$ via the inclusion given by the zero-section.
We refer to 
$E_{Y(D)}$ as the local geometry attached to $Y(D)$. 
If each component $D_i$ is nef, then
for every $d\in\hhh_2(Y,\ZZ)$ 
intersecting $D_i$ generically, the moduli space of genus $0$ stable maps to 
$E_{Y(D)}$ of degree $d$ is compact: every stable map to $E_{Y(D)}$ of class 
$d$ factors through the zero-section $Y$. Thus, it makes sense to consider the local
genus $0$ Gromov--Witten invariants
$N^{\rm loc}_{0,d}(Y(D))$, which define virtual counts of rational curves in $E_{Y(D)}$ passing through $l-1$ given points in $Y$.

\subsubsection{Higher genus open Gromov--Witten invariants}
\label{sec:opengw_intro}
Let $X$ be a semi-projective toric Calabi--Yau 3-fold, i.e.\ a toric Calabi--Yau 3-fold which admits a presentation as the GIT quotient of a vector space by a torus action \cite{MR2015052}.
We will be concerned with a class of Lagrangian submanifolds of $X$ considered by Aganagic--Vafa in \cite{Aganagic:2000gs}, that we simply refer to as toric Lagrangians: symplectically, these are singular fibres of the Harvey--Lawson fibration associated to $X$. A toric Lagrangian is diffeomorphic to 
$\mathbb{R}^2 \times S^1$, and so its first homology group is isomorphic to 
$\Z$.

We fix $ L=L_1 \cup \dots \cup L_s$ a disjoint union of  toric Lagrangians $L_i$ in $X$.  In informal terms, the open Gromov--Witten theory of $(X, L=L_1 \cup \dots \cup L_s)$ should be a virtual count of maps to $X$ from open Riemann surfaces of fixed genus, relative homology degree, and boundary winding data around $S^1 \hookrightarrow L$.
A precise definition of such counts in the algebraic category has been given by
Li--Liu--Liu--Zhou \cites{Li:2001sg,Li:2004uf} using relative Gromov--Witten theory and virtual localisation.
These invariants depend on the choice of a framing $\mathsf{f}$ of $L$, which is a choice of integer $f_i$ for each connected component $L_i$ of $L$.
Given partitions $\mu_1,\cdots, \mu_s$ of length
$\ell(\mu_1),\cdots, \ell(\mu_s)$, we denote by $O_{g; \beta; (\mu_1, \dots, \mu_s)}(X,L,\mathsf{f})$ the invariants defined in  \cites{Li:2001sg,Li:2004uf}, which are informally open Gromov--Witten invariants counting connected genus $g$ Riemann surfaces of class $\beta \in \hhh_2(X,L,\ZZ)$ with, for every $1 \leq i \leq s$, $\ell(\mu_i)$ boundary components wrapping $L_i$
with winding numbers given by the parts of $\mu_i$.
We package the open Gromov--Witten invariants $O_{g,\beta,\mu_1, \dots, \mu_s}(X,L,\mathsf{f})$ into formal generating functions
%
\bea
\mathsf{O}_{\beta;\vec \mu}(X,L,\mathsf{f})(\hbar) &\coloneqq & \sum_{g\geq0} \hbar^{2g-2+\ell(\vec\mu)}  O_{g; \beta; \vec \mu}(X,L,\mathsf{f}) \,,
\label{eq:openfreeen_intro}
\eea
where $\ell(\vec\mu)=\sum_{i=1}^s \ell(\mu_i)$. We simply denote by $O_{g; \beta}(X,L,\mathsf{f})$ and $\mathsf{O}_{\beta}(X,L,\mathsf{f})(\hbar)$ 
the $s$-holed open Gromov--Witten invariants obtained when each partition 
$\mu_i$ consists of a single part (whose value is then determined by 
the class $\beta  \in \hhh_2(X,L,\ZZ)$).

\subsubsection{Quiver DT invariants}
\label{sec:quiverdt_intro}
Let $\mathsf{Q}$ be a quiver with an ordered set $\mathsf{Q}_0$ of $n$ vertices $v_1, \dots v_n \in \mathsf{Q}_0$ and a set of oriented edges $\mathsf{Q}_1 = \{\a:v_i \to v_j  \}$. We let $\bbN \mathsf{Q}_0$ be the free abelian semi-group generated by $\mathsf{Q}_0$, and for $\mathsf{d}=\sum d_i v_i$, $\mathsf{e}=\sum e_i v_i \in \bbN \mathsf{Q}_0$ we write $E_\mathsf{Q}(\mathsf{d},\mathsf{e})$ for the Euler form
\beq
E_\mathsf{Q}(\mathsf{d},\mathsf{e}) \coloneqq \sum_{i=1}^n d_i e_i -\sum_{\a:v_i \to v_j} d_i e_j\,.
\eeq
Assume that $\mathsf{Q}$ is symmetric, that is, for every $i$ and $j$, the number of oriented edges from $v_i$ to $v_j$ is equal to the number of oriented edges from 
$v_j$ to $v_i$. The Euler form is then a symmetric bilinear form.
The motivic DT invariants  $\mathrm{DT}_{\mathsf{d}; i}(\mathsf{Q})$ of $\mathsf{Q}$ are defined by the equality \cites{MR2956038, MR2851153, MR2889742}:
\beq
\sum_{\mathsf{d} \in \bbN^n} \frac{\big(-q^{1/2}\big)^{E_Q(\mathsf{d},\mathsf{d})} \mathsf{x}^\mathsf{d}}{\prod_{i=1}^n (q;q)_{d_i}}
= \prod_{\mathsf{d} \neq 0} \prod_{i \in \bbZ} \prod_{k \geq 0} \l(1-(-1)^i \mathsf{x}^\mathsf{d} q^{-k-(i+1)/2} \r)^{-\mathrm{DT}_{\mathsf{d}; i}(\mathsf{Q})}\,,
\label{eq:DTmot_intro}
\eeq
where $\mathsf{x}^\mathsf{d} =\prod_{i=1}^n x_i^{d_i}$. In other words, the motivic DT invariants are defined by taking the plethystic logarithm of the generating series of Poincar\'e rational functions of the stacks of representations of $\mathsf{Q}$.
The numerical DT invariants $\mathrm{DT}^{\rm num}_{\mathsf{d}}(\mathsf{Q})$
are defined by
\beq
\mathrm{DT}^{\rm num}_{\mathsf{d}}(\mathsf{Q}) \coloneqq \sum_{i \in \bbZ} (-1)^i \mathrm{DT}_{\mathsf{d},i}(\mathsf{Q}) \,.
\label{eq:DTnum_intro}
\eeq
According to Efimov \cite{MR2956038}, the numerical DT invariants 
 $\mathrm{DT}^{\rm num}_{\mathsf{d}}(\mathsf{Q})$ are non-negative integers.

\subsubsection{Open/closed BPS invariants}
\label{sec:bps_intro}

Gromov--Witten invariants define virtual counts of curves and are in general rational numbers, but they are well-known to exhibit hidden integrality properties in terms of underlying BPS counts. The original physics definition due to Gopakumar--Vafa in the classical context of closed Gromov--Witten invariants of 
Calabi--Yau 3-folds  \cites{Gopakumar:1998jq,Gopakumar:1998ii} predicted the form of these counts in terms of degeneracies of BPS particles in four/five dimensions arising from type IIA/M-theory as D2/M2-branes wrapping 2-cycles in the compactification. A long-standing effort has been made on multiple fronts to make the physics definition rigorous either using the associated cohomologies of sheaves \cites{Katz:2006gn,MR3842061}, stable pairs \cite{MR2552254}, and direct symplectic methods \cite{MR3739228}. In this paper, we will consider BPS invariants for genus $0$ Gromov--Witten invariants of Calabi--Yau 4-folds and higher genus open Gromov--Witten invariants of toric Calabi--Yau 3-folds. As an immediate corollary we obtain a new definition of all genus BPS invariants of Looijenga pairs \eqref{eq:Omegad_intro}.

Let $Y(D)=(Y,D=D_1+D_2)$ be a $2$-component Looijenga pair. The corresponding local geometry 
$E_{Y(D)}$ is a non-compact Calabi--Yau 4-fold. Following Greene--Morrison--Plesser \cite[App.~B]{Greene:1993vm} and Klemm--Pandharipande in \cite[Sec.~1.1]{Klemm:2007in}, we define BPS invariants 
$\mathrm{KP}_d(E_{Y(D)})$ in terms of the local
genus $0$ Gromov--Witten invariants
$N^{\rm loc}_{0,d}(Y(D))$ by the formula
\beq
\mathrm{KP}_d(E_{Y(D)}) = \sum_{k \mid d} \frac{\mu(k)}{k^2}N^{\rm loc}_{d/k}(Y(D)) \,.
\label{eq:kp_intro}
\eeq

Let $X$ be a toric Calabi--Yau 3-fold, $L=L_1 \cup \cdots \cup L_s$ a disjoint union of toric Lagrangian branes and $\mathsf{f}$ a choice of framing.
Following \cites{Labastida:2000yw, Labastida:2000zp, Ooguri:1999bv,Marino:2001re}, we define the Labastida--Mari\~no--Ooguri--Vafa (LMOV) generating function of BPS invariants $\Omega_d(X,L,f)(q)\in \bbQ(q^{1/2})$ in terms of the $s$-holed higher genus open Gromov--Witten generating series  $\mathsf{O}_{\beta}(X,L,\mathsf{f})(\hbar)$ by the formula
\bea
\Omega_{\beta}(X,L,\mathsf{f})(q) 
=
[1]_q^{2} \l(\prod_{i=1}^{s} \frac{w_i}{[w_i]_q}\r)  \sum_{k | \beta}  \frac{\mu(k)}{k}
\mathsf{O}_{\beta/k}(X,L,\mathsf{f})(-\ri k \log q) \,,
\label{eq:Omegad_intro_0}
\eea 
where $w_1,\cdots, w_{s}$ are the winding numbers around the Lagrangians 
$L_1, \cdots, L_s$ of the boundary components of a $s$-holed Riemann surface with relative homology class $\beta$, and where
$
 [n]_q:=q^{\frac{n}{2}}-q^{-\frac{n}{2}}
$
are the $q$-integers, defined for all integers $n$.

\subsection{The web of correspondences: geometric motivation}
\label{sec:inter-intro}

\begin{figure}[t]
\includegraphics[scale=1]{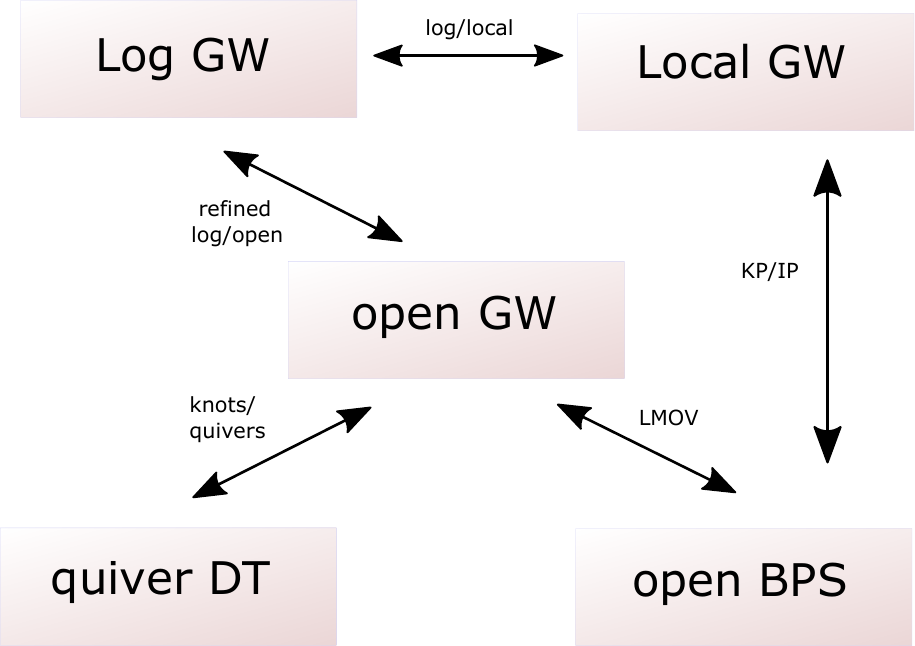}
\caption{Enumerative invariants of $Y(D)$ and their mutual relations.}
\label{fig:web}
\end{figure}

The enumerative theories of the previous section have superficially distant flavours, but they will turn out to be in close and often surprising relation to each other (\cref{fig:web}). We start by explaining the general geometric motivation behind the web of relations below, deferring rigorous statements for the case of Looijenga pairs to \cref{sec:introres}.


\subsubsection{From log to local invariants}
\label{sec:introloglocal}

Let $(Y,D=D_1+\cdots +D_l)$ be a log smooth pair of maximal boundary; unless specified at this stage we do not restrict to $Y$ being a surface, and we neither impose $(Y,D)$ to be log Calabi--Yau nor any positivity conditions on $D_j$. We will say that a curve class $d\in \hhh_2(Y,\bbZ)$ is \emph{\textbf{$D$-convex}} if $d\cdot D_i>0$ for all $i$, and 
for every decomposition $d=[C_1]+\cdots +[C_m]\in\hhh_2(Y,\ZZ)$, with each $C_j$ an effective curve, we have $C_j\cdot D_i\geq0$ for all $i,j$.

We begin by introducing some intermediate geometries built from $Y(D)$: for $\mathsf{m}=1, \dots, l+1$, let
\beq 
Y^{(\mathsf{m})}\coloneqq \tot\left(\bigoplus_{k\geq \mathsf{m}}\shO_Y(-D_{k})\right),
\label{eq:Ym}
\eeq
and $D^{(\mathsf{m})}$ be the preimage $\pi^{-1}\left(\bigcup_{k<\mathsf{m}} D_k\right)$ by the projection $\pi : Y^{(\mathsf{m})}\to Y$.
Note that, by definition, $Y^{(1)}(D^{(1)}) = E_{Y(D)}$ and $Y^{(l+1)}(D^{(l+1)}) = Y(D)$: the  geometries $Y^{(\mathsf{m})}(D^{(\mathsf{m})})$ for $1<\mathsf{m}\leq l$ consist of intermediate setups where a log condition is imposed on  $\{D_k\}_{k<\mathsf{m}}$, and a local one on $\{D_k\}_{k\geq \mathsf{m}}$. 
For $d$ a $D^{(\mathsf{m})}$-convex curve class, we denote by $N^{\rm log}_{0,d}(Y^{(\mathsf{m})}(D^{(\mathsf{m})}))$ a genus 0 maximal tangency log GW invariant of class $d$ of $Y^{(\mathsf{m})}(D^{(\mathsf{m})})$ with a choice of point and $\psi$-class insertions, see \cref{sec:loggwmaxtang}. $D^{(\mathsf{m})}$-convexity ensures that this is well-defined, despite $Y^{(\mathsf{m})}(D^{(\mathsf{m})})$ not being proper for $\mathsf{m}\leq l$.\medskip

%
Assume first that $l=1$, i.e.~that $D$ is a smooth divisor. In \cite{vGGR}, the genus 0 local Gromov--Witten invariants of $E_{Y(D)}$ were related to the genus 0 maximal tangency Gromov--Witten theory of $(Y,D)$ by the {\it stationary log/local correspondence},
\begin{equation}\label{eq:vGGR}
N^{\rm loc}_{0,d}(Y(D))=\frac{(-1)^{d\cdot D-1}}{d\cdot D} N_{0,d}^{\rm log}(Y(D)).
\end{equation}
The argument of \cite{vGGR} is geometric, and it gives a stronger statement at the level of virtual fundamental classes: $E_{Y(D)}$ is degenerated to $Y\times\AA^1$ glued along $D\times\AA^1$ to a line bundle over the projective bundle $\PP(\shO_D\oplus \shO_D(-D))$. This degeneration moves genus~0 stable maps in $E_{Y(D)}$ to genus~0 stable 
maps splitting along both components of the central fibre: the degeneration formula then states that $N^{\rm loc}_{0,d}(Y(D))$ equals the weighted sum over splitting type of the product of invariants associated to each component, and a careful analysis shows that only one term is non-zero, leading to \eqref{eq:vGGR}. In \cite[Conjecture 6.4]{vGGR}, a conjectural cycle-level 
log-local correspondence was also proposed for simple normal crossing pairs: we  propose here a slight variation of its restriction to stationary invariants and anti-canonical $D$ in the following Conjecture.
\begin{conj}[The stationary log/local correspondence for maximal log~CY  pairs]
Let $(Y,D=D_1 +\dots +D_l)$ be a log~smooth log~Calabi--Yau pair of maximal boundary, $d$ a $D$-convex curve class, and $1 \leq \mathsf{n} < \mathsf{m} \leq l+1$. Then,
\beq
N^{\rm log}_{0,d}(Y^{(\mathsf{m})}(D^{(\mathsf{m})}))= \l(\prod_{i=\mathsf{n}}^{\mathsf{m}-1} (-1)^{d \cdot D_i +1} d \cdot D_i\r) N^{\rm log}_{0,d}(Y^{(\mathsf{n})}(D^{(\mathsf{n})})).
\label{eq:loglocgen1}
\eeq 
In particular, when $(\mathsf{n},\mathsf{m})=(1,l+1)$,
\beq
N^{\rm log}_{0,d}(Y(D))= \l(\prod_{i=1}^l (-1)^{d \cdot D_i +1} d \cdot D_i\r) N^{\rm loc}_{0,d}(Y(D)).
\label{eq:loglocgen2}
\eeq
\label{conj:loglocgen}
\end{conj}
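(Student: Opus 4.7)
The plan is to reduce the statement to the case $\mathsf{m}=\mathsf{n}+1$ by induction on $\mathsf{m}-\mathsf{n}$, and then to establish this single‐step correspondence by a direct generalisation of the degeneration argument of \cite{vGGR}. The key observation is that the right‑hand side of \eqref{eq:loglocgen1} telescopes: if we know
\[
N^{\rm log}_{0,d}\bigl(Y^{(\mathsf{k}+1)}(D^{(\mathsf{k}+1)})\bigr)=(-1)^{d\cdot D_{\mathsf{k}}+1}(d\cdot D_{\mathsf{k}})\, N^{\rm log}_{0,d}\bigl(Y^{(\mathsf{k})}(D^{(\mathsf{k})})\bigr)
\]
for every $\mathsf{n}\leq \mathsf{k}\leq \mathsf{m}-1$, then iterating yields the general statement. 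Thus I would focus on proving the one‑step exchange: moving $D_{\mathsf{n}}$ from a ``local'' (line‑bundle) condition in $Y^{(\mathsf{n})}$ to a ``log'' (maximal tangency) condition in $Y^{(\mathsf{n}+1)}$.

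For the base step, I would apply the degeneration of $Y^{(\mathsf{n})}$ coming from the deformation to the normal cone of $D_{\mathsf{n}}\subset Y$, carried out fibrewise for the remaining bundles $\bigoplus_{k>\mathsf{n}}\shO_Y(-D_k)$ and equipped with the log structure induced by the divisors $D_1,\dots,D_{\mathsf{n}-1}$. Concretely, blow up $Y\times \AA^1$ along $D_{\mathsf{n}}\times\{0\}$ and pull back the summand bundles together with the log structure, obtaining a family $\shY\to \AA^1$ whose generic fibre is $Y^{(\mathsf{n})}(D^{(\mathsf{n})})$ and whose central fibre is the union of two log smooth pieces glued along $D_{\mathsf{n}}$: one is $Y^{(\mathsf{n}+1)}(D^{(\mathsf{n}+1)})$ and the other is the total space of the pullback of $\bigoplus_{k>\mathsf{n}}\shO_Y(-D_k)$ to the projective bundle
\[
P_{\mathsf{n}}=\mathbb{P}_{D_{\mathsf{n}}}\bigl(\shO_{D_{\mathsf{n}}}\oplus \shO_{D_{\mathsf{n}}}(-D_{\mathsf{n}})\bigr),
\]
with the appropriate log structure along the two sections.

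I would then apply the Abramovich--Chen--Gross--Siebert log degeneration formula to this family, in the spirit of \cite[\S1.3]{vGGR}, adapted to the presence of $l-1$ point and $\psi$-class insertions together with residual line‑bundle summands. The expected mechanism is that the $l-1$ generic point conditions on $Y$ all specialise into the $Y^{(\mathsf{n}+1)}$ component, forcing every contributing stable log map on the $P_{\mathsf{n}}$ side to be a bubble whose image lies in a single fibre of $P_{\mathsf{n}}\to D_{\mathsf{n}}$ and represents a multiple cover of that $\mathbb{P}^1$; $D$-convexity of $d$ is what rules out all other splitting types, exactly as it does in \cite{vGGR}, because any component escaping into the normal bundle directions of $D_{\mathsf{n}+1},\dots,D_l$ would have strictly negative intersection with one of the $D_i$. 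The surviving splitting contributes a product of the wanted invariant $N^{\rm log}_{0,d}(Y^{(\mathsf{n}+1)}(D^{(\mathsf{n}+1)}))$ with the standard multiple‑cover integral over $\overline{\shM}_{0,1}(\mathbb{P}^1,(d\cdot D_{\mathsf{n}})[\mathbb{P}^1])$ of the Euler class of the obstruction bundle built from $\shO(-1)$ and the pulled‑back line bundles; the usual Faber--Pandharipande type evaluation of this integral produces the factor $(-1)^{d\cdot D_{\mathsf{n}}+1}/(d\cdot D_{\mathsf{n}})$, which upon inversion gives precisely $(-1)^{d\cdot D_{\mathsf{n}}+1}(d\cdot D_{\mathsf{n}})$.

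The main obstacle is the verification that no other splitting contributes. This requires two technical inputs that I expect to be the delicate part of the argument: first, a vanishing lemma saying that log stable maps in the central fibre whose $P_{\mathsf{n}}$ component has non‑trivial horizontal projection to $D_{\mathsf{n}}$ have obstruction bundles with a trivial factor (and hence vanishing virtual contribution), making essential use of the $l-1$ point insertions in $Y$ and of the smoothness/nefness implicit in $D$-convexity; and second, a careful bookkeeping of the $\psi$-class insertion under the degeneration, to confirm that it specialises cleanly onto the $Y^{(\mathsf{n}+1)}$ component without generating extra boundary contributions. Once these two points are established, the one‑step exchange formula follows and the induction closes.
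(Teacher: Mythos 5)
The statement you are proving is \cref{conj:loglocgen}, which is a \emph{conjecture} in the paper: the authors prove it only in the special case of nef Looijenga pairs, i.e.\ surfaces with each $D_i$ smooth and nef and $(\mathsf{n},\mathsf{m})=(1,l+1)$ (\cref{thm_log_local}), and they explicitly describe the general case as open, citing the failure of the naive cycle-level expectation in a nearby (non-stationary, non-log-CY) situation \cite{NR_new}. Your proposal essentially re-derives the heuristic blueprint already laid out in \cref{sec:introloglocal}: telescoping to a one-step exchange is fine as a formal reduction, and the multiple-cover bookkeeping $(d\cdot D_{\mathsf{n}})\cdot\frac{(-1)^{d\cdot D_{\mathsf{n}}-1}}{(d\cdot D_{\mathsf{n}})^2}$ matches \cref{lem_computation}. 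But everything you defer as ``two technical inputs I expect to be the delicate part'' is precisely the mathematical content, and it is left unproven.

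Concretely, two gaps. First, when $\mathsf{n}>1$ (and already in the all-at-once surface case $l\geq 2$) the gluing divisor $D_{\mathsf{n}}$ meets the divisors $D_1,\dots,D_{\mathsf{n}-1}$ carried as log structure, so the degeneration to the normal cone is \emph{not} a log smooth family with a smooth gluing divisor: the total space has singular points that must be resolved (as in \cref{sec:logloc}), and the classical degeneration formula with a two-factor product of relative invariants does not apply. One must instead invoke the decomposition formula of \cite{abramovich2017decomposition}, which produces a sum over rigid decorated tropical curves $h\colon\Gamma\to\Delta$, and then prove that all terms except the star-shaped $\bar h$ vanish. Second, that vanishing is not a consequence of $D$-convexity, contrary to what you assert; in the paper it is established (only for $l=2$ surfaces, in \cref{thm_key} and \cref{sec:constr_trop}) through a long case analysis using the point insertions, genus-$0$ plus two-dimensional dual-complex gluing, the relations $H_{j,E}^2=0$, rigidity, and crucially the \emph{nefness} of each $D_i$ (e.g.\ \cref{lem_toric_homological_balancing} uses $D_j^2\geq 0$), none of which is available for a general log smooth log Calabi--Yau pair in arbitrary dimension. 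Until you supply a proof of your ``vanishing lemma'' and of the $\psi$-class bookkeeping in this normal-crossings setting, the proposal is a plausible strategy statement, not a proof, and it does not go beyond what the paper itself conjectures and motivates.
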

When all $D_j$ are nef and $(\mathsf{n},\mathsf{m})=(1,l+1)$, this gives the numerical version of \cite[Conjecture 6.4]{vGGR} for point insertions and anti-canonical $D$. When $\mathsf{m}-\mathsf{n}=1$, \eqref{eq:loglocgen1} is an extension of the main result of \cite{vGGR} to the non-compact case. 
\medskip

The extent to which the argument of \cite{vGGR} generalises to the case of simple normal crossings pairs of \cref{conj:loglocgen} is a somewhat thorny issue. In particular, the cycle-level conjecture of \cite[Conjecture 6.4]{vGGR} is known to fail in the non-stationary sector
for general $l$, as
recently observed in a non-log~Calabi--Yau example
by Nabijou--Ranganathan \cite{NR_new}.
At the same time, there is a non-trivial body of evidence that a generalisation of the stationary sector equality \eqref{eq:vGGR} (i.e., with descendent point insertions only) might hold for simple normal crossings log~Calabi--Yau pairs $Y(D)$ -- see \cite{BBvG1} for a proof for toric orbifold pairs.
%
%
It  is therefore an open question to find the exact boundaries of validity of the stationary log-local correspondence, and in this paper we chart a conceptual pathway to delineate them for the (special, but central) case of log~Calabi--Yau pairs of \cref{conj:loglocgen}, as follows.

At a geometric level, the degeneration of \cite{vGGR} can be generalised to a birational modification of one where
the generic fibre is $E_{Y(D)}$,
and the special fibre is obtained by gluing, for each $j=1,\dots,l$, $Y\times(\A^1)^l$ along $D_j\times(\A^1)^l$ to a rank $l$ vector bundle over $\PP(\shO_{D_j}\oplus \shO_{D_j}(-D_j))$.
After an (explicit) birational modification this gives a log smooth family: we describe the details of the degeneration for the case of surfaces in \cref{sec:logloc}.
When $l>1$, instead of the degeneration formula the decomposition formula \cite{abramovich2017decomposition} applies, expressing $N^{\rm loc}_{0,d}(Y(D))$ as a weighted sum of terms, indexed by tropical curves $h \colon \Gamma \to \Delta$,
where $\Delta$ is the dual intersection complex of the central fibre:
\beq N^{\rm loc}_{0,d}(Y(D))
=\sum_{h \colon \Gamma \rightarrow \Delta} 
\frac{m_h}{|\Aut(h)|} 
N^{{\rm loc}, h}_{0,d}(Y(D))\,.
\label{eq:nlocdec}
\eeq
The geometric picture above, and the ensuing decomposition formula \eqref{eq:nlocdec}, provides a rather  general and geometrically motivated blueprint to measure the deviation, or lack thereof, of the local invariants from their expected relation to maximal tangency log invariants in \eqref{eq:loglocgen2}. As a proof-of-concept step, and as we shall describe in detail in \cref{sec:logloc}, in this paper we show how this framework bears fruit in the context of Looijenga pairs\footnote{It is an intriguing question, and one well beyond the scope of this paper, to test how this philosophy generalises to log Calabi--Yau varieties of any dimension, and to revisit the non-log Calabi--Yau, non-stationary negative result of \cite{NR_new} in this light.}: here  correction terms indexed by non-maximal tangency tropical curves turn out, remarkably, to {\it all} individually vanish, whilst the maximal tangency tropical contribution exactly returns the r.h.s. of \eqref{eq:loglocgen2}.

\subsubsection{From log to open invariants}
\label{sec:intrologopen}
Let $Y(D)$ be a log Calabi--Yau surface. 
By \eqref{eq:Ym}, the complement $Y^{(l)}\setminus D^{(l)}$ is isomorphic to the total space of $\cO(-D_l) \to Y\setminus \left( D_1\cup \cdots \cup D_{l-1} \right)$; since $D$ is anti-canonical, this is a non-compact Calabi--Yau threefold. 
We propose that the log invariants $N^{\log}_{0,d}(Y(D))$ can be precisely related to open Gromov--Witten invariants of $Y^{(l)}\setminus D^{(l)}$
with boundary in fixed disjoint Lagrangians $L_k$, $k<l$ near the divisor $D^{(l)}$. 
These Lagrangians should have a specific structure as described in \cite[Section 7]{MR2386535}, namely they should be fibred over Lagrangians $L_k'$ in $\pi^{-1}\left(D_k\right)$ 
with fibres Lagrangians in the normal bundle $\big(N_{\pi^{-1}(D_k)/Y^{(\mathsf{m})}}\big){\big \vert}_{L_k'}$. 
Denoting $L\coloneqq  \cup_{k<l} L_k$ and
 $Y^{\rm op}(D) \coloneqq (Y^{(l)}\setminus D^{(l)}, L)$,  there is a natural isomorphism
$\iota: \hhh^{\rm rel}_2(Y^{\rm op}(D), \bbZ) \rightarrow  \hhh_2(Y, \bbZ)$
induced by the embedding $Y^{(l)} \setminus D^{(l)} \hookrightarrow Y^{(l)}$ and the identification of winding degrees along $L_k$ with contact orders along $D_k$ 
(see \cref{prop:iota} for details).\\

\begin{figure}[h]
    \includegraphics{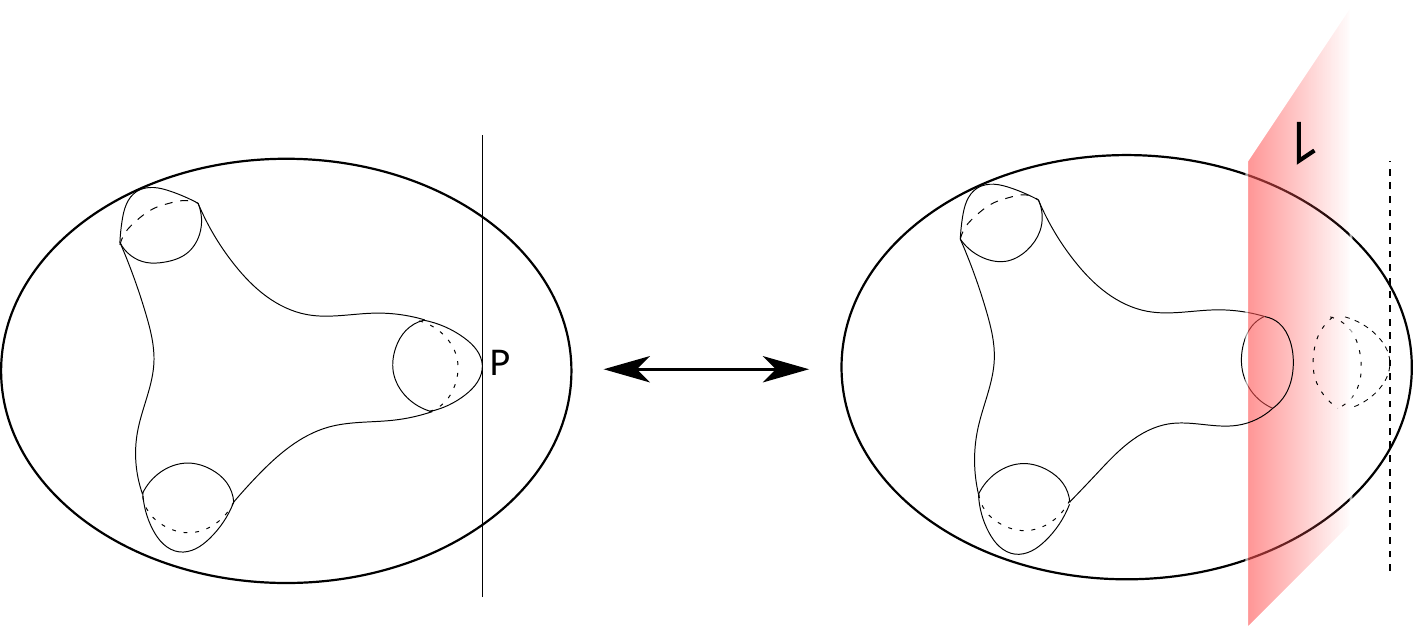}
\caption{Exchanging log and open conditions.} 
\label{fig:logopen}
\end{figure}

Suppose 
now that there is a well-posed definition\footnote{An example of this situation (see  Construction~\ref{constr:YopD}) is when up to deformation both $Y$ and the divisors $D_k$ ($k<l$) are toric, implying  that
$Y^{\rm op}(D)$ is a toric Calabi--Yau threefold geometry equipped with framed toric Lagrangians $L_{k}$: in this case the open GW invariants were introduced in \cref{sec:opengw_intro}.} of genus zero open GW counts $O_{0;d}(Y^{\rm op}(D))$ as in \cite{Solomon:2016vey,MR3539371}.  In such a scenario, we expect a close relationship between these
and the log invariant $N^{\log}_{0,d}(Y(D))$. 


\begin{conj}[Log-open correspondence for surfaces]\label{conj:ocd}
Let $Y(D)$ be a log~Calabi--Yau surface with maximal boundary and $d$ a $D$-convex curve class. Then,

\beq \label{eq_log_open_duality}
O_{0;\iota^{-1}(d)}(Y^{\rm op}(D)) = \left( \prod_{k=1}^l \dfrac{(-1)^{d\cdot D_k-1}}{d\cdot D_k} \right) N^{\log}_{0,d}(Y(D)). 
\eeq
\end{conj}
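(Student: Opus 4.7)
The plan is to prove the identity in two stages: first collapse the log tangency along $D_l$ via the log-local correspondence of \cref{conj:loglocgen}, then establish an open-log correspondence for the remaining 3-fold geometry.

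\emph{Stage one.} Applying \cref{conj:loglocgen} with $(\mathsf{n},\mathsf{m})=(l,l+1)$ yields
\[
N^{\log}_{0,d}(Y(D)) = (-1)^{d\cdot D_l+1}(d\cdot D_l)\, N^{\log}_{0,d}(Y^{(l)}(D^{(l)})),
\]
so that \eqref{eq_log_open_duality} reduces to the open-log statement on the non-compact 3-fold pair $Y^{(l)}(D^{(l)})$:
\[
N^{\log}_{0,d}(Y^{(l)}(D^{(l)})) = \prod_{k=1}^{l-1} (-1)^{d\cdot D_k+1}(d\cdot D_k)\, O_{0;\iota^{-1}(d)}(Y^{\rm op}(D)).
\]
Before attacking this, I would reduce to a toric model: under quasi-tameness, deform $(Y, D_1+\cdots+D_{l-1})$ to a toric surface pair (leaving $D_l$ free), so that $Y^{(l)}\setminus D^{(l)}$ becomes a toric Calabi--Yau 3-fold and the $L_k$ become Aganagic--Vafa branes, with a canonical framing making $O_{0;\iota^{-1}(d)}(Y^{\rm op}(D))$ well-defined via the definition of \cites{Li:2001sg,Li:2004uf}.

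\emph{Stage two.} The core work is the open-log correspondence displayed above. My approach would be simultaneous torus localization. On the open side, the residual $\mathbb{C}^*$-action on the toric Calabi--Yau $Y^{(l)}\setminus D^{(l)}$ preserving the Lagrangians recovers the Aganagic--Vafa topological vertex expansion of $O_{0;\iota^{-1}(d)}(Y^{\rm op}(D))$. On the log side, I would invoke the decomposition formula \eqref{eq:nlocdec} for a toric log-smooth degeneration of $Y^{(l)}$, writing $N^{\log}_{0,d}(Y^{(l)}(D^{(l)}))$ as a sum over tropical curves $h \colon \Gamma\to\Delta$ in the dual complex of the degeneration; a further localization reduces each summand to vertex-type contributions. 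The claim then reduces to a vertex-by-vertex comparison: each open vertex attached to $L_k$ corresponds to a tropical vertex with maximal tangency on the stratum $\pi^{-1}(D_k)$, and the difference between the log normal bundle factor and the open winding-number factor accounts exactly for the scalar $(-1)^{d\cdot D_k+1}(d\cdot D_k)$, while the change of variables from contact orders to winding numbers is the one encoded by the isomorphism $\iota$ of \cref{prop:iota}. Combining this with stage one and simplifying signs via $(-1)^{d\cdot D_l+1} = (-1)^{d\cdot D_l-1}$ yields \eqref{eq_log_open_duality}.

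The main obstacle is to show that the tropical curves carrying non-maximal tangency or internal bounded edges away from the boundary strata contribute zero to the log sum, since they have no open counterpart. This is the 3-fold analogue of the vanishing used in \cref{sec:logloc} for the log-local surface correspondence, and I expect the combination of $D$-convexity of the class $d$ with the nefness of each $D_k$ to be what forces such contributions to vanish. As an alternative, note that \eqref{eq_log_open_duality} combined with \cref{conj:loglocgen} is equivalent to the open-local identity $O_{0;\iota^{-1}(d)}(Y^{\rm op}(D)) = N^{\rm loc}_{0,d}(Y(D))$; one could attempt to prove this directly by an SYZ-type deformation exchanging the $l$ local line bundle summands of $E_{Y(D)}$ with Lagrangian handles, trading one non-trivial geometric argument for another of comparable depth.
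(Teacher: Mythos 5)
The statement you are proving is a conjecture in the paper: it is not established there in the stated generality, and what the paper actually proves is the restricted version \cref{thm:logopen} (for nef, quasi-tame pairs satisfying Property~O), by a route entirely different from yours — construct $Y^{\rm op}(D)$ explicitly via toric models (Construction~\ref{constr:YopD}), evaluate the open side in closed form with the topological vertex, evaluate the log side in closed form with ($q$-)scattering diagrams (\cref{prop:dp311,thm:log_dp3_0_0_0,thm:logf0_0000}), match the two by explicit $q$-binomial identities ($q$-Pfaff--Saalsch\"utz), and propagate to all other cases by the blow-up formulas \cref{prop:logblup,prop:openblup}. Your proposal, by contrast, is a programme rather than a proof, and its two load-bearing steps are both unproven. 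Stage one invokes \cref{conj:loglocgen} with $(\mathsf{n},\mathsf{m})=(l,l+1)$, i.e.\ a log-local statement for the non-proper intermediate geometry $Y^{(l)}(D^{(l)})$; the paper only proves the $(1,l+1)$ instance for nef Looijenga pairs (\cref{thm_log_local}), and by a different mechanism, so you are resting a conjecture on another conjecture. Stage two is exactly the hard part: you would need to show that in the decomposition formula for a degeneration of $Y^{(l)}$ all tropical configurations without an open counterpart (non-maximal tangency, extra bounded edges) contribute zero, and that the surviving vertex contributions reproduce precisely the factors $(-1)^{d\cdot D_k+1}(d\cdot D_k)$ and the winding/contact-order identification $\iota$. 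You state this as an expectation, not an argument; it is the threefold analogue of \cref{thm_key}, whose surface case already occupies \cref{sec:constr_trop} and does not transfer verbatim, and no version of it is supplied. In addition, your reduction to a toric model via quasi-tameness silently shrinks the scope of the conjecture (which is stated for arbitrary maximal-boundary log CY surfaces and $D$-convex classes, with only a hypothetical well-posed open theory) down to essentially the cases the paper already treats, so even if stage two were completed you would not obtain the statement as formulated. As written, the proposal identifies the right heuristic picture (the one the paper itself uses to motivate the conjecture) but does not close any of the gaps that separate that picture from a proof.
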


There is an intuitive symplectic heuristics behind \cref{conj:ocd}: removing a tubular neighbourhood of $D^{(l)}$ turns pseudo-holomorphic log curves in $Y^{(l)}$ with prescribed tangencies along $D^{(l)}$ into pseudo-holomorphic open Riemann surfaces with boundaries in $L$, with winding numbers determined by the tangencies (see \cref{fig:logopen}). 
The relative factor 
$\prod_{k<l} (-1)^{d\cdot D_k-1}(d\cdot D_k)^{-1}$ at the level of GW counts
in \cref{conj:ocd} can be understood by looking at the simplest example where 
$Y=\PP^1 \times \A^1$, $D_1= \{0\} \times \A^1$ and $D_2=\{\infty\} \times \A^1$, where $0, \infty \in \PP^1$.
For the curve class 
$d$ times the class of $\PP^1$ we have $N_{0,d}^{\log}((D))=1$, as there exists a unique degree $d$ cover of $\PP^1$ fully ramified over two points, and the order $d$ automorphism group of this cover is killed by the point condition. 
By \cref{conj:loglocgen}, and in particular \eqref{eq:loglocgen1} with $\mathsf{m}=2$, we deduce that 
$N_{0,d}^{\log}(Y^{(2)}(D^{(2)}))=\frac{(-1)^{d-1}}{d}$; 
on the other hand, the open geometry $Y^{\rm op}(D)$ is 
$\C^3$ with a singular Harvey--Lawson Lagrangian $L$ of framing zero (see Construction~\ref{constr:YopD}): the degree $d$ multicovers of the unique embedded disk  \cite[Theorem~7.2]{Katz:2001vm} contribute $O_{0;\iota^{-1}(d)}(Y^{\rm op}(D))= \frac{1}{d^2}$,  
from which the relative factor in \eqref{eq_log_open_duality} is recovered.
\medskip

Much as in 
\cref{conj:loglocgen}, the invariants in \cref{conj:ocd} live in different dimensions: \eqref{eq_log_open_duality} relates log invariants of the log~CY surface $Y(D)$ to open invariants of special Lagrangians in a Calabi--Yau threefold. Note that combining \cref{conj:loglocgen,conj:ocd} further gives a surprising conjectural relation
\beq
O_{0;\iota^{-1}(d)}(Y^{\rm op}(D)) = N^{\rm loc}_{0,d}(Y(D)),
\label{eq:openlogloc}
\eeq
which equates the GW invariants of the CY3 open geometry $Y^{\rm op}(D)$ with the local GW invariants of the CY-($l+2$) variety $E_{Y(D)}$.\footnote{The relation \eqref{eq:openlogloc} is in tune with physics expectations from type IIA string theory compactification on $\bbR^{1,1} \times X$
where $X$ is a Calabi--Yau fourfold: the low energy effective theory is a $\cN=(2,2)$ QFT,  whose effective holomorphic superpotential is computed by the genus-0 Gromov--Witten invariants of $X$. Now precisely the same type of holomorphic F-terms can be engineered by considering D4-branes wrapping special Lagrangians on a Calabi--Yau 3-fold  \cite{Ooguri:1999bv}: the superpotential here is a generating function of holomorphic disk counts with boundary on the Lagrangian three-cycle. It was suggested by Mayr \cite{Mayr:2001xk} (see also \cite{Aganagic:2009jq}) that there exist cases where 2d superpotentials can be engineered in both ways, resulting in an identity between local genus 0 invariants of CY 4-folds and disk invariants of CY 3-folds: the equality in \eqref{eq:openlogloc} asserts just that.}
\medskip

We also expect a precise uplift of this picture to higher genus invariants.
For a single irreducible divisor, an all-genus version of the log-local correspondence of \cite{vGGR} was described in \cite[Thm~ 1.1-1.2]{Bousseau:2020ckw}. Its generalisation to a log-open correspondence in higher genus for a completely general pair is likely to take an unwieldy form, but we expect it to be particularly simple for a maximal boundary log Calabi--Yau surface.
Indeed, in the degeneration to the normal cone along $D_l$, only multiple covers of a 
$\PP^1$-fiber in $\PP(\cO_{D_l}\oplus \cO_{D_l}(-D_l))$ will contribute. The resulting combination of the multiplicity 
$d \cdot D_l$ in the degeneration formula with the higher genus multiple cover contribution 
$\frac{(-1)^{d \cdot D_l +1}}{(d \cdot D_l)[d \cdot D_l]_q}$ leads us to predict a precise, and tantalisingly simple $q$-analogue of \cref{conj:ocd}.

\begin{conj}[The all-genus log-open correspondence for surfaces]\label{conj:refocd}
Let $Y(D)$ be a log~CY surface with maximal boundary and $d$ a $D$-convex curve class. With notation as in \cref{sec:loggw_intro,sec:opengw_intro}, we have
\beq
 \mathsf{O}_{\iota^{-1}(d)}(Y^{\rm op}(D))(-\ri \log q)
 = [1]_q^{l-2} \,
  \frac{(-1)^{d \cdot D_l+1}}{[d \cdot D_l]_q} \prod_{k=1}^{l-1} \frac{(-1)^{d \cdot D_k+1}}{d \cdot D_k} \,
 \mathsf{N}_{d}^{\rm log}(Y(D))(-\ri \log q)\,.
 \label{eq:refocd}
\eeq
\end{conj}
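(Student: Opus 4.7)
The plan is to reduce this all-genus log-open correspondence to the proven single-divisor case of \cite[Thm~1.1--1.2]{Bousseau:2020ckw} via a degeneration to the normal cone along $D_l$, in the same spirit as the stationary case of \cref{conj:loglocgen} treated in \cref{sec:logloc}, but carried out at the $\lambda_g$-refined level. First I would construct a one-parameter family with general fibre $Y^{(l)}$ and special fibre $Y^{(l)} \cup_{D_l} \PP_{D_l}$, where $\PP_{D_l} \coloneqq \PP(\cO_{D_l} \oplus \cO_{D_l}(-D_l))$, endowed with a log structure such that $D_1, \ldots, D_{l-1}$ continue to impose log conditions on the $Y^{(l)}$-component while the original $D_l$ is transferred to the infinity section of $\PP_{D_l}$. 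I would then expand $\mathsf{N}_d^{\rm log}(Y(D))(\hbar)$ using the log decomposition formula of Abramovich--Chen--Gross--Siebert as a sum over tropical curves in the dual intersection complex of the central fibre.

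Second, I would argue that only the simplest tropical type contributes: a single vertex on the $\PP_{D_l}$-side and a single vertex on the $Y^{(l)}$-side, joined by one edge of weight $d \cdot D_l$. The vanishing of all other tropical contributions should follow, as in the genus-zero analysis of \cref{sec:logloc}, from the $D$-convexity hypothesis combined with the maximal tangency condition along $D_l$ and a virtual dimension count coupled to the $(-1)^g \lambda_g$ insertion. On the $\PP_{D_l}$-side, the surviving contribution is a fully-ramified multiple cover integral over a $\PP^1$-fibre with point and $\lambda_g$ insertions, evaluating via a standard Faber--Pandharipande-type Hodge computation to $\frac{(-1)^{d \cdot D_l+1}}{(d \cdot D_l)\,[d \cdot D_l]_q}$ after the substitution $\hbar = -\ri \log q$; this produces precisely the $q$-refined $D_l$-factor in the right-hand side of \eqref{eq:refocd}.

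Third and most delicately, I would match the residual $Y^{(l)}$-side contribution -- a log GW invariant with maximal tangency along $D_1, \ldots, D_{l-1}$ and tangency $d \cdot D_l$ at a single fixed point of $D_l$ -- with the open GW invariant $\mathsf{O}_{\iota^{-1}(d)}(Y^{\rm op}(D))$ supplemented by the classical factor $\prod_{k=1}^{l-1} \frac{(-1)^{d \cdot D_k+1}}{d \cdot D_k}$ and the $[1]_q^{l-2}$ normalisation tracking the number of Lagrangian components. This amounts to iterating the single-divisor log-open comparison of \cite{Bousseau:2020ckw} one divisor at a time, exchanging the log condition at each $D_k$ ($k<l$) for a winding boundary condition on the Lagrangian $L_k$; in the toric framework of Construction~\ref{constr:YopD}, this should follow from a topological vertex-style identification of the rubber contribution near each $D_k$ with the framing-dependent winding factor in the Li--Liu--Liu--Zhou definition of $O$. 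The hard part is making these iterated log-open exchanges rigorous in all genera for $l \geq 2$, and verifying that no mixed tropical contribution spreads between the $\PP_{D_l}$-bubble and the surviving log conditions along $D_1, \ldots, D_{l-1}$ -- a vanishing that should again be controlled by $D$-convexity and a careful analysis of the admissible tropical types in the multi-divisor degeneration.
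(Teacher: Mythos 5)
What you have written is, in essence, the paper's own heuristic motivation for \cref{conj:refocd} (the degeneration to the normal cone along $D_l$ and the multiple-cover factor $\frac{(-1)^{d\cdot D_l+1}}{(d\cdot D_l)[d\cdot D_l]_q}$, spelled out in \cref{sec:intrologopen}) promoted to a proof outline, and the rigorous content is missing at exactly the two places where it is needed. The most serious gap is your third step: \cite{Bousseau:2020ckw} proves an all-genus log-\emph{local} correspondence for a single smooth divisor, not a log-\emph{open} one, so there is no ``single-divisor log-open comparison'' available to iterate. Exchanging the maximal tangency condition along each $D_k$, $k<l$, for a winding boundary condition on the Lagrangian $L_k$, with the exact factor $(-1)^{d\cdot D_k+1}/(d\cdot D_k)$ and no further $q$-dependence, is precisely the assertion of \eqref{eq:refocd}; invoking it divisor by divisor assumes the conclusion. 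Moreover, the right-hand side invariants $\mathsf{O}_{\iota^{-1}(d)}$ are only defined (via Li--Liu--Liu--Zhou and the topological vertex) when $Y^{\rm op}(D)$ is a toric Lagrangian triple, so outside the (quasi-)tame, Property~O setting there is not even a definition to compare the degenerated log side against. This is why the statement remains a conjecture in the paper, and why the cases that are proved (\cref{thm:logopen}, for tame pairs) are handled by an entirely different mechanism: closed-form evaluation of the log side via $q$-refined scattering diagrams and of the open side via the topological vertex, with the match reducing to nontrivial $q$-hypergeometric identities (the $q$-Pfaff--Saalsch\"utz summation already for $l=2$).

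The second gap is your claim that only the star-shaped tropical type contributes to the decomposition formula. Even in genus zero and for $l=2$ this is not a routine consequence of $D$-convexity plus a dimension count: it is the content of \cref{thm_key}, whose proof occupies all of \cref{sec:constr_trop} and uses the detailed structure of the modified central fibre $\PP(\cV_0)$. No higher-genus analogue with the $(-1)^g\lambda_g$-insertion and several residual log divisors is established anywhere; Mumford's relation $\lambda_g^2=0$ kills some distributions of genus (as in \cref{prop:blowuplog}) but does not by itself rule out configurations with positive genus on the $Y$-component together with nontrivial bubbling, nor mixed types interacting with the remaining log conditions along $D_1,\dots,D_{l-1}$. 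So both the vanishing statement and the final log-to-open exchange require genuine new arguments, and as it stands your outline does not close either of them.
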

The factor $[1]_q^{l-2}$ corresponds to the relative normalisation of the higher genus generating functions in \eqref{eq:GWexp_intro} and \eqref{eq:openfreeen_intro}. The allusive hints of this Section will be put on a rigorous footing in \cref{sec:logopen_intro_res}.

\subsubsection{Quivers and BPS invariants}
\label{sec:introquiverbps}

Given $Y(D=D_1+D_2)$ a 2-component Looijenga pair, the virtual count of curves in the non-compact Calabi-Yau 4-fold 
$E_{Y(D)}$ \cite{Klemm:2007in} is expected to be expressible in terms of sheaf counting 
\cite{MR3861701, cao2019stable}. More precisely, it is expected that that the BPS invariants of $E_{Y(D)}$ are extracted from a ${\rm DT}_4$ virtual fundamental class associated to the moduli space of one-dimensional coherent sheaves on $E_{Y(D)}$. As coherent sheaves are often very closely related to modules over quivers, it might be tempting to ask if 
curve counting in $E_{Y(D)}$ (and, via the arguments of the previous section, the log/open GW theory of $Y(D)$) can be described in terms of some quiver DT theory.
\medskip

This is more than a suggestive speculation. Consider for example $Y=\bbP^2$ and $D=D_1+D_2$ the union of a line $D_1$ and a conic $D_2$, so that $E_{Y(D)}$ is the total space of $\cO_{\bbP^2}(-1) \oplus \cO_{\bbP^2}(-2)$. Let $\cM_{\bbP^1}^{\rm Higgs}(d,n)$ be the moduli space  of rank-$d$, degree-$n$ $\cO(1)$-twisted Higgs bundles $\cO_{\PP^1}^{\oplus d} \rightarrow \cO_{\PP^1}^{\oplus d} \otimes \cO_{\PP^1}(1)$ on $\PP^1$. The total space of $\cO_{\pone}(1)$ is the complement of a point in $\PP^2$, and as $\PP^1$ has normal bundle $\cO(1)$ in $\PP^2$, $\cM_{\bbP^1}^{\rm Higgs}(d,n)$ sits as an open part of the moduli space of one-dimensional coherent sheaves on $E_{Y(D)}$. At the same time, as $\cO(1)$ has two sections on 
$\PP^1$, $\cM_{\bbP^1}^{\rm Higgs}(d,n)$ is isomorphic to the moduli space of representations of the quiver 
with one vertex and two loops. Strikingly, we remark here that this is reflected into a completely unexpected identity for the corresponding invariants:  the Klemm--Pandharipande BPS invariants of $E_{Y(D)}$ computed in \cite[Sec.~3.2]{Klemm:2007in} simultaneously coincide (up to sign) with the DT invariants of the 2-loop quiver computed in \cite[Thm.~4.2]{MR2889742}, as well as with the top Betti numbers\footnote{The degree-independence of these Betti numbers, at least for $(d,n)=1$, is explained in \cite[Sec.~5]{Rayan:2018fpb}. 
} $\mathfrak{B}^{\rm Higgs}_d(\bbP^1) \coloneqq \dim \hhh^{\rm top}(\cM_{\bbP^1}^{\rm Higgs}(d,n),\bbQ)$  of the moduli spaces of $\cO(1)$-twisted Higgs bundles on the line considered in \cite[Sec.~5]{Rayan:2018fpb}:
\bea
& 
\Big|\mathrm{KP}_d\big(\cO_{\bbP^2}(-1) \oplus \cO_{\bbP^2}(-2)\big)\Big|  = \mathfrak{B}^{\rm Higgs}_d(\bbP^1)= \mathrm{DT}^{\rm num}_d(\hbox{2-loop quiver})& \nn \\
& = 
(1, 1, 1, 2, 5, 13, 35, 100, 300, 925, 2915, 9386, \dots )_d. &
\label{eq:surprise}
\eea

From a sheafy point of view, this raises the question how the definition of Calabi-Yau 4-fold invariants from the moduli space of coherent sheaves \cite{MR3861701, cao2019stable} interacts with the quiver description, and whether such a startling coincidence is an isolated example -- or not. 
\medskip

An upshot of \cref{conj:loglocgen,conj:ocd} is a  surprising Gromov--Witten theoretic take on this question: for $l=2$ and when $Y^{\rm op}(D)$ is an open geometry given by toric Lagrangians in a toric CY3, the quiver can be reconstructed systematically from the geometry of $Y(D)$ via a version of the ``branes--quivers'' correspondence introduced in \cite{Kucharski:2017ogk,Panfil:2018sis,Ekholm:2018eee,Ekholm:2019lmb}.
According to the open GW/quiver dictionary of \cite{Ekholm:2019lmb},
the quiver nodes are identified with basic (in the sense of \cite{Ekholm:2018eee,Ekholm:2019lmb}) embedded holomorphic disks with boundary on $L$, edges and self-edges correspond to linking and self-linking numbers of the latter, and the DT invariants of the quiver return (up to signs) the genus zero LMOV count of holomorphic disks obtained as ``boundstates'' of the basic ones \cite[Section~4]{Ekholm:2018eee}.

Now, by the $q \to 1$ limit of \eqref{eq:Omegad_intro_0}, the genus zero LMOV and GW invariants of $Y^{\rm op}(D)$ are related to each other by the {\it same} BPS change of variables relating KP invariants and local GW invariants of $E_{Y(D)}$ in \eqref{eq:kp_intro}. Then a direct consequence of the conjectural open=local GW equality \eqref{eq:openlogloc} is that the KP invariants of the local CY4-fold $E_{Y(D)}$ coincide with the LMOV invariants of the open CY3 geometry $Y^{\rm op}(D)$-- which by the branes-quivers correspondence above are in turn DT invariants of a symmetric quiver!  In particular, for the example above of $Y=\bbP^2$ and $D=D_1+D_2$ the union of a line and a conic, we shall find the open geometry $Y^{\rm op}(D)$ to be three-dimensional affine space with a single toric Lagrangian at framing one (see Construction~\ref{constr:YopD}) -- and as expected, in this case the quiver construction in  \cite[Sec.~5.1]{Panfil:2018sis} returns exactly the quiver with one loop and two arrows we had found in \eqref{eq:surprise}. In general, this connection leads to some non-trivial implications for the Gopakumar--Vafa/Donaldson--Thomas theory of CY4 local surfaces from log Gromov--Witten theory, which we describe precisely in \cref{sec:bpsloglocal,sec:cmt}.

\subsection{The web of correspondences: results}\label{sec:introres}
In order to state our results, we introduce some notions of positivity for Looijenga pairs. A Looijenga pair $Y(D)=(Y,D=D_1+\cdots+D_l)$ is \emph{nef}
if each irreducible component $D_i$ of $D$ is smooth and nef: note that the condition that the components $D_i$ are smooth implies in particular that $l \geq 2$, and nefness entails that a generic stable map to $Y$ is $D$-convex, which implies that the corresponding 
local Gromov--Witten invariants are well-defined.

A nef Looijenga pair $Y(D)$ is \emph{tame} if either $l>2$ or 
$D_i^2>0$ for all $i$, and \emph{quasi-tame} if the associated local geometry 
$E_{Y(D)}$ is deformation equivalent to the local geometry 
$E_{Y'(D')}$ associated to a tame Looijenga pair 
$Y'(D')$: we explain the relevance of these two properties in \cref{sec:loggw_intro_res}. As we will show in \cref{sec:logCYsurf},
there are 18 smooth deformation types of nef Looijenga pairs in total, 11 of which are tame and 15 are quasi-tame. In particular, a nef Looijenga pair $Y(D)$ is uniquely determined by $Y$ and the self-intersection numbers $D_i^2$, and we sometimes use the notation $Y(D_1^2,\dots,D_l^2)$ for $Y(D)$,  
see \cref{tab:classif}. We state our results in a slightly discursive form below, including pointers to their precise versions in the main body of the text.

\subsubsection{The stationary log-local correspondence}
\label{sec:loggw_intro_res}

Our first result establishes the stationary log/local correspondence of  \cref{conj:loglocgen} in the form given by \eqref{eq:loglocgen2}.

\begin{thm}[=\cref{thm_log_local,lem:localgw,thm:dP5,thm:dP33comp,thm:F04comp}] 
\label{thm:log_local_intro}
For every nef Looijenga pair $Y(D)$, the genus $0$ log invariants $N_{0,d}^{\rm log}(Y(D))$
and the genus $0$ local invariants $N^{\rm loc}_{0,d}(Y(D))$ are related by
\begin{equation} N^{\rm loc}_{0,d}(Y(D))= \left( \prod_{j=1}^l \frac{(-1)^{d \cdot D_j -1}}{
d \cdot D_j}\right) 
N_{0,d}^{\rm log}(Y(D)) \,.
\label{eq:loglocthm}
\end{equation}
Moreover, we provide a closed-form solution to the calculation of both sets of invariants in \eqref{eq:loglocthm}.
\end{thm}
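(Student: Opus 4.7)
The plan is to establish the numerical identity \eqref{eq:loglocthm} via the geometric blueprint sketched in \cref{sec:introloglocal}, and then to compute both sides in closed form by exploiting the classification of nef Looijenga pairs. I would first construct an iterated degeneration of the local geometry $E_{Y(D)}$: starting from $E_{Y(D)}\times \bbA^1$, I would degenerate successively to the normal cone along each $D_i$, obtaining on the central fibre an arrangement in which, for each $i=1,\dots,l$, a rank-$l$ vector bundle over $\bbP(\shO_{D_i}\oplus \shO_{D_i}(-D_i))$ is glued to $Y\times \bbA^{l-1}$ along $D_i\times \bbA^{l-1}$. After an explicit birational modification I would upgrade this to a log smooth family, so that the decomposition formula of Abramovich--Chen--Gross--Siebert applies to the local invariant $N^{\rm loc}_{0,d}(Y(D))$, writing it as in \eqref{eq:nlocdec} as a weighted sum over rigid tropical curves $h\colon \Gamma\to \Delta$ landing in the dual intersection complex $\Delta$ of the central fibre.

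The heart of the argument is then a combinatorial/enumerative analysis of the decomposition formula. The key claim is that every tropical curve $h$ that does not correspond to a maximal tangency configuration contributes zero, either for dimension reasons on the vertex moduli or because the vertex invariants on the normal cone pieces vanish. The remaining, unique maximal tangency tropical curve splits the degree $d$ stable map into: (i) a contribution from the $Y\times \bbA^{l-1}$ component that is exactly the log invariant $N^{\rm log}_{0,d}(Y(D))$, and (ii) one multicover contribution per divisor $D_j$ coming from the ruled component $\bbP(\shO_{D_j}\oplus \shO_{D_j}(-D_j))$, each of which produces the factor $(-1)^{d\cdot D_j-1}/(d\cdot D_j)$ by a standard Hodge-integral computation on a genus zero fully ramified cover of $\bbP^1$. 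Multiplying these together recovers \eqref{eq:loglocthm}.

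For the closed-form solutions, I would use the classification (\cref{tab:classif}) of the 18 smooth deformation types into 11 tame and 15 quasi-tame ones. For the generic tame families ($Y$ toric, or $Y(D)$ containing a toric model), the log invariants can be computed directly from the $q$-refined scattering diagrams/consistent wall structures associated to $Y(D)$ by Gross--Hacking--Keel and Gross--Siebert, whose consistency at the origin encodes the maximal-tangency log counts. The cases $dP_5$, $dP_3$ with three boundary components, and $\bbF_0$ with four boundary components require separate inputs (handled in the referenced theorems), using blow-up/blow-down formulas and the deformation of quasi-tame pairs to tame ones, the latter being justified by invariance of local Gromov--Witten invariants under deformation of $E_{Y(D)}$ preserving the Calabi--Yau structure. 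Combined with \eqref{eq:loglocthm}, this also yields the local invariants.

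The principal obstacle is step two: the general cycle-level log-local conjecture is known to fail outside the stationary sector (as in Nabijou--Ranganathan's example), so the vanishing of all non-maximal-tangency tropical vertex contributions is not automatic and must be established directly from the log~CY geometry of $Y(D)$. This requires an explicit case-by-case verification of vertex invariants at the valence-$\geq 2$ nodes of $\Gamma$, together with a rigidity analysis showing that any tropical curve with a non-maximal leg either has excess dimension or lands in a locus of vanishing virtual class. Organising this combinatorial bookkeeping so that the surviving terms reassemble cleanly into the product $\prod_j (-1)^{d\cdot D_j-1}(d\cdot D_j)^{-1}\, N^{\rm log}_{0,d}(Y(D))$ is where the maximality of the boundary and the nefness of the $D_i$ are decisively used.
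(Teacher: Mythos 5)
Your strategy coincides with the paper's for $l=2$ and for the closed-form solutions, but there is a genuine gap in how you treat $l=3$ and $l=4$. You propose to run the degeneration-plus-decomposition argument for all $l$ and to rest everything on the claim that every rigid tropical curve other than the maximal-tangency one contributes zero. In the paper this vanishing statement (\cref{thm_key}) is formulated and proved only under the hypothesis $l=2$, and its proof in \cref{sec:constr_trop} uses the two-component structure decisively: the sink $V_0$ is the vertex incident to the single ($l-1=1$) unbounded edge, the dimension count in \cref{lem_key_l_2} (virtual dimension $1+2$ at $V_0$, forcing exactly two bounded edges decorated by $H_{1,E_1}$ and $H_{2,E_2}$) has no stated analogue for more components, and the subsequent arguments (\cref{lem_gamma_intersection}--\cref{lem_unique_edge}) repeatedly exploit that there are only two boundary classes $H_1,H_2$ with $H_{j}^2=0$ on the central component. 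You flag this as ``the principal obstacle'' but do not supply the analysis, and the paper does not supply it for $l\geq 3$ either; so, as written, your argument does not prove the theorem for the three- and four-component pairs.

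The paper closes those cases by a different mechanism: it evaluates both sides in closed form and compares. On the log side it uses the scattering/tropical correspondence (\cref{thm:log_dp3_0_0_0,thm:logf0_0000}); on the local side it uses the toric mirror theorem (\cref{lem:localgw}) to compute the one-pointed descendent invariants and then reconstructs the $(l-1)$-pointed primary invariants from associativity of the small quantum product (\cref{thm:dP33comp}, and \cite{BBvG1} for $\bbF_0(0,0,0,0)$) --- a step absent from your plan, and needed precisely because the correspondence is not derived geometrically there. To repair your proposal, either restrict the degeneration argument to $l=2$ and adopt the comparison route for $l=3,4$, or genuinely establish the tropical vanishing and multicover bookkeeping for $l\geq 3$, which is substantially more than a routine extension of the $l=2$ case. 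Your sketch of the closed-form solutions (scattering for tame pairs, the algebraic mirror map of \cref{thm:dP5} for $\mathrm{dP}_5(0,0)$, blow-down formulas, and deformation invariance for quasi-tame pairs) otherwise matches the paper.
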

As explained in \cref{sec:introloglocal}, the key idea to prove \cref{thm:log_local_intro} is by a degeneration argument, illustrated in \cref{sec:logloc} for $l=2$: we follow the general strategy  of \cite{vGGR} to deduce 
log-local relations from a degeneration to the normal cone, and we solve in our case of interest the difficulties of the normal-crossings situation through a detailed
study of the tropical curves contributing in the decomposition formula of Abramovich--Chen--Gross--Siebert \cite{abramovich2017decomposition}
for log Gromov--Witten invariants. For $l>2$, and more generally when $Y(D)$ is tame, it turns out to be more convenient to structure the proof so that an uplift to the all-genus story, absent in other approaches, is immediate.
The notion of tameness is first shown to be synonymous of finite scattering, and for tame pairs we compute closed-form solutions for the log Gromov--Witten invariants using tropical geometry, more precisely two-dimensional scattering diagrams \cites{GPS,GHKlog,Gro11,Man19}. The statement of the Theorem for tame cases follows by subsequently comparing with a closed-form solution of the local theory via  Givental-style mirror theorems: the proof  follows from a general statement valid for local invariants of toric Fano varieties in any dimension twisted by a sum of concave line bundles (\cref{lem:localgw}), and the notion of tameness is shown to coincide here with the vanishing of quantum corrections to the mirror map. For non-quasi-tame cases, we use a blow-up formula which allows to restrict to the case of highest Picard number;
the proof of the equality \eqref{eq:loglocthm} in this case, in \cref{thm:dP5}, requires a highly non-trivial mirror map calculation.

\subsubsection{The all-genus log-open correspondence}
\label{sec:logopen_intro_res}

A notable property of the scattering approach to \cref{thm:log_local_intro} for $l>2$ (and, in general, for tame Looijenga pairs) is that it can be bootstrapped to obtain all-genus results for the log invariants through the $q$-deformed version of the two-dimensional scattering diagrams of \cites{GPS,GHKlog,Gro11,Man19} and 
the general connection between higher genus log invariants of surfaces with $\lambda_g$-insertion and $q$-refined tropical geometry
studied in \cites{MR3904449,bousseau2018quantum}. This is key to establishing the following version of \cref{conj:ocd,conj:refocd}.

\begin{thm}[=\cref{prop:dp311,thm:log_dp3_0_0_0,thm:logf0_0000,thm:logopen}] 
For every quasi-tame Looijenga pair $Y(D)$ distinct from 
$\delp_3(0,0,0)$, there exists a triple $Y^{\rm op}(D)=(X,L,\mathsf{f})$, geometrically related to $Y(D)$ by Construction~\ref{constr:YopD}, where 
$X$ is a semi-projective toric Calabi--Yau 3-fold, 
$L=L_1 \cup \cdots \cup L_{l-1}$ is a disjoint union of 
$l-1$ toric Lagrangians in $X$, $\mathsf{f}$ is a framing for $L$, and an 
isomorphism $\iota \colon \hhh_2(X,L,\Z) \stackrel{\sim}{\rightarrow} \hhh_2(Y,\Z)$ such that
\beq 
O_{0;\iota^{-1}(d)}(Y^{\rm op}(D)) = N_{0,d}^{\rm loc}(Y(D)) =  \prod_{i=1}^{l}  \frac{(-1)^{d \cdot D_i+1}}{d \cdot D_i}  N_{0,d}^{\rm log}(Y(D))\,.
\label{eq:logopen0_intro}
\eeq
Furthemore, if $Y(D)$ is tame, 
\beq \label{eq:logopen_intro}
 \mathsf{O}_{\iota^{-1}(d)}(Y^{\rm op}(D))(-\ri \log q)
 = [1]_q^{l-2} \,
  \frac{(-1)^{d \cdot D_l+1}}{[d \cdot D_l]_q} \, \prod_{i=1}^{l-1} \, \frac{(-1)^{d \cdot D_i+1}}{d \cdot D_i} \,
 \mathsf{N}_{d}^{\rm log}(Y(D))(-\ri \log q)\,.
\eeq
Moreover, we provide a closed-form solution to the calculation of the invariants in \eqref{eq:logopen0_intro}--\eqref{eq:logopen_intro}.
\label{thm:logopen_intro}
\end{thm}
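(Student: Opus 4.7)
The second equality in \eqref{eq:logopen0_intro} is precisely \cref{thm:log_local_intro}, so the task reduces to (i) establishing the open=local identity $O_{0;\iota^{-1}(d)}(Y^{\rm op}(D))=N^{\rm loc}_{0,d}(Y(D))$ for every quasi-tame pair different from $\delp_3(0,0,0)$, and (ii) upgrading it in the tame case to the higher-genus identity \eqref{eq:logopen_intro}. The approach is to proceed in three steps: construct the open dual and the identification $\iota$; reduce to a short list of ``core'' tame deformation types by deformation invariance and a blow-up formula; then compute both sides in closed form and match them.

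\textbf{Construction and reduction.} For each of the 18 nef deformation classes listed in \cref{tab:classif}, I would apply Construction~\ref{constr:YopD}: after a deformation that makes $D_1,\dots,D_{l-1}$ toric, $Y^{(l)}\setminus D^{(l)}=\tot(\shO_Y(-D_l)\to Y\setminus\bigcup_{k<l}D_k)$ is a semi-projective toric Calabi--Yau threefold, in which one places an Aganagic--Vafa Lagrangian $L_k$ on the toric leg over each $D_k$ with framing $\mathsf{f}_k$ dictated by the local toric data. The isomorphism $\iota$ is induced by the inclusion $Y^{(l)}\setminus D^{(l)}\hookrightarrow Y^{(l)}$ together with the matching of winding around $L_k$ with tangency order along $D_k$; its bijectivity is checked case-by-case. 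Quasi-tameness then reduces the genus-zero identity to the tame case via deformation invariance of $N^{\rm loc}_{0,d}$, and the blow-up/contraction machinery used in the proof of \cref{thm:log_local_intro} further reduces the tame case to a short list of maximal Picard-rank models, most notably $\delp_3(1,1)$ and $\fzero(0,0,0,0)$, treated in \cref{prop:dp311,thm:logf0_0000}. The pair $\delp_3(0,0,0)$ is excluded because no toric deformation produces an open dual of the required form; only its genus-zero statement is established, in \cref{thm:log_dp3_0_0_0}, by a direct deformation of $E_{Y(D)}$ onto the local geometry of a neighbouring tame pair.

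\textbf{Matching and main obstacle.} For each of the core tame cases, I would compute $\mathsf{N}^{\log}_d(Y(D))$ via the $q$-refined two-dimensional scattering diagram of \cites{MR3904449,bousseau2018quantum}: tameness forces finite scattering, so the series is read off as an ordered product of quantum-dilogarithm wall-crossing automorphisms. On the open side, the toricity of $(X,L,\mathsf{f})$ allows $\mathsf{O}_\beta$ to be evaluated by the topological vertex \cites{Li:2001sg,Li:2004uf} as an explicit product of $q$-Pochhammers. Standard $q$-dilogarithm identities then match the two closed forms and reproduce exactly the prefactor $[1]_q^{l-2}\,(-1)^{d\cdot D_l+1}[d\cdot D_l]_q^{-1}\prod_{k<l}(-1)^{d\cdot D_k+1}(d\cdot D_k)^{-1}$; the classical limit $q\to 1$ combined with \cref{thm:log_local_intro} then yields \eqref{eq:logopen0_intro}. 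The hard part will be the case-by-case matching itself: the outputs of $q$-refined scattering and of the topological vertex are combinatorially quite different a priori, and the equality emerges only after careful bookkeeping of framings, of wall-crossing signs on the log side versus winding/framing signs on the open side, and of non-trivial $q$-Pochhammer rearrangements. A secondary subtlety is that the exclusion of $\delp_3(0,0,0)$ reflects a genuine obstruction to the existence of a toric open dual, forcing an indirect, deformation-based argument for its genus-zero statement rather than a direct computation.
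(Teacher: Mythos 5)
Your proposal follows essentially the same route as the paper: Construction~\ref{constr:YopD} under Property~O, reduction to the highest-Picard-rank models by deformation invariance and blow-up formulas, closed-form evaluation of the log side by $q$-refined scattering against the open side computed by the topological vertex, and the genus-zero identity \eqref{eq:logopen0_intro} recovered in the $q\to 1$ limit via \cref{thm:log_local_intro}. Only small corrections are needed: the reduction on the open side requires a blow-up formula for the open invariants themselves (in the paper, \cref{prop:openblup}, proved via flop invariance of the vertex and the ``forgetting an edge'' property, not the log/local machinery); the core cases are $\delp_3(1,1)$, $\delp_2(1,0,0)$ (annuli, matched against the $\delp_3(0,0,0)$ log invariants through the blow-up formulas) and $\fzero(0,0,0,0)$; the $l=2$ matching hinges on Jackson's $q$-Pfaff--Saalsch\"utz summation rather than generic $q$-dilogarithm identities; and the theorem makes no claim for $\delp_3(0,0,0)$, which is simply excluded because it fails Property~O, so no indirect genus-zero argument is needed for it.
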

The open geometry $Y^{\rm op}(D)$ is constructed following the ideas of \cref{sec:intrologopen} -- see \cref{sec:logopenpr} for full details. Key to the proof of \cref{thm:logopen_intro} is the fact that quasi-tame Looijenga pairs can always be deformed to pairs for which the both surface $Y$ and the divisors $D_i$ with $i<l$ are toric: as we shall explain in \cref{sec:logopenpr}, the corresponding open geometry $Y^{\rm op}(D)$ is given by suitable Aganagic--Vafa (singular Harvey--Lawson) Lagrangian branes in a toric Calabi--Yau threefold, whose open Gromov--Witten theory can be compactly encoded through the topological vertex.\footnote{A conceptual explanation for the exclusion of $\delp_3(0,0,0)$ from the statement of \cref{thm:logopen_intro} is given by the notion of Property~O, which we introduce in  \cref{def:propO}.} \cref{conj:refocd} then predicts a completely unexpected relation between the $q$-scattering and topological vertex formalisms, which \cref{thm:logopen_intro} establishes for tame pairs. The combinatorics underlying the resulting comparison of invariants is in general extremely non-trivial: for $l=2$, it can be shown to be equivalent to Jackson's $q$-analogue of the Pfaff--Saalsch\"utz summation for  the $_3 \phi_2$ generalised $q$-hypergeometric function.

We furthermore conjecture that the higher genus log-open correspondence of \cref{thm:logopen_intro} extends to all quasi-tame pairs. The scattering diagrams become substantially more complicated in the non-tame cases, and \eqref{eq:logopen0_intro} translates into an intricate novel set of $q$-binomial identities:
see \cref{conj:dp1binom} for explicit examples.\footnote{After the first version of this paper appeared on the arXiv, we received a combinatorial proof of \cref{conj:dp1binom} from C.~Krattenthaler \cite{CKpriv}.} The log-local correspondence of \cref{thm:log_local_intro} establishes their limit for $q\to 1$.

\subsubsection{BPS invariants and quiver DT invariants}\label{sec:bpsquiver}

As anticipated in \cref{sec:introquiverbps}, the log/open correspondence of \cref{thm:logopen_intro} can be leveraged to produce a novel correspondence between log/local Gromov--Witten invariants and quiver DT theory.
%

\begin{thm}[=\cref{thm:kpdt}]
Let $Y(D)=(Y,D_1+D_2)$ be a 2-component quasi-tame Looijenga pair. Then there exists a symmetric quiver $\mathsf{Q}(Y(D))$ with $\chi(Y)-1$ vertices and a lattice isomorphism $\kappa: \bbZ (\mathsf{Q}(Y(D)))_0 \stackrel{\sim}{\rightarrow}  \hhh_2(Y,\bbZ) $ such that
\beq
\mathrm{DT}_{d}^{\rm num}(\mathsf{Q}(Y(D))) =
\Big|\mathrm{KP}_{\kappa(d)}(E_{Y(D)})+ \sum_i \a_i \delta_{d,v_i}\Big| \,,
\label{eq:kpdt_intro}
\eeq
with $\a_i \in \{-1,0,1\}$. In particular, $\mathrm{KP}_{d}(E_{Y(D)})\in \bbZ$.
\label{thm:kpdt_intro}
\end{thm}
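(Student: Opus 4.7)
The plan is to combine the all-genus log-open correspondence of \cref{thm:logopen_intro} with the branes--quivers correspondence surveyed in \cref{sec:introquiverbps} to explicitly construct the quiver $\mathsf{Q}(Y(D))$ from the open geometry $Y^{\rm op}(D)$ and then match DT and KP invariants through a common BPS-type change of variables. First, I would invoke \cref{thm:logopen_intro} in the case $l=2$: quasi-tameness is preserved in deformation families, so for every such pair one has $O_{0;\iota^{-1}(d)}(Y^{\rm op}(D))=N_{0,d}^{\rm loc}(Y(D))$, where $Y^{\rm op}(D)=(X,L_1,\mathsf{f})$ is a semi-projective toric CY3 with a single Aganagic--Vafa Lagrangian produced by Construction~\ref{constr:YopD}.

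Next, I would compare the M\"obius-type defining formulae for the two BPS counts. In genus zero the LMOV formula \eqref{eq:Omegad_intro_0} reduces, after the $q\to 1$ limit and the single-Lagrangian identification $w_1=d\cdot D_1$ (or $D_2$), to $\sum_{k\mid \beta}\frac{\mu(k)}{k^2}\mathsf{O}_{\beta/k}$ up to a $w$-dependent prefactor; this is precisely the structure of \eqref{eq:kp_intro}. Substituting the open=local identity shows that $\mathrm{KP}_d(E_{Y(D)})$ equals, up to a computable global sign, the genus-zero LMOV invariant of $(X,L_1,\mathsf{f})$ associated to the class $\iota^{-1}(d)$. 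This step reduces the theorem to a statement purely about the open CY3 geometry.

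At this point I would invoke the branes--quivers dictionary of Kucharski--Panfil--Sulkowski and Ekholm--Kucharski--Longhi: to the configuration $(X,L_1,\mathsf{f})$ one associates a symmetric quiver whose vertices are labelled by the basic embedded holomorphic disks with boundary on $L_1$, with (self-)edges recording the (self-)linking numbers, and whose numerical DT invariants recover the LMOV BPS counts. Since the classification of nef 2-component pairs (\cref{tab:classif}) contains finitely many deformation types, I would carry out this identification case by case, reading off both the number of basic disks (which must equal $\chi(Y)-1$, giving the vertex count) and the linking data from the toric diagram of $X$ together with the framing $\mathsf{f}$; this simultaneously defines $\mathsf{Q}(Y(D))$ and the lattice isomorphism $\kappa$ via the matching of disk classes with $\hhh_2(Y,\bbZ)$ through $\iota$. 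The corrections $\alpha_i\in\{-1,0,1\}$ encode the discrepancy on vertex classes coming from the single-disk/basic-disk contributions and from sign conventions between KP and LMOV, and can be pinned down by direct inspection in each case. Integrality of $\mathrm{KP}_d(E_{Y(D)})$ then follows from Efimov's theorem on positivity/integrality of numerical DT invariants of symmetric quivers.

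The main obstacle I anticipate is twofold: establishing the branes--quivers correspondence rigorously for the specific Lagrangians produced by Construction~\ref{constr:YopD}, since the general correspondence is still largely conjectural, and controlling the vertex corrections $\alpha_i$ uniformly. The former I would handle by using the explicit closed-form solutions for $\mathsf{O}_\beta(Y^{\rm op}(D))$ already obtained in the proof of \cref{thm:logopen_intro} (via the topological vertex) and directly verifying, for each of the finitely many deformation types in \cref{tab:classif}, that the plethystic logarithm of that generating function matches the right-hand side of \eqref{eq:DTmot_intro} for an explicit symmetric quiver; the latter by tabulating $\mathrm{KP}_{v_i}(E_{Y(D)})$ against $\mathrm{DT}^{\rm num}_{v_i}(\mathsf{Q}(Y(D)))$ at the vertex classes and reading off $\alpha_i$.
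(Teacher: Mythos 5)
Your proposal is correct in outline and follows the same overall chain as the paper: the $l=2$ case of \cref{thm:logopen} identifies $O_{0;\iota^{-1}(d)}(Y^{\rm op}(D))$ with $N^{\rm loc}_{0,d}(Y(D))$, the genus-zero LMOV/disk-BPS change of variables \eqref{eq:diskbps} matches the KP change of variables \eqref{eq:kp}, a symmetric quiver is attached to the open geometry so that its numerical DT invariants reproduce the disk BPS counts up to shifts at vertex classes, and integrality then follows from Efimov's theorem. Where you diverge is in how the quiver step is justified: you appeal to the general branes--quivers dictionary (basic disks as vertices, linking numbers as arrows), which you correctly flag as conjectural, and propose to repair this by case-by-case matching over the finitely many deformation types. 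The paper avoids the conjectural dictionary altogether: because every $2$-component quasi-tame $Y^{\rm op}(D)$ is a strip geometry, the one-holed vertex function has the closed product form \eqref{eq:Wsymm} (a ratio of $q$-Pochhammer symbols), and the established strips--quivers correspondence of Panfil--Sulkowski \cite{Panfil:2018faz} -- which at bottom is just the $q$-binomial theorem -- converts the generating series $\psi_{Y(D)}$ uniformly into a generalised $q$-hypergeometric series $\Phi_{C(Y(D))}$ with the explicit symmetric adjacency matrix \eqref{eq:adjmat}; this gives the vertex count $\chi(Y)-1$, the lattice map $\kappa=\iota\circ\tau$, and the shifts $\alpha_i$ in \eqref{eq:kpdtshift} in one stroke, with no case analysis. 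Your fallback of "directly verifying the plethystic logarithm" is viable, but be aware that for each geometry this must be an identity in \emph{all} dimension vectors, not a finite tabulation -- so in practice you would be reproving exactly the strip-quiver identity that the paper imports; citing it directly is both cleaner and what pins down the $\alpha_i$ uniformly rather than by inspection.
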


A symplectic proof of the integrality of genus $0$ BPS invariants for projective Calabi--Yau 4-folds, although likely adaptable to the non-compact setting, was given by Ionel--Parker in \cite{MR3739228}. In \cref{thm:kpdt_intro}, the integrality for the local Calabi--Yau 4-folds $E_{Y(D)}$ follows from the identification of the BPS invariants with DT invariants of a symmetric quiver\footnote{The equality modulo the integral shift by $\sum_i \a_i \delta_{d,v_i}$ in \eqref{eq:kpdt_intro} can be traded to an actual equality of absolute values at the price of considering a larger disconnected quiver $\widetilde{\mathsf{Q}}$, and a corresponding epimorphism $\widetilde{\kappa}:\bbZ(\widetilde{\mathsf{Q}}(Y(D)))_0 \to \hhh_2(Y,\bbZ)$; see \cite{Panfil:2018faz}.}. We construct the symmetric quiver $\mathsf{Q}(Y(D))$ by combining the 
log-open correspondence given by \cref{thm:logopen_intro} with a correspondence previously established by Panfil--Sulkowski between toric Calabi--Yau 3-folds with ``strip geometries'' and symmetric quivers \cites{Panfil:2018faz,Kimura:2020qns}.

\cref{thm:logopen_intro} 
associates to a Looijenga pair $Y(D)$ satisfying Property~O 
the toric Calabi--Yau 3-fold geometry $Y^{\rm op}(D)$.
Denote 
$\Omega_d(Y(D))(q) \coloneqq \Omega_{\iota^{-1}(d)}(Y^{\rm op}(D))(q)$ the open BPS 
invariants defined in \eqref{eq:Omegad_intro_0}. In general, for any Looijenga pair we can define
\beq
\Omega_{d}(Y(D))(q) 
 \coloneqq 
 [1]^2_q
 \left( \prod_{i=1}^{l} \frac{ 1 }{[ d \cdot D_i]_{q}} \right)
 \sum_{k | d}
\frac{(-1)^{ d/k \cdot D + l}\mu(k)}{[k]_{q}^{2-l} \,  k^{2-l} \,} \,
\mathsf{N}_{d/k}^{\rm log}(Y(D))(-\ri k \log q)\, .
\label{eq:Omegad_intro}
\eeq
When $Y(D)$ is tame and satisfies Property O, the equivalence of the definitions \eqref{eq:Omegad_intro_0} and \eqref{eq:Omegad_intro} is a rephrasing of the log-open correspondence of \cref{thm:logopen_intro} at the level of BPS invariants.

A priori, 
$\Omega_d(Y(D))(q) \in \bbQ(q^{1/2})$.
By a direct arithmetic argument, we prove the following integrality result, which in particular 
establishes the existence of an integral BPS structure underlying the 
higher genus log Gromov--Witten theory of $Y(D)$.

\begin{thm}[=\cref{thm:openbps}]
Let $Y(D)$ be a quasi-tame Looijenga pair. Then $\Omega_{d}(Y(D))(q) \in  q^{-\frac{\mathsf{g}_{Y(D)}(d)}{2}} \bbZ[q] $
for an integral quadratic polynomial $\mathsf{g}_{Y(D)}(d)$.
\label{thm:openbps_intro}
\end{thm}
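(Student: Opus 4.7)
The plan is to treat \eqref{eq:Omegad_intro} as a purely arithmetic identity, and to split the problem into (i) showing that the a priori rational function $\Omega_d(Y(D))(q)\in \bbQ(q^{1/2})$ is a Laurent polynomial, and (ii) upgrading rational to integer coefficients via a Möbius--Frobenius congruence. For tame pairs satisfying Property~O one could alternatively reroute the proof through \cref{thm:logopen_intro}, identifying $\Omega_d$ with the LMOV generating series of the toric Calabi--Yau threefold $Y^{\rm op}(D)$ and appealing to existing integrality results for open BPS invariants of toric Lagrangian branes. However, since the statement covers all quasi-tame pairs (including those not meeting Property~O) and the proof is described as arithmetic, I would argue directly from \eqref{eq:Omegad_intro}.

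For step (i), I would first show that, for every divisor $k$ of $d$, the substituted log series $\mathsf{N}^{\log}_{d/k}(Y(D))(-\ri k \log q)$ is a rational function of $q^{k/2}$ whose only possible poles lie at roots of unity of orders dividing $(d/k)\cdot D_i$. For tame pairs this follows from the $q$-refined scattering-diagram description of higher-genus log GW invariants with $\lambda_g$-insertion underlying the proof of \cref{thm:logopen_intro}; for non-tame quasi-tame cases, deformation invariance of $E_{Y(D)}$ within the finite list of nef Looijenga pair deformation types (cf.\ \cref{tab:classif}) reduces the statement to the tame setting. I would then verify that the factor $[1]_q^2\prod_{i=1}^l [d\cdot D_i]_q^{-1}$, multiplied by the Möbius-weighted sum over $k | d$, has poles cancelling term-by-term: at each putative pole $q=\zeta$ of order $n$ dividing some $d\cdot D_i$, the divisors $k$ are grouped by the residue class of $(d/k)\cdot D_i$ modulo $n$, and the resulting partial sums are annihilated by the vanishings of $\mathsf{N}^{\log}_{d/k}$ at $\zeta$. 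A degree count in $q^{1/2}$ then bounds the pole at $q=0$ and $q=\infty$, producing the quadratic polynomial $\mathsf{g}_{Y(D)}(d)$.

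For step (ii), I would invoke the classical Möbius--Frobenius congruence: for every $F(q)\in \bbZ[q^{\pm 1/2}]$ and every prime $p$, one has $F(q^p)\equiv F(q)^p \pmod{p\,\bbZ[q^{\pm 1/2}]}$. Applied prime-by-prime to the decomposition $d=p^a m$ with $\gcd(p,m)=1$, and combined with the Möbius weights $\mu(k)/(k^{2-l}[k]_q^{2-l})$, this congruence forces the $p$-adic valuation of the numerator of $\Omega_d$, after the clearing of denominators in step (i), to dominate that of the remaining denominators in each summand. Iterating over all primes dividing $d$ and combining with the Laurent polynomiality from step (i) then yields $\Omega_d(Y(D))(q)\in q^{-\mathsf{g}_{Y(D)}(d)/2}\,\bbZ[q]$.

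The principal difficulty will be the pole cancellation in step (i) for non-tame quasi-tame pairs, where the direct scattering-diagram input is unavailable and one must leverage the deformation invariance of the local higher-genus invariants together with a controlled comparison of the log series across the deformation; a secondary difficulty is pinning down the quadratic polynomial $\mathsf{g}_{Y(D)}(d)$ precisely, which I expect to match (up to an affine shift linear in $d$) the virtual dimension $d\cdot(d+K_Y)/2+(l-2)(d\cdot D-1)$ of the relevant curve-counting moduli, and which I would fix by matching the extremal coefficients of $\Omega_d(q)$ as $q\to 0$ and $q\to\infty$ against the leading log GW contributions in \eqref{eq:GWexp_intro}.
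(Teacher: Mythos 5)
Your overall skeleton (Laurent polynomiality via pole cancellation in the M\"obius sum, plus a degree count at $q=0,\infty$ to pin down $\mathsf{g}_{Y(D)}(d)$) points in the same general direction as the paper, which reduces via \cref{prop:logblup,prop:openblup} to the maximal Picard rank cases $\delp_3(1,1)$, $\delp_3(0,0,0)$, $\bbF_0(0,0,0,0)$ and works with the explicit $q$-binomial closed forms of \cref{prop:dp311,thm:log_dp3_0_0_0,thm:logf0_0000}. But your step (i) has a real gap in its mechanism. The individual summands $\mathsf{N}^{\log}_{d/k}(Y(D))(-\ri k\log q)$ are Laurent \emph{polynomials} and do not vanish at the relevant roots of unity; the poles come entirely from the prefactor $[1]_q^2\prod_i[d\cdot D_i]_q^{-1}$, and at $q=\omega_{\tilde d}^j$ these are \emph{double} poles. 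So it is not enough to group divisors and invoke ``vanishings of $\mathsf{N}^{\log}_{d/k}$ at $\zeta$'': one needs the full M\"obius-weighted sum to vanish to order two. The paper achieves this by evaluating the $q$-binomials at roots of unity with the $q$-Lucas theorem (\cref{lem:qlucas}), pairing divisors $k\leftrightarrow kp'$ so that the zeroth-order values cancel, and then separately proving that the first $q$-derivative of the relevant $q$-binomials vanishes there (\cref{lem:binom0}); nothing in your outline supplies this second-order input, and without it the argument fails already for $\delp_3(1,1)$ with $\gcd(d_0,\dots,d_3)>1$.

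Two further points. First, your reduction of the non-tame quasi-tame pairs to the tame ones by ``deformation invariance of the local higher genus invariants together with a controlled comparison of the log series across the deformation'' is precisely the content of the open \cref{conj:log_defo} (proved in the paper only at $q=1$); the paper avoids this by using the Property~O/open-GW definition \eqref{eq:Omegad}, under which $\Omega_d$ of a non-tame quasi-tame pair is \emph{literally equal} to that of its tame partner because they share the same $Y^{\rm op}(D)$ (e.g.\ $\Omega_d(\delp_3(0,2))=\Omega_d(\delp_3(1,1))$). Second, your step (ii) is solving the wrong problem: once Laurent polynomiality is known, integrality of the coefficients is immediate, since the closed-form log series are built from $q$-binomials, $\qbinom{n}{m}_q\in q^{-m(n-m)/2}\bbZ[q]$ and $1/[n]_q\in q^{n/2}\bbZ[[q]]$, so $\Omega_d$ is an integral Laurent series at $q=0$ from the outset; the Frobenius congruence $F(q^p)\equiv F(q)^p \pmod p$ does not address the actual obstruction, which is divisibility by the $q$-integers $[d\cdot D_i]_q$, i.e.\ regularity on the unit circle, not $p$-adic integrality of coefficients.
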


\subsubsection{Orbifolds}

In the present paper, we mainly focus on the study of the finitely many deformation families
of nef Looijenga pairs $(Y,D)$ with $Y$ smooth. Nevertheless, most of our techniques and results should extend to the more general setting where we allow $Y$ to have orbifold singularities 
at the intersection of the divisors: the log Gromov--Witten theory is then well-defined since $Y(D)$ is log smooth, and the local Gromov--Witten theory makes sense by viewing $Y$ and $E_{Y(D)}$ as smooth Deligne--Mumford stacks. There are infinitely many examples of nef/tame/quasi-tame Looijenga pairs in the orbifold sense. Deferring a treatment of more general examples to the companion note \cite{Bousseau:2020ryp}, we content ourselves here to show in \cref{sec:orbi} that the log-local, log-open and Gromov--Witten/quiver correspondences still hold for the infinite family of examples
obtained by taking $Y=\bbP_{(1,1,n)}$, the weighted projective plane with weights $(1,1,n)$, and $D=D_1+D_2$ with 
$D_1$ a line passing through the orbifold point and $D_2$ a smooth member of the linear system given by the sum of the two other toric divisors.

\subsection{The web of correspondences: implications} 
\label{sec:intro_implications}

The results of the previous section subsume and were motivated by several disconnected strands of development in the study of the enumerative invariants in \cref{sec:loggw_intro,sec:locgw_intro,sec:opengw_intro,sec:quiverdt_intro,sec:bps_intro}. We briefly describe here how they relate to and impact ongoing progress in some allied contexts.

\subsubsection{BPS structures in log/local GW theory}
\label{sec:bpsloglocal}
The relation of log GW invariants to BPS invariants in \cref{thm:openbps_intro} echoes very similar\footnote{A non-trivial difference is that here the log Gromov--Witten invariants are {\it not} interpreted as BPS invariants themselves, unlike in \cites{Bou18,bousseau2018quantum}, but rather are related to them via \eqref{eq:Omegad_intro}.}
statements relating log GW theory to DT and LMOV invariants in \cites{Bou18,bousseau2018quantum}, and in particular it partly demystifies the interpretation of log GW partition functions as related to some putative open curve counting theory on a Calabi--Yau 3-fold in \cite[\S 9]{bousseau2018quantum} by realising the open BPS count in terms of actual, explicit special Lagrangians in a toric Calabi--Yau threefold. Aside from its conceptual appeal, its power is revealed by some of its immediate consequences: the Klemm--Pandharipande conjectural integrality \cite[Conjecture~0]{Klemm:2007in} for local CY4 surfaces follows as a zero-effort corollary of the log-open correspondence of \cref{thm:logopen_intro} by constructing the associated quiver in \cref{thm:kpdt_intro}, identifying the KP invariants of the local surface with its DT invariants, and applying Efimov's theorem \cite{MR2956038}.

We note that this chain of connections opens the way to a proof of the Calabi-Yau 4-fold Gromov-Witten/Donaldson-Thomas correspondence \cite{MR3861701, cao2019stable} which is an open conjecture even for the simplest local surfaces.
The analysis of the underlying integrality of the $q$-scattering calculation in \cref{thm:openbps_intro} furthermore gives, in the limit $q\to 1$, an algebro-geometric version of symplectic results of Ionel--Parker \cite{MR3739228} for Calabi--Yau vector bundles on toric surfaces; and away from this limit, it provides a  refined integrality statement whose enumerative salience for the local theory is hitherto unknown, and worthy of further study: see \cref{sec:cmt}.

\subsubsection{The general log-open correspondence for surfaces}
Throughout the heuristic description of the motivation for \cref{conj:loglocgen,conj:ocd,conj:refocd}, we have been mindful not to impose any nefness condition on the divisors $D_i$: the only request we made was that the genus zero obstruction theory for the moduli problem of the local theory was encoded by a genuine obstruction bundle over the untwisted moduli space. This was taken into account by the condition of $D$-convexity for the stable maps: restricting to $D$-convex maps\footnote{Note that for $Y$ Fano, this contains the interior of a full-dimensional sub-cone of $\mathrm{NE}(Y)$.}  widens the horizon of the log/local correspondence of \cite{vGGR} to a vast spectrum of cases which were not accounted for in previous studies of the correspondence. And indeed, inspection of examples where $Y$ is a blow-up of the plane and $D_1$ an exceptional curve reveals that  \cref{conj:loglocgen,conj:ocd,conj:refocd} hold with flying colours in these cases as well.\footnote{Their detailed study will appear in \cite{BvGS}.}

The discussion of \cref{sec:intrologopen} also opens the door to pushing the log/open correspondence beyond the maximal contact setting: it is tempting to see how the maximal tangency condition could be removed from \cref{conj:ocd}, with the splitting of contact orders amongst multiple points on the same divisor being translated to windings of multiple boundary disks ending on the same Lagrangian. The multi-covering factor of \eqref{eq:logopen_intro} would then be naturally given by a product of individual contact orders/disk windings -- an expectation that the reader can verify to be fulfilled in the basic example presented there of $(Y,D)=(\bbP^1\times \AA^1, \AA^1 \cup \AA^1)$. More generally, the link to the topological vertex and open GW invariants of arbitrary topology raises a fascinating question how much the topological vertex knows of the log theory of the surface -- and how it can be effectively used in the construction of (quantum) SYZ mirrors.


\subsubsection{Relation to the Cao--Maulik--Toda conjecture}
\label{sec:cmt}
Another direction towards a geometric understanding of the integrality of KP invariants is provided by sheaf-counting theories for Calabi--Yau $4$-folds, which were originally introduced by Borisov--Joyce \cite{MR3692967} (see also \cite{cao2014donaldson}) and have recently been given an algebraic construction by Oh--Thomas \cite{OT20}. 
More precisely, 
Cao--Maulik--Toda have conjectured in 
\cite{cao2019stable} (resp.~\cite{MR3861701})
explicit relations between genus $0$ KP invariants 
and stable pair invariants (resp.~counts of one-dimensional coherent sheaves)
on Calabi--Yau 4-folds. Recently, Cao--Kool--Monavari \cite{cao2020stable} have checked the conjecture of \cite{cao2019stable} for low degree classes on local toric surfaces; their proof hinges on the solution of the Gromov--Witten/Klemm--Pandharipande side given by \cref{thm:log_local_intro,thm:kpdt_intro} in this paper.

The results of \cref{thm:kpdt_intro,thm:openbps_intro} also raise a host of new questions. First and foremost, it would be extremely interesting to find for local toric surfaces a direct connection between the symmetric quivers appearing in \cref{thm:kpdt}
and the moduli spaces of coherent sheaves appearing in the conjectures of 
\cites{cao2019stable,MR3861701}.
Furthermore, since for $l=2$ we have $\mathrm{KP}_d(E_{Y(D)}) = \Omega_d(Y(D))$,  a fascinating direction would be to find an interpretation of the $q$-refined invariants $\Omega_d(Y(D))(q)$ in terms of the Calabi--Yau 4-fold $E_{Y(D)}$. 
A natural suggestion is that
$\Omega_d(Y(D))(q)$ should take the form of some appropriately refined Donaldson--Thomas invariants of $E_{Y(D)}$.
As the topic of refined DT theory of Calabi--Yau 4-folds is still in its infancy, we leave the question open for now.

\addtocontents{toc}{\protect\setcounter{tocdepth}{0}}
\section*{Acknowledgements}
\addtocontents{toc}{\protect\setcounter{tocdepth}{1}}

Discussions with T.~Bridgeland, M.~Gross, A.~Klemm, M.~Kool, P.~Kucharski, P.~Longhi, C.~Manola\-che, N.~Nabijou, D.~Ranganathan and P.~Sulkowski are gratefully acknowledged. We  thank A.~Afandi, Y.~Sch\"uler, and especially R.~Thomas for very helpful comments on previous versions of the manuscript, and C.~Krattenthaler for providing us with a copy of \cite{CKpriv} where \cref{conj:dp1binom} is proved for $q \neq 1$.



\section{Nef Looijenga pairs}

\label{sec:logCYsurf}

We start off by establishing some general facts about the classical geometry of nef log~Calabi--Yau (CY) surfaces. We first proceed to classify them in the smooth case, recall some basics of their birational geometry and the construction of toric models, and describe the structure of their pseudo-effective cone in preparation for the study of curve counts in them. We then end by introducing the notions of (quasi-) tameness.

\subsection{Classification}

We start by giving the following

\begin{defn}
An \emph{$l$-component log~CY surface with maximal boundary}, or \emph{$l$-component Looijenga pair}, is a pair $Y(D) \coloneqq (Y,D=D_1+\dots + D_l)$ consisting of a smooth rational projective surface $Y$ and a singular nodal anti-canonical divisor $D$ that admits a decomposition $D=D_1+\cdots+D_l$. We say that an $l$-component log~CY surface is \emph{nef} if $l \geq 2$ and each $D_i$ is a smooth, irreducible, and nef rational curve. 
\label{def:lp}
\end{defn}

Examples of log~CY surfaces arise when $Y$ is a projective toric surface and $D$ is the complement of the maximal torus orbit in $Y$; we call these pairs {\it toric}. 
By definition, if $Y(D)$ is nef, $Y$ is a weak Fano surface together with a choice of distribution of the anti-canonical degree amongst components $D_i$ preserving the condition that $D_i \cdot C \geq 0$ for any effective curve $C$ and all $i=1, \dots, l$ with $l\geq 2$. We classify these by recalling some results of di~Rocco \cite{Rocco} (see also \cite[Section 2]{CGKT1} and \cites{CGKT2,CGKT3}). 

Let $\dpr$ be the surface obtained from blowing up $r \geq 1$ general points in $\ptwo$. 
The Picard group of $\dpr$ is generated by the hyperplane class $H$ and the classes $E_i$ of the exceptional divisors. 
The anticanonical class is $-K_{\dpr}=3H-\sum_{i=1}^r E_i$. Recall that a \emph{line class} on $\dpr$ is $l\in\pic(\dpr)$ such that $l^2=-1$ and $-K_{\dpr}\cdot l=1$; for $r\leq 5$ and up to permutation of the $E_i$, they are given by $E_i$, $H-E_1-E_2$  or $2H-\sum_{i=1}^5 E_i$. Furthermore, for $n\geq 0$, denote by $\fn$ the $n$th Hirzebruch surface. Its Picard group is of rank 2 generated by the sections $C_{-n}$, resp. $C_n$, with self-intersections $-n$, resp. $n$, and by the fibre class $f$, subject to the relation that $C_{-n}+nf=C_n$. Note that $\fzero\simeq \pone\times\pone$ and $\fone\simeq \mathrm{dP}_1$ is the blowup of $\ptwo$ in one point. 

\begin{lem}[\cite{Rocco}]\label{lem:nef}
Assume that $1\leq r \leq 5$ and let $D\in\pic(\dpr)$. Then $D$ is nef if and only if
\begin{enumerate}
\item[(i)] for $r=1$, $D \cdot l\geq 0$ for all line classes $l$ and $D \cdot (H-E_1)\geq 0$,
\item[(ii)] for $5 \geq r\geq 2$, $D \cdot l\geq 0$ for all line classes $l$.
\end{enumerate}
\end{lem}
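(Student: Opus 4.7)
The plan is to reduce nefness to a finite combinatorial check via the Kleiman/Mori cone description of weak del Pezzo surfaces of degree $\geq 4$.

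By Kleiman's criterion, a divisor $D$ on a smooth projective surface is nef if and only if $D \cdot C \geq 0$ for every class $C$ in the Mori cone $\overline{NE}(Y)$, and by convexity it suffices to check this on any set of generators of the extremal rays. So the first step is to exhibit a concrete set of generators for $\overline{NE}(\mathrm{dP}_r)$ in the two regimes.

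For $r \in \{2,3,4,5\}$, the surface $\mathrm{dP}_r$ is a smooth del Pezzo surface of degree $9-r \geq 4$. I would invoke the classical fact that on a smooth del Pezzo of degree $\geq 3$ the Mori cone is generated by the finitely many $(-1)$-curves, i.e.\ smooth rational curves $l$ with $l^2=-1$ and $-K\cdot l = 1$. I would then enumerate them directly from the intersection form on $\mathrm{Pic}(\mathrm{dP}_r) = \mathbb{Z}\langle H, E_1,\dots, E_r\rangle$: writing $l = aH - \sum b_i E_i$, the conditions $l^2 = -1$ and $-K\cdot l = 1$ force $\sum b_i^2 = a^2 + 1$ and $\sum b_i = 3a - 1$, which for $r \leq 5$ have only the solutions $E_i$, $H - E_i - E_j$, and (only for $r=5$) $2H - \sum_{i=1}^5 E_i$. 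Each such class is represented by an irreducible smooth rational curve for a general configuration of blown-up points, so these are precisely the line classes appearing in the statement. This proves (ii).

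For $r=1$, the surface $\mathrm{dP}_1$ is isomorphic to the Hirzebruch surface $\mathbb{F}_1$; its Picard rank is $2$ and $\overline{NE}(\mathbb{F}_1)$ is spanned by two extremal rays. One is generated by the unique $(-1)$-curve $E_1$ (the negative section), and the other by the fibre class of the ruling $\mathbb{F}_1 \to \mathbb{P}^1$, which is exactly $H - E_1$ (with square $0$, hence not a line class in our sense). Testing $D$ on these two generators recovers precisely the conditions in (i).

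The main obstacle is the identification of the Mori cone generators as the enumerated line classes: one must (a) rule out any other numerical solution of the $(-1)$-curve equations for $r \leq 5$, and (b) know that each such numerical class is actually representable by an irreducible curve on $\mathrm{dP}_r$ (for a general choice of blown-up points). Both are standard but rely on the general position hypothesis in Definition/setup of $\mathrm{dP}_r$, which I would quote from \cite{Rocco} rather than re-derive. Once these inputs are in place, the proof reduces to bookkeeping.
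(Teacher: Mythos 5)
Your proof is correct, but note that the paper itself does not prove this lemma at all: it is recalled verbatim from di Rocco \cite{Rocco}, so there is no in-paper argument to compare with. Your self-contained route -- Kleiman's criterion plus an explicit set of generators of $\overline{\mathrm{NE}}(\delp_r)$ -- is the standard one, and the generator list you produce ($E_i$, $H-E_i-E_j$, and for $r=5$ also $2H-\sum_{i=1}^5 E_i$, together with $E_1$ and the fibre class $H-E_1$ on $\delp_1\simeq\fone$) is exactly the list the paper records later in \cref{sec:eff}, so the two are consistent. Two remarks. First, the ``classical fact'' is mis-stated: it is not true that the Mori cone of every del Pezzo surface of degree $\geq 3$ is generated by $(-1)$-curves (this fails for $\ptwo$, $\fzero$ and $\fone$); the correct hypothesis is that the surface is a blow-up of $\ptwo$ in $r\geq 2$ general points, equivalently degree $\leq 7$ with Picard rank $\geq 3$ -- which is precisely the range $2\leq r\leq 5$ where you invoke it, so your proof is unaffected, but the hypothesis should be stated that way (the underlying argument: on a Fano surface of Picard rank $\geq 3$ every extremal ray is spanned by an irreducible curve $C$ with $C^2<0$, and adjunction together with $-K\cdot C>0$ forces $C^2=K\cdot C=-1$). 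Second, your numerical enumeration is indeed complete: writing $l=aH-\sum b_iE_i$, Cauchy--Schwarz gives $(3a-1)^2\leq r(a^2+1)\leq 5(a^2+1)$, hence $a\in\{0,1,2\}$, which yields only the classes above; and the representability of each such class by an irreducible curve for general blown-up points is exactly the general-position input you correctly isolate and may quote from \cite{Rocco}. With those inputs, the reduction to finitely many intersection checks, and the separate $\fone$ case giving condition (i), complete the proof as you describe.
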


\subsubsection{$l=2$}
Let's start by setting $l=2$. With the sole exception of $\delp_4(1,0)$ versus $\delp_4(0,1)$, the next proposition shows that up to deformation and permutation of the factors, and assuming that $D_1$ and $D_2$ are nef, $Y(D)$ is completely determined by the self-intersections $(D_1^2, D_2^2)$, and we will employ the short-hand notation $Y(D) \leftrightarrow Y(D_1^2,D_2^2)$ to indicate this.

\begin{prop}\label{prop:class}
Let $(Y(D=D_1+D_2)$ be a 2-component nef log~CY surface. Then up to deformation and interchange of $D_1$ and $D_2$, $Y(D_1^2,D_2^2)$ is one of the following types
\begin{enumerate}
\item $\ptwo(1,4)$,
\item $\dpr(1,4-r)$ for $1\leq r \leq 4$,
\item $\dpr(0,5-r)$ for $1\leq r \leq 5$,
\item $\fzero(0,4)$,
\item $\fzero(2,2)$.
\end{enumerate}
\end{prop}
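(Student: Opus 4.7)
The plan is to reduce the classification to a finite enumeration governed by an adjunction identity together with the classification of weak del~Pezzo surfaces of degree at least $4$.

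First I would extract the numerical constraints. Since each $D_i$ is a smooth rational curve, adjunction gives $D_i^2 + K_Y\cdot D_i = -2$; substituting $K_Y = -(D_1+D_2)$ forces
\[
D_1\cdot D_2 = 2.
\]
Squaring $-K_Y = D_1+D_2$ and using that $D_i^2 \geq 0$ for any nef effective divisor yields $K_Y^2 = D_1^2 + D_2^2 + 4 \geq 4$, so $-K_Y$ is nef and big, and $Y$ is a smooth rational weak del~Pezzo surface of degree at least $4$. By the standard classification, up to deformation $Y$ is one of $\ptwo$, $\fzero$, or $\dpr$ for some $1 \leq r \leq 5$, noting that $\fone \simeq \delp_1$ and that $\ftwo$ deforms to $\fzero$.

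Next I would enumerate admissible decompositions $(D_1,D_2)$ on each surface. On $\ptwo$ with $-K = 3H$ the only splitting into smooth irreducible nef classes is $H + 2H$, giving $(1)$. On $\fzero$ with $-K = 2C_0 + 2f$ (using the two ruling classes), writing $D_1 = aC_0 + bf$ with $0 \leq a, b \leq 2$ and imposing smoothness, rationality and $D_1\cdot D_2 = 2$, the ruling-swap symmetry leaves exactly the decompositions $(C_0,\, C_0 + 2f)$ and $(C_0 + f,\, C_0 + f)$, yielding $(4)$ and $(5)$. On $\dpr$, writing $D_1 = aH - \sum_{i=1}^{r} b_i E_i$, nefness of $D_1$ and of $D_2 = -K_Y - D_1$ tested against each $E_i$ forces $b_i \in \{0,1\}$; a direct intersection computation using $H\cdot E_i = 0$ and $E_i\cdot E_j = -\delta_{ij}$ then produces the collapse identity
\[
D_1\cdot D_2 = a(3-a),
\]
so the constraint $D_1\cdot D_2 = 2$ leaves only $a \in \{1,2\}$, and the two values are interchanged by $D_1 \leftrightarrow D_2$. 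Hence, up to permutation of the exceptional divisors, only $D_1 = H$ and $D_1 = H - E_1$ need to be considered; testing nefness of the corresponding $D_2$ against the line-class criterion of \cref{lem:nef} (specifically the classes $2H - E_{i_1} - \cdots - E_{i_5}$) bounds $r \leq 4$ in the first subcase and $r \leq 5$ in the second, producing $(2)$ and $(3)$. Smoothness of a general member of $|D_i|$ is automatic since conics through at most five general points are smooth.

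The main obstacle will be ruling out missed decompositions on the del~Pezzo side, where a priori $D_1$ depends on $r+1$ parameters. This is neutralised by the collapse identity $D_1\cdot D_2 = a(3-a)$, which pins $a$ down to two values and reduces the classification to a short case check together with a single bound on $r$ dictated by \cref{lem:nef}.
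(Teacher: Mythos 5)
Your proposal is correct, and it reaches the classification by a route that overlaps with, but usefully streamlines, the paper's argument. The paper proceeds via minimal models: it rules out $\fn$ for $n>2$ by nefness of $-K_Y$, treats $\ftwo$ by hand (showing the only nef splitting there is $C_2+C_2$, which deforms to $\fzero(2,2)$), and on $\delp_r$ first notes $r\leq 9$ and then enumerates nef decompositions with the di~Rocco criterion of \cref{lem:nef}, finishing smoothness by basepoint-freeness and Bertini. Your adjunction step is the genuine novelty: $D_1\cdot D_2=2$ and hence $K_Y^2=D_1^2+D_2^2+4\geq 4$ give the bound $r\leq 5$ \emph{a priori} (so \cref{lem:nef}, which is only stated for $r\leq 5$, applies immediately, whereas the paper's passage from $r\leq 9$ to the range of the lemma is left implicit), and the collapse identity $D_1\cdot D_2=a(3-a)$ together with $b_i\in\{0,1\}$ replaces a chunk of the case analysis by pinning $a\in\{1,2\}$ at once. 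Two places in your write-up should be tightened, though neither threatens the argument. First, for $a=1$ you still need to test $D_1$ itself against the line classes $H-E_i-E_j$ to exclude $D_1=H-E_{i}-E_{j}-\cdots$ (two or more $b_i=1$); this is the same one-line use of \cref{lem:nef} you already invoke for $D_2$, but as written the reduction to $D_1\in\{H,\,H-E_1\}$ does not follow from $a\in\{1,2\}$ alone. Second, since the statement is up to deformation of \emph{pairs}, deforming the surface first (e.g.\ replacing $\ftwo$ by $\fzero$, or a non-generic blow-up by $\delp_r$) requires a word of justification: either enumerate decompositions on the special surfaces as the paper does for $\ftwo$, or observe that the nef cone of a weak del Pezzo is contained in that of its generic deformation, so no nef splittings are created on the special members and the resulting pairs deform to the ones you list.
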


\begin{proof}

A minimal model of $Y$ is given by $\ptwo$, $\fzero$ or $\mathbb{F}_n$ for $n\geq2$. By assumption $-K_Y=D_1+D_2$ is nef, ruling out $\mathbb{F}_n$ for $n>2$. 
If $Y=\fzero$, then the stipulated decompositions of $-K_{\fzero}$ are immediate.
If $\ftwo$ is a minimal model of $Y$, then $Y=\ftwo$. In this case, the only possible decomposition of $-K_{\ftwo}$ into nef divisors is as $D_1=C_{-2}+2f=C_2$ and $D_2=C_2$.
The resulting pair $\ftwo(2,2)$ is deformation equivalent to $\fzero(2,2)$, see the proof of Proposition \ref{prop:local-deform}.

Assume now that $\ptwo$ is a minimal model of $Y$. If $Y=\ptwo$, we are done. Otherwise, up to deformation, we may assume that $Y=\delp_r$. Since $-K_Y$ is nef, $r\leq 9$. As $D_1$ and $D_2$ are nef, they are of the form $dH-\sum_{i=1}^r a_i E_i$ for $d\geq 1$ and $a_i\geq 0$.
Applying Lemma \ref{lem:nef}, we find that the only nef decompositions are as follows:
\begin{itemize}
\item Either $D_1=H$, $D_2=2H-\sum_{i=1}^r E_i$ for $r \leq 4$;
\item Or $D_1=H-E_j$, $D_2=2H-\sum^r_{i\neq j} E_i$ for $r \leq 5$.
\end{itemize}
They are all basepoint-free by \cite{Rocco} (see \cite[Lemma 2.7]{CGKT1}) and hence a general member will be smooth by Bertini.
\end{proof}


\subsubsection{$l=3$}

Next, we classify the surfaces with $l=3$ nef components. The short-hand notation $Y(D_1^2, D_2^2, D_3^2)$ is employed as in the previous section.

\begin{prop}\label{prop:class2}
Let $Y(D=D_1+D_2+D_3)$ be a 3-component log~CY surface with $Y$ smooth and $D_1$, $D_2$ and $D_3$ nef. Then up to deformation and permutation of $D_1$, $D_2$ and $D_3$, $Y(D_1^2, D_2^2, D_3^2)$ is one of the following:
\begin{enumerate}
\item $\ptwo(1,1,1)$,
\item $\delp_1(1,1,0)$,
\item $\delp_2(1,0,0)$,
\item $\delp_3(0,0,0)$,
\item $\fzero(2,0,0)$.
\end{enumerate}
\end{prop}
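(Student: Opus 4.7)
The plan is to follow the same template as the proof of \cref{prop:class}: identify the possible surfaces $Y$ via their minimal models, enumerate the admissible decompositions of $-K_Y$ into three nef components with smooth irreducible rational representatives, and apply Bertini to guarantee smoothness of a general member.

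The combinatorial problem is substantially rigidified in the $l=3$ case by adjunction. Since each $D_i$ is a smooth rational curve, $D_i^2 = -K_Y \cdot D_i - 2$, and substituting $-K_Y = D_1 + D_2 + D_3$ produces the three linear equations
\[
D_i \cdot D_j + D_i \cdot D_k = 2, \qquad \{i,j,k\} = \{1,2,3\}.
\]
Non-negativity of the pairwise intersections (from the nefness of each $D_j$) forces the unique solution $D_i \cdot D_j = 1$ for all $i \neq j$, so the three components meet pairwise transversally at a single node. Squaring $-K_Y = \sum_i D_i$ then yields
\[
\sum_{i=1}^3 D_i^2 = K_Y^2 - 6 \geq 0,
\]
so $K_Y^2 \geq 6$. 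Combined with the weak Fano condition on $Y$ (implied by nefness of $-K_Y$), this restricts the candidates to $Y \in \{\ptwo, \fzero, \ftwo, \delp_r \text{ with } 1 \leq r \leq 3\}$.

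The remaining step is a case-by-case enumeration. For $Y = \ptwo$, $\sum_i D_i^2 = 3$ forces each $D_i$ to be a line, giving $\ptwo(1,1,1)$. For $Y = \delp_r$ with $1 \leq r \leq 3$, applying \cref{lem:nef} to classify the nef divisors of small self-intersection and imposing the sum $-K_{\delp_r} = 3H - \sum_{k=1}^r E_k$ yields, up to permutation, the unique decompositions $H + H + (H - E_1)$, $H + (H - E_1) + (H - E_2)$, and $(H - E_1) + (H - E_2) + (H - E_3)$, producing $\delp_1(1,1,0)$, $\delp_2(1,0,0)$ and $\delp_3(0,0,0)$ respectively. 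For $Y = \fzero$, the constraint that each $a_i C_0 + b_i f$ admits a smooth irreducible representative (which excludes classes $aC_0$ and $bf$ with $a,b \geq 2$) leaves $(1,1) + (1,0) + (0,1)$ as the only decomposition of $(2,2)$, producing $\fzero(2,0,0)$. For $Y = \ftwo$, expanding $-K_{\ftwo} = 2C_{-2} + 4f$ inside the nef cone $\{aC_{-2} + bf : a \geq 0,\ b \geq 2a\}$ and excluding the reducible classes $bf$ with $b \geq 2$ leaves no admissible decomposition, so this case is empty.

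Smoothness of a general member of each listed linear system follows from base-point-freeness together with Bertini, and deformation equivalence of pairs sharing the same discrete data $(Y, D_1^2, D_2^2, D_3^2)$ is obtained as in \cref{prop:class} by smoothly varying the configuration of blown-up points on $\ptwo$. The main obstacle is the careful case-by-case enumeration, particularly the elimination of $\ftwo$ and the exhaustive ruling-out of spurious decompositions on $\delp_r$ and $\fzero$, but no individual step is difficult once the rigidity $D_i \cdot D_j = 1$ and the bound $K_Y^2 \geq 6$ are in hand.
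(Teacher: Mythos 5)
Your proof is correct, and it reaches the same list by the same overall strategy as the paper (pin down the possible surfaces $Y$, enumerate nef decompositions of $-K_Y$, then invoke base-point-freeness/Bertini and deformation), but the organizing step is genuinely different. The paper argues via minimal models: it notes $-K_Y$ nef rules out $\mathbb{F}_n$ for $n\geq 2$, writes down the decompositions on $\ptwo$ and $\fzero$, and then asserts that all remaining cases arise as iterated interior blow-ups of these keeping the divisors nef, ``leading to the list'' — the finiteness of the search is left implicit. You instead rigidify the problem up front by adjunction: $D_i\cdot D_j + D_i\cdot D_k = 2$ forces $D_i\cdot D_j=1$ for all pairs (note the $3\times 3$ linear system is already determined, so non-negativity of the intersections is not actually needed), whence $K_Y^2 = \sum_i D_i^2 + 6 \geq 6$, which makes the finiteness of candidates explicit and reduces everything to weak del Pezzo surfaces of degree at least $6$; the subsequent enumeration on $\ptwo$, $\delp_r$ ($r\leq 3$), $\fzero$, $\ftwo$ via \cref{lem:nef} is correct and matches the paper's list, and your treatment of non-generic configurations ``up to deformation'' is at the same level of rigor as the paper's own. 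Two small inaccuracies of emphasis, neither affecting the conclusion: on $\ftwo$ the nef-cone inequalities $b_i\geq 2a_i$ already force one of the three summands of $2C_{-2}+4f$ to vanish, so the exclusion of reducible classes $bf$, $b\geq 2$, is not what kills this case; and on $\fzero$ it is irreducibility (not smoothness per se) that rules out the decompositions involving $2H_1$ or $2H_2$, as you correctly use. The net effect of your adjunction step is a cleaner, self-contained justification of the bound that the paper's terser blow-up bookkeeping takes for granted.
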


\begin{proof}
A minimal model of $Y$ is given by $\ptwo$, $\fzero$ or $\mathbb{F}_n$ for $n\geq2$. By assumption $-K_Y=D_1+D_2+D_3$ is nef, ruling out $\mathbb{F}_n$ for $n\geq2$. 
For $\ptwo$, the only possibility is to choose $D_1,D_2,D_3$ in class $H$. For $\fzero$, it is to choose $D_1=H_1+H_2$ the diagonal and $D_2=H_1$, $D_3=H_2$. 
Necessarily, all other surfaces are given by iterated blow-ups of the minimal models, keeping the divisors nef, leading to the list. As in the previous proposition, they are all base-point free and thus a general member will be smooth.
\end{proof}


\subsubsection{$l \geq 4$}

For $l=4$, a minimal model for $Y$ is $\fzero$, for which the only possibility is given by $D$ being its toric boundary. There are no other cases preserving nefness of the divisors. For 5 components or more, there are no surfaces keeping each divisor nef.

\subsection{Toric models}

We consider two basic operations on log~CY surfaces $Y(D)$.
\begin{itemize}
\item Let $\wt{Y}$ be the blow-up of $Y$ at a node of $D$ and let $\wt{D}$ be the preimage of $D$ in $\wt{Y}$. Then the log~CY surface $(\wt{Y},\wt{D})$ is said to be a \emph{corner blow-up} of $Y(D)$.
\item Let $\wt{Y}$ be the blow-up of $Y$ at a smooth point of $D$. Let $\wt{D}$ be the strict transform of $D$ in $\wt{Y}$. Then the log~CY surface $(\wt{Y},\wt{D})$ is said to be an \emph{interior blow-up} of $Y(D)$.
\end{itemize}

A corner blow-up does not change the complement $Y\setminus D$, whereas an interior blow-up does; accordingly corner blowups do not change log Gromov--Witten invariants \cite{AW}.

\begin{defn}
Let
$\pi \colon Y(D) \longrightarrow \ol{Y}(\ol{D})$
be a sequence of interior blow-ups between log~CY surfaces such that $\ol{Y}(\ol{D})$ is toric. Then $\pi$ is said to be a \emph{toric model} of $Y(D)$.

\end{defn}

We will describe toric models by giving the fan of $(\ol{Y},\ol{D})$ with {\it focus-focus singularities} on its rays. A focus-focus singularity on the ray corresponding to a toric divisor $F$ encodes that we blow up $F$ at a smooth point. Each focus-focus singularity produces a wall and interactions of them create a scattering diagram $\scatt(Y(D))$, as we discuss in Section \ref{sec:theta}.

\begin{prop}[Proposition 1.3 of \cite{GHKlog}]\label{prop:logCY}
Let $Y(D)$ be a log~CY surface. Then there exist log~CY surfaces $\wt{Y}(\wt{D})$ and $\ol{Y}(\ol{D})$, with the latter toric, and a diagram
\begin{equation} \label{diag:tm}
\xymatrix{
 & \wt{Y}(\wt{D}) \ar[ld]_\varphi \ar[rd]^\pi & \\
Y(D) & &  \ol{Y}(\ol{D})
}
\end{equation}
such that $\varphi$ is a sequence of corner blow-ups and $\pi$ is a toric model.
\end{prop}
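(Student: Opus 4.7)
The plan is to follow the strategy of Gross--Hacking--Keel, performing corner blowups first to ensure $\wt{D}$ has enough components, then successively contracting interior $(-1)$-curves until a toric pair is reached.

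First, I would perform an initial sequence of corner blowups $\varphi \colon \wt{Y}(\wt{D}) \to Y(D)$ to guarantee $\wt{D}$ has at least three components (any projective toric surface has toric boundary of $\geq 3$ components, so this is a necessary preliminary). Note that corner blowups preserve the property of being log~CY and do not alter the open surface $Y \setminus D$ up to isomorphism. The key observation driving the rest of the argument is that for any $(-1)$-curve $E \subset \wt{Y}$ not contained in $\wt{D}$, adjunction combined with $\wt{D} \in |-K_{\wt{Y}}|$ gives
\[
E \cdot \wt{D} \;=\; -K_{\wt{Y}} \cdot E \;=\; 1.
\]
Since each $\wt{D}_i$ is effective and $E$ is not a component, non-negativity of the intersection numbers $E \cdot \wt{D}_i$ forces exactly one of them to equal $1$, with the others vanishing. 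In particular $E$ meets a single component of $\wt{D}$ transversely at a single smooth point.

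Second, I would iteratively contract such interior $(-1)$-curves. Castelnuovo's criterion produces a contraction $\pi_1 \colon \wt{Y} \to Y_1$ whose exceptional divisor is $E$. Because $E$ meets $\wt{D}$ transversely at exactly one smooth point of a single $\wt{D}_i$, the image $D_1 \coloneqq \pi_{1,*}\wt{D}$ is again a cycle of smooth rational curves with only nodal singularities, and $D_1 \sim -K_{Y_1}$ since $-K_{Y_1} = \pi_{1,*}(-K_{\wt{Y}}) = \pi_{1,*}\wt{D}$. Thus $(Y_1,D_1)$ is a log~CY surface and $\pi_1$ is by construction an interior blowup. The Picard rank strictly decreases, so after finitely many iterations we arrive at a log~CY pair $\ol{Y}(\ol{D})$ carrying no interior $(-1)$-curves at all. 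By construction, the composition $\pi \colon \wt{Y}(\wt{D}) \to \ol{Y}(\ol{D})$ is a sequence of interior blowups, i.e. a toric model, provided $\ol{Y}$ is toric.

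The main obstacle is the final step: showing that a log~CY pair $\ol{Y}(\ol{D})$ with $\ol{l} \geq 3$ and no interior $(-1)$-curves is toric with $\ol{D}$ its toric boundary. Here I would argue as follows. Every $(-1)$-curve on $\ol{Y}$ is by assumption a component of $\ol{D}$. Running the MMP for rational surfaces, the contractible $(-1)$-curves are thus all boundary components; contracting any such $\ol{D}_i$ amounts on the combinatorial side to smoothing a corner of the cycle $\ol{D}$, and one obtains a tower of smooth rational surfaces ultimately descending to a minimal model $\mathbb{P}^2$ or $\mathbb{F}_n$ ($n \neq 1$), which is toric and on which the image of $\ol{D}$ is the toric boundary. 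Reversing this tower, each intermediate blowup takes place at a node of the boundary cycle (otherwise a new interior $(-1)$-curve would appear, contradicting minimality), hence is a torus-equivariant blowup. Consequently $\ol{Y}$ is toric and $\ol{D}$ is its toric boundary, completing the proof. The delicate point is verifying that none of the intermediate contractions introduce interior $(-1)$-curves or violate the nodal/nef structure of the boundary cycle, which is where one invokes the precise classification of smooth rational log~CY surfaces of low Picard rank.
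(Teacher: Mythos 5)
The paper does not actually prove this statement---it is quoted from Gross--Hacking--Keel \cite{GHKlog}*{Proposition 1.3}---so your attempt must stand on its own, and it has a genuine gap in the last step. The reduction is fine as far as it goes: your observation that $E\cdot\wt{D}=-K_{\wt{Y}}\cdot E=1$ forces an interior $(-1)$-curve to meet a single component of $\wt{D}$ transversally at a smooth point is correct, and each such contraction stays within the class of Looijenga pairs while dropping the Picard rank. What fails is the termination criterion: it is \emph{not} true that a Looijenga pair with $l\geq 3$ and no interior $(-1)$-curves is toric. The pair $\mathbb{F}_0(2,0,0)=(\mathbb{P}^1\times\mathbb{P}^1,\,H_1+H_2+\Delta)$ with $\Delta$ a smooth $(1,1)$-curve, which appears in \cref{tab:classif} of this paper, has three boundary components and lies on a surface with no $(-1)$-curves at all, yet it is not toric (its interior has Euler characteristic $12-K^2-l=1\neq 0$, whereas a toric pair has interior $(\mathbb{C}^*)^2$). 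Even the paper's prototype $\mathbb{P}^2(1,4)$ defeats your algorithm: after the single corner blow-up needed to reach $l=3$, the only $(-1)$-curve on the resulting surface $\mathbb{F}_1$ is the exceptional curve, which is a boundary component, so your procedure halts at a non-toric pair with $\chi(Y\setminus D)=1$.

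The underlying problem is that corner blow-ups cannot be front-loaded merely to achieve $l\geq 3$: they must be chosen, and in general interleaved with interior blow-downs, precisely so as to \emph{create} interior exceptional curves, and proving that this is always possible is the real content of the proposition. This is visible in the paper's own toric model of $\mathbb{P}^2(1,4)$ (\cref{fig:p2scatt}): one blows up a node of $D_1\cap D_2$ and then the point where the exceptional curve meets the conic, and only after this second corner blow-up does the strict transform of the tangent line $L$ become an interior $(-1)$-curve that can be contracted. Relatedly, your closing MMP argument silently assumes that after contracting boundary $(-1)$-curves down to a minimal model $\mathbb{P}^2$ or $\mathbb{F}_n$ the image of the boundary is the toric boundary; that is exactly what can fail (line plus conic in $\mathbb{P}^2$, a nodal cubic in $\mathbb{P}^2$, or $H_1+H_2+\Delta$ in $\mathbb{F}_0$), and it is the step where the existence of suitable corner blow-ups producing interior exceptional curves must be established---which your argument does not do, and for which the paper simply defers to \cite{GHKlog}.
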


The diagrams as in \eqref{diag:tm} are far from unique, and they are related by cluster mutations \cite{GHKclu}. Because of the invariance of log Gromov--Witten invariants by corner blow-ups, we can calculate the log Gromov--Witten invariants of $Y(D)$ on the scattering diagram $\scatt(Y(D))$ associated to the toric model $\pi$.

\subsection{The effective cone of curves}
\label{sec:eff}

Given $Y(D)$ a nef log~CY surface and $d\in A_1(Y)$, it will be convenient for the discussion in the foregoing sections to determine numerical conditions for $d$ to be an element of the pseudo-effective cone. If $Y = \bbF_n$, $\mathrm{NE}(Y)$ is just the monoid generated by $C_{-n}$ and $f$, so let us assume that $Y=\dpr$. 
We will write a curve class $d$ as $d_0(H-\sum_{i=1}^r E_i)+\sum_{i=1}^r d_iE_i$. If $\rho(Y)\geq 2$, then the extremal rays of the effective cone $\mathrm{NE}(Y)$ of $Y$ are generated by extremal classes $D$ with $D^2\leq 0$, and in the case of del Pezzo surfaces these are the line and fibre classes described above. Using the classification \cite[Examples 2.3 and 2.11]{CGKT1}, up to permutation of the $E_i$ and $E_j$, we find the following lists of generators of extremal rays of $\mathrm{NE}(Y)$. 
\begin{itemize}
\item If $r=1$,
\beq
E_1, \quad H-E_1 \,.
\eeq
\item If $2\leq r \leq 4$,
\beq
E_i, \quad H-E_i, \quad H-E_i-E_j \; (i\neq j) \,.
\eeq
\item If $r=5$,
\beq
E_i, \quad H-E_i, \quad H-E_i-E_j \; (i\neq j), \quad 2H-\sum_{i=1}^5 E_i \,.
\eeq
\end{itemize}
Note that the effective cone is closed since it is generated by finitely many elements. The following Proposition can be specialised to the del Pezzo surfaces $\delp_r$ for $r\leq5$ by setting the corresponding $d_i$ to 0 and removing the superfluous equations such as the last one that only holds for $r=5$.

\begin{prop}\label{prop:eff}
A class $d=d_0(H - \sum_{i=1}^5 E_i) + \sum_{i=1}^5 d_i E_i$ of $\delp_5$ is effective if and only if 
\beq
d_0\geq0, \quad d_i\geq0, \quad d_i+d_j+d_k\geq d_0, \quad d_i+d_j+d_k+d_l\geq 2d_0, \quad 2d_i + \sum_{j\neq i}d_j\geq 3d_0,
\eeq
where the $i,j,k,l$ are always pairwise distinct.

\end{prop}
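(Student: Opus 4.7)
I would attack this via a dual/Farkas-lemma argument. The first step is to reformulate each of the listed inequalities as a pairing $d \cdot N \geq 0$ against an explicit divisor $N$. Writing $d$ in the Picard basis $\{H, E_1, \ldots, E_5\}$ as $d = d_0 H + \sum_k (d_k - d_0) E_k$ and computing, one finds: $d \cdot H = d_0$; $d \cdot (H - E_i) = d_i$; $d \cdot (2H - E_i - E_j - E_k) = d_i + d_j + d_k - d_0$; $d \cdot (2H - E_i - E_j - E_k - E_l) = d_i + d_j + d_k + d_l - 2 d_0$; and $d \cdot (-K_Y - E_i) = 2 d_i + \sum_{j \neq i} d_j - 3 d_0$. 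Thus the $26$ inequalities match $d \cdot N \geq 0$ as $N$ ranges over the $26$ classes $H$, $H - E_i$, $2H - E_i - E_j - E_k$, $2H - E_i - E_j - E_k - E_l$, and $-K_Y - E_i$.

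The necessity direction is then immediate once these $26$ divisors are shown to be nef. Using the classification recalled immediately before the proposition, $\overline{\mathrm{NE}}(\delp_5)$ is generated by the $16$ exceptional classes of the first kind on $\delp_5$: the five $E_i$, the ten $L_{ij} \coloneqq H - E_i - E_j$ with $i < j$, and the conic $Q \coloneqq 2H - \sum_{k=1}^5 E_k$. One checks by inspection that each of the $26$ divisors above pairs non-negatively with each of these $16$ extremal classes (e.g., $(2H - E_i - E_j - E_k) \cdot L_{lm} = 2 - |\{i,j,k\} \cap \{l,m\}| \geq 0$), hence each is nef. So if $d$ is effective then all $26$ inequalities hold.

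For sufficiency, by Farkas' lemma it is enough to show that the $26$ listed classes generate the nef cone $\mathrm{Nef}(\delp_5)$. I would do this by induction on $d_0 \geq 0$: the base case $d_0 = 0$ is trivial, since the inequalities reduce to $d_i \geq 0$ and $d = \sum_i d_i E_i$ is visibly a non-negative combination of the $E_i$. For $d_0 \geq 1$, reorder so that $d_1 \leq \cdots \leq d_5$; the plan is to subtract one of $E_i$, $L_{ij}$, or $Q$ and reduce $d_0$ by $0$ or $1$ while preserving all $26$ inequalities. Concretely, one splits into three cases: (i) if some $d_i > d_0$ in a suitable sense, a scalar multiple of $E_i$ can be peeled off; (ii) in the generic case, the smallest-index line class $L_{12}$ can be subtracted, and a direct substitution into the five families of inequalities shows the residual class $(d_0 - 1; d_1, d_2, d_3 - 1, d_4 - 1, d_5 - 1)$ still satisfies all of them; (iii) in the boundary case where all $L_{ij}$-subtractions fail, the inequalities (4) and (5) force enough slack that $Q$ can be subtracted instead, dropping $d_0$ by $2$.

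The main obstacle is the bookkeeping in (iii): one needs to verify that each of the $26$ inequalities is preserved after subtraction, which amounts to a finite and symmetric, but tedious, case analysis organised by which of the inequalities are saturated. An alternative, cleaner route sidesteps the induction entirely by invoking the classical fact that the Picard lattice of $\delp_5$ is the $D_5$ root lattice, under which the nef cone of $\delp_5$ has precisely the $26$ listed extremal rays (corresponding to the vertices of the Gosset polytope $2_{21}$); once this structural statement is cited, sufficiency is immediate.
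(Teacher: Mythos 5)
Your overall strategy is the same as the paper's: the paper also starts from the explicit generators of $\NE(\delp_5)$ recalled just before the statement (the sixteen $(-1)$-classes $E_i$, $H-E_i-E_j$, $2H-\sum_k E_k$, plus the redundant $H-E_i$) and obtains the inequalities as the facet description of that cone, except that it performs the dualization by computer (Polymake in Macaulay2) rather than by hand. Your necessity half is complete and correct: the $26$ inequalities are exactly $d\cdot N\geq 0$ for the $26$ classes $H$, $H-E_i$, $2H-E_i-E_j-E_k$, $2H-E_i-E_j-E_k-E_l$, $-K_Y-E_i$, and each of these pairs non-negatively with the sixteen $(-1)$-classes, hence is nef; these $26$ classes are indeed precisely the extremal rays of $\operatorname{Nef}(\delp_5)$, so the plan is sound and, in the necessity direction, more explicit than the paper.

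The gap is in the sufficiency half, which is the substantive part (the part the paper delegates to the computer), and neither of your routes is actually carried out. In the peeling induction, the assertion in case (ii) that subtracting $L_{12}$ preserves all the inequalities is false as stated: after passing to $(d_0-1;d_1,d_2,d_3-1,d_4-1,d_5-1)$, the inequality $d_3+d_4+d_5\geq d_0$ becomes $d_3+d_4+d_5\geq d_0+2$, which does not follow from the hypotheses. For instance $d=Q=2H-\sum_k E_k$, i.e.\ $(d_0;d_1,\dots,d_5)=(2;1,1,1,1,1)$, satisfies all $26$ inequalities, but $d-L_{12}=H-E_3-E_4-E_5=(1;1,1,0,0,0)$ violates $d_3+d_4+d_5\geq d_0$ and is not effective. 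So the trichotomy (when to peel $E_i$, $L_{ij}$, or $Q$) is exactly where the proof lives, and it is neither specified nor verified. The fallback citation is also misstated: $\operatorname{Pic}(\delp_5)$ is not the $D_5$ root lattice (that is $K_Y^{\perp}\subset\operatorname{Pic}$), and $2_{21}$ is the $27$-vertex $E_6$ polytope attached to cubic surfaces; for $\delp_5$ the sixteen $(-1)$-curves are the vertices of $1_{21}$ (the $5$-demicube), and the $26$ extremal rays of the nef cone correspond to its $26$ facets (the $16$ simplex facets give the square-one classes $H$, $2H-E_i-E_j-E_k$, $-K_Y-E_i$, the $10$ cross-polytope facets give the conic classes $H-E_i$, $2H-E_i-E_j-E_k-E_l$). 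With that reference corrected, or with the case analysis actually performed, or simply with the polyhedral dualization the paper invokes, the argument closes; as written, sufficiency is not proved.
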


The statement follows from the explicit description of the effective cone as generated by extremal rays. A direct calculation using the Polymake package in Macaulay2 computes the half-spaces defining the cone, yielding the above inequalities for the effective curves.

\subsection{Tame and quasi-tame Looijenga pairs}
\label{sec:quasitame}
The computation of curve-counting invariants of nef Looijenga pairs is strongly affected by the number $l$ of smooth irreducible components of $D$ and the positivity of $D_i$, $i=1,\dots, l$. We spell this out with the following definition, whose significance will be worked out in \cref{sec:mirrorthm,sec:theta}. 

Let $Y(D)$ be a nef Looijenga pair and let
\beq
E_{Y(D)} \coloneqq \mathrm{Tot}(\oplus_{i=1}^l \cO_Y(-D_i))
\label{eq:EYD}
\eeq
be the total space of the direct sum of the dual line bundles to $D_i$, $i=1, \dots, l$.

\begin{defn}
We call a nef log~CY surface $(Y, D=D_1 + \dots + D_l)$ \emph{tame} if $Y$ either $l >2$ or $D_i^2>0$ for all $i$. A nef log~CY surface $Y(D)$ is \emph{quasi-tame} if $E_{Y(D)}$ is deformation equivalent to $E_{Y'(D')}$, with $Y'(D')$ tame.
\end{defn}

We will use the abbreviated notation $E_{Y(D_1^2,D_2^2)}$ for the local Calabi--Yau fourfold $E_{Y(D_1+D_2)}$ associated by \eqref{eq:EYD} to a $2$-component log~CY surface $Y(D_1^2, D_2^2)$ in the classification of \cref{prop:class}. Quasi-tame pairs are classified by the following Proposition.

\begin{prop}
\label{prop:local-deform}
The following varieties are deformation-equivalent:
\ben
\item $E_{\mathbb{F}_0(0,4)}$, $E_{\mathbb{F}_0(2,2)}$ and $E_{\mathbb{F}_2(2,2)}$;
\item $E_{\delp_r(1,4-r)}$ and $E_{\delp_r(0,5-r)}$, $1 \leq r \leq 4$.
\een
\end{prop}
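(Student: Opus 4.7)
The plan is to realise each claimed deformation-equivalence as a flat family over $\mathbb{A}^1$ whose fibres are the total spaces of rank-$2$ bundles interpolating between the two sides; in all cases the Calabi--Yau condition $c_1(\mathcal{E})=K_Y$ is preserved across the family.

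For part (1), I will argue in two steps. First, to connect $E_{\mathbb{F}_2(2,2)}$ to $E_{\mathbb{F}_0(2,2)}$, I will use the classical Hirzebruch deformation, obtained by projectivising the universal extension $0\to \mathcal{O}_{\mathbb{P}^1} \to \mathcal{V}_t \to \mathcal{O}_{\mathbb{P}^1}(2)\to 0$ over $\mathrm{Ext}^1 = H^1(\mathbb{P}^1,\mathcal{O}(-2))\cong\mathbb{C}$: this yields a smooth family $\mathcal{Y}\to\mathbb{A}^1$ with $\mathcal{Y}_0=\mathbb{F}_2$ and $\mathcal{Y}_t=\mathbb{F}_0$ for $t\neq 0$, along which the self-intersection-$2$ section class $C_2$ deforms to $H_1+H_2$. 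Two copies of the dual of the relative $\mathcal{O}(C_2)$ then provide a rank-$2$ bundle on $\mathcal{Y}$ whose relative total space realises the desired family. Second, within $\mathbb{F}_0$, I will produce the deformation between $E_{\mathbb{F}_0(2,2)}$ and $E_{\mathbb{F}_0(0,4)}$ from the family of extensions
\[
0 \to \mathcal{O}(-H_1-2H_2) \to \mathcal{E}_\xi \to \mathcal{O}(-H_1) \to 0, \qquad \xi \in \mathrm{Ext}^1 = H^1(\mathbb{F}_0,\mathcal{O}(0,-2))\cong\mathbb{C};
\]
a cohomology chase on the twist by $\mathcal{O}(H_1,0)$ shows that the connecting map $H^0(\mathcal{O}) \to H^1(\mathcal{O}(0,-2))$ is the extension class, forcing $h^0(\mathcal{E}_\xi(H_1,0)) = 0$ for $\xi\neq 0$ and $=1$ for $\xi=0$, which identifies the generic fibre with $\mathcal{O}(-H_1-H_2)^{\oplus 2}$ and the special one with $\mathcal{O}(-H_1)\oplus\mathcal{O}(-H_1-2H_2)$.

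For part (2), I fix $j\in\{1,\dots,r\}$ and look for a versal family on $\delp_r$ deforming
\[\mathcal{E}_2 \coloneqq \mathcal{O}(-H+E_j)\oplus\mathcal{O}(-2H+\textstyle\sum_{i\neq j}E_i) \quad\text{into}\quad \mathcal{E}_1 \coloneqq \mathcal{O}(-H)\oplus\mathcal{O}(-2H+\textstyle\sum_i E_i).\]
A Riemann--Roch computation, combined with Serre-duality vanishing of $H^0$ and $H^2$ of the relevant line bundles, gives $\mathrm{Ext}^1(\mathcal{O}(-H+E_j),\mathcal{O}(-2H+\sum_{i\neq j}E_i)) = H^1(\mathcal{O}(-H+\sum_i E_i - 2E_j))\cong\mathbb{C}$ for all $1\leq r\leq 4$, and the same cohomological input gives $\mathrm{Ext}^2(\mathcal{E}_2,\mathcal{E}_2)=0$. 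Hence deformations of $\mathcal{E}_2$ are unobstructed and parametrised by a smooth affine line $\{\mathcal{E}_\xi\}_{\xi\in\mathbb{A}^1}$ with $\mathcal{E}_0=\mathcal{E}_2$. For $r\leq 3$ I will identify $\mathcal{E}_\xi\cong \mathcal{E}_1$ for $\xi\neq 0$ by exhibiting a saturated embedding $\mathcal{O}(-2H+\sum_{i\neq j}E_i)\hookrightarrow \mathcal{E}_1$ with quotient $\mathcal{O}(-H+E_j)$: this is produced by choosing a generic section of $\mathcal{E}_1\otimes\mathcal{O}(2H-\sum_{i\neq j}E_i)=\mathcal{O}(H-\sum_{i\neq j}E_i)\oplus\mathcal{O}(E_j)$, observing that for $r\leq 3$ the zero loci of the two components (a line through $r-1$ of the blown-up points in $\mathbb{P}^2$, and the divisor $E_j$) can be arranged to be disjoint, hence the section is nowhere vanishing.

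The principal technical obstacle is the case $r=4$ of part (2): since $h^0(\mathcal{O}(H-\sum_{i\neq j}E_i))=0$ for $4$ points in general position, every homomorphism $\mathcal{O}(-2H+\sum_{i\neq j}E_i)\to\mathcal{E}_1$ factors through the second summand and acquires zeros along $E_j$, so the generic-section argument above fails to yield a locally free quotient. To handle this case I plan to identify the non-split member of the versal family directly: the non-trivial deformation $\mathcal{E}_\xi$ is locally free with Chern invariants $(c_1,c_2)=(K_{\delp_4},2)$, and a finite enumeration of split types compatible with these invariants, combined with semicontinuity of $h^0(\mathcal{E}_\xi(aH+\sum b_i E_i))$ along the parameter $\xi$, isolates $\mathcal{E}_1$ as the unique non-split possibility. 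Taking the relative total space $\mathrm{Tot}(\mathcal{E}_\xi)\to\mathbb{A}^1$ then produces the required flat family deforming $E_{\delp_r(0,5-r)}$ to $E_{\delp_r(1,4-r)}$, completing the proof.
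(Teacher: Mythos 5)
Parts (1) and the $r\le 3$ cases of (2) are correct, and close in substance to the paper's argument: your rank-one extension families are exactly the twisted (relative) Euler sequences the paper writes down, and for $r\le 3$ your fixed-surface bundle deformation is a legitimate shortcut (the paper instead only does the fixed-surface deformation for $r=1$ and then iterates interior blow-ups inside the family). One small point in the $\mathbb{F}_0$ step: knowing $h^0(\mathcal{E}_\xi(H_1))=0$ for $\xi\neq 0$ does not by itself identify $\mathcal{E}_\xi$; either note that $\mathcal{E}_\xi(H_1)$ is trivial on every fibre of the second ruling, hence a pullback, and then split it on $\mathbb{P}^1$, or simply observe that the pullback of the Euler sequence twisted by $\mathcal{O}(-H_1)$ already exhibits the non-split extension as $\mathcal{O}(-H_1-H_2)^{\oplus 2}$.

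The $r=4$ case, however, is a genuine gap, and the proposed fallback cannot be repaired. Your versal deformation space of $\mathcal{E}_2=\mathcal{O}(-H+E_j)\oplus\mathcal{O}(-2H+\sum_{i\neq j}E_i)$ is one-dimensional, so the only non-trivial deformation over the fixed surface is the unique non-split extension $0\to L_2\to\mathcal{E}\to L_1\to 0$ with $L_1=\mathcal{O}(-H+E_j)$, $L_2=\mathcal{O}(-2H+\sum_{i\neq j}E_i)$. For $r=4$ this bundle is provably \emph{not} isomorphic to $\mathcal{E}_1=\mathcal{O}(-H)\oplus\mathcal{O}(-2H+\sum_i E_i)$: since $h^0(\mathcal{O}(H-\sum_{i\neq j}E_i))=0$ for three general points, every non-zero map $L_2\to\mathcal{E}_1$ has vanishing first component, hence is a multiple of the canonical section of $\mathcal{O}(E_j)$ into the second summand, and its cokernel has torsion along $E_j$; so $\mathcal{E}_1$ carries no sub-line-bundle $L_2$ with locally free quotient, i.e.\ it is not an extension of $L_1$ by $L_2$ at all. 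By completeness of the versal family, no family of bundles on the fixed $\delp_4$ can have special fibre $\mathcal{E}_2$ and nearby general fibre $\mathcal{E}_1$. Your enumeration-of-split-types step also tacitly assumes the non-split extension is decomposable, which is unjustified, and semicontinuity of $h^0$ of twists cannot force the (false) identification.

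To handle $r=4$ one must deform the surface together with the divisors rather than the bundle on a fixed surface; this is what the paper does. Concretely: starting from the $r=1$ extension family, the $H^0$ sequence dual to the relative Euler sequence shows that a section of $\mathcal{O}(2H-E_1)$ on the general fibre specialises to a section of $\mathcal{O}(2H)$ on the special fibre, giving a divisor $\mathcal{D}$ in the total space of the family; blowing up a general (moving) point of $\mathcal{D}$, and iterating, produces for each $r\le 4$ a family whose general fibre is $E_{\delp_r(1,4-r)}$ and whose special fibre is $E_{\delp_r(0,5-r)}$. Replacing your $r=4$ paragraph by this construction closes the gap.
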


\begin{proof}

For the first part of the Proposition, denote by $H_1$ and $H_2$ the two generators of the Picard group of $\fzero$ corresponding to the pullbacks of a point in $\pone$ along $\rm{proj_{1,2}} : \fzero \to \pone$. The Euler sequence on $\pone$, pulled back to $\fzero$ along $\rm{proj}_1$ and tensored by $\shO(-H_2)$ yields
\beq \xymatrix{
0 \ar[r] & \shO(-2H_1-H_2)  \ar[r] & \shO(-H_1-H_2)\oplus\shO(-H_1-H_2) \ar[r] & \shO(-H_2) \ar[r] & 0 \,.
}\eeq
This determines a family with general fibre the total space of $\shO(-H_1-H_2)\oplus\shO(-H_1-H_2)$ and special fibre the total space of $\shO(-H_2)\oplus\shO(-2H_1-H_2)$ hence a deformation between $E_{\mathbb{F}_0(0,4)}$ and $E_{\mathbb{F}_0(2,2)}$. 

Next, consider again the Euler sequence over $\pone$,
\beq \xymatrix{
0 \ar[r] & \shO(-2) \ar[r] & \shO(-1)\oplus\shO(-1) \ar[r] & \shO \ar[r] & 0 \,,
}\eeq
and the associated deformation of the total space of $\shO(-1)\oplus\shO(-1)$ into the total space of $\shO\oplus\shO(-2)$. Taking the projectivisation of this family yields a deformation between $\fzero$ and $\ftwo$. In this deformation, $-H_1-H_2$ specialises to $-C_2$. Taking twice the associated line bundles yields the deformation between $E_{\mathbb{F}_0(2,2)}$ and $E_{\mathbb{F}_2(2,2)}$. 



To prove the second part, assume first that $r=1$. We start with the relative (dual) Euler sequence for the fibration $\delp_1\to\pone$ with distinct sections with image $H$ and $E_1$:
\beq \xymatrix{
0 \ar[r] & \shO \ar[r] & \shO(H)\oplus\shO(E_1) \ar[r] & \shO(H+E_1) \ar[r] & 0\,.
}\eeq
We tensor it with $\shO(-2H)$ to obtain
\beq \xymatrix{
0 \ar[r] & \shO(-2H) \ar[r] & \shO(-H)\oplus\shO(-2H+E_1) \ar[r] & \shO(-H+E_1) \ar[r] & 0\,.
}\eeq
This determines a family with general fibre the total space of $\shO(-H)\oplus\shO(-2H+E_1)$ and special fibre the total space of $\shO(-2H)\oplus\shO(-H+E_1)$ hence a deformation between $E_{\delp_1(1,3)}$ and $E_{\delp_1(0,4)}$.

Dually, we have
\beq \xymatrix{
0 \ar[r] & \hhh^0\left(\shO(H-E_1)\right)  \ar[r] & \hhh^0\left(\shO(H)\right)\oplus\hhh^0\left(\shO(2H-E_1)\right) \ar[r] & \hhh^0\left(\shO(2H)\right) \,,
}\eeq 
and a section of $\shO(2H-E_1)$ in the general fibre gives a section of $\shO(2H)$ in the special fibre. Hence we have a divisor $\shD$ in the family in class $2H-E_1$ for the general fibre and of class $2H$ for the special fibre. Blowing up a general point of $\shD$ in the family gives a deformation between $E_{\delp_2(1,2)}$ and $E_{\delp_2(0,3)}$. Iterating the process we obtain the desired deformations.
\end{proof}

We summarise the discussion of this Section in \cref{tab:classif}. There are 18 smooth deformation types of nef Looijenga pairs in total, 11 of which are tame and 15 are quasi-tame. The three non-quasi tame cases occur when $Y$ is a del Pezzo surface of degree 5 or less.

\begin{table}[t]
    \centering
    \begin{tabular}{|c|c|c|c|c|c|c|c|c|}
    \hline
    $Y(D)$ & $l$ &  $K_Y^2$  & $D_1$ & $D_2$ & $D_3$ & $D_4$ & Tame & Quasi-tame \\
        \Xhline{3\arrayrulewidth}
         $\bbP^2(1,4)$ & $2$ & $9$  & $H$ & $2H$ & -- & -- & \checkmark & \checkmark \\ 
         \hline
          $\bbF_0(2,2)$ & $2$ & $8$  & $H_1+H_2$ & $H_1+H_2$ & -- & -- & \checkmark & \checkmark \\ \hline
          $\bbF_0(0,4)$ & $2$ & $8$  & $H_1$ & $H_1+2 H_2$ & -- & -- & \ding{55} & \checkmark \\ \hline
          $\delp_1(1,3)$ & $2$ & $8$  & $H$ & $2H-E_1$ & -- & -- & \checkmark & \checkmark \\ \hline
        $\delp_1(0,4)$ & $2$ & $8$ &  $H-E_1$ & $2H$ & -- & -- & \ding{55} & \checkmark \\ \hline
          $\delp_2(1,2)$ & $2$ & $7$  & $H$ & $2H-E_1-E_2$ & -- & -- & \checkmark & \checkmark \\ \hline
        $\delp_2(0,3)$ & $2$ & $7$ &  $H-E_1$ & $2H-E_2$ & -- & -- & \ding{55} & \checkmark \\ \hline
          $\delp_3(1,1)$ & $2$ & $6$  & $H$ & $2H-E_1-E_2-E_3$ & -- & -- & \checkmark & \checkmark \\ \hline
        $\delp_3(0,2)$ & $2$ & $6$ &  $H-E_1$ & $2H-E_2-E_3$ & -- & -- & \ding{55} & \checkmark \\ \hline
          $\delp_4(1,0)$ & $2$ & $5$  & $H$ & $2H-E_1-E_2-E_3-E_4$ & -- & -- & \ding{55} & \ding{55} \\ \hline
        $\delp_4(0,1)$ & $2$ & $5$ &  $H-E_1$ & $2H-E_2-E_3-E_4$ & -- & -- & \ding{55} & \ding{55} \\ \hline
        $\delp_5(0,0)$ & $2$ & $4$ &  $H-E_1$ & $2H-E_2-E_3-E_4-E_5$ & -- & -- & \ding{55} & \ding{55} \\ 
        \Xhline{3\arrayrulewidth}
         $\bbP^2(1,1,1)$ & $3$ & $9$  & $H$ & $H$ & $H$ & -- & \checkmark & \checkmark \\ \hline
          $\bbF_0(2,0,0)$ & $3$ & $8$  & $H_1+H_2$ & $H_1$ & $H_2$ & -- & \checkmark & \checkmark \\ \hline
          $\delp_1(1,1,0)$ & $3$ & $8$  & $H$ & $H$ & $H-E_1$ & -- & \checkmark & \checkmark \\ \hline
          $\delp_2(1,0,0)$ & $3$ & $7$  & $H$ & $H-E_1$  & $H-E_2$ & -- & \checkmark & \checkmark \\ \hline
          $\delp_3(0,0,0)$ & $3$ & $6$  & $H-E_1$ & $H-E_2$ & $H-E_3$ & -- & \checkmark & \checkmark \\ \hline
          \Xhline{3\arrayrulewidth}
           $\bbF_0(0,0,0,0)$ & $4$ & $8$  & $H_1$ & $H_2$ & $H_1$ & $H_2$ & \checkmark & \checkmark \\ \hline
    \end{tabular}
    \caption{Classification of smooth nef Looijenga pairs.}
    \label{tab:classif}
\end{table}

\section{Local Gromov--Witten theory}

\label{sec:localGW}

\subsection{$1$-pointed local Gromov--Witten invariants}
\label{sec:mirrorthm}
In this section, we provide general formulas for the Gromov--Witten invariants with point insertions of toric Fano varieties in any dimension twisted by a sum of concave line bundles. For the remainder of this section, let $Y$ be an $n$-dimensional smooth projective variety of Picard rank $r$, let $D=D_1+\dots +D_{l} \in A_{n-1}(Y)$ with $D \in |-K_Y|$ and each $D_i$ smooth and irreducible, and let $d$ be a $D$-convex curve class.

Let $E_{Y(D)} \coloneqq \mathrm{Tot}(\oplus_{i=1}^l (\cO_Y(-D_i)))$ be as in \eqref{eq:EYD} and let $\pi_Y:E_{Y(D)} \to Y$ be the natural projection.  
Since $d$ is $D$-convex, the moduli space $\ol{\mmm}_{0,m}(E_{Y(D)},d)$ of genus 0 $m$-marked stable maps $[f:C\to E_{Y(D)}]$ with $f_{*}([C])=d \in \hhh_2(Y,\bbZ)$ is scheme-theoretically the moduli stack $\ol{\mmm}_{0,m}(Y,d)$ of stable maps to the base $Y$, as every stable map to the total space  factors through the zero section $Y \hookrightarrow E_{Y(D)}$. In particular, $\ol{\mmm}_{0,m}(E_{Y(D)},d)$ is proper.
Consider the universal curve $\pi: \mathcal{C} \to \ol{\mmm}_{0,m}(Y,d)$, and denote by
$f : \mathcal{C} \to Y$ 
the universal stable map. 
Then $\hhh^0(C,f^{*}\shO_{Y}(-D_i))=0$ 
and we have 
obstruction bundles
$
\obstr_{D_j}:=R^{1}\pi_{*}f^{*}\shO_{Y}(-D_j),
$
of rank $d \cdot D_j-1$ with fibre $\hhh^1(C,f^{*}\shO_{Y}(-D_j))$ over a stable map $[f:C\to Y]$. The virtual fundamental class on $\ol{\mmm}_{0,m}(E_{Y(D)},d)$ is defined by intersecting the virtual fundamental class on $\ol{\mmm}_{0,m}(Y,d)$ with the top Chern class of $\oplus_j \obstr_{D_j}$:
\begin{align}
\label{eq:dtvfc}
  [\ol{\mmm}_{0,m}(E_{Y(D)},d)]^{\rm vir}:=& c_{\rm top}\left(\obstr_{D_1}\right) \cap \dots \cap
c_{\rm top}\left(\obstr_{D_{l}}\right) \cap  [\ol{\mmm}_{0,m}(Y,d)]^{\rm
  vir} \nn \\ 
\in &\hhh_{m+1-l}(\ol{\mmm}_{0,m}(Y,d),\QQ).
\end{align}
There are tautological classes $\psi_i:=c_1(L_i)$, where $L_i$ is the $i^{\rm th}$ tautological line
bundle on $\ol{\mmm}_{0,m}(Y,d)$ whose fibre at $[f:(C,x_1,\dots,x_m)\to Y]$
is the cotangent line of $C$ at $x_i$, and we denote by $\ev_i$ the
evaluation maps at the $i^{\rm th}$ marked point.  For an effective $D$-convex curve class $d\in\hhh_2(Y,\ZZ)$, 
genus 0 local Gromov--Witten invariants of $E_{Y(D)}$ with point insertions on the base are defined as
\bea
\label{eq:Nd}
N^{\rm loc}_{0,d}(Y(D)) &\coloneqq& \int_{[\ol{\mmm}_{0,l-1}(E_{Y(D)},d)]^{\rm vir}} \, \prod_{j=1}^{l-1} \ev_j^{*}(\pi_Y^{*} [\pt_Y]),\\
N^{\rm loc, \psi}_{0,d}(Y(D))&:=&\int_{[\ol{\mmm}_{0,1}(E_{Y(D)},d)]^{\rm vir}} \ev_1^*(\pi_Y^{*} [\pt_Y])\cup\psi_1^{l-2},
\label{eq:Npsid}
\eea
which we think of as the virtual counts of curves through $l-1$ points (resp. 1-point with a $\psi$-condition) on the zero section of the vector bundle $E_{Y(D)}$.  

Since $D$ is anticanonical, $E_{Y(D)}$ is a non-compact Calabi--Yau $(n+l)$-fold. The case $n+l=3$ has been the main focus in the study of local mirror symmetry, and as such it has been abundantly studied in the literature \cite{Chiang:1999tz}. It turns out that the lesser studied situation when $n+l>3$ has a host of simplifications, often leading to closed-form expressions for \eqref{eq:Nd}--\eqref{eq:Npsid}. We start by fixing some notation which will be of further use throughout this Section. Let $T\simeq (\bbC^\star)^{l}\circlearrowright E_{Y(D)}$ be the fibrewise torus action and denote by 
$\lambda_i \in \hhh(BT)$, $i=1, \dots, l$ its equivariant parameters.  Let $\{\phi_\a\}_{\a}$ be a graded $\bbC$-basis for the non-equivariant cohomology of the image of the zero section $Y\hookrightarrow E_{Y(D)}$ with $\deg \phi_\alpha \leq \deg \phi_{\alpha+1}$; in particular $\phi_1=\mathbf{1}_{\hhh(Y)}$. Its elements have canonical lifts $\phi_\a \to \varphi_\a$ to $T$-equivariant cohomology forming a $\bbC(\lambda_1, \dots, \lambda_{l})$ basis
for $\hhh_T(Y(D))$. The latter is furthermore endowed with a perfect pairing
\beq
\eta_{E_{Y(D)}}(\varphi_\a, \varphi_\b) := \int_{Y} \frac{\phi_\a \cup
  \phi_\b}{\cup_i \mathrm{e}_T(\cO_Y(-D_i))},
\label{eq:grampd}
\eeq
with $\mathrm{e}_T$ denoting the $T$-equivariant Euler class. In what follows, we will indicate by $\eta^{-1}_{E_{Y(D)}}(\varphi_\a, \varphi_\b)$ the inverse of the Gram matrix \eqref{eq:grampd}.

Let now $\tau \in H_T(Y(D))$. The  $J$-function of $E_{Y(D)}$ is the formal power series  
\beq
J_{\rm big}^{E_{Y(D)}}(\tau, z):= z+\tau+\sum_{d\in
  \mathrm{NE}(Y)}\sum_{n\in \bbZ^+}\sum_{\a,\b} \frac{\eta_{E_{Y(D)}}^{-1}(\varphi_\a, \varphi_\b)}{n!}\bra \tau, \dots, \tau, \frac{\varphi_\a}{z-\psi}
\ket_{0,n+1,d}^{E_{Y(D)}} \varphi_\b 
\eeq
where we employed the usual correlator notation for GW invariants,
\beq
\bra \tau_1 \psi_1^{k_1}, \dots, \tau_n \psi^{k_n}_n\ket_{0,n,d}^{E_{Y(D)}}:=\int_{[\ol{\mmm}_{0,m}(E_{Y(D)},d)]^{\rm vir}} \prod_i \ev^*_i(\tau_i) \psi_i^{k_i}.
\eeq
Restriction to $t=\sum_{i=1}^{r+1} t_i \varphi_i$ and use of the Divisor
Axiom gives the small $J$-function
\beq
J_{\rm small}^{E_{Y(D)}}(t, z):= z  \re^{\sum t_i \varphi_i/z}\l(1+\sum_{d\in
  \mathrm{NE}(Y)}\sum_{\a,\b} \eta^{-1}(\varphi_\a, \varphi_\b)_{E_{Y(D)}} \re^{t(d)} \bra \frac{\varphi_\a}{z(z-\psi_1)}
\ket_{0,1,d}^{E_{Y(D)}} \varphi_\b\r).
\label{eq:Jsmall}
\eeq

\begin{lem}
Suppose that $Y$ is a toric Fano variety and either $n+r=4$ and $D_i$ is ample $\forall~i$, or $n+r>4$ and $D_i$ is nef  $\forall~i$. Let $\cT:=\{T_i \in A_{\dim Y-1}(Y)\}_{i=1}^{n+r}$ be the collection of its prime toric divisors, and $\sqcup_{i=1}^{m} S_i = \{1, \dots, n+r\}$ a length-$m$ partition of $n+r$ such that $D_i := \sum_{j \in S_i} T_{j}$. For an effective curve class $d \in \mathrm{NE}(Y)$, write $\mathsf{d}_i := d \cdot D_i$ and $\mathsf{t}_i := d \cdot T_i$ for its intersection multiplicities with respectively the nef divisors $D_i$ and the toric divisors $T_i$. Then,
\beq
N^{\rm loc, \psi}_{0,d}(Y(D_1+\dots+D_l)) = \frac{(-1)^{\sum_{i=1}^l (\mathsf{d}_i-1)}}{\prod_{i=1}^l \mathsf{d}_i} \prod_{i=1}^l \binom{\mathsf{d}_i}{\{ \mathsf{t}_{j}\}_{j\in S_i}},
\label{eq:localgw}
\eeq
where $\binom{k}{\{i_j\}_{j=1}^m}=\frac{k!}{\prod_{j=1}^m i_j!}$ is the multinomial coefficient.
\label{lem:localgw}
\end{lem}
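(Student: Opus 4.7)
My strategy would be to apply Givental's mirror theorem for the Euler-twisted genus zero theory of the toric Fano $Y$, with twist by the concave sum $\bigoplus_i \cO_Y(-D_i)$. The Coates--Givental $I$-function of the local geometry reads
\[
I^{E_{Y(D)}}(t,z) \;=\; z\,\re^{\sum_i t_i \varphi_i/z} \sum_d Q^d \frac{\prod_{i=1}^l \prod_{k=0}^{\mathsf{d}_i-1}(-D_i - kz)}{\prod_{j=1}^{n+r}\prod_{k=1}^{\mathsf{t}_j}(T_j + kz)}\,,
\]
combining the toric $I$-function of $Y$ with a concave twist factor $\prod_{k=0}^{\mathsf{d}_i-1}(-D_i-kz)$ for each summand $\cO_Y(-D_i)$.

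I would then show that the positivity conditions force $J^{E_{Y(D)}}_{\rm small} = I^{E_{Y(D)}}$, i.e.\ the mirror map is trivial. The $k=0$ numerator factor $-D_i$ is independent of $z$, so the top $z$-degree of the $Q^d$-coefficient numerator is $\sum_i(\mathsf{d}_i-1) = -K_Y\cdot d - l$, matching the denominator $z$-degree $\sum_j \mathsf{t}_j = -K_Y\cdot d$ up to a factor $z^{-l}$. Thus $zI_d(z) = O(z^{1-l})\subseteq O(z^{-1})$ as soon as $l\geq 2$, putting $I^{E_{Y(D)}}(0,z) = z + O(z^{-1})$ in the normal form required by the Coates--Givental mirror theorem on the Givental Lagrangian cone.

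Expanding the small $J$-function \eqref{eq:Jsmall}, the $\psi_1^{l-2}$ descendant contributes to the $z^{1-l}$ coefficient. Matching this with the leading $z^{1-l}$ term of $zI_d$ --- extracted by retaining the constant factor $-D_i$ from $k=0$ in each numerator product and the leading $-kz$ from $k\geq 1$ factors --- gives
\[
[z^{1-l}]\,zI_d \;=\; \prod_{i=1}^l(-D_i)\cdot\frac{\prod_{i=1}^l (-1)^{\mathsf{d}_i-1}(\mathsf{d}_i-1)!}{\prod_{j=1}^{n+r}\mathsf{t}_j!}\,.
\]
Pairing with $[\pt_Y]$ through $\eta_{E_{Y(D)}}$ in the non-equivariant limit amounts to dividing by $\prod_i(-D_i)$ and integrating against $[\pt_Y]$: this cancels the overall $\prod_i(-D_i)$ prefactor and returns the scalar $\prod_i(-1)^{\mathsf{d}_i-1}(\mathsf{d}_i-1)!/\prod_j \mathsf{t}_j!$. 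Rewriting $(\mathsf{d}_i-1)! = \mathsf{d}_i!/\mathsf{d}_i$ and $\mathsf{d}_i!/\prod_{j\in S_i}\mathsf{t}_j! = \binom{\mathsf{d}_i}{\{\mathsf{t}_j\}_{j\in S_i}}$ then yields \eqref{eq:localgw}.

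The main obstacle is pinning down precisely which positivity hypotheses place the concave twist in the regime where the Lagrangian-cone form of the mirror theorem applies without modification. The leading-order $z$-degree count above is robust once $l\geq 2$; however, ensuring that no subleading symplectic transformation intrudes requires control over the full $z$-expansion of $I^{E_{Y(D)}}$ cohomology-degree by cohomology-degree, and it is precisely the dichotomy ($n+r=4$ with $D_i$ ample) vs.\ ($n+r>4$ with $D_i$ nef) that eliminates the weakly-positive, low-dimensional regime where such corrections would obstruct the direct identification $J=I$.
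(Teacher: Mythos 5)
Your route is essentially the paper's: identify $N^{\rm loc,\psi}_{0,d}$ as a coefficient of the small $J$-function, invoke the Givental/Coates--Givental mirror theorem for the concave twist by $\bigoplus_i\shO_Y(-D_i)$, argue that the mirror map is trivial, and extract the $z^{1-l}$-coefficient; your sign bookkeeping in the final extraction does reproduce \eqref{eq:localgw}, and (modulo the fact that the paper runs the argument equivariantly for the fibrewise torus, so that the pairing and the substitution $\kappa_i\to-\lambda_i$ make the "division by $\prod_i(-D_i)$" literal rather than heuristic) that part is fine.

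The genuine gap is in your justification that the mirror map receives no corrections. The count "numerator $z$-degree $\sum_i(\mathsf{d}_i-1)$ against denominator $z$-degree $\sum_j\mathsf{t}_j$, hence $zI_d=O(z^{1-l})\subseteq O(z^{-1})$ as soon as $l\geq 2$" tacitly assumes $\mathsf{d}_i\geq 1$ for every $i$ and $\mathsf{t}_j\geq 0$ for every $j$. Nefness allows $\mathsf{d}_i=d\cdot D_i=0$, in which case the corresponding numerator product is empty: the constant factor $-D_i$ you rely on is absent and the order in $1/z$ drops by one for each such $i$ (in the equivariant form \eqref{eq:Ifunloc} this is the identity $\kappa_i(\kappa_i/z+1)_{-1}=z$); similarly $\mathsf{t}_j<0$, which does occur on toric Fanos (e.g.\ exceptional curves), moves Pochhammer factors across the fraction bar and must be tracked. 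This is not a technicality: for $l=2$ with $D_1$ nef but not ample --- e.g.\ $\bbF_0(0,4)$ or $\delp_1(0,4)$ and $d$ a multiple of the class with $d\cdot D_1=0$ --- the $I$-function genuinely acquires $O(z^0)$ terms and the mirror map is corrected; these are exactly the non-tame cases the paper excludes from the Lemma and treats separately (cf.\ \cref{thm:dP5}, whose proof hinges on a nontrivial mirror map). So the blanket claim "trivial for all $l\geq2$" is false, and the actual content of the Lemma's hypothesis dichotomy is the degree inspection of \eqref{eq:Ifunloc} in the presence of classes with $\mathsf{d}_i=0$ (ruled out by ampleness when the local geometry is a fourfold, and checked to be harmless in higher dimension). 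You acknowledge this at the end as "the main obstacle", but you never carry out that inspection, so the one non-routine step of the proof is missing.
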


\begin{proof}
By \eqref{eq:Npsid} and \eqref{eq:Jsmall}, we have
\beq
N^{\rm loc, \psi}_{0,d}(Y(D))) =\sum_\beta \eta(\varphi_{\bar \a}, \varphi_\b)_{E_{Y(D)}}  [z^{1-l} \re^{t(d)+\sum t_i \varphi_i/z}  \varphi^\beta]  J_{\rm small}^{E_{Y(D)}}(t, z)  
\eeq
where $\bar \a$ is defined by $\varphi_{\bar \a} = [\mathrm{pt}]$. From \eqref{eq:grampd}, we have $\eta(\varphi_{\bar\a}, \varphi_\b)_{E_{Y(D)}}=\delta_{\bar \a 1} \prod_{i=1}^{l} \lambda_i^{-1}$, hence
\beq
N^{\rm loc, \psi}_{0,d}(Y(D)) =\frac{1}{\prod_{i=1}^{l} \lambda_i}  \Big[z^{1-l} \re^{t(d)+\sum t_i \varphi_i/z}  \mathbf{1}_{H_T(Y)}\Big]  J_{\rm small}^{E_{Y(D)}}(t, z)  
\label{eq:NpsidJfun}
\eeq
The right hand side can be computed by Givental-style toric mirror theorems. Let $\theta_a:=T_a^\vee \in \hhh^2(Y)$ be the Poincar\'e dual class of the $a^{\rm th}$
toric divisor of $Y$, $\kappa_i:=c_1(\cO(-D_i)$ the $T$-equivariant Chern class of $D_i$,
and $y_i$, $i=1, \dots, r+1$ be variables in a formal disk around the origin. Writing $(x)_n:=\Gamma(x+n)/\Gamma(x)$ for the Pochhammer symbol of $(x,n)$ with $n\in \bbZ$, the $I$-function of $E_{Y(D)}$ is the $\hhh_T(Y)$-valued Laurent series
\begin{align}
I^{E_{Y(D)}}(y,z) := & z \mathbf{1}_{\hhh(Y)} + \prod_i y_i^{\varphi_i/z}
\sum_{0\neq d\in \mathrm{NE}(Y)}
  \prod_i y_i^{d_i} z^{1-l} \frac{\prod_i \kappa_i
    \l(\frac{\kappa_i}{z}+1\r)_{\mathsf{d}_i-1}}{\prod_a \l(\frac{\theta_a}{z}
    +1\r)_{\mathsf{t}_a}}
  \label{eq:Ifunloc}
\end{align}
and its mirror map is their formal $\cO(z^0)$ coefficient,
\begin{align}
  \tilde{t}^i_{E_{Y(D)}}(y) := & \big[z^0 \varphi_i \big] I^{E_{Y(D)}}(y,z)\,.
  \label{eq:mm}
\end{align}
Then \cites{MR1408320, MR2276766, MR2510741}
\beq
 J_{\rm small}^{E_{Y(D)}}\l(\tilde{t}_{E_{Y(D)}(D)}(y),
    z\r) =  I^{E_{Y(D)}}(y,z)\,.
\label{eq:I=J}
\eeq
Inspecting \eqref{eq:Ifunloc} shows that if either $n+l>4$,  or $n+l=4$ and $D_i$ is ample, the mirror map does not receive quantum corrections:
\beq
  \tilde{t}^i_{E_{Y(D)}}(y) = \log y_i \,. 
\eeq
Therefore, under the assumptions of the Lemma,
\bea
N^{\rm loc, \psi}_{0,d}(Y(D))  &=& \frac{1}{\prod_{i=1}^{l} \lambda_i}  \l[z^{1-l}  \prod_i y_i^{d_i} \prod_i y_i^{\varphi_i/z}  \mathbf{1}_{H_T(Y(D))}\r]  I^{E_{Y(D)}}(y, z)  \nn \\
&=& \frac{1}{\prod_{i=1}^{l} \lambda_i}  \l[z^{1-l}  \prod_i y_i^{d_i}\r]  I^{E_{Y(D)}}(y, z)\bigg|_{\varphi_\a \to 0}.
\eea
The claim then follows by replacing $\theta_a|_{\varphi_\a \to 0} = 0$, $\kappa_i|_{\varphi_\a \to 0} = -\lambda_i$ into \eqref{eq:Ifunloc}.
\end{proof}

\subsubsection{Quasi-tame Looijenga pairs}
\label{sec:quasitameloc}
Let us now go back to the case of log~CY surfaces and specialise the discussion in the previous Section to $Y(D)$ a tame Looijenga pair. The key observation in the proof of \cref{lem:localgw} was that no contributions to the $\cO(z^0)$ Laurent coefficient of the $I$-function could possibly come from any stable maps in any degrees, which is automatic for $n+l>4$, and requires that 
$\mathsf{d}_i>0$ when $n+l=4$. We can in fact partly relax the condition that $D_i$ is ample by just requiring by {\it fiat} that no curves with $\mathsf{d}_i = d \cdot D_i = 0$ contribute to the mirror map. A direct calculation from \eqref{eq:Ifunloc} shows that in the case of nef log~CY surfaces with $Y$ a Fano surface,
this relaxed assumption coincides with $Y(D)$ being tame as in \cref{def:lp}. Since by \cref{prop:class}, $Y$ is toric for all tame cases, \cref{lem:localgw} computes \eqref{eq:Npsid} for all of them. 

\begin{example}
Let $Y(D)=\bbP^2(1,4)$. Then \cref{lem:localgw} gives for the degree-$d$ local invariants of the projective plane
\beq 
N^{\rm loc, \psi}_{0,d}(Y(D))=N^{\rm loc}_{0,d}(Y(D)) = \frac{(-1)^d}{2d^2} \binom{2d}{d}\,.
\label{eq:NlocP2}
\eeq 
This recovers a direct localisation calculation by Klemm--Pandharipande in \cite[Proposition~2]{Klemm:2007in}.
\end{example}

\begin{example}
Let $Y(D)=\bbF_0(2,2)$ and write $d=d_1 H_1 + d_2 H_2$. \cref{lem:localgw} yields
\beq 
N^{\rm loc, \psi}_{0,d}(Y(D))=N^{\rm loc}_{0,d}(Y(D)) = \frac{1}{(d_1+d_2)^2} \binom{d_1+d_2}{d_1}^2
\label{eq:NlocF_0}
\eeq 
as in \cite[Proposition~3]{Klemm:2007in}.
\end{example}

Moreover, if $Y(D)$ is a quasi-tame Looijenga pair, the Calabi--Yau vector bundle $E_{Y(D)}$ is deformation equivalent to $E_{Y'(D')}$ for some tame Looijenga pair by definition. It therefore carries the same local Gromov--Witten theory, and the calculation of $N^{\rm loc, \psi}_{0,d}(Y(D))=N^{\rm loc, \psi}_{0,d}(Y'(D'))$ from \cref{lem:localgw} extends immediately to these cases as well. 

\subsubsection{Non-quasi-tame Looijenga pairs}

\cref{lem:localgw} cannot be immediately extended to non-quasi-tame pairs $Y(D)$, as $Y$ is not toric and $E_{Y(D)}$ does not deform to $E_{Y'(D')}$ for tame $Y'(D')$. We will proceed by exhibiting a closed-form solution for the case of lowest anticanonical degree $Y(D)=\mathrm{dP}_5(0,0)$. This recovers all other cases with $l=2$ by blowing-down, as per the following
\begin{prop}[Blow-up formula for local GW invariants]
Let $Y(D)$ be an $l$-component log CY surface. Let $\pi : Y'(D') \to Y(D)$ be the $l$-component log CY surface obtained by an interior blow-up at a general point of $D$ with exceptional divisor $E$. Let $d$ be a curve class of $Y(D)$ and let $d'\coloneqq \pi^* d$. Then
\bea
N^{\rm loc}_{0,d}(Y(D)) &=& N^{\rm loc}_{d'}(Y'(D'))\,, \nn \\
N^{\rm loc, \psi}_{0,d}(Y(D)) &=& N^{\rm loc, \psi}_{d'}(Y'(D'))\,.
\label{eq:locblup}
\eea
\label{prop:locblup}
\end{prop}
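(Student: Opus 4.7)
The plan is to construct a natural morphism of moduli stacks
\[
g\colon \ol{\mmm}_{0,l-1}(Y',\pi^{*}d) \longrightarrow \ol{\mmm}_{0,l-1}(Y,d),
\]
induced by composition with $\pi$ followed by stabilisation, and then to argue that both sides of \eqref{eq:locblup} are compatible under $g$ at the level of virtual fundamental classes, obstruction bundles, and insertion classes. Let $p\in D_j$ be the blown-up point, so that $D'_j=\pi^{*}D_j-E$ and $D'_i=\pi^{*}D_i$ for $i\neq j$. The crucial numerical inputs are $d'\cdot E=0$ and $d'\cdot D'_i=d\cdot D_i$ for every $i$, so in particular the ranks of the obstruction bundles on the two sides agree.

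First I would invoke the blow-up formula for genus-zero virtual fundamental classes at a smooth point: since $d'=\pi^{*}d$ is orthogonal to $E$, no ``new'' class supported on $E$ contributes and one obtains $g_{*}\,[\ol{\mmm}_{0,l-1}(Y',\pi^{*}d)]^{\vir}=[\ol{\mmm}_{0,l-1}(Y,d)]^{\vir}$. Second, I would compare $\bigoplus_i\obstr_{D'_i}$ with $g^{*}\bigoplus_i\obstr_{D_i}$. For $i\neq j$ this is tautological since $\cO_{Y'}(-D'_i)=\pi^{*}\cO_Y(-D_i)$. For $i=j$, tensoring the structure sequence of $E\subset Y'$ with $\cO_{Y'}(-D'_j)$ yields
\[
0\longrightarrow \pi^{*}\cO_Y(-D_j)\longrightarrow \cO_{Y'}(-D'_j)\longrightarrow \cO_E(-1)\longrightarrow 0;
\]
pulling back along the universal map $f\colon\mathcal{C}\to Y'$ and pushing forward along the universal curve $\rho\colon \mathcal{C}\to \ol{\mmm}$, the vanishings $\hhh^0(\bbP^1,\cO(-1))=\hhh^1(\bbP^1,\cO(-1))=0$ propagate through the resulting long exact sequence and give $R^1\rho_{*}f^{*}\cO_{Y'}(-D'_j)\cong R^1\rho_{*}(\pi\circ f)^{*}\cO_Y(-D_j)$, so that $c_{\rm top}(\bigoplus_i\obstr_{D'_i})=g^{*}c_{\rm top}(\bigoplus_i\obstr_{D_i})$.

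Third, since the blow-up point $p$ and the $l-1$ insertion points $p_k\in Y$ can be taken in general position, I would lift each $p_k$ to $p'_k\in Y'\setminus E$ so that $\ev_k^{*}[p'_k]=g^{*}\ev_k^{*}[p_k]$, and note that the $\psi$-class at the first marked point also pulls back under $g$. The projection formula applied to
\[
\int_{[\ol{\mmm}_{0,l-1}(Y',\pi^{*}d)]^{\vir}} g^{*}\Big(c_{\rm top}\big(\textstyle\bigoplus_i\obstr_{D_i}\big)\cdot \textstyle\prod_k \ev_k^{*}[p_k]\cdot \psi_1^{l-2}\Big),
\]
combined with the first two steps, then yields both identities in \eqref{eq:locblup}.

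The main obstacle is to make the obstruction-bundle comparison rigorous for those stable maps whose domain has irreducible components entirely covering $E$: on such components the pullback of $\cO_E(-1)$ can have nontrivial cohomology and the long exact sequence argument above breaks down. The expectation is that this locus does not contribute to either invariant, either by a dimension count against the general position of the $l-1$ point conditions, or by virtual localisation with respect to the natural scaling action on the fibres of $\cO_{Y'}(E)$. Once this technicality is dealt with, both equalities in \eqref{eq:locblup} follow at once.
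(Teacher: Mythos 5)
Your outline --- the morphism $g$ induced by $\pi$, the genus-zero virtual push-forward for a point blow-up, a comparison of obstruction bundles, and the projection formula --- has the same skeleton as the paper's argument, which cites \cite[Proposition 5.14]{Man12} for $\overline{\pi}_*[\ol{\mmm}_{0,m}(Y',d')]^{\rm vir}=[\ol{\mmm}_{0,m}(Y,d)]^{\rm vir}$ and then, using $E\cdot d'=0$, pushes forward the \emph{local} virtual classes directly, the insertions being handled by the projection formula. The essential difference is that the paper never attempts to identify $\obstr_{D'_j}$ with $g^*\obstr_{D_j}$ as bundles; that identification is precisely the step you leave open, and it is a genuine gap rather than a removable technicality.

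Concretely: on the locus of maps with no component inside $E$, the condition $E\cdot d'=0$ forces the image to be disjoint from $E$, so $f^*\cO_{Y'}(-D'_j)=f^*\pi^*\cO_Y(-D_j)$ on the nose and no long exact sequence is needed there (your appeal to $\hhh^*(\bbP^1,\cO(-1))=0$ is in any case aimed at the wrong object: what enters is the cohomology on the domain curve of $f^*\cO_E(-1)$, which is torsion with nonzero $\hhh^0$ at any isolated intersection point with $E$). All of the content of the proposition therefore sits on the locus you postpone, and there the natural comparison map genuinely degenerates: take $d\cdot D_j=2$ and $C=C_1\cup C_2$ with $C_1$ mapping to the strict transform of a curve having a double point at $p$ and $C_2$ a double cover of $E$; then both $\hhh^1(C,f^*\cO_{Y'}(-D'_j))$ and $\hhh^1(C,f^*\pi^*\cO_Y(-D_j))$ are one-dimensional, but the map induced by multiplication by the section cutting out $E$ vanishes identically on $C_2$ and hence is zero on $\hhh^1$. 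So $c_{\rm top}(\oplus_i\obstr_{D'_i})=g^*c_{\rm top}(\oplus_i\obstr_{D_i})$ cannot be extracted from your exact sequence, and the two repairs you gesture at are not available as stated: a ``general position'' dimension count does not apply at the level of virtual classes, and the fibres of $\cO_{Y'}(E)$ do not induce a torus action on the moduli space, so there is nothing to localise with respect to. The paper's route avoids any bundle comparison by applying Manolache's virtual push-forward to the compatible twisted obstruction theories defining the local classes, with $E\cdot d'=0$ as the numerical input; a secondary point in the same vein is that $\psi_1$ need not pull back along $g$ on the locus where $g$ contracts components mapping into $E$, which likewise needs the virtual-class-level treatment rather than a pointwise one.
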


\begin{proof}
 
By \cite[Proposition 5.14]{Man12},
\beq
\overline{\pi}_*[\ol{\mmm}_{0,m}(Y',d')]^{\rm
  vir} = [\ol{\mmm}_{0,m}(Y,d)]^{\rm
  vir},
\eeq
where $\overline{\pi}$ is the morphism between the moduli spaces induced by $\pi$.
Since $E\cdot d'=0$,
\beq
\overline{\pi}_*[\ol{\mmm}_{0,m}(E_{Y'(D')},d')]^{\rm
  vir} = [\ol{\mmm}_{0,m}(E_{Y(D)},d)]^{\rm
  vir}.
\eeq
\end{proof}
\begin{thm}
With notation as in \cref{prop:eff}, we have
\bea
& &
N^{\rm loc}_{0,d}(\mathrm{dP}_5(0,0)) = \nn \\
& &
\sum_{j_1, \dots, j_4=0}^\infty \Bigg[
\frac{(-1)^{d_1+d_2+d_3+d_4+d_5} \left(d_1+d_2+d_3+d_4+d_5-3 d_0+j_1+j_2-1\right)!}
{j_1! j_2! j_3! j_4! \left(d_1+d_2+d_4-2 d_0+j_1\right)!  \left(-d_1+d_0-j_1-j_2\right)! \left(-d_3+d_5+j_4\right)!}\nn \\
& & \times 
\frac{ \left(d_1+d_4-d_0+j_1+j_3-1\right)!
   \left(d_1+d_5-d_0+j_2+j_4-1\right)! \left(d_4+d_5-d_0+j_3+j_4-1\right)!}{ \left(d_1+d_3+d_5-2 d_0+j_2\right)! \left(-d_4+d_0-j_1-j_3\right)! \left(-d_2+d_4+j_3\right)!
   \left(-d_5+d_0-j_2-j_4\right)!} \nn \\
& & \times \frac{1}{ \left(d_2+d_3-d_0-j_3-j_4\right)!  \left(\left(d_1+d_4+d_5-2
   d_0+j_1+j_2+j_3+j_4-1\right)!\right){}^2}\Bigg] \,.
\label{eq:dP5}
\eea
\label{thm:dP5}
\end{thm}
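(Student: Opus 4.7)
The pair $\mathrm{dP}_5(0,0)$ is non-quasi-tame: $\mathrm{dP}_5$ is not toric and $E_{\mathrm{dP}_5(0,0)}$ does not deform to a local geometry over a toric base, so \cref{lem:localgw} cannot be applied directly, while \cref{prop:locblup} only reduces to the equally non-quasi-tame $\mathrm{dP}_4(0,1)$ and $\mathrm{dP}_4(1,0)$. The plan is nevertheless to follow a Givental-style mirror symmetry approach, employing a more elaborate ambient realisation of $\mathrm{dP}_5$ and incorporating non-trivial mirror-map corrections.

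First, I would realise $\mathrm{dP}_5$ as a complete intersection in an auxiliary toric variety compatible with the splitting $-K_{\mathrm{dP}_5} = D_1 + D_2$ --- for instance via its Cox-ring description as a Mori dream space (giving a GIT quotient presentation), or by exploiting the two conic-bundle structures $|D_1|$ and $|D_2|$ to realise $\mathrm{dP}_5$ inside a $\mathbb{P}^1 \times \mathbb{P}^1$-bundle cut out by suitable equations. The four auxiliary summation indices $j_1, \dots, j_4$ in \eqref{eq:dP5} strongly suggest an ambient of Picard rank $\rho(\mathrm{dP}_5) + 4 = 10$ in which $\mathrm{dP}_5$ is cut out by four additional divisor constraints, producing four extra Pochhammer-type factors in the corresponding twisted $I$-function. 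Indeed, the expressions $d_i + d_j - d_0$ and $d_i + d_j + d_k - 2 d_0$ appearing throughout \eqref{eq:dP5} are precisely the intersection numbers of $d$ with the $(-1)$-curves $H - E_i - E_j$ and with classes of the form $2H - E_i - E_j - E_k - E_l$, consistent with such a realisation. One then writes down the twisted $I$-function of $E_{\mathrm{dP}_5(0,0)}$ via the Coates--Givental / Lian--Liu--Yau mirror theorem for complete intersections in toric varieties, with the additional twist by the concave bundle $\mathcal{O}(-D_1) \oplus \mathcal{O}(-D_2)$.

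The crucial new feature, absent from the proof of \cref{lem:localgw}, is that $D_1^2 = D_2^2 = 0$: effective classes $d$ with $d \cdot D_1 = 0$ or $d \cdot D_2 = 0$ (such as $E_1$, $H - E_i - E_j$ for $i, j \neq 1$, and so on --- all of which are described via \cref{prop:eff}) contribute non-trivially to the $\mathcal{O}(z^0)$ part of the $I$-function, so the mirror map \eqref{eq:mm} is no longer logarithmic. The main obstacle is then to compute this mirror map explicitly and to invert it --- via Birkhoff factorisation --- to obtain the $J$-function, from which $N^{\rm loc}_{0,d}$ can be extracted as a coefficient. Verifying that the result matches \eqref{eq:dP5} should reduce to a multi-variable hypergeometric identity, plausibly of Pfaff--Saalsch\"utz or Whipple type, in the spirit of the identities underlying the proof of \cref{thm:logopen_intro} for tame pairs. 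Once \eqref{eq:dP5} is established, the corresponding formulas for $\mathrm{dP}_4(1,0)$ and $\mathrm{dP}_4(0,1)$ follow from \cref{prop:locblup} by specialising to pullback classes with $d_5 = d_0$.
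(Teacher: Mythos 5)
Your starting premise is off, and it matters: $E_{\mathrm{dP}_5(0,0)}$ \emph{does} deform to a local geometry over a toric base. The paper's proof is exactly this: deform $\mathrm{dP}_5$ to the blow-up of $\bbF_0$ at four \emph{toric} points (a toric but only weak Fano surface, described by the fan in \cref{app:PdP5}), use deformation invariance of local Gromov--Witten invariants, and then run the toric twisted $I$-function computation of \cref{lem:localgw} directly on this surface --- no auxiliary ambient and no complete-intersection presentation are needed. The price of non-tameness is that the mirror map \eqref{eq:mm} now receives quantum corrections (from the four $(-2)$-classes and classes with vanishing intersection with $D_1,D_2$), but it turns out to be algebraic with the closed-form rational inverse $y_i(t)=\re^{t_i}/(1-\re^{t_i})^2$. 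The four indices $j_1,\dots,j_4$ in \eqref{eq:dP5} arise from binomially expanding these four inverted mirror maps (after two of the six degree sums have been collapsed using Kummer's quadratic transformation for ${}_2F_1$), not from four extra Pochhammer factors attached to divisor constraints in a Picard-rank-$10$ ambient, so your structural reading of the formula is not the one realised by the proof.

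Beyond the misdiagnosis, your proposed route has genuine gaps as it stands. You never exhibit an ambient toric variety and complete-intersection presentation in which \emph{both} $D_1=H-E_1$ and $D_2=2H-E_2-\dots-E_5$ extend as line bundles and the bundles cutting out $\mathrm{dP}_5$ are convex --- both are needed to invoke a Coates--Givental/Lian--Liu--Yau-type theorem together with the concave twist by $\cO(-D_1)\oplus\cO(-D_2)$. The standard model of the degree-$4$ del Pezzo as a $(2,2)$ complete intersection in $\bbP^4$ fails this test (only multiples of $-K$ restrict from the ambient, so the splitting $D_1+D_2$ is invisible), and the Cox-ring/Mori-dream-space presentation is not a complete intersection in a toric variety in any form for which the mirror theorem applies off the shelf. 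Finally, the actual content of the theorem --- computing and inverting the corrected mirror map and matching the outcome with \eqref{eq:dP5} --- is deferred to an unspecified hypergeometric identity; in the paper this step is the explicit inversion above plus Kummer's transformation, not a Pfaff--Saalsch\"utz-type summation. Your closing remark that the $\delp_4(1,0)$ and $\delp_4(0,1)$ cases follow from \cref{prop:locblup} by setting $d_5=d_0$ does agree with the paper.
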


\begin{proof}[Sketch of the proof]
The strategy of the proof runs by deforming $\mathrm{dP}_5$  to the blow-up of $\bbF_0$ at four toric points, which is only weak Fano but allows to work torically along the lines of \cref{lem:localgw} at the price of extending the pseudo-effective cone by four generators of self-intersection $-2$. These contribute non-trivially to the mirror map, alongside curves with zero intersections with the boundary divisors $D_1=H-E_1$ and $D_2=2H-\sum_{i\neq 1} E_i $. However the mirror map turns out to be algebraic, and furthermore it remarkably has a closed-form rational inverse, leading to the final result \eqref{eq:dP5}. Full details are given in \cref{app:PdP5}.
\end{proof}


\begin{rmk}
The final expression \eqref{eq:dP5} is significantly more involved than \eqref{eq:localgw}, to which it reduces when blowing down to the quasi-tame del~Pezzo cases $\delp_k$, $k\leq 3$ by setting $d_{i}=d_0$ for all $i\geq k+1$ using \eqref{eq:locblup}, since then only the summand with $j_i=0~\forall~i$ survives. It is also noteworthy that, while the summands in \eqref{eq:dP5} are not symmetric under permutation of the degrees $\{d_2, d_3, d_4, d_5\}$, the final sum is highly non-obviously warranted to be $S_4$-invariant since the left hand side is\footnote{There is an obvious $S_5$ symmetry under permutation of the  exceptional classes $E_i$ in $Y$, which is reduced to an $S_4$ symmetry in the degrees $(d_2,d_3,d_4,d_5)$ in $E_{Y(D)}$ by the splitting $D_1=H-E_1$, $D_2=2H-E_2-E_3-E_4-E_5$.}, and we verified this explicitly in low degrees. The BPS invariants arising from \eqref{eq:dP5} should also be integers, and we checked this is indeed the case for a large sample of non-primitive classes with multi-covers of order up to 11.
\end{rmk}

\subsection{Multi-pointed local GW invariants}

The primary multi-point invariants \eqref{eq:Nd} of nef Looijenga pairs with $l>2$ can be reconstructed from the descendent single-insertion invariants \eqref{eq:Npsid}. We shall show how this arises by combining the associativity of the quantum product with the vanishing of quantum corrections for particular classes. 

\subsubsection{$l=3$}
It suffices to compute the invariants for the case of maximal Picard rank, $\mathrm{dP}_3(0,0,0)$, from which the other $l=3$ cases can be recovered by blowing down.
\begin{thm}
With notation as in \cref{prop:eff}, we have
\bea
N^{\rm loc}_{0,d}(\mathrm{dP}_3(0,0,0)) &=& \l(d_0^2-d_1 d_0-d_2 d_0-d_3 d_0+d_1 d_2+d_1 d_3+d_2 d_3 \r) N^{\rm loc, \psi}_{0,d}(\mathrm{dP}_3(0,0,0)), \\
N^{\rm loc, \psi}_{0,d}(\mathrm{dP}_3(0,0,0)) &=& \frac{(-1)^{d_1+d_2+d_3+1} \left(d_1-1\right)! \left(d_2-1\right)! \left(d_3-1\right)!}{\left(d_1+d_2-d_0\right)! \left(d_1+d_3-d_0\right)!
   \left(d_2+d_3-d_0\right)! \left(d_0-d_1\right)! \left(d_0-d_2\right)! \left(d_0-d_3\right)!}.\nn \\
\label{eq:dP33comp}
\eea
\label{thm:dP33comp}
\end{thm}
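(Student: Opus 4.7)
The proof naturally splits into two parts: establishing the closed form for $N^{\rm loc,\psi}_{0,d}$, and then the polynomial proportionality relating the two invariants.

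\textbf{Step 1 (one-point descendant closed form).} Since $\mathrm{dP}_3(0,0,0)$ is a tame Looijenga pair (cf.~\cref{tab:classif}), the hypotheses of \cref{lem:localgw} are satisfied. The surface $\mathrm{dP}_3$ is toric of Picard rank $4$ with six prime toric divisors: the exceptional classes $E_1,E_2,E_3$ and the proper transforms $L_{ij}:=H-E_i-E_j$ of lines through blown-up pairs. Each boundary divisor $D_i=H-E_i$ is a sum of two toric primes, and a consistent partition is given e.g.\ by $S_1=\{L_{12},E_2\}$, $S_2=\{L_{23},E_3\}$, $S_3=\{L_{13},E_1\}$. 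Plugging in $\mathsf{d}_i=d_i$, $\mathsf{t}_{L_{ij}}=d_i+d_j-d_0$, $\mathsf{t}_{E_i}=d_0-d_i$ into the multinomial formula \eqref{eq:localgw} and simplifying factorials yields at once the second equation of \eqref{eq:dP33comp}.

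\textbf{Step 2 (vanishing of quantum corrections in divisor directions).} Following the preamble to \cref{sec:quasitame}, the polynomial prefactor should come from associativity of the quantum product combined with a dimensional vanishing. For $l=3$ the virtual dimension of $\ol{\mmm}_{0,1}(E_{Y(D)},d)$ is exactly $3$, while any primary class $\pi_Y^*\phi$ pulled back from the surface has cohomological degree at most $4$; hence every one-point primary invariant $\langle\pi_Y^*\phi\rangle_{0,1,d}$ vanishes by degree reasons. By the divisor axiom the three-point invariants with at least two divisor insertions vanish as well, i.e.\ $\langle D,D',\phi\rangle_{0,3,d}=(D\cdot d)(D'\cdot d)\langle\phi\rangle_{0,1,d}=0$. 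Equivalently, the small quantum product of two divisor classes in $E_{Y(D)}$ coincides with the classical cup product; in particular $H\bullet H=[\pt]$ is un-deformed.

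\textbf{Step 3 (extraction of the polynomial prefactor).} With divisor flatness in hand, I would use the string equation to rewrite $\langle[\pt],[\pt]\rangle_{0,2,d}=\langle 1,[\pt],[\pt]\psi\rangle_{0,3,d}$, and then compute the latter via the Dubrovin connection of $E_{Y(D)}$. Since this connection is trivial in divisor directions, its fundamental solution $S(\tau,z)$ reduces to a pure exponential there, and its non-trivial part along the $[\pt]$-direction is obtained by a single Birkhoff factorisation of the small $I$-function of \cref{lem:localgw}. A direct manipulation of the Pochhammer coefficients then delivers the desired quadratic prefactor
\[
d_0^2-d_0(d_1+d_2+d_3)+(d_1d_2+d_1d_3+d_2d_3).
\]
The main obstacle is precisely this combinatorial extraction: the conceptual ingredients -- tameness and dimensional vanishing -- are standard, but verifying that the non-divisor correction collapses cleanly to the stated quadratic polynomial in the intersection numbers $d_0,d_1,d_2,d_3$ is the technical crux of the argument, analogous in spirit to (but considerably simpler than) the mirror-map calculation required for $\mathrm{dP}_5(0,0)$ in \cref{thm:dP5}.
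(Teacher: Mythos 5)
Your Step 1 is fine and is exactly what the paper does: the closed form for $N^{\rm loc,\psi}_{0,d}$ is an application of \cref{lem:localgw} with the toric splitting $D_i=(H-E_i-E_j)+E_j$, and your factorial bookkeeping reproduces the stated formula.

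The gap is in Step 2, and it is essential. Your dimension-count argument ("one-point primary invariants vanish, hence by the divisor axiom the quantum product of two divisor classes is undeformed") is only valid for $D$-convex classes, where the moduli space is proper and the obstruction bundle has rank $\sum_j(d\cdot D_j-1)$, so that non-equivariant degree reasoning applies. The small quantum product, however, sums over \emph{all} effective classes, including $d=nE_j$ and $d=n(H-E_j)$, for which $d\cdot D_i=0$ for some $i$; these are not $D$-convex, the naive rank/dimension count fails, and the corresponding contributions (defined equivariantly, and visible directly in the $I$-function \eqref{eq:JfundP3}) are nonzero. Indeed the paper's proof of \cref{thm:dP33comp} explicitly identifies quantum corrections of the form $1/n^2$ in exactly these degrees, so your blanket claim that divisor products are classical — and in particular that $H\bullet H=[\pt]$ is undeformed — is false: e.g.\ the classes $n(H-E_j)$ correct $H\star H$ with nonzero coefficient. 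The actual mechanism, which is the crux of the paper's argument and which your Step 3 cannot reach from the false premise, is that only the specific quadratic combination
$\de_{t_5}^2-\sum_{i}\de_{t_i}\de_{t_5}+\sum_{i<j}\de_{t_i}\de_{t_j}$
is correction-free, because the quadratic form $d_0^2-d_0(d_1+d_2+d_3)+d_1d_2+d_1d_3+d_2d_3$ vanishes precisely on the correcting classes $nE_j$ (degree vector $(0,\dots,n,\dots,0)$) and $n(H-E_j)$ (degree vector with $d_j=0$ and the other entries equal to $n$). Hence that combination of quantum products equals the classical cup product, which is Poincar\'e dual to the point class, and the quantum differential equation \eqref{eq:qdedP3} then converts this into $N^{\rm loc}_{0,d}=\big(d_0^2-d_0(d_1+d_2+d_3)+d_1d_2+d_1d_3+d_2d_3\big)N^{\rm loc,\psi}_{0,d}$. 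Separately, the identity you invoke in Step 3, $\langle[\pt],[\pt]\rangle_{0,2,d}=\langle 1,[\pt],[\pt]\psi\rangle_{0,3,d}$, is not the string equation (the string equation would kill the right-hand side's fundamental-class insertion without introducing a $\psi$), so even the reduction to a Birkhoff-factorisation computation is not set up correctly as written.
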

\begin{proof}
In the notation of the proof of \cref{lem:localgw}, for $i,j=2, \dots, 5$ the components of the small $J$-function of $E_{\mathrm{dP}_3(0,0,0))}$ satisfy the quantum differential equations
\beq
z \nabla_{\varphi_i}\nabla_{\varphi_j} J^{E_{\mathrm{dP}_3(0,0,0))}}_{\rm small}(t,z) = \nabla_{\varphi_i \star_t \varphi_j} J^{E_{\mathrm{dP}_3(0,0,0))}}_{\rm small}(t,z)	\label{eq:qdedP3}
\eeq
where $\a \star_t \b$ denotes the small quantum cohomology product, and the cohomology classes $\{ \varphi_1=\mathbf{1}_{H_T(E_{\mathrm{dP}_3(0,0,0))}}, \dots, \varphi_5\}$ are denoted as in the proof of \cref{lem:localgw}.  We take $\{\varphi_i\}_{i=2}^{5}$ to be the basis elements of $\hhh_T(E_{\mathrm{dP}_3(0,0,0))})$ given by lifts to $T$-equivariant cohomology of the integral K\"ahler classes dual to $\{C_i \in \hhh_2(\mathrm{dP}_3, \bbZ)\}_i$ with $C_{i+1}=E_i$, $i=1,2,3$, and $C_5=H-E_1-E_2-E_3$, and an effective curve will be written $d=d_0 C_5 +\sum_{i=1}^3 d_i C_{i+2}$.
From the proof of \cref{lem:localgw}, the small $J$-function in the tame setting equates the $I$-function,
\bea
& & J_{\rm small}^{E_{\mathrm{dP}_3(0,0,0)}}(t, z)  = \sum_{d_i>0} \re^{\sum_{i=0}^3 t_{i+2} d_i}\Bigg[
\frac{(-1)^{d_1+d_2+d_3}  \left(\phi _2-\lambda _1\right) \left(\phi _3-\lambda _2\right)}{z^2 \left(\frac{z+\phi _2+\phi _3-\phi _5}{z}\right)_{-d_0+d_1+d_2} \left(\frac{z+\phi _2+\phi _4-\phi
   _5}{z}\right)_{-d_0+d_1+d_3} } \nn \\
& &
\frac{\left(\phi _4-\lambda
   _3\right) \left(\frac{z-\lambda _1+\phi _2}{z}\right)_{d_1-1} \left(\frac{z-\lambda _2+\phi _3}{z}\right)_{d_2-1} \left(\frac{z-\lambda _3+\phi
   _4}{z}\right)_{d_3-1}}{\left(\frac{z+\phi _3+\phi _4-\phi _5}{z}\right)_{-d_0+d_2+d_3} \left(\frac{z-\phi _2+\phi _5}{z}\right)_{d_0-d_1}
   \left(\frac{z-\phi _3+\phi _5}{z}\right)_{d_0-d_2} \left(\frac{z-\phi _4+\phi _5}{z}\right)_{d_0-d_3}}
\Bigg].
\label{eq:JfundP3}
\eea
By \eqref{eq:Jsmall}, the small quantum product can be computed from the $\cO(z^{-1})$ formal Taylor coefficient of \eqref{eq:JfundP3} as
\beq
\varphi_i \star_t \varphi_j = \sum_\a \varphi_\a \Big[z^{-1} \varphi_\a\Big]  \de^2_{t_i t_j} J^{E_{\mathrm{dP}_3(0,0,0))}}_{\rm small}(t,z)\,.
\eeq
Inspection of \eqref{eq:JfundP3} shows that the r.h.s. receives quantum corrections of the form $1/n^2$ from curves with either $d_i =\delta_{ij} n$ or $d_i=(1-\delta_{ij}) n$ and $j \neq 0$, $n \in \bbN^+$, with vanishing contributions in all other degrees. This implies that
\beq
\l(\de^2_{t_5}-\de_{t_2}\de_{t_5}-\de_{t_3}\de_{t_5}-\de_{t_4}\de_{t_5}+\de_{t_2}\de_{t_3}+\de_{t_3}\de_{t_4}+\de_{t_2}\de_{t_4}\r)  \Big[z^{-1} \Big] J^{E_{\mathrm{dP}_3(0,0,0))}}_{\rm small}(t,z) = 0\,,
\eeq
which amounts to
\beq
\varphi_5 \star_t \varphi_5 - \sum_{j=2}^4 \varphi_5 \star_t \varphi_i+ \sum_{j>i=2}^5 \varphi_i \star_t \varphi_j=\varphi_5 \cup \varphi_5 - \sum_{j=2}^4 \varphi_5 \cup \varphi_i+ \sum_{j>i=2}^4 \varphi_i \cup \varphi_j\,.
\eeq
It is immediate to verify that the r.h.s. is the Poincar\'e dual of the point class. Therefore, from \eqref{eq:qdedP3},
\beq
N^{\rm loc}_{0,d}(\mathrm{dP}_3(0,0,0))=\l(d_0^2-d_1 d_0-d_2 d_0-d_3 d_0+d_1 d_2+d_1 d_3+d_2 d_3 \r) N^{\rm loc, \psi}_{0,d}(\mathrm{dP}_3(0,0,0))\,,
\eeq
and the second equation in the statement follows by \cref{lem:localgw}.
\end{proof}

\subsubsection{$l=4$}
In this case $D$ is the toric boundary, and the invariants
$N^{\rm loc}_{0,d}(\bbF_0(0,0,0,0))$ were computed in \cite{BBvG1} by a strategy similar to that of \cref{thm:dP33comp}. The final result is the following Proposition.
\begin{prop}[\cite{BBvG1}, Theorem~3.1, Corollary~6.4]
\beq \label{eq:local_4_comp}
N^{\rm loc}_{0,d}(\bbF_0(0,0,0,0)) = d_1^2 d_2^2 N^{\rm loc, \psi}_{0,d}(\bbF_0(0,0,0,0)) = 1\,,
\eeq
\label{thm:F04comp}
\end{prop}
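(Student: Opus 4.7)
The proof follows the template of \cref{thm:dP33comp}, adapted to $l=4$ for the Calabi--Yau $6$-fold $E_{Y(D)}=\mathrm{Tot}\big(\cO_{\fzero}(-H_1)^{\oplus 2}\oplus\cO_{\fzero}(-H_2)^{\oplus 2}\big)$. The computation splits into two stages: first the single-marked $\psi^{l-2}=\psi^2$-invariant is extracted from a Givental $I$-function, and then the three-point primary invariant is reduced to it via the quantum differential equations.

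For the descendant invariant I would first apply Givental's toric mirror theorem to the concave bundle above. Writing $d=d_1H_1+d_2H_2$ and $\kappa_i \coloneqq c_1^T(\cO(-H_i))$, the $I$-function of $E_{Y(D)}$ reads
\[
I^{E_{Y(D)}}(y,z) = z\mathbf{1} + \sum_{d\neq 0} y_1^{d_1}y_2^{d_2}\, z^{-3}\, \frac{\big(\kappa_1 (\kappa_1/z+1)_{d_1-1}\big)^2 \big(\kappa_2 (\kappa_2/z+1)_{d_2-1}\big)^2}{\big((\theta_1/z+1)_{d_1}\big)^2 \big((\theta_2/z+1)_{d_2}\big)^2}.
\]
Strictly speaking \cref{lem:localgw} does not apply since the $H_i$ are nef but not ample while $n+r=4$; however, direct inspection shows that the contributions with some $d_i=0$, after cancellation between the vanishing $\kappa$ factors in the numerator and the $\theta$ factors in the denominator, carry no $\cO(z^0)$ term. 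Hence the mirror map is uncorrected, $J_{\rm small}=I^{E_{Y(D)}}$ on the small parameter space, and extracting the $\cO(z^{-2})$ coefficient at the equivariant point class returns $N^{\rm loc,\psi}_{0,d}(\fzero(0,0,0,0)) = 1/(d_1^2 d_2^2)$, as in the formal specialisation of \cref{lem:localgw}.

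To upgrade to the $3$-point invariant, I would iterate the quantum differential equation
\[
z^2 \nabla_{\varphi_i}\nabla_{\varphi_j}\nabla_{\varphi_k} J_{\rm small} = \nabla_{\varphi_i \star_t \varphi_j \star_t \varphi_k} J_{\rm small}
\]
with insertions taken in the two divisor directions $\varphi_{H_1},\varphi_{H_2}$, in the spirit of \cref{thm:dP33comp}. Reading off the small quantum product from $J_{\rm small}=I^{E_{Y(D)}}$, one verifies that the triple quantum products $\varphi_{H_1}^{\star 2}\star_t\varphi_{H_2}$ and $\varphi_{H_1}\star_t\varphi_{H_2}^{\star 2}$ coincide with their classical counterparts when paired with $[\pt]$: the only potential quantum corrections come from classes with some $d\cdot D_i=0$, which are killed by the point constraints. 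Combined with the previous step this yields $N^{\rm loc}_{0,d}(\fzero(0,0,0,0)) = d_1^2 d_2^2\, N^{\rm loc,\psi}_{0,d}(\fzero(0,0,0,0)) = 1$.

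The main obstacle is the combinatorial verification that all triple quantum corrections projected onto $[\pt]$ vanish: there are more terms to track than in \cref{thm:dP33comp}, and classes supported on a single ruling of $\fzero$ require separate handling. A cleaner alternative I would pursue in parallel is direct equivariant localisation on $[\ol{\mmm}_{0,3}(E_{Y(D)},d)]^{\rm vir}$, where the torus-fixed loci correspond to multicovers of torus-invariant rational curves in $\fzero$ meeting three point constraints; a systematic case analysis should then collapse the sum to the single value~$1$.
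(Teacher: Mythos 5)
Your proposal is correct and matches the paper's route: the paper does not reprove this statement but cites \cite{BBvG1}, where the invariants are computed ``by a strategy similar to that of \cref{thm:dP33comp}'' --- precisely your two steps of an uncorrected mirror map for the rank-four concave bundle over $\bbF_0$ (giving $N^{\rm loc, \psi}_{0,d}=1/(d_1^2 d_2^2)$) followed by a quantum-product/associativity reduction of the three-point invariant producing the factor $d_1^2 d_2^2$. Your caveat about \cref{lem:localgw} is unnecessary once its hypothesis is read, as in its proof, in terms of $n+l$ (here $n+l=6>4$ with nef $D_i$), so the descendant computation follows directly from that lemma.
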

This concludes the calculation of local invariants with point insertions for nef Looijenga pairs. We collate the results in \cref{tab:localgw}.

\begin{table}[t]
    \centering
    \begin{tabular}{|c|c|c|}
    \hline
    $Y(D)$ & $N^{\rm loc, \psi}_{0,d}$ & $N^{\rm loc}_{0,d}/N^{\rm loc, \psi}_{0,d}$ \\
    \Xhline{2\arrayrulewidth}
    $\bbP^2(1,4)$ & $\frac{1}{2 d^2} \binom{2d}{d}$ & 1 \\ \hline
        $\bbF_0(2,2)$ &  \multirow{2}{*}{$\l(\frac{1}{d_1+d_2}\binom{d_1+d_2}{d_1} \r)^2$} & \multirow{2}{*}{1} \\ \cline{1-1}
$\bbF_0(0,4)$ & & \\ \hline
        $\delp_1(1,1)$ &  \multirow{2}{*}{$\frac{(-1)^{d_1}}{d_0 (d_1+d_0)}\binom{d_0}{d_1}  \binom{d_1+d_0}{d_0}$} & \multirow{2}{*}{1} \\ \cline{1-1}
$\delp_2(0,4)$ & & \\ \hline
        $\delp_2(1,2)$ &  \multirow{2}{*}{$\frac{(-1)^{d_0+d_1+d_2}}{d_0 (d_1+d_2)}\binom{d_0}{d_1} \binom{d_0}{d_2} \binom{d_1+d_2}{d_0}$} & \multirow{2}{*}{1} \\ \cline{1-1}
$\delp_2(0,3)$ & & \\ \hline
    $\delp_3(1,1)$ &  \multirow{2}{*}{$\frac{(-1)^{d_1+d_2+d_3} (d_0-1)!(d_1+d_2+d_3-d_0-1)!}{(d_0-d_1)!(d_0-d_2)!
(d_0-d_3)!(d_1+d_2-d_0)!(d_1+d_3-d_0)!(d_2+d_3-d_0)!}$} & \multirow{2}{*}{1} \\ \cline{1-1}
$\delp_3(0,2)$ & & \\ \hline
$\delp_4(1,0)$ & \multirow{2}{*}{\eqref{eq:dP5}$\Big|_{d_5 \to d_0}$} & \multirow{2}{*}{1}\\ \cline{1-1}
$\delp_4(0,1)$ & & \\ \hline
$\delp_5(0,0)$ & \eqref{eq:dP5} & 1 \\ \hline
\Xhline{2\arrayrulewidth}
$\bbP^2(1,1,1)$ & $\frac{(-1)^{d+1}}{d^3} $ & $d^2$ \\     \hline
$\bbF_0(2,0,0)$ & $-\frac{1}{d_1 d_2 (d_1+d_2)}\binom{d_1+d_2}{d_2}$ & $d_1 d_2$ \\ \hline
$\delp_1(1,1,0)$ & $\frac{(-1)^{d_1+1}}{d_0^2 d_1} \binom{d_0}{d_1}$ & $d_1 d_0$ \\     \hline
 $\delp_2(1,0,0)$ & $\frac{(-1)^{d_0+d_1+d_2+1}}{d_0 d_1 d_2} \binom{d_0}{d_1}\binom{d_1}{d_0-d_2}$ & $d_1 d_2$ \\ 
    \hline
 \multirow{2}{*}{$\delp_3(0,0,0)$} & \multirow{2}{*}{$\frac{(-1)^{d_1+d_2+d_3+1}}{d_1 d_2 d_3} \binom{d_1}{d_0-d_2}\binom{d_2}{d_0-d_3}\binom{d_3}{d_0-d_1}$} & $d_0^2-(d_1+d_2 +d_3)d_0$ \\
   & & $+d_1 d_2+d_1 d_3+d_2 d_3$ \\ 
    \Xhline{2\arrayrulewidth}
    $\bbF_0(0,0,0,0)$ & 1 & $d_1^2 d_2^2$ \\ \hline
    \end{tabular}
    \caption{Local Gromov--Witten invariants of nef Looijenga pairs.}
    \label{tab:localgw}
\end{table}

\section{Log Gromov--Witten theory}
\label{sec:loggw}

\subsection{Log Gromov--Witten invariants of maximal tangency}
\label{sec:loggwmaxtang}

Let $Y(D)$ be an $l$-component log~CY surface with maximal boundary. 
We endow $Y$ with the divisorial log structure coming from
$D$.  The log structure is used to
impose tangency conditions along the components $D_j$ of $D$.
In this paper we will be looking
at genus $g$ stable maps into $Y$ of class $d\in\hhh_2(Y,\ZZ)$ that meet each component $D_j$ in
one point of maximal tangency $d \cdot D_j$. The appropriate moduli space $\ol{\mmm}^{\log}_{g,m}(Y(D),d)$ of maximally
tangent basic stable log maps was constructed in all generality in
\cites{GS13,Chen14,AbramChen14}.

There are tautological classes $\psi_i:=c_1(L_i)$ for $L_i$ the $i^{\rm ith}$ tautological line
bundle on $\ol{\mmm}^{\log}_{g,m}(Y(D),d)$ whose fibre at $[f:(C,x_1,\dots,x_m)\to Y]$
is the cotangent line of $C$ at $x_i$. Let $\ev_i$ be the
evaluation map at the $i^{\rm th}$ marked point, and for $\pi : \mathcal{C} \to \ol{\mmm}^{\log}_{g,m}(Y(D),d)$ the universal curve with relative dualising sheaf $\omega_\pi$, denote by $\mathbb{E}\coloneqq\pi_*\omega_\pi$ the Hodge bundle, which is a rank $g$ vector bundle on $\ol{\mmm}^{\log}_{g,m}(Y(D),d)$. The $g^{\rm th}$ {\it lambda class} is its top Chern class $\lambda_g\coloneqq c_g(\mathbb{E})$.

We will be concerned with the virtual log~GW count count of genus $g$ curves in $Y$ of degree $d$ meeting $D_j$ in one point of maximal tangency $d\cdot D_j$, passing through $l-1$ general points of $Y$ and with insertion $\lambda_g$:
\begin{equation}\label{eq:def-Nlog-g-d}
N^{\rm log}_{g,d}(Y(D)):=\int_{[\ol{\mmm}^{\log}_{g,l-1}(Y(D),d)]^{\rm vir}} (-1)^g \, \lambda_g \, \prod_{j=1}^{l-1} \ev_j^*([{\rm pt}]).
\end{equation}
%
Furthermore, we will denote by $N^{\rm log, \psi}_{0,d}(Y(D))$ the genus~0 log~GW invariants of maximal tangency passing through 1 general point of $Y$ with psi class to the power $l-2$:
\begin{equation}\label{eq:def-Nlog-psi-g-d}
N^{\rm log, \psi}_{0,d}(Y(D)):=\int_{[\ol{\mmm}^{\log}_{0,1}(Y(D),d)]^{\rm vir}} \ev_1^*([{\rm pt}])\cup\psi_1^{l-2}.
\end{equation}
It will be useful in the following to define all-genus generating functions for the logarithmic invariants of $Y(D)$ at fixed degree,
\beq
\mathsf{N}^{\rm log}_d(Y(D))(\hbar) \coloneqq \frac{1}{\left( 
2 \sin \left( \frac{\hbar}{2} \right) \right)^{l-2}}
\sum_{g \geqslant 0} N^{\rm log}_{g,d} \hbar^{2g-2+l} \,.
\label{eq:GWexp}
\eeq
%


In the setting of Proposition \ref{prop:logCY}, it follows from \cite{AW} that
$N^{\rm log}_{g,d}(Y(D))$, resp.\ $N^{\rm log, \psi}_{0,d}(Y(D))$, equals the log GW invariant of $(\wt{Y},\wt{D})$, of class
$\varphi^*d$, with maximal tangency along each of the strict transforms of
$D_i$, not meeting the other boundary components and meeting $l-1$ general points
of $\wt{Y}$, resp.\ 1 point with psi class to the power $l-2$. The above numbers are deformation invariant in log~smooth families \cite{ManRu}.
The genus $0$ invariants 
$N^{\rm log}_{0,d}(Y(D))$
and 
$N^{\rm log, \psi}_{0,d}(Y(D))$
are moreover enumerative 
by \cite[Proposition 6.1]{Man19} since $Y$ has dimension 2 (and in particular is a cluster variety).

\subsection{Scattering diagrams}\label{sec:theta}

Our main tool for the calculation of \eqref{eq:def-Nlog-g-d}--\eqref{eq:def-Nlog-psi-g-d} will be their associated quantum scattering diagrams and quantum broken lines \cites{mandel2015scattering, bousseau2018quantum, MR4048291, davison2019strong, bousseau2020strong}. In the classical limit, in dimension 2 this is treated in \cites{GPS,GHKlog,Gro11} and in full generality in \cites{GS11,GHS16}. The quantum scattering diagram consists of an affine integral manifold $B$ and a collection of walls $\mathfrak{d}$ with wall-crossing funtions $f_{\mathfrak{d}}$. The latter are functions on open subsets of the mirror.

Let $\pi : (\wt{Y},\wt{D}) \longrightarrow (\ol{Y},\ol{D})$  
be a toric model as in Proposition \ref{prop:logCY} with $s$ interior blowups. Up to deformation, we may assume that the blowup points are disjoint. Note that 
$s=\chi_{\rm top}\left(Y\setminus D\right)=\chi_{\rm top}\left(\wt{Y}\setminus\wt{D}\right)$ is an invariant of the interior.
We construct an affine integral manifold $B$ from $\pi$ as follows. First, we start with the fan of $(\ol{Y},\ol{D})$. 
Then, for every interior blowup, we add a focus-focus singularity in the direction of the corresponding ray. In practice, we introduce cuts connecting the singularities to infinity and we use charts to identify the complements of the cuts with an open subset of $\bbR^2$.

Denote by $\delta_1,\dots,\delta_s$ the focus-focus singularities and let $B(\ZZ)$ be the set of integral points of $B\setminus\{\delta_1,\dots,\delta_s\}$. In the limit where the singularities are sent to infinity, $B(\ZZ)$ can be identified with the integral points of the fan of $(\ol{Y},\ol{D})$. The singularity $\delta_j$ corresponds to an interior blowup on a toric divisor $D(\delta_j)$ of $(\ol{Y},\ol{D})$ with exceptional divisor $\cE_j$. Viewing the ray of the fan of $(\ol{Y},\ol{D})$ corresponding to $D(\delta_j)$ as going from $(0,0)$ to infinity, denote by $\rho_j$ its primitive direction.

Each $\delta_j$ creates a quantum wall $\mathfrak{d}_j$ propagating into the direction $-\rho_j$ and decorated with the wall-crossing function $f_{\mathfrak{d}_j}:=1+t_j z^{\rho_j}$, where $t_j=t^{[\cE_j]}$ is a formal variable keeping track of the exceptional divisor and $z^{\rho_j}$ is the tangent monomial $x^ay^b$ if $\rho_j=(a,b)$. Note that the wall also propagates into the $\rho_j$ direction (decorated with $1+t_j z^{-\rho_j}$), but that part of the scattering diagram is not relevant to us. 

When two walls meet, this creates scattering: up to perturbation, we may assume that at most two walls $\mathfrak{d}_j$ and $\mathfrak{d}_k$ come together at one point, which in the following is taken to be the origin for simplicity. We refer to \cite{GPS} for the general case and only describe the explicit result in the two cases relevant to us:

\bit
\item $\det (\rho_j, \rho_k)=\pm 1$ (\emph{simple scattering}): the scattering algorithm draws an additional quantum wall $\mathfrak{d}$ in the direction $-\rho_j-\rho_k$ decorated with the function $1+t_jt_kz^{\rho_j+\rho_k}$.
\item $\det (\rho_j, \rho_k)=\pm 2$ (\emph{infinite scattering}): the algorithm creates a central quantum wall $\mathfrak{d}$ in the direction $-\rho_j-\rho_k$ decorated with the function
\beq
\prod_{\ell=-\frac{1}{2}({\rm ind}(\rho_j+\rho_k)-1)}^{\frac{1}{2}({\rm ind}(\rho_j+\rho_k)-1)}
(1-q^{-\frac{1}{2}+\ell} t_jt_kz^{\rho_j+\rho_k})^{-1}
(1-q^{\frac{1}{2}+\ell} t_jt_kz^{\rho_j+\rho_k})^{-1},
\eeq
where ${\rm ind}(\rho_j+\rho_k)$ is the index of $\rho_j+\rho_k$. 
We then add quantum walls $\mathfrak{d}_1,\dots,\mathfrak{d}_n,\dots$ in the directions $(n+1)\rho_j+n\rho_k$ decorated with functions
\beq
1+t_j^{n+1}t_k^nz^{(n+1)\rho_j+n\rho_k},
\eeq
for $n\geq0$, as well as quantum walls $\prescript{}{1}{\mathfrak{d}},\dots,\prescript{}{n}{\mathfrak{d}},\dots$ in the directions $n\rho_j+(n+1)\rho_k$ decorated with functions
\beq
1+t_j^{n}t_k^{n+1}z^{n\rho_j+(n+1)\rho_k},
\eeq
for $n\geq0$.
\eit
The classical scattering algorithm is recovered in the classical limit 
$q^{\frac{1}{2}}=1$. Only the central quantum wall in the case $\det (\rho_j, \rho_k)=\pm 2$ is different from its classical version, for which the wall-crossing function specialises to $(1-t_jt_kz^{\rho_j+\rho_k})^{-2 \, {\rm ind}(\rho_j+\rho_k)}$.

If $\mathfrak{u}$ and $\mathfrak{u}'$ are adjacent chambers of $B$ separated by the quantum wall $\mathfrak{d}$ decorated with $f_{\mathfrak{d}}$, we can define a {\it quantum wall-crossing transformation} $\theta_{\mathfrak{d}}$ from $\mathfrak{u}$ to $\mathfrak{u}'$ as follows.  Denote by $n_{\mathfrak{d}/\mathfrak{u}}$ the primitive orthogonal vector pointing from $\mathfrak{d}$ into $\mathfrak{u}$. 
Let $m$ be such that $\langle n_{\mathfrak{d}/\mathfrak{u}}, m \rangle \geq 0$.
For a polynomial $a$ in the variables $t_j$, consider an expression $az^m$, which we think of as a function on $\mathfrak{u}$. Then, writing
\beq f_{\mathfrak{d}} = \sum_{r \geq 0} c_r z^{r \rho_{\mathfrak{d}}} \,,\eeq
where $-\rho_{\mathfrak{d}}$ is the primitive direction of 
$\mathfrak{d}$, 
\beq
\theta_{\mathfrak{d}} \, : \, az^m \longmapsto a  z^m
\prod_{\ell=-\frac{1}{2}(\langle n_{\mathfrak{d}/\mathfrak{u}},m \rangle -1)}^{\frac{1}{2}(\langle n_{\mathfrak{d}/\mathfrak{u}},m \rangle -1)}
\left(
\sum_{r \geq 0} c_r q^{r \ell} z^{r \rho_{\mathfrak{d}}} \right) \,.
\eeq
Note that in the classical limit $q^{\frac{1}{2}}=1$, we recover the formula for the classical wall-crossing transformation, which is
%
$\theta^{cl}_{\mathfrak{d}}  :  az^m \longmapsto f_{\mathfrak{d}}^{\langle n_{\mathfrak{d}/\mathfrak{u}},m \rangle} a z^m.$
%
Writing $\theta_{\mathfrak{d}}(az^m) = \sum_i a_i z^{m_i}$, any summand $a_i z^{m_i}$ is called \emph{a result of quantum transport of $az^m$} from $\mathfrak{u}$ to $\mathfrak{u}'$. 

The final object we will need is the algebra of quantum broken lines associated to the scattering diagram, which we describe in the generality needed here (see \cite{GHS16} for full details in the classical limit).
Let $B_0:=B\setminus\{\delta_1,\dots,\delta_s,\mathfrak{d}_j\cap\mathfrak{d}_k \; \big{|} \; \forall j,k\}$.
Let $z^m$ be an \emph{asymptotic monomial}, in our case this means that $m=(a,b)\neq(0,0)$, and let $p\in B$. Then a \emph{quantum broken line} $\beta$ with asymptotic monomial $z^m$ and endpoint $p$ consists of:
\begin{enumerate}
\item a directed piece-wise straight path in $B_0$ of rational slopes, coming from infinity in the direction $-m$, bending only at quantum walls and ending at $p$;
\item a labelling of the initial ray by $L_1$ and the successive line segments in order by $L_2,\dots,L_s$, where $p$ is the endpoint of $L_s$;
\item if $L_i\cap L_{i+1}\in \mathfrak{d}_{i}$, then, iteratively defined from $1$ to $s$, the assignment of a monomial $a_iz^{m_i}$, where
\begin{itemize}
\item $a_{1} z^{m_{1}} = z^m$,
\item $a_{i+1}z^{m_{i+1}}$ is a result of the quantum transport of $a_{i}z^{m_{i}}$ across $\mathfrak{d}_{i}$,
\item $L_i$ is directed in the direction $-m_i$.
\end{itemize}
\end{enumerate}
Note that if $n_{\mathfrak{\fd}_i/L_i}$ is the primitive orthogonal vector to $\mathfrak{\fd}_i$ pointing into the half-plane containing $L_i$, then, as $L_i$ is directed in the direction $-m_i$, we have $\langle n_{\mathfrak{\fd}_i/L_i},m_i \rangle \geq 0$, and so the quantum transport of $a_{i}z^{m_{i}}$ across $\mathfrak{d}_{i}$ is indeed well-defined.
We call $a_{\fend}z^{m_{\fend}}=a_sz^{m_s}$ the \emph{end monomial} of $\beta$ and $a_{\fend}$ the \emph{end coefficient} of $\beta$.

If $z^m$ is an asymptotic monomial, the \emph{theta function} $\vartheta_m$ is the sum of the end monomials of all broken lines with asymptotic monomial $z^m$ and ending at $p$. Note that a priori $\vartheta_m$ depends on $p$, but it is one of the main results of \cite{GHS16} that it is constant in chambers and transforms from chamber to chamber according to the wall-crossing transformations.

We first describe the classical algebra of theta functions, i.e.\ we set $q^{\frac{1}{2}}=1$. For $A$ an element in the algebra of theta functions, we denote by $\langle A, \vartheta_m \rangle$ the coefficient of $\vartheta_m$ in $A$; note that $\langle A, \vartheta_{m} \rangle$ is a polynomial in the $t_j$. 
Then the identity component $\langle \vartheta_{m^1} \cdot \vartheta_{m^2}, \vartheta_{0} \rangle$ is given as the sum of products of end coefficients $a^1_{\fend}a^2_{\fend}$ over all broken lines $\beta_1$ with asymptotic monomial $z^{m^1}$ and $\beta_2$ with asymptotic monomial $z^{m^2}$ such that $m^1_{\fend}=-m^2_{\fend}$. The identity component $\langle \vartheta_{m^1} \cdot \vartheta_{m^2} \cdot \vartheta_{m^3}, \vartheta_{0} \rangle$ is given as the sum of products of end coefficients $a^1_{\fend}a^2_{\fend}a^3_{\fend}$ over all broken lines $\beta_1$, $\beta_2$, $\beta_3$, with asymptotic monomials $z^{m^1}$, $z^{m^2}$, $z^{m^3}$ and such that $m^1_{\fend}+m^2_{\fend}+m^3_{\fend}=0$.

For $(Y(D=D_1+\cdots + D_l))$, consider the scattering diagram associated to a toric model $\pi$ coming from a diagram as in Proposition \ref{prop:logCY}:
\beq\label{eq:prop24}
\xymatrix{
 & \wt{Y}(\wt{D}) \ar[ld]_\varphi \ar[rd]^\pi & \\
Y(D) & &  \ol{Y}(\ol{D})
}
\eeq
Then the proper transform and pushforward of $D_j$ is a toric divisor in $\ol{Y}$ corresponding to a ray in $B$. Up to reordering the indices, we assume that the ray corresponding to $D_j$ is directed by $\rho_j$.

\begin{prop}[\cite{Man19}]\label{prop:calc-log-gw}

Let $Y(D)$ be an $l$-component log Calabi--Yau surface of maximal boundary.  Let $d\in\hhh_2(Y,\ZZ)$ be an effective curve class and write $e_j:=d\cdot \varphi_* \cE_j$ for $j=1,\dots,s$, where $\varphi$ is as in \eqref{eq:prop24}.
\begin{itemize}
\item Assume that $l=2$. Set $m^1=(d\cdot D_1)\rho_1$ and $m^2=(d\cdot D_2)\rho_2$. Then $N_{0,d}^{\rm log}(Y(D))$ is the coefficient of $\prod_{j=1}^{s} t_j^{e_j}$ in $\langle \vartheta_{m^1} \cdot \vartheta_{m^2}, \vartheta_{0} \rangle$.
\item Assume that $l=3$. Set $m^1=(d\cdot D_1)\rho_1$, $m^2=(d\cdot D_2)\rho_2$ and $m^3=(d\cdot D_3)\rho_3$. Then $N^{\rm log,\psi}_{0,d}(Y(D))$ is the coefficient of $\prod_{j=1}^{s} t_j^{e_j}$ in $\langle \vartheta_{m^1} \cdot \vartheta_{m^2} \cdot \vartheta_{m^3}, \vartheta_{0} \rangle$. 
\end{itemize}
\end{prop}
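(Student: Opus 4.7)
The plan is to establish this via the tropical-to-log correspondence theorem for log Calabi--Yau surfaces, following the strategy developed in \cite{Man19} building on the Gross--Pandharipande--Siebert \cite{GPS} program. First I would reduce to the toric model: by invariance of log GW invariants under corner blowups \cite{AW}, I may replace $Y(D)$ by $\wt{Y}(\wt{D})$ in \eqref{eq:prop24}. Then $\wt{Y}$ arises from the toric variety $\ol{Y}$ by $s$ interior blowups at smooth points of $\ol{D}$, and the curve class $d$ splits into a toric part together with multiplicities $e_j=d\cdot \varphi_*\cE_j$ along the exceptional divisors, tracked by the formal variables $t_j$.

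Next, I would interpret each wall of the scattering diagram as a generating series of log GW counts: the function $1+t_j z^{\rho_j}$ at the focus-focus singularity $\delta_j$ encodes, by \cite{GPS}, the virtual count of rational log curves in $\wt{Y}$ with maximal tangency at one unspecified point of the toric divisor $D(\delta_j)$ and intersecting $\cE_j$ with multiplicity one. Scattering at wall crossings is forced by consistency, and the resulting wall functions---both in the simple and infinite scattering cases---themselves compute log GW invariants of $\wt{Y}$ for the corresponding multi-class. A broken line with asymptotic monomial $z^{m^i}$ ending at a general base-point $p$ then represents a tropical log disk, and its end coefficient is, via the refined tropical correspondence theorem, the virtual count of log disks in $\wt{Y}$ of the prescribed tropical type.

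For $l=2$, set $m^i=(d\cdot D_i)\rho_i$, so each broken line records maximal tangency along $D_i$. Two broken lines meeting at $p$ with $m^1_{\fend}+m^2_{\fend}=0$ glue to a rational tropical curve whose correspondence yields a rational log curve in $\wt{Y}(\wt{D})$ of class $d$ with maximal tangency along each $D_i$ and passing through $p$; reading off the $\prod_j t_j^{e_j}$ coefficient of $\langle \vartheta_{m^1}\cdot \vartheta_{m^2}, \vartheta_0\rangle$ counts precisely these and hence gives $N^{\rm log}_{0,d}(Y(D))$. For $l=3$, gluing three broken lines at $p$ yields a trivalent tropical vertex, and dimensional counting shows the corresponding moduli count is one too large; the $\psi_1$ insertion in \eqref{eq:def-Nlog-psi-g-d} exactly corrects this dimension via the compatibility of tropical $\psi$-integrals with broken-line counts established in \cite{Man19}, producing $N^{\rm log, \psi}_{0,d}(Y(D))$.

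The main technical obstacle is the tropical correspondence theorem itself in infinite-scattering regimes (the $\det(\rho_j,\rho_k)=\pm 2$ case), where the wall function is no longer polynomial and the count involves summing over an infinite family of secondary walls generated by the scattering algorithm; one must show that the combinatorial end-coefficient assembly matches the virtual log count via the Abramovich--Chen--Gross--Siebert decomposition formula applied to a degeneration along the tropical type. Separately, for $l=3$, one must verify that the $\psi_1^{l-2}$ insertion corresponds precisely to the difference between the three-broken-line tropical count and the virtually zero-dimensional log moduli problem. Both are the content of \cite{Man19}; modulo these, the Proposition reduces to careful bookkeeping of tropical multiplicities.
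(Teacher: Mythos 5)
Your proposal is correct in outline and follows essentially the same route the paper relies on: the paper does not prove this Proposition itself but cites \cite{Man19}, and in the proof sketch of the quantum analogue (Proposition~\ref{prop:calc-log-gw-q}) it describes exactly your strategy — reduce to a toric model, study the degeneration of \cite{GPS}, and conclude via the toric tropical correspondence theorems of \cites{NiSi,ManRu}, with the structure constants of (two or three) theta functions matching the $2$-point, respectively $1$-point $\psi^{l-2}$, log invariants. Your treatment of the $l=3$ case and of infinite scattering defers the substantive verifications to \cite{Man19}, which is consistent with how the paper itself handles them.
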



We return to the algebra of quantum theta functions.
For every $m^1, m^2$ and $p \in B_0$, denote by $C_{m^1,m^2}$ the polynomial in the variables 
$t_j$ with coefficients in $\Z[q^{\pm \frac{1}{2}}]$ given as the sum of products of end coefficients $a^1_{\fend}a^2_{\fend}$ over all quantum broken lines $\beta_1$ with asymptotic monomial $z^{m^1}$ and $\beta_2$ with asymptotic monomial $z^{m^2}$, with common endpoint $p$ and such that $m^1_{\fend}=-m^2_{\fend}$. The polynomial $C_{m^1,m^2}$ is independent of the choice of 
$p \in B_0$. 


\begin{prop}\label{prop:calc-log-gw-q}
Let $Y(D)$ be an $l$-component log Calabi--Yau surface of maximal boundary. Let $d\in\hhh_2(Y,\ZZ)$ be an effective curve class and write $e_j:=d\cdot \varphi_* \cE_j$ for $j=1,\dots,s$, where $\varphi$ is as in \eqref{eq:prop24}.
\begin{itemize}
    \item Assume that $l=2$.
    Set $m^1=(d\cdot D_1)\rho_1$ and $m^2=(d\cdot D_2)\rho_2$. Then after the change of variables 
$q=e^{i \hbar}$, the series 
\beq \mathsf{N}^{\rm log}_d(Y(D))(\hbar) = \sum_{g \geqslant 0} N^{\rm log}_{g,d} \hbar^{2g}\eeq
is the $\hbar$-expansion of the $q$-polynomial which is the coefficient of  $\prod_{j=1}^{s} t_j^{e_j}$ in $C_{m^1,m^2}$.
\item Assume that $l=3$. Set $m^1=(d\cdot D_1)\rho_1$, $m^2=(d\cdot D_2)\rho_2$ and $m^3=(d\cdot D_3)\rho_3$.
Then after the change of variables 
$q=e^{i \hbar}$, the series 
\beq \mathsf{N}^{\rm log}_d(Y(D))(\hbar) = \frac{1}{ 
2 \sin \left( \frac{\hbar}{2} \right)} \sum_{g \geqslant 0} N^{\rm log}_{g,d} \hbar^{2g+1}\eeq
is the $\hbar$-expansion of the $q$-polynomial obtained as the sum over all quantum broken lines 
$\beta_1$ with asymptotic monomial $z^{m_1}$,
$\beta_2$ with asymptotic monomial $z^{m_2}$,
and $\beta_3$ with asymptotic monomial $z^{m_3}$, with common endpoint and such that 
$m^1_{\fend}+m^2_{\fend}+m^3_{\fend}=0$,
of 
\begin{equation}
\frac{[|\det (m^1_{\fend}, m^2_{\fend})|]_q}{[1]_q}a^1_{\fend}a^2_{\fend}a^3_{\fend}
\end{equation}
\end{itemize}
where $a^i_{\fend}z^{m^i_{\fend}}$ are the end monomials of the broken lines $\beta_i$ and where the $q$-integers $[\cdot]_q$ are defined in \eqref{eq:q-int} below. 

\end{prop}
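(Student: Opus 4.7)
The plan is to lift the proof of Proposition~\ref{prop:calc-log-gw} (Mandel's theorem \cite{Man19}) to its $q$-refined counterpart, by combining the $q$-scattering and broken-line algebra described in this section with the correspondence between higher genus log Gromov--Witten invariants with a $\lambda_g$-insertion and $q$-refined tropical counts established in \cites{MR3904449,bousseau2018quantum}. Concretely, the $q$-refined correspondence theorem identifies, after the substitution $q=e^{i\hbar}$, the series $\sum_{g\geq 0}N^{\rm log}_{g,d}\hbar^{2g-2+l}$ with a weighted sum over rational tropical curves $h\colon\Gamma\to B$ of degree $d$ with unbounded legs of weights $d\cdot D_j$ in the directions $\rho_j$ and passing through $l-1$ points in general position; each trivalent vertex $V$ of $\Gamma$ contributes a Block--G\"ottsche-type factor $[m_V(h)]_q/[1]_q$, where $m_V(h)=|\det(w_1,w_2)|$ is the lattice multiplicity of $V$. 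This reduces the proposition to checking that the quantities in the statement recover these $q$-refined tropical counts.

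For the $l=2$ case, I would show that the coefficient of $\prod_{j=1}^s t_j^{e_j}$ in $C_{m^1,m^2}$ equals the above $q$-refined tropical count. The key computation is to unwind the $q$-wall-crossing formula: when a broken line of outgoing direction $m$ crosses a wall $\mathfrak{d}$ with function $f_\mathfrak{d}=\sum_r c_r z^{r\rho_\mathfrak{d}}$, the symmetric product
\[
\prod_{\ell=-(\langle n_{\mathfrak{d}/\mathfrak{u}},m\rangle-1)/2}^{(\langle n_{\mathfrak{d}/\mathfrak{u}},m\rangle-1)/2}\Bigl(\sum_r c_r\,q^{r\ell}\,z^{r\rho_\mathfrak{d}}\Bigr)
\]
is precisely the generating function, in the weights $r$ attached to $f_\mathfrak{d}$, of the Block--G\"ottsche multiplicities at the trivalent tropical vertices created by fusing the parent tropical curves of $f_\mathfrak{d}$ with the straight segment. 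Composing along each broken line, taking the product of end coefficients of the two lines glued at $p$ with $m^1_{\fend}+m^2_{\fend}=0$, and summing over configurations matches the tropical count term-by-term. The independence of $p\in B_0$ is guaranteed by the consistency of the $q$-scattering diagram.

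For the $l=3$ case, the same argument runs along each broken line, but now three quantum broken lines with $m^1_{\fend}+m^2_{\fend}+m^3_{\fend}=0$ are glued at an interior trivalent vertex $p$, which is the tropical image of the unique marked point carrying the $\psi^{l-2}=\psi$ insertion. I would then argue, in the spirit of \cite[\S 5]{bousseau2018quantum}, that a single $\psi$-class at such an interior trivalent tropical vertex $V$ of lattice multiplicity $m_V=|\det(m^1_{\fend},m^2_{\fend})|$ introduces the factor $[m_V]_q/[1]_q$; the $q\to 1$ limit recovers the classical multiplicity $m_V$ attached to a $\psi$-marked trivalent vertex in \cite{Man19}, as $\lim_{q\to 1}[m]_q/[1]_q=m$. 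The overall normalisation $1/(2\sin(\hbar/2))=\pm i/[1]_q$ in \eqref{eq:GWexp} for $l=3$ is absorbed through the same identification.

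The main obstacle will be justifying the $\psi$-to-$[m_V]_q/[1]_q$ rule at the interior trivalent vertex in the $q$-refined setting: this requires a virtual localisation computation on the log moduli space, extending the genus-$g$ vanishing and $q$-refined vertex calculations of \cites{MR3904449,bousseau2018quantum} to the single-$\psi$-insertion regime. Once this rule is in place, the remaining steps reduce to a term-by-term comparison dictated by the structure of quantum broken lines and the consistency of the $q$-scattering diagram.
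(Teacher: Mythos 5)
Your overall strategy for the $l=2$ case is the same as the paper's (which is itself only a sketch): lift Mandel's genus-zero argument through the degeneration to the toric situation of \cite{GPS}, handled in higher genus with the techniques of \cite{bousseau2018quantum}, and then invoke the refined tropical correspondence theorem with $\lambda_g$-insertion of \cite{MR3904449}; your unwinding of the quantum wall-crossing into Block--G\"ottsche vertex weights is consistent with that.

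The $l=3$ part, however, contains a genuine conceptual error that would derail the argument as you have set it up. The series $\mathsf{N}^{\rm log}_d(Y(D))(\hbar)$ in the statement is built from the invariants $N^{\rm log}_{g,d}$ of \eqref{eq:def-Nlog-g-d}, i.e.\ genus-$g$ counts with $\lambda_g$ and $l-1=2$ \emph{point} insertions; no $\psi$-class appears anywhere in the higher-genus statement. The factor $[|\det(m^1_{\fend},m^2_{\fend})|]_q/[1]_q$ is simply the Block--G\"ottsche multiplicity of the interior trivalent vertex of the tropical curve passing through the two point constraints, and it comes directly from applying the refined correspondence theorem of \cite{MR3904449} to the two-point count — exactly as for the other vertices, which are encoded in the quantum broken lines. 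Your proposed ``$\psi$-to-$[m_V]_q/[1]_q$ rule'' is both unnecessary and false as stated: by \cref{prop:calc-log-gw} (i.e.\ \cite{Man19}), the $\psi$-marked vertex carries weight $1$, not $m_V$, so your claimed $q\to 1$ limit contradicts the genus-zero theorem you are trying to refine. The relation you have in mind (identical tropical curves, vertex weight $1$ for the $\psi$-invariant versus Block--G\"ottsche weight for the two-point invariant) is the comparison the paper draws \emph{after} the proposition, in deducing \cref{thm:log_dp3_0_0_0} from \cref{thm:log_dp3_0_0_0_psi}; it is not an input to the proof. Consequently the ``main obstacle'' you flag — a new virtual localisation computation for a $\psi$-insertion at the vertex — is a misdirected effort; once the $l=3$ case is correctly read as a two-point count, the proof closes by the same combination of \cite{GPS}, \cite{bousseau2018quantum} and \cite{MR3904449} used for $l=2$, with the $1/(2\sin(\hbar/2))$ normalisation matching \eqref{eq:GWexp}.
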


\begin{proof}
We only give a sketch of the proof.
The proof in  \cite{Man19} of the genus $0$ case, Proposition \ref{prop:calc-log-gw}, 
relies on studying the degeneration to a toric situation considered in 
\cite{GPS} and then on using 
a toric tropical correspondence theorem \cites{NiSi, ManRu}. In the higher genus case, the study of the degeneration of \cite{GPS} is done using the techniques introduced in 
\cite{bousseau2018quantum}, and then the result follows from the toric 
tropical correspondence theorem for higher genus log Gromov--Witten invariants with 
$\lambda_g$-insertion proved in \cite{MR3904449}.
\end{proof}

\subsubsection{Binomials and $q$-binomial coefficients}
In our applications of Propositions \ref{prop:calc-log-gw}
and \ref{prop:calc-log-gw-q}, we will mostly consider (quantum) broken lines bending along 
(quantum) walls $\mathfrak{f}_{\fd}$ decorated by a function of the form 
\beq f_{\mathfrak{\fd}} = 1+tz^{\rho_{\mathfrak{d}}} \,,\eeq
where $-\rho_{\mathfrak{d}}$ is the primitive direction of 
$\mathfrak{d}$.
By the binomial theorem, we have 
\beq f_{\mathfrak{d}}^{\langle n,m \rangle}
= (1+tz^{\rho_{\mathfrak{d}}})^{\langle n,m \rangle}
= \sum_{k=0}^{\langle n,m \rangle} \binom{\langle n,m \rangle}{k}
t^k z^{k \rho_{\mathfrak{d}}}\,. \eeq
Therefore, each application of transport across such a wall will produce a binomial coefficient,  
and
so our genus $0$ log Gromov--Witten invariants will be product of binomial coefficients.
By the $q$-binomial theorem, we have 
\beq
\prod_{\ell=-\frac{1}{2}(\langle n,m \rangle -1)}^{\frac{1}{2}(\langle n,m \rangle -1)}
\left(1+tq^{\ell} z^{ \rho_{\mathfrak{d}}} \right) 
= \sum_{k=0}^{\langle n,m \rangle} \qbinom{\langle n,m \rangle}{k}_q
t^k z^{k \rho_{\mathfrak{d}}}\,,
\eeq
where the $q$-binomial coefficients 
\beq \qbinom{N}{k}_q \coloneqq \frac{[N]_q!}{[k]_q![(N-k)]_q!} \eeq
are defined in terms of the $q$-factorials 
\beq [n]_q! \coloneqq \prod_{j=1}^n [j]_q \,,\eeq
where the $q$-integers are 
\begin{equation}\label{eq:q-int}
[n]_q:=q^{\frac{n}{2}}-q^{-\frac{n}{2}}\, .
\end{equation}
It follows that the formulas for the higher genus log Gromov--Witten invariants
$\mathsf{N}^{\rm log}_d(Y(D))(\hbar)$ will be obtained by replacing 
binomial coefficients by $q$-binomial coefficients in the
formulas for the genus $0$ invariant $N_{0,d}^{\rm log}(Y(D))$.

\subsection{Log Gromov--Witten invariants under interior blow-up}

\begin{prop}[Blow-up formula for log GW invariants]\label{prop:blowuplog}
Let $Y(D)$ be an $l$-component log CY surface with maximal boundary. 
Let $\pi : Y'(D') \to Y(D)$ be the $l$-component log CY surface with maximal boundary
obtained by an interior blow-up at a general point of $D$ with exceptional divisor $E$. Let $d$ be a curve class of $Y(D)$ and let $d'\coloneqq \pi^* d$. Then
\beq
N^{\rm log}_{g,d}(Y(D)) = N^{\rm log}_{g,d'}(Y'(D'))
\label{eq:logblup}
\eeq
and
\beq
N^{\rm log, \psi}_{0,d}(Y(D)) = N^{\rm log, \psi}_{0,d'}(Y'(D'))
\eeq
\label{prop:logblup}
\end{prop}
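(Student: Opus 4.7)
The discrete data match up: since $d' = \pi^* d$, we have $d' \cdot E = 0$ and $d' \cdot D'_j = d \cdot D_j$ for every $j$, so the degrees, maximal tangency conditions, and expected dimensions of the moduli spaces agree on both sides. Let $\bar\pi : \ol{\mmm}^{\log}_{g,m}(Y'(D'),d') \to \ol{\mmm}^{\log}_{g,m}(Y(D),d)$ denote the morphism induced by the blow-up $\pi : Y' \to Y$. The heart of the proof is a logarithmic analogue of Manolache's pushforward formula \cite[Proposition 5.14]{Man12} used in the local case of Proposition \ref{prop:locblup}, namely
\beq
\bar\pi_{*}\,[\ol{\mmm}^{\log}_{g,m}(Y'(D'),d')]^{\rm vir} \;=\; [\ol{\mmm}^{\log}_{g,m}(Y(D),d)]^{\rm vir}.
\eeq

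Given this identity, the proof concludes rapidly. The Hodge class $\lambda_g$ and the $\psi$-classes are pulled back from the moduli of prestable curves and are therefore compatible under $\bar\pi$. The evaluation maps factor through $\pi \circ \bar\pi$, and since $p$ is a general point of $D$ we can arrange the $l-1$ point insertions to lie in $Y \setminus \{p\}$, so that $\pi^{*}[\mathrm{pt}] = [\mathrm{pt}]$ in $Y'$. The projection formula then yields
\begin{align*}
N^{\rm log}_{g,d'}(Y'(D'))
&= \int_{[\ol{\mmm}^{\log}_{g,l-1}(Y'(D'),d')]^{\rm vir}} (-1)^g\,\lambda_g\,\prod_{j=1}^{l-1}\ev_j^{*}[\mathrm{pt}] \\
&= \int_{\bar\pi_{*}[\ol{\mmm}^{\log}_{g,l-1}(Y'(D'),d')]^{\rm vir}} (-1)^g\,\lambda_g\,\prod_{j=1}^{l-1}\ev_j^{*}[\mathrm{pt}] \;=\; N^{\rm log}_{g,d}(Y(D)),
\end{align*}
and the same argument with $(-1)^g \lambda_g$ replaced by $\psi_1^{l-2}$ handles $N^{\rm log,\psi}_{0,d}$.

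The main obstacle is establishing the pushforward identity for the virtual classes. Since $p$ is a general point of $D$ it lies in the smooth locus of a single component $D_i$, and $\pi$ is an isomorphism away from $p$ that is compatible with the log structures on $Y(D)$ and $Y'(D')$. One route is to emulate Manolache's obstruction-theoretic comparison in the log setting, exploiting the fact that a stable log map of class $d'$ cannot contain a component mapping dominantly to $E$ (because $d' \cdot E = 0$), which gives precise control on the fibres of $\bar\pi$ at log maps through $p$. A more geometrically transparent alternative, well aligned with the tropical and scattering-diagram perspective developed in this paper, is a degeneration-to-the-normal-cone argument: degenerate $Y$ to $Y' \cup_E \mathbb{P}(\mathcal{O}_E \oplus \mathcal{O}_E(-E))$ and apply the decomposition formula of \cite{abramovich2017decomposition}; the constraint $d' \cdot E = 0$ together with the maximal tangency conditions along $D'$ then forces all tropical curves with a non-trivial component on the $\mathbb{F}_1$-bubble to contribute trivially, leaving only the tropical curve concentrated on $Y'$, which gives the identity directly.
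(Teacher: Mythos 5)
Your reduction to a virtual-class comparison has the right shape, but neither of your two routes closes the gap that constitutes the actual content of this proposition, namely controlling the contributions of curves interacting with the exceptional locus. In route 1, the justification offered is incorrect: a stable log map in class $d'=\pi^*d$ can very well contain components mapping onto $E$ (a subcurve of class $kE$, $k>0$, is compensated by the rest of the curve meeting $E$ with total multiplicity $k$), and these positive-dimensional fibres of $\bar\pi$ over maps through $p$ are precisely what any virtual push-forward statement must handle. Moreover $\pi$ is not a morphism of log schemes from $(Y',D')$ to $(Y,D)$, since $\pi^*D=D'+E$, so even defining $\bar\pi$ on moduli of stable log maps requires an argument; the only off-the-shelf invariance result used in the paper, \cite{AW}, covers log \'etale modifications such as corner blow-ups and does not apply to interior blow-ups, and a ``log Manolache'' theorem is asserted rather than proved. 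In route 2, the claim that every configuration with a nontrivial component on the bubble ``contributes trivially'' is exactly what needs proof and is not forced by $d'\cdot E=0$: for instance the maximal tangency point with the blown-up component $D_i$ can migrate entirely onto the bubble, with a curve of degree $d\cdot D_i$ there attached along the gluing curve to a curve of class $\pi^*d-(d\cdot D_i)E$ in $Y'$. The paper's own proof (which runs the degeneration the other way, via a family with general fibre $Y'(D')$ and special fibre $Y(D)\cup\tilde{\PP}_j$, with $\tilde{\PP}_j$ a one-point blow-up of $\PP(\cO_{D_j}\oplus N_{D_j|Y})$) shows that the analogous bubble contributions are \emph{nonzero}: they are multiple covers of a $\PP^1$-fibre contributing $1/(d\cdot D_j)$, which cancels only against the multiplicity $d\cdot D_j$ in the degeneration formula. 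Nothing in your sketch produces, computes, or cancels such factors.

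Second, the higher-genus statement \eqref{eq:logblup} needs an argument you never give. In any such degeneration the insertion $(-1)^g\lambda_g$ distributes over the pieces, and the bubble contribution carries an extra class $e(H^1(C,\cO_C))=(-1)^g\lambda_g$; the paper kills all positive-genus contributions from the bubble using Mumford's relation $\lambda_g^2=0$, thereby reducing the correction factor to the genus-$0$ multiple-cover computation above. Without this step (or a substitute), neither of your routes establishes the result for $g>0$, and route 2 as written does not even address how the $\lambda_g$-class behaves under the decomposition formula.
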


\begin{proof}
Let $D_j$ be the irreducible component of $D$ containing the point that we blow-up.
We consider the degeneration of $Y(D)$ to the normal cone of $D_j$: the fibre over any point of $\A^1-\{0\}$  is 
$Y(D)$ and the special fibre over $\{0\}$ has two irreducible components, which are isomorphic to 
$Y(D)$ and a $\PP^1$-bundle $\PP_j$ over $D_j$, and are glued together along a copy of $D_j$. We blow-up a section of the closure of $D_j \times (\A^1-\{0\})$ in the total space of the degeneration. The resulting family has fibre $Y'(D')$ over any point of
$\A^1-\{0\}$, and a special fibre over $\{0\}$ given by the union of two irreducible components, which are isomorphic to $Y(D)$ and to the blow-up 
$\tilde{\PP}_j$ of $\PP_j$ at one point. 
We compare the invariants $N_{g,d}^{\rm log}$ and $N_{0,d}^{\rm log,\psi}$ of 
$Y(D)$ and $Y'(D')$ using this degeneration. Following the 
general strategy of Section 5 of \cite{bousseau2018quantum}, we obtain that the invariants 
of $Y(D)$ and $Y'(D')$ only differ by a multiplicative factor coming from multiple covers of a fibre of $\tilde{\PP}_j \rightarrow D_j$. By deformation invariance, we can assume that this fibre is a smooth $\PP^1$-fibre, with trivial normal bundle in 
$\tilde{\PP_j}$. Therefore, the correction factor is an integral over a moduli space of 
stable log maps to $\PP^1$ with extra insertion of the class 
$e(\hhh^1(C,\cO_C))=(-1)^g \lambda_g$. Because our genus $g$ invariants already contain an insertion of $\lambda_g$ and $\lambda_g^2=0$ for $g>0$ by Mumford's relation \cite{MR717614}, the correction factor only receives contributions from genus $0$. The genus $0$ corrections involves 
degree $d \cdot D_j$ stable log maps to $(\PP^1, \{0\} \cup \{ \infty \})$, fully ramified over $0$ and $\infty$. The corresponding moduli space is a point with an automorphism group of order $d \cdot D_j$ and so contributes $1/(d \cdot D_j)$. Because of the extra $(d \cdot D_j)$ multiplicity factor in the degeneration formula, the total multiplicative correction factor is $1$.
\end{proof}

\begin{rmk} A more visual way to see that \cref{prop:logblup} holds, at least in genus 0 and with no $\psi$-classes, is to notice that the invariants $N^{\rm log}_{0,d}(Y(D))$ are enumerative: blowing up a point away from the curves does not change the local geometry, and the counts remain the same. 
\end{rmk}

As a consequence of \cref{prop:blowuplog}, if we calculate $N^{\rm log}_{g,d}(Y(D))$, resp.\ $N^{\rm log, \psi}_{0,d}(Y(D))$, for all $g$, $d$, then we will know the invariants for all interior blow-downs of $Y(D)$. Therefore it is enough to calculate the invariant for the cases of highest Picard rank in \cref{prop:class,prop:class2}. In the following Section, we calculate the higher genus log invariants $\mathsf{N}^{\rm log}_d(Y(D))(\hbar)$ for all tame Looijenga pairs: using \cref{prop:blowuplog}, it is enough to consider the pairs
$\delp_3(1,1)$, $\delp_3(0,0,0)$ and $\bbF_0(0,0,0,0)$, which are treated in
\cref{prop:dp311,thm:log_dp3_0_0_0,thm:logf0_0000}. 

For non-tame pairs, the genus 0 invariants can be obtained by combining the log-local correspondence of \cref{thm_log_local} and \eqref{eq:dP5} in \cref{thm:dP5} giving the local invariants. For quasi-tame pairs we furthermore make the following general conjecture for the higher genus invariants $\mathsf{N}^{\rm log}_d(Y(D))(\hbar)$.

\begin{conj} \label{conj:log_defo}
Let $Y(D)$ and $Y'(D')$ be nef $2$-components log CY surfaces with maximal boundary such that the corresponding local geometries $E_{Y(D)}$ and 
$E_{Y'(D')}$ are deformation equivalent. Then, under suitable identification of $d$, we have 
\beq
\left( \prod_{j=1}^{l=2} [d \cdot D_j']_q \right) 
\mathsf{N}^{\rm log}_d(Y(D))(\hbar)=
\left( \prod_{j=1}^{l=2} [d \cdot D_j]_q \right)
\mathsf{N}^{\rm log}_d(Y'(D'))(\hbar) \,.
\eeq
\end{conj}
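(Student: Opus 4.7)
My first step is to verify the $\hbar \to 0$ specialisation as a consistency check. With $q = \re^{\ri\hbar}$ one has $[n]_q = 2\ri\sin(n\hbar/2) = \ri n\hbar + O(\hbar^3)$, so extracting the $\hbar^2$-coefficient of both sides and using $\mathsf{N}^{\rm log}_d(Y(D))(\hbar) = N^{\rm log}_{0,d}(Y(D)) + O(\hbar^2)$ reduces the statement to
\[
(d \cdot D_1')(d \cdot D_2')\, N^{\rm log}_{0,d}(Y(D)) = (d \cdot D_1)(d \cdot D_2)\, N^{\rm log}_{0,d'}(Y'(D')).
\]
Substituting into each side the identity $N^{\rm log}_{0,d}(Y(D)) = (d \cdot D_1)(d \cdot D_2)(-1)^{-d \cdot K_Y} N^{\rm loc}_{0,d}(Y(D))$ afforded by the genus-$0$ log-local correspondence of \cref{thm:log_local_intro}, the common product $(d \cdot D_1)(d \cdot D_2)(d \cdot D_1')(d \cdot D_2')$ cancels and the claim reduces to $(-1)^{-d \cdot K_Y} N^{\rm loc}_{0,d}(Y(D)) = (-1)^{-d' \cdot K_{Y'}} N^{\rm loc}_{0,d'}(Y'(D'))$, which follows from deformation invariance of the canonical class together with the definition of quasi-tameness.

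For the all-genus statement, my plan is to split the problem along the classification of quasi-tame $2$-component deformation classes provided by \cref{prop:local-deform}. In the case $\bbF_2(2,2) \sim \bbF_0(2,2)$, I would lift the Euler-sequence deformation $\bbF_2 \leadsto \bbF_0$ used in \emph{loc.\ cit.} to a log smooth flat family of two-component log Calabi--Yau pairs; since log Gromov--Witten invariants and intersection numbers are both locally constant in such families, the equality then follows immediately with each factor matched individually. For the pairs of type $\delp_r(1,4-r) \sim \delp_r(0,5-r)$, I would use the blow-up formula for log GW invariants (\cref{prop:logblup}) to reduce to the case of highest Picard rank, namely $r=4$.

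The essential difficulty lies in the ``same-surface'' cases $\bbF_0(0,4) \sim \bbF_0(2,2)$ and $\delp_4(1,0) \sim \delp_4(0,1)$, where the underlying surface is held fixed but the splitting of $-K_Y$ into two nef summands varies. Here my plan is to compute both sides via the $q$-refined scattering of \cref{prop:calc-log-gw-q}, using toric models adapted to each decomposition. For the tame representative the scattering diagram exhibits only finite scattering and produces a closed-form expression in $q$-binomial coefficients, a form also accessible through \cref{thm:logopen_intro}; for the non-tame partner, infinite scattering occurs, yielding an \emph{a priori} different closed form. The conjecture then translates into a family of non-trivial $q$-binomial identities, in the spirit of those recorded in \cref{conj:dp1binom}, which one expects to establish by classical $q$-hypergeometric summation techniques such as the $q$-Pfaff--Saalsch\"utz formula already invoked elsewhere in the paper.

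The main obstacle is exactly this last step: putting the infinite-scattering side in manageable closed form and proving the ensuing $q$-hypergeometric identities in full generality. A cleaner but more speculative alternative would be to first establish the all-genus log-open correspondence (\cref{conj:refocd}) for all quasi-tame pairs, and then to deduce the present conjecture by exhibiting a direct geometric relationship between the open targets $Y^{\rm op}(D)$ and $Y'^{\rm op}(D')$---for instance a framing change or a flop in the associated toric Calabi--Yau threefold---under which the topological vertex is known to transform in a controlled manner.
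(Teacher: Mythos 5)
The statement you are addressing is presented in the paper as a conjecture, and the paper's only supporting argument is exactly the one you give: the $q=1$ specialisation follows from the genus-zero log-local correspondence (\cref{thm_log_local}) together with deformation invariance of local Gromov--Witten invariants, while in higher genus the equality is reduced, via the $q$-refined scattering computations and blow-up formulas, to new non-trivial $q$-binomial identities arising from the comparison of finite and infinite scattering (cf.\ \cref{conj:dp1binom}, proved separately by Krattenthaler). Your proposal follows this same route — correct genus-zero verification, reduction of the remaining cases to $q$-hypergeometric identities which you leave open — so it matches the paper's treatment both in method and in what is actually established; the all-genus statement remains conjectural in both.
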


\cref{conj:log_defo} holds in the genus $0$, i.e.\ $q^{\frac{1}{2}}=1$ limit, as a corollary of 
the log-local correspondence given by \cref{thm_log_local} and of the deformation invariance of local Gromov--Witten invariants. In higher genus, \cref{conj:log_defo} translates to conjectural, new non-trivial $q$-binomial identities: see e.g. \cref{conj:dp1binom} for the cases of $\delp_1(0,4)$ and $\fzero(0,4)$.


\subsection{Toric models: $l=2$}

Extending \cite[Section 5]{Bou18} we find toric models for all $l=2$ nef log Calabi--Yau surfaces except for $\fzero(2,2)$, which we leave as an exercise to the reader. For each toric model, we draw the corresponding fans with focus-focus singularities.
Note that by \cite[Lemma 2.10]{Fri}, a log Calabi--Yau surface with maximal boundary $(\ol{Y},\ol{D})$ is toric if the sequence of self-intersection numbers of irreducible components of $\ol{D}$ is realised as the sequence of self-intersection numbers of toric divisors on a toric surface. Once we have the toric models, we calculate the part of the scattering diagram relevant to us, and by \cref{prop:calc-log-gw} the relevant structural coefficients for the multiplication of theta functions yield the maximal tangency log Gromov--Witten invariants. 

\subsubsection{Tame pairs: simple scattering} 

By \cref{prop:blowuplog}, it suffices to consider the case $Y(D)=\delp_3(1,1)$.
Start with $\ptwo(1,4)$. The anticanonical decomposition of $D$ is given by $D_1$ a line and $D_2$ a smooth conic not tangent to $D_1$. For notational convenience in what follows we will identify $D_1$, $D_2$, $F_1$ and $F_2$ with their strict transforms, resp.\ push-forwards, under blow-ups, resp.\ blow-downs.

Denote by $\mathrm{pt}$ one of the intersection points of $D_1$ and $D_2$ and by $L$ the line tangent to $D_2$ at $\mathrm{pt}$. We blow up $\mathrm{pt}$ leading to the exceptional divisor $F_1$. We further blow up the intersection of $F_1$ with $D_2$ and write $F_2$ for the exceptional divisor. Denote the resulting log Calabi--Yau surface with maximal boundary $\left(\wt{\ptwo(1,4)},\wt{D}\right)$, where $\wt{D}$ is the strict transform of $D$.

The toric model $\left(\ol{\ptwo(1,4)},\ol{D}\right)$ is given by blowing down the strict transform of $L$, resulting in $\ol{\ptwo(1,4)}=\ftwo$ and $\ol{D}=D_1\cup F_1 \cup F_2 \cup D_2$, with $F_1$ the $(-2)$-curve of $\ftwo$, $D_2$ a section of self-intersection $2$, and $D_1, F_2$ linearly equivalent to fibre classes. Labeling the toric boundary divisors with their self-intersections, we obtain the diagram in \cref{fig:p2scatt}.

\smallskip

\begin{figure}[h]
\begin{center}
\begin{tikzpicture}[smooth, scale=1.2]
\draw[step=1cm,gray,very thin] (-2.5,-2.5) grid (2.5,2.5);
\draw[thick] (0,0) to (-2.5,0);
\draw[thick] (0,-2.5) to (0,2.5);
\draw[thick] (0,0) to (1.25,2.5);
\node at (-0.55,-1.7) {$D_2(2)$};
\node at (1.5,1.7) {$D_1(0)$};
\node at (-0.55,1.7) {$F_1(-2)$};
\node at (-1.5,0.3) {$F_2(0)$};
\end{tikzpicture}
\end{center}
\caption{The toric model of $\bbP^2(1,4)$.}
\label{fig:p2scatt}
\end{figure}

To obtain the toric model for $\delp_3(1,1)$, we need to blow up a non-toric point on $F_2$ (and thus reproducing $L$), and three non-toric points on $D_2$. Tropically, this amounts to introducing a focus-focus singularity on the ray of $F_2$ and three on on the ray of $D_2$ as in \cref{fig:dpr3}. Walls emanate out of these focus-focus singularities. While they propagate into two directions, for our calculations only one direction matters (the other ray being close to infinity and thus non-interacting). We perturb the focus-focus singularities on $D_2$ horizontally.
\begin{figure}[!h]
\begin{tikzpicture}[smooth, scale=1.2]
\draw[step=1cm,gray,very thin] (-2.5,-2.5) grid (2.5,2.5);
\draw[thick] (0,0) to (-2.5,0);
\draw[thick] (0,-2.5) to (0,2.5);
\draw[thick] (0,0) to (1.25,2.5);
\node at (-0.3,-1.7) {$D_2$};
\node at (1.3,1.7) {$D_1$};
\node at (-0.3,1.7) {$F_1$};
\node at (-1.5,0.2) {$F_2$};
\node at (-2.3,0) {$\times$};
%
\node at (0,-1) {$\times$};
\node at (0,-1.5) {$\times$};
\node at (0,-2) {$\times$};
%
\end{tikzpicture}
\caption{The toric model of $\delp_3(1,1)$}
\label{fig:dpr3}
\end{figure}

The cone of curves is generated by $H-E_i-E_j$ for $1\leq i < j \leq r $ and the $E_i$. In particular, any curve class $d\in\hhh_2(\delp_3,\bbZ)$ can be written as $d=d_0(H-E_1-E_2-E_3)+d_1E_1+d_2E_2 +d_3E_3$.

\begin{thm}\label{prop:dp311}
We have
\begin{equation}
\mathsf{N}^{\rm log}_d(\delp_3(1,1))(\hbar) = \qbinom{d_3}{d_0-d_1}_q 
\qbinom{d_3}{d_0-d_2}_q
\qbinom{d_0}{d_3}_q
\qbinom{d_1+d_2+d_3-d_0}{d_3}_q \,,
\label{eq:dP311log}
\end{equation}
where $q=\re^{\ri \hbar}$.
\end{thm}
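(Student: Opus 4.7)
The plan is to apply \cref{prop:calc-log-gw-q} to the scattering diagram associated to the toric model of $\delp_3(1,1)$ depicted in \cref{fig:dpr3}, and compute $\mathsf{N}_d^{\log}(\delp_3(1,1))(\hbar)$ explicitly as a $q$-refined sum over pairs of broken lines. With the rays of the fan of $\mathbb{F}_2$ labelled by primitive directions $\rho_{D_1}=(1,2)$, $\rho_{D_2}=(0,-1)$, $\rho_{F_1}=(0,1)$, and $\rho_{F_2}=(-1,0)$, the asymptotic monomials to track are $z^{m^1}$ with $m^1=d_0\rho_{D_1}$ (since $d\cdot D_1=d_0$) and $z^{m^2}$ with $m^2=(d_1+d_2+d_3-d_0)\rho_{D_2}$ (since $d\cdot D_2=d_1+d_2+d_3-d_0$).

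First I would write out the full quantum scattering diagram explicitly. The four focus-focus singularities produce primary walls with wall-crossing functions of the form $1+t_i z^{(0,-1)}$ for $i=1,2,3$ on the $D_2$ side (with $t_i$ tracking the exceptional divisors $E_i$ on $\delp_3$) and $1+t_L z^{(-1,0)}$ on the $F_2$ side (tracking the fourth interior blow-up, whose push-forward on $\delp_3$ is the tangent line $L$). Every pairwise collision of a $D_2$-wall with the $F_2$-wall satisfies $\det(\rho_j,\rho_k)=\pm 1$, so only simple scattering occurs, producing walls in the direction $(1,1)$ whose wall-crossing functions are again of binomial type; tameness of $\delp_3(1,1)$ then guarantees that the diagram stabilises after finitely many iterations.

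Next I would enumerate pairs of quantum broken lines $(\beta_1,\beta_2)$ with asymptotic data $z^{m^1}, z^{m^2}$ whose end monomials cancel, $m^1_{\fend}+m^2_{\fend}=0$, and extract the coefficient of $\prod_j t_j^{e_j}$ (with $e_j=d\cdot \varphi_\ast\mathcal{E}_j$) in the sum of products $a^1_{\fend}a^2_{\fend}$. By the quantum binomial theorem, each bending at a wall of primitive form $1+tz^\rho$ contributes a factor $\qbinom{\langle n_{\fd/L},m\rangle}{k}_q$ to the end coefficient, while bendings at secondary walls contribute binomial-in-binomial corrections. The $D$-convexity of $d$ severely restricts the admissible bending multiplicities, and the requirement that the exponents of the $t_i$ and $t_L$ match the prescribed $e_j$ pins down a unique configuration of four independent bendings whose product reproduces \eqref{eq:dP311log}.

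The main obstacle will be the combinatorial bookkeeping: verifying that secondary-wall contributions either vanish or combine neatly into the four primary $q$-binomials appearing in \eqref{eq:dP311log}, and correctly matching which $q$-binomial factor arises from which bending via the push-forward identification $t_j \leftrightarrow t^{\varphi_\ast\mathcal{E}_j}$. A useful internal consistency check is provided by the $q\to 1$ specialisation, in which the product in \eqref{eq:dP311log} reduces to an ordinary product of binomial coefficients that, via the log/local correspondence of \cref{thm_log_local} combined with the closed form for $N_{0,d}^{\rm loc}(\delp_3(1,1))$ in \cref{tab:localgw}, must reproduce the genus $0$ log invariant $N_{0,d}^{\log}(\delp_3(1,1))$.
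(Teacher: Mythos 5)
Your overall strategy is the same as the paper's: take the toric model of \cref{fig:dpr3}, build the quantum scattering diagram, and apply \cref{prop:calc-log-gw-q} to extract the coefficient of $\prod_j t_j^{e_j}$ from a sum over pairs of quantum broken lines with cancelling end monomials. However, there is a genuine gap at the decisive step. Your claim that matching the exponents of $t_L$ and the $t_i$ to the prescribed $e_j$ ``pins down a unique configuration of four independent bendings whose product reproduces \eqref{eq:dP311log}'' is false. In the actual diagram the bending broken line (the one coming in from the $D_2$-direction) meets five relevant walls: the primary wall $1+t\,x^{-1}$, the three simple-scattering walls $1+t\,t_i\,x^{-1}y^{-1}$, and a further wall $1+t^2t_1t_2t_3\,x^{-2}y^{-3}$ produced by higher-order scattering. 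Writing $k,k_1,k_2,k_3,k_4$ for the bending multiplicities at these walls, the degree constraints ($k+k_1+k_2+k_3+2k_4=d_0$ and $k_i+k_4=d_0-d_{4-i}$, say) give only four equations in five unknowns, so there remains a one-parameter family of contributing configurations, not a unique one. The resulting expression is a single $q$-hypergeometric sum of products of five $q$-binomial coefficients over the free index $k$, and it is emphatically not term-by-term equal to the right-hand side of \eqref{eq:dP311log}.

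The missing idea is the closed-form evaluation of that sum: the paper resums it using the $q$-Pfaff--Saalsch\"utz identity \eqref{eq:qPS} (in the symmetric normalisation of the $q$-factorials used here), and indeed remarks that \cref{prop:dp311} is \emph{equivalent} to that identity. Your proposal neither identifies this sum nor supplies any mechanism to collapse it into the product of four $q$-binomials, and the hope that ``secondary-wall contributions either vanish or combine neatly'' is exactly what fails: the wall $1+t^2t_1t_2t_3\,x^{-2}y^{-3}$ contributes essentially through $k_4$. The $q\to 1$ consistency check via \cref{thm_log_local} and \cref{tab:localgw} is circular as a substitute (the paper's proof of the classical identity for this geometry goes through the $q=1$ specialisation of the same summation), so without the $q$-Pfaff--Saalsch\"utz step, or an equivalent summation argument, the proof does not reach \eqref{eq:dP311log}.
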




\begin{proof}
Write $t=z^{[L]}$ and let $t_i=z^{[E_i]}$. Since $D_1=H$, $D_2=2H-E_1-E_2-E_3$, we have the following intersection multiplicities:
$d \cdot D_1= d_0$, 
$d \cdot D_2=d_1+d_2+d_3-d_0$, 
$d \cdot E_i=d_0-d_i$.
All of the scattering is simple. The initial wall-crossing functions are drawn in Figure \ref{fig:dp3scatt}, and all successive functions are easily obtained. We have two broken lines, one coming from the $D_1$-direction with attaching monomial $(xy^2)^{d\cdot D_1}$ and one coming from the $D_2$-direction with attaching monomial $(y^{-1})^{d\cdot D_2}$. Provided we choose our endpoint $p$ to be sufficiently far into the $x$-direction, Figure \ref{fig:dp3scatt} contains all the relevant walls. We start from the broken line coming from the $D_2$ direction and summarise the wall-crossing functions attached to the walls it meets:

\begin{center}
\begin{tabular}{cccccc}
$\circleed{1}$ & $1+tx^{-1}$ & $\circleed{2}$ & $1+tt_3x^{-1}y^{-1}$ & $\circleed{3}$ & $1+tt_2x^{-1}y^{-1}$  \\ $\circleed{4}$ & $1+tt_1x^{-1}y^{-1}$ & 
 $\circleed{5}$ & $1+t^2t_1t_2t_3x^{-2}y^{-3}$  & &
\end{tabular}
\end{center}

Crossing these walls leads to $y^{d_0-d_1-d_2-d_3}$ mapping to
\begin{align*}
&\sstyle{ \qbinom{d_1+d_2+d_3-d_0}{k}_q \qbinom{d_1+d_2+d_3-d_0-k}{k_1}_q \qbinom{d_1+d_2+d_3-d_0-k}{k_2}_q \qbinom{d_1+d_2+d_3-d_0-k}{k_3}_q \qbinom{2d_1+2d_2+2d_3-2d_0-3k-k_1-k_2-k_3}{k_4}_q  } \\
&  \cdot \, \sstyle{ t^{k+k_1+k_2+k_3+2k_4} \, t_3^{k_1+k_4} \, t_2^{k_2+k_4} \, t_1^{k_3+k_4} \, x^{-k-k_1-k_2-k_3-2k_4} \, y^{d_0-d_1-d_2-d_3-k_1-k_2-k_3-3k_4} }.
\end{align*}
The intersection multiplicities with the divisors impose the following conditions:
\beq
k+k_1+k_2+k_3+2k_4=d_0, \quad k_1+k_4 = d_0-d_3, \quad k_2+k_4 = d_0-d_2, \quad k_3+k_4 = d_0-d_1.
\eeq
Choose as indeterminate $k$. For the coefficient to be non-zero, $0\leq k \leq d_1+d_2+d_3-d_0$. Then
\begin{center}
\begin{tabular}{cc}
$k_4=k+2d_0-d_1-d_2-d_3$, & $k_1 = d_1+d_2-d_0-k$, \\
$k_2 = d_1+d_3-d_0-k$, & $k_3 = d_2+d_3-d_0-k$.
\end{tabular}
\end{center}
Hence the sum of the coefficients of the broken lines is
\begin{align*}
&\sum_{k=0}^{\sstyle{d_1+d_2+d_3-d_0}} \sstyle{ \qbinom{d_1+d_2+d_3-d_0}{k}_q \qbinom{d_1+d_2+d_3-d_0-k}{d_3}_q \qbinom{d_1+d_2+d_3-d_0-k}{d_2}_q \qbinom{d_1+d_2+d_3-d_0-k}{d_1}_q \qbinom{2d_1+2d_2+2d_3-2d_0-3k-k_1-k_2-k_3}{k_4}_q} \\
=&\sum_{k=0}^{k(d_0,d_1,d_2,d_3)} \sstyle{ \qbinom{d_1+d_2+d_3-d_0}{k}_q \qbinom{d_1+d_2+d_3-d_0-k}{d_3}_q \qbinom{d_1+d_2+d_3-d_0-k}{d_2}_q \qbinom{d_1+d_2+d_3-d_0-k}{d_1}_q \qbinom{d_0}{k+2d_0-d_1-d_2-d_3}_q},
\end{align*}
where $k(d_0,d_1,d_2,d_3):=\min\{d_0,d_1+d_2-d_0,d_1+d_3-d_0,d_2+d_3-d_0\}$.
\begin{figure}[t]
\begin{tikzpicture}[smooth, scale=1.2]
\draw[step=1cm,gray,very thin] (-2.5,-2.5) grid (6.5,7.5);
\draw[<->] (-2.5,2) to (-1.5,2);
\draw[<->] (-2,1.5) to (-2,2.5);
\node at (-1.67,2.15) {$\scriptstyle{x}$};
\node at (-2.15,2.3) {$\scriptstyle{y}$};
\draw(-2.5,0) to (3.5,0);
\draw (0,-2.5) to (0,2.5);
\draw (0,0) to (1.25,2.5);
%
\node at (-2.3,0) {$\times$};
\node at (0,-2.3) {$\times$};
\node at (1,-2.3) {$\times$};
\node at (2,-2.3) {$\times$};
\draw (0,0) to (5.3,5.3);
\draw (1,-2.5) to (1,1.75);
\draw (2,-2.5) to (2,3.75);
\draw (1,0) to (4.4,3.4);
\draw (2,0) to (3.4,1.4);
\draw (1,1) to (2.35,3.7);
\draw (2,3) to (2.2,3.9);
\draw (3,3) to (5.8,7.2);
\draw (2,2) to (3.1,4.2);
\draw (2,1) to (3.4,3.8);
\node at (-0.5,0.2) {$\sstyle{1+tx^{-1}}$};
\node[rotate=90] at (-0.15,-0.65) {$\sstyle{1+t_1y^{-1}}$};
\node[rotate=90] at (0.85,-0.65) {$\sstyle{1+t_2y^{-1}}$};
\node[rotate=90] at (1.85,-0.65) {$\sstyle{1+t_3y^{-1}}$};
\draw[<->,thick] (2.5,-2.5) to (2.5,0) to (2.6,0.6) to (3,2) to (3.6,3.6) to (4,4.5) to (5.5,7.5);
\node at (3.5,-2.2) {$\sstyle{y^{d_0-d_1-d_2-d_3}}$};
\node at (4.9,7.2) {$\sstyle{x^{d_0}y^{2d_0}}$};
%
%
\node[shape=circle,draw,inner sep=1pt] at  (2.7,-0.2) {$\sstyle{1}$};
\node[shape=circle,draw,inner sep=1pt] at  (2.8,0.5) {$\sstyle{2}$};
\node[shape=circle,draw,inner sep=1pt] at  (3.15,1.85) {$\sstyle{3}$};
\node[shape=circle,draw,inner sep=1pt] at  (3.75,3.5) {$\sstyle{4}$};
\node[shape=circle,draw,inner sep=1pt] at  (4.2,4.5) {$\sstyle{5}$};
\node at (5,6.5) {$\bullet$};
\node at (4.9,6.6) {$\sstyle{p}$};
\end{tikzpicture}
\caption{$\scatt\delp_3(1,1)$}
\label{fig:dp3scatt}
\end{figure}
Therefore, we obtain
\( \mathsf{N}^{\rm log}_d(\delp_3(1,1))(\hbar)
= \)
\beq
\sum_{k \geqslant 0}
\sstyle{\qbinom{d_1+d_2+d_3-d_0}{k}_q
\qbinom{d_1+d_2+d_3-d_0-k}{d_3}_q
\qbinom{d_1+d_2+d_3-d_0-k}{d_2}_q
\qbinom{d_1+d_2+d_3-d_0-k}{d_1}_q
\qbinom{d_0}{k+2d_0-d_1-d_2-d_3}_q}\,.
\eeq

Writing the $q$-binomial coefficients in terms of $q$-factorials, 
and changing the indexing variable 
$k \mapsto k-d_0+\frac{d_1+d_2+d_3}{2}$, we have
\beq
\frac{[d_0]_q![d_1+d_2+d_3-d_0]_q!}{[d_1]_q![d_2]_q!
[d_3]_q!} 
\sum_{k}
\sstyle{\frac{[\frac{d_1+d_2+d_3}{2}-k]_q!}{
[\frac{d_1+d_2-d_3}{2}-k]_q!
[\frac{d_1+d_3-d_2}{2}-k]_q!
[\frac{d_2+d_3-d_1}{2}-k]_q!
[k-d_0+\frac{d_1+d_2+d_3}{2}]_q!
[k+d_0-\frac{d_1+d_2+d_3}{2}]_q!}}\,.
\eeq
We can resum this explicitly using the $q$-Pfaff--Saalsch\"utz identity\footnote{Note that unlike \cite{MR2128719,ZeilSaal}, we are using $q$-factorials and 
$q$-binomial coefficients symmetric under 
$q \mapsto q^{-1}$. This explains the absence in the above expression of the power 
$q^{n^2-k^2}$ which is present in \cite[Eq.~(1q)]{ZeilSaal}.
}, under its form given in \cite[Eq.~(1q)]{ZeilSaal}:
\beq
\sum_{k }
\frac{[a+b+c-k]_q!}{
[a-k]_q!
[b-k]_q!
[c-k]_q!
[k-m]_q!
[k+m]_q!}
= \qbinom{a+b}{a+m}_q
\qbinom{a+c}{c+m}_q
\qbinom{b+c}{b+m}_q \,.
\label{eq:qPS}
\eeq
Therefore, specialising \eqref{eq:qPS} to
$a+b=d_1$, $b+c=d_3$, $a+c=d_2$, 
$a+m=d_0-d_3$, 
$b+m=d_0-d_2$, $c+m=d_0-d_1$, we have 
\beq \mathsf{N}^{\rm log}_d(\delp_3(1,1))(\hbar)
=
\frac{[d_0]_q![d_1+d_2+d_3-d_0]_q!}{[d_1]_q![d_2]_q!
[d_3]_q!} 
\qbinom{d_1}{d_0-d_3}_q
\qbinom{d_2}{d_0-d_1}_q
\qbinom{d_3}{d_0-d_2}_q \,,
\eeq
which after elementary simplifications gives \eqref{eq:dP311log}.
%
%

\end{proof}

\begin{rmk}
It follows from the above proof that 
Theorem \ref{prop:dp311} is in fact equivalent to the $q$-Pfaff--Saalsch\"utz identity.
In genus 0, Theorem \ref{thm_log_local} applied to $\delp_3(1,1)$ gives a geometric proof of Theorem \ref{prop:dp311}. Thus, we obtain a new geometric, albeit quite indirect, proof of the classical ($q=1$) Pfaff--Saalsch\"utz
identity.
\end{rmk}

\subsubsection{Non-tame pairs: infinite scattering} \cref{fig:fzero0} gives the toric model of $\fzero(0,4)$. For the other non-tame pairs, let $1 \leq r \leq 5$. Then $\dpr(0,5-r)$ is obtained from $\ptwo(1,4)$ by blowing up the first point on the line $D_1$ and the remaining $r-1$ points on the conic $D_2$. Hence we obtain the toric model of $\dpr(0,5-r)$ by adding 1 focus-focus singularity on the ray $D_1$ and $r-1$ focus-focus singularities on the ray $D_2$, as in Figure \ref{fig:dpr0}. The singularities on the ray of $D_2$ can be perturbed horizontally.

\begin{figure}[h]
\caption{$\dpr(0,5-r)$}
\label{fig:dpr0}
\begin{tikzpicture}[smooth, scale=1.2]
\draw[step=1cm,gray,very thin] (-2.5,-2.5) grid (2.5,2.5);
\draw[thick] (0,0) to (-2.5,0);
\draw[thick] (0,-2.5) to (0,2.5);
\draw[thick] (0,0) to (1.25,2.5);
\node at (-0.25,-1.7) {$D_2$};
\node at (1.2,1.7) {$D_1$};
\node at (-0.25,1.7) {$F_1$};
\node at (-1.5,0.2) {$F_2$};
\node at (-2.3,0) {$\times$};
\node at (0.75,1.5) {$\times$};
\node at (0,-0.5) {$\times$};
\node at (0,-1) {$\times$};
\node at (0,-1.5) {$\times$};
\node at (0,-2) {$\times$};
\draw[decoration={brace,mirror,raise=5pt},decorate]
  (0.1,-2.2) -- node[right=6pt] {$r\text{-}1$} (0.1,-0.3);
\end{tikzpicture}
\end{figure}

Write a curve class $d\in\hhh_2(\delp_3(0,2),\bbZ)$ as $d=d_0(H-E_1-E_2-E_3)+d_1E_1+d_2E_2+d_3E_3$. As $D_1=H-E_1$ and $D_2=2H-E_2-E_3$, $d \cdot D_1=d_1$, $d \cdot D_2=d_2+d_3$.
As $E_{\delp_3(0,2)}$ is deformation equivalent to 
$E_{\delp_3(1,1)}$ by \cref{prop:local-deform}, \cref{conj:log_defo,prop:dp311} give the following conjecture.

\begin{conj}\label{conj:dp302}
The generating function $\mathsf{N}^{\rm log}_d(\delp_3(0,2))(\hbar)$ equals
\beq
\frac{[d_1]_q [d_2+d_3]_q}{[d_0]_q [d_1+d_2+d_3-d_0]_q}\qbinom{d_3}{d_0-d_1}_q 
\qbinom{d_3}{d_0-d_2}_q
\qbinom{d_0}{d_3}_q
\qbinom{d_1+d_2+d_3-d_0}{d_3}_q \,,
\eeq
where $q=\re^{\ri \hbar}$.
\end{conj}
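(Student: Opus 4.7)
The plan is to compute $\mathsf{N}^{\rm log}_d(\delp_3(0,2))(\hbar)$ directly via the $q$-scattering formalism of \cref{prop:calc-log-gw-q}, and then match the outcome to the conjectured closed form by a $q$-hypergeometric summation. Starting from the toric model of \cref{fig:dpr0} with $r=3$, i.e.\ the fan of $\ftwo$ augmented with one focus-focus singularity on each of the rays of $F_2$ and $D_1$ and two on the ray of $D_2$, I would write $t = z^{[L]}$ for the $F_2$-singularity and $t_i = z^{[E_i]}$ for the remaining three, and take asymptotic monomials $(xy^2)^{d_1}$ from the $D_1$-direction and $y^{-(d_2+d_3)}$ from the $D_2$-direction.

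The feature distinguishing this case from \cref{prop:dp311} is that the primitive ray vectors satisfy $\det(\rho_{F_2},\rho_{D_1}) = -2$, so the initial walls emanating from the $F_2$- and $D_1$-focus-focus singularities undergo \emph{infinite scattering}: the collision produces a central wall decorated by an infinite-product wall-crossing function, together with an infinite family of side walls, as described in \cref{sec:theta}. All other wall intersections between initial walls have determinant $\pm 1$, and so yield ordinary $q$-binomial factors upon quantum transport.

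Next, I would enumerate all pairs of $q$-broken lines meeting at a common generic endpoint $p$ chosen far from the origin and track their wall-crossings to assemble the polynomial $C_{m^1,m^2}$ of \cref{prop:calc-log-gw-q}. Extracting the coefficient of $\prod_{j=1}^3 t_j^{e_j}$ with $e_i = d_0 - d_i$ produces an explicit, multi-indexed $q$-hypergeometric sum built from $q$-binomials and the Pochhammer-type factors contributed by the infinite-scattering central wall.

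The final and most delicate step is to collapse this sum to the compact four-factor r.h.s.\ of \cref{conj:dp302}. By analogy with the proof of \cref{prop:dp311}, which reduced to the $q$-Pfaff--Saalsch\"utz identity, I expect this matching to be controlled by a higher $q$-hypergeometric transformation of $_4\phi_3$ or Bailey type. The main obstacle is precisely this combinatorial step: organising the infinite scattering into a manageable $q$-series and identifying and proving the $q$-identity that collapses the resulting multi-sum to the desired closed form. A potentially cleaner route would be to prove the higher-genus deformation statement \cref{conj:log_defo} directly for the pair $(\delp_3(0,2),\delp_3(1,1))$ by constructing a $q$-deformation of scattering diagrams lifting the local-geometry deformation of \cref{prop:local-deform}; combined with \cref{prop:dp311}, this would immediately yield \cref{conj:dp302}.
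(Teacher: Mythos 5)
Your proposal correctly reproduces the setup (the toric model of \cref{fig:dpr0} with four focus-focus singularities, the asymptotic monomials $(xy^2)^{d_1}$ and $y^{-(d_2+d_3)}$, and the fact that $\det(\rho_{F_2},\rho_{D_1})=\pm 2$ forces infinite scattering), but it does not contain a proof: the step you yourself flag as "the most delicate" — collapsing the multi-indexed $q$-sum produced by the infinite-scattering walls to the four-$q$-binomial closed form, via some $_4\phi_3$/Bailey-type identity you neither identify nor prove — is precisely the entire conjectural content of the statement. The paper does not prove \cref{conj:dp302} either: it is stated as a conjecture, obtained by combining the deformation equivalence $E_{\delp_3(0,2)}\simeq E_{\delp_3(1,1)}$ of \cref{prop:local-deform} with \cref{prop:dp311} \emph{through} the higher-genus deformation conjecture \cref{conj:log_defo}, and only its classical $q=1$ limit is established unconditionally (via \cref{thm_log_local} and deformation invariance of local invariants). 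The paper explicitly records that the direct scattering route you propose yields "intricate multiple $q$-sums", and even for the simpler blow-down specialisations $\delp_1(0,4)$ and $\bbF_0(0,4)$ the resulting identities are stated as \cref{conj:dp1binom} and were only settled by a combinatorial proof communicated by Krattenthaler, external to the paper. So your plan amounts to a restatement of the problem together with the paper's own heuristic derivation, not a proof.

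Your fallback route is also not available as stated: deducing \cref{conj:dp302} from \cref{conj:log_defo} plus \cref{prop:dp311} is exactly what the paper does to \emph{formulate} the conjecture, and \cref{conj:log_defo} is itself open beyond genus $0$; "constructing a $q$-deformation of scattering diagrams lifting the local-geometry deformation of \cref{prop:local-deform}" is a genuinely new idea that would need to be supplied, since the deformation of \cref{prop:local-deform} is a deformation of the local Calabi--Yau fourfolds $E_{Y(D)}$, not of the log surfaces, and no mechanism in the paper transports $q$-refined (i.e.\ $\lambda_g$-twisted higher-genus log) invariants across it. To turn your outline into a proof you would need either to perform the broken-line enumeration explicitly and prove the resulting $q$-binomial identity (as Krattenthaler did for the blow-down cases), or to prove the higher-genus deformation invariance statement itself.
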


\cref{thm_log_local} in \cref{sec:logloc} implies that  \cref{conj:dp302} holds in the classical limit $q^{\frac{1}{2}}=1$. 
Direct scattering computation for $\delp_r(0,5-r)$
with $r>1$ are particularly daunting owing to the present of infinite scattering, and in particular the final formulas take the shape of somewhat intricate multiple $q$-sums, which \cref{conj:dp302} predicts should take a remarkably simple $q$-binomial form. We exemplify this for the blow-down geometries $\delp_1(0,4)$ and $\bbF_0(0,4)$ in \cref{app:scatt}. For these cases, the specialisation of \cref{conj:dp302}  reduces to non-trivial, and apparently novel, conjectural $q$-binomial identities, see e.g. \cref{conj:dp1binom}.


\subsection{Toric models: $l=3$}

For $l=3$, recall from \eqref{eq:def-Nlog-psi-g-d}, resp.\ \eqref{eq:def-Nlog-g-d}, that $N^{\rm log, \psi}_{0,d}(Y(D))$, resp.\ $N^{\rm log}_{0,d}(Y(D))$, is the genus zero log Gromov--Witten invariant of maximal tangency passing through one point with psi-class, resp.\ passing through 2 points.
By Proposition \ref{prop:blowuplog}, it is enough to treat $\delp_3(0,0,0)$ as the other cases are obtained from it by interior blow-downs. 
We leave the description of the other toric models as an exercise to the reader.

Via \cref{prop:calc-log-gw}, the invariant $N^{\rm log, \psi}_{0,d}(Y(D))$ is calculated from the scattering diagram as a structural coefficient of the product of 3 theta functions. For each constellation of three broken lines, the union of these corresponds to a tropical curve in the degeneration encoded by the scattering diagram. It is counted with multiplicity the product of the coefficients of the final monomials of the broken lines. Using \cref{prop:calc-log-gw-q}, one can compute the generating series 
$\mathsf{N}^{\rm log}_d(Y(D))(\hbar)$ of higher genus $2$-point log Gromov-Witten invariants. The 
relevant tropical curves are identical to those entering the 
computation of $N^{\rm log, \psi}_{0,d}(Y(D))$. The difference is in the weighting of the tropical curves. For the $\psi$ class, the trivalent vertex at the endpoint of the broken lines carry weight $1$. For the $2$-point invariants, we consider quantum broken lines and the trivalent vertex is counted with Block--G\"ottsche multiplicity.

Let $Y(D)=\delp_3(0,0,0)$. We take $D_1$ in class $H-E_3$, $D_2$ in class $H-E_2$, $D_3$ in class $H-E_1$ and $d=d_0(H-E_1-E_2-E_3)+d_1E_1+d_2E_2+d_3E_3$. Then $d \cdot D_1=d_3$, $d \cdot D_2=d_2$, $d \cdot D_3=d_1$, $d \cdot E_1=d_0-d_1$, $d \cdot E_2=d_0-d_2$ and $d \cdot E_3=d_0-d_3$.
The calculations of Figure \ref{fig:delp3000} give for the broken line $\circleed{1}$ the contribution
\beq
\binom{d_1}{d_0-d_2}t_2^{d_0-d_2}x^{-d_1}y^{d_2-d_0},
\eeq
for the broken line $\circleed{2}$
the contribution
\beq
\binom{d_2}{d_0-d_3}t_3^{d_0-d_3}x^{d_0-d_3}y^{d_0-d_2-d_3},
\eeq
and from the broken line $\circleed{3}$ the contribution
\beq
\binom{d_3}{d_0-d_1}t_1^{d_0-d_1}x^{d_3+d_1-d_0}y^{d_3}.
\eeq
Taken together, we obtain the following result.

\begin{thm} \label{thm:log_dp3_0_0_0_psi}
\begin{equation}
N^{\rm log, \psi}_{0,d}(\delp_3(0,0,0))=\binom{d_1}{d_0-d_2}\binom{d_2}{d_0-d_3}\binom{d_3}{d_0-d_1}.
\end{equation}
\end{thm}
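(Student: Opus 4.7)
The plan is to use Proposition~\ref{prop:calc-log-gw} to reduce the computation of $N^{\rm log, \psi}_{0,d}(\delp_3(0,0,0))$ to a structural coefficient $\langle \vartheta_{m^1}\cdot\vartheta_{m^2}\cdot\vartheta_{m^3},\vartheta_0\rangle$ in the algebra of theta functions associated to a suitable toric model. First I would set up the scattering diagram: starting from the toric model of $\bbP^2(1,1,1)$ with three rays corresponding to $D_1,D_2,D_3$, I would introduce one focus-focus singularity on each ray (since $\delp_3(0,0,0)$ is obtained from $\bbP^2(1,1,1)$ by blowing up one non-toric point on each $D_i$). Labelling the exceptional divisors $E_1,E_2,E_3$ and the corresponding wall parameters $t_1,t_2,t_3$, each singularity emits a wall with initial wall-crossing function of the form $1+t_i z^{\rho_i}$, where $\rho_i$ is the primitive direction opposite to the ray of $D_{\sigma(i)}$ (for the appropriate matching dictated by $D_1\sim H-E_3$, $D_2\sim H-E_2$, $D_3\sim H-E_1$). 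All scattering here is simple, since $\det(\rho_i,\rho_j)=\pm 1$.

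Next I would identify the three broken lines contributing to the structural coefficient. The asymptotic monomials are $z^{m^i}=z^{(d\cdot D_i)\rho_i}$, with multiplicities $d\cdot D_1=d_3$, $d\cdot D_2=d_2$, $d\cdot D_3=d_1$. For a generic endpoint chosen in an appropriate chamber (as in Figure~\ref{fig:delp3000}), each asymptotic monomial has a unique bending pattern producing a non-zero end monomial whose direction will balance out, namely each of the three broken lines bends exactly once, crossing a single wall emitted by the ``opposite'' focus-focus singularity. By the binomial theorem, the single bending of the $i$-th broken line across the wall $1+t_jz^{\rho_j}$ produces a contribution $\binom{d\cdot D_i}{d_0-d_{\sigma(i)}}t_{\sigma(i)}^{d_0-d_{\sigma(i)}}$ to the end coefficient, where the exponent of $t_{\sigma(i)}$ is fixed by the intersection constraint $d\cdot E_{\sigma(i)}=d_0-d_{\sigma(i)}$.

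Finally, I would verify that with these three broken lines the end directions sum to zero (which is an immediate check using the primitive directions in the fan), so that they do constitute a valid tropical configuration contributing to $\langle \vartheta_{m^1}\cdot\vartheta_{m^2}\cdot\vartheta_{m^3},\vartheta_0\rangle$. Multiplying the three end coefficients and matching the total monomial in $t_1,t_2,t_3$ with the prescribed exponents $e_i=d\cdot E_i=d_0-d_i$ gives
\[
N^{\rm log, \psi}_{0,d}(\delp_3(0,0,0))=\binom{d_1}{d_0-d_2}\binom{d_2}{d_0-d_3}\binom{d_3}{d_0-d_1},
\]
as claimed.

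The main obstacle I anticipate is twofold: bookkeeping of which focus-focus wall each broken line must bend against (requiring a careful choice of endpoint chamber and of the perturbation of the singularities so that exactly one broken line per asymptotic direction contributes), and ruling out parasitic broken lines that bend multiple times. The latter is resolved by the observation that only simple scattering occurs in this diagram and that any further bending would force the end direction to leave the cone needed for the balancing condition, killing the contribution. The shape of the answer as a clean product of three binomials (rather than a sum) is a manifestation of the absence of higher-order scattering interactions in this tame, highest-Picard-rank $3$-component case.
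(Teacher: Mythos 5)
Your proposal is correct and is essentially the paper's own argument: the paper's toric model in Figure~\ref{fig:delp3000} is exactly $\bbP^2$ with one focus-focus singularity on each ray (up to an $\mathrm{SL}(2,\bbZ)$ change of coordinates), and the computation via Proposition~\ref{prop:calc-log-gw} proceeds through the same three broken lines, each bending once on the primary wall carrying $t_{\sigma(i)}$, with the exponents $d\cdot E_j=d_0-d_j$ fixing the binomial terms and the balancing check at the endpoint $p$. Your handling of the uniqueness of the contributing configuration (no parasitic multiple bends, finiteness/simplicity of the scattering) is at the same level of detail as the paper's figure-based verification, so there is nothing substantive to add.
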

For the 2-point invariant, the tropical multiplicity at $p$ is
\beq
\Big{|} \det \begin{pmatrix}
d_1 & d_1+d_3-d_0 \\
-d_2+d_0 & d_3
\end{pmatrix}
 \Big{|} = \big{|} \,d_1d_3 + d_1d_2 + d_2d_3 - d_0d_2 - d_0d_1 - d_0d_3 + d_0^2 \, \big{|}. \label{eqn:quadform}
\eeq
For the invariant to be non-zero, the curve class needs to lie in the effective cone determined in Proposition \ref{prop:eff} by 
\beq
d_0\geq0, \; d_i\geq0, \; d_1+d_2+d_3\geq d_0.
\eeq
Also, for the binomial coefficients to be non-zero, the curve class needs to satisfy the equations
\beq
0\leq d_0-d_2 \leq d_1, \, 0\leq d_0-d_3 \leq d_2, \, 0\leq d_0-d_1 \leq d_3.
\eeq
These inequalities determine a cone. Using the Polyhedra Package of Macaulay2, in the basis $(H-E_1-E_2-E_3,E_1,E_2,E_3)$ we find extremal rays generated by
\beq
(1,1,1,0), \, (1,1,0,1), \, (1,0,1,1), \, (2,1,1,1).
\eeq
Using this as a new basis, we find that the quadratic form in \eqref{eqn:quadform} is given by
\beq
xy+xz+yz+w(x+y+z)+w^2,
\eeq
which is always positive in the cone.
Therefore, we have proven the following result.

\begin{thm} \label{thm:log_dp3_0_0_0}
The generating function $\mathsf{N}^{\rm log}_d(\delp_3(0,0,0))(\hbar)$ equals
\beq   
\frac{[d_0^2 - d_1(d_0-d_2) - d_2(d_0-d_3) - d_3(d_0-d_1)]_q}{[1]_q}
\qbinom{d_1}{d_0-d_2}_q \qbinom{d_2}{d_0-d_3}_q \qbinom{d_3}{d_0-d_1}_q\,,
 \label{eq:log_dp3_0_0_0}
\eeq
where $q=\re^{\ri \hbar}$.
\end{thm}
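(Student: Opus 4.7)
The plan is to apply Proposition~\ref{prop:calc-log-gw-q} to the same scattering diagram for $\delp_3(0,0,0)$ used in the proof of Theorem~\ref{thm:log_dp3_0_0_0_psi}, replacing classical wall-crossing by quantum wall-crossing and the trivalent vertex weight by the Block--G\"ottsche multiplicity. Concretely, with $D_1=H-E_3$, $D_2=H-E_2$, $D_3=H-E_1$ and $d=d_0(H-E_1-E_2-E_3)+d_1E_1+d_2E_2+d_3E_3$, the asymptotic monomials of the three broken lines $\beta_1,\beta_2,\beta_3$ are determined by $d\cdot D_i$ exactly as in the classical case, and the walls they each bend across are all of the simple-scattering form $1+t_j z^{\rho_j}$.

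Next I would compute the end monomial of each quantum broken line. Since each wall-crossing function is of the form $1+tz^{\rho}$, the $q$-binomial theorem turns each classical transport across such a wall into its $q$-analogue: the coefficient $\binom{n}{k}$ appearing in the classical computation of $\circleed{1}$, $\circleed{2}$, $\circleed{3}$ is replaced by $\qbinom{n}{k}_q$. In our case each broken line bends across exactly one wall, so the three end monomials are
\[
\qbinom{d_1}{d_0-d_2}_q t_2^{d_0-d_2} x^{-d_1} y^{d_2-d_0}, \quad
\qbinom{d_2}{d_0-d_3}_q t_3^{d_0-d_3} x^{d_0-d_3} y^{d_0-d_2-d_3}, \quad
\qbinom{d_3}{d_0-d_1}_q t_1^{d_0-d_1} x^{d_1+d_3-d_0} y^{d_3}.
\]
The momenta $m^i_{\fend}$ satisfy $m^1_{\fend}+m^2_{\fend}+m^3_{\fend}=0$, so this triple contributes to the invariant, and inspection shows that this is the unique admissible configuration of quantum broken lines for the chosen endpoint $p$ (exactly as in the classical case, since quantum and classical broken lines project to the same tropical curves).

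Finally I would apply the weighting prescribed by Proposition~\ref{prop:calc-log-gw-q}: multiply the product of end coefficients by $[|\det(m^1_{\fend},m^2_{\fend})|]_q/[1]_q$. The determinant was computed in the classical proof to equal $d_0^2 - d_1(d_0-d_2) - d_2(d_0-d_3) - d_3(d_0-d_1)$, which by the effective-cone analysis there (positivity of $xy+xz+yz+w(x+y+z)+w^2$ in the cone generated by $(1,1,1,0),(1,1,0,1),(1,0,1,1),(2,1,1,1)$) is non-negative, so the absolute value can be dropped. Assembling these ingredients yields \eqref{eq:log_dp3_0_0_0}.

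The main obstacle is purely bookkeeping: checking that no new quantum broken lines appear beyond the classical ones for generic $p$, and that the relevant walls encountered are indeed of the simple form $1+tz^{\rho}$ so that the $q$-binomial theorem applies directly (rather than the infinite-scattering formulas, which would produce $q$-Pochhammer factors instead of single $q$-binomials). Both facts follow from the explicit form of the toric model of $\delp_3(0,0,0)$ and the absence of any pair of walls with $|\det(\rho_j,\rho_k)|\geq 2$ meeting in the relevant region of $B_0$.
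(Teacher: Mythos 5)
Your proposal is correct and follows essentially the same route as the paper: the quantum broken-line computation on the toric model of $\delp_3(0,0,0)$ with $q$-binomial wall-crossings, the Block--G\"ottsche weight $[|\det(m^1_{\fend},m^2_{\fend})|]_q/[1]_q$ from Proposition~\ref{prop:calc-log-gw-q}, and the positivity of the quadratic form on the relevant cone to remove the absolute value. Nothing essential is missing.
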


\begin{figure}[h]
\caption{$\delp_3(0,0,0)$}
\label{fig:delp3000}
\begin{tikzpicture}[smooth, scale=1.2]
\draw[step=1cm,gray,very thin] (-2.5,-4.5) grid (5.5,2.5);
\draw[<->] (-2.5,2) to (-1.5,2);
\draw[<->] (-2,1.5) to (-2,2.5);
\node at (-1.67,2.15) {$\scriptstyle{x}$};
\node at (-2.15,2.3) {$\scriptstyle{y}$};
%
%
\draw[<->,thick] (-2.5,-2) to (0,-2) to (1,-1) to (1,0) to (3.5,2.5);
\draw[->,thick] (1,-1) to (1.5,-1.5) to (1.5,-4.5);
\draw (-2.5,0) to (5.5,0);
\draw (0,-4.5) to (0,2.5);
\draw (0,0) to (2.4,2.4);
\draw (-1.4,-4.4) to (5.4,2.4);
\draw (0,-3) to (-2.5,-3);
\draw (3,0) to (3,-4.5);
\node at (0.25,-3.4) {$D_2$};
\node at (1.8,2.2) {$D_1$};
\node at (-2.2,0.2) {$D_3$};
\node at (-1.8,0) {$\times$};
\node at (0,-4.2) {$\times$};
\node at (4.8,1.8) {$+$};
\node at (-1,0.2) {$\sstyle{1+t_1x^{-1}}$};
\node[rotate=90] at (-0.2,-3.8) {$\sstyle{1+t_2y^{-1}}$};
\node[rotate=45] at (4.2,1.4) {$\sstyle{1+t_3xy}$};
%
\node at (-2,-2.2) {$\sstyle{x^{-d_1}}$};
\node at (1.85,-3.8) {$\sstyle{y^{-d_2}}$};
\node at (3.4,1.85) {$\sstyle{x^{d_3}y^{d_3}}$};
\node at (1.3,-0.3) {$\circleed{3}$};
\node at (1.5,-1.2) {$\circleed{2}$};
\node at (0.5,-1.2) {$\circleed{1}$};
\node at (1,-1) {$\bullet$};
\node at (1.2,-1) {$\sstyle{p}$};
\end{tikzpicture}
\end{figure}

\subsection{Toric models: $l=4$}

There is only one 4-component log Calabi--Yau surface with maximal boundary, namely the toric surface $\fzero(0,0,0,0)$.
For $d=d_1H_1+d_2H_2$, through tropical correspondence \cites{Mikh05,NiSi,ManRu,ManRu19}, we calculated in \cite{BBvG1} that
\beq
N^{\rm log, \psi}_{0,d}(\fzero(0,0,0,0))=1, \quad N^{\rm log}_{0,d}(\fzero(0,0,0,0))=d_1^2d_2^2.
\eeq
To obtain the higher genus invariant, we replace the tropical multiplicities by the Block-G\"ottsche multiplicities \cite{MR3453390}. Applying \cite{Bou19a} we obtain the following result.

\begin{thm}
\begin{equation}
\label{eq:fzerolog}
\mathsf{N}^{\rm log}_d(\fzero(0,0,0,0))(\hbar)=\frac{[d_1d_2]_q^2}{[1]_q^2}.
\end{equation}
\label{thm:logf0_0000}
\end{thm}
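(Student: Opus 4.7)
\smallskip

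\noindent\textbf{Proof proposal.} The strategy is to invoke the higher-genus toric tropical correspondence theorem of \cite{Bou19a} (extending the earlier genus-zero toric correspondences of \cites{Mikh05,NiSi,ManRu,ManRu19}), applied to the fully toric Looijenga pair $(\bbF_0, D_1+D_2+D_3+D_4)$ where $D$ is the toric boundary and every tangency order $d\cdot D_j$ is realised as the weight of an unbounded tropical end. Under this correspondence, the generating series $\mathsf{N}^{\rm log}_d(\fzero(0,0,0,0))(\hbar)$ is computed, after the change of variables $q=e^{\ri\hbar}$, as a $q$-refined tropical count: one sums over parameterised rational tropical curves $h\colon\Gamma\to\bbR^2$ with $4$ unbounded ends of directions $(\pm 1,0)$ (weight $d_2$ each) and $(0,\pm 1)$ (weight $d_1$ each), passing through $l-1=3$ generic marked points in $\bbR^2$, with each curve weighted by the product over trivalent vertices of the Block--G\"ottsche multiplicity $[m_v]_q/[1]_q$, where $m_v=w_1w_2|\det(u_1,u_2)|$ is the Mikhalkin multiplicity. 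The normalisation prefactor $1/(2\sin(\hbar/2))^{l-2}=-1/[1]_q^{2}$ in the definition \eqref{eq:GWexp} of $\mathsf{N}^{\rm log}_d$ is already absorbed in this correspondence.

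Once this is set up, the enumerative classification of contributing tropical curves is exactly the one already carried out in \cite{BBvG1} for the unrefined ($q=1$) case, since the $q$-refinement does not alter the combinatorial types nor the point-incidence constraints. For generic positions of the three marked points in $\bbR^2$, one finds a single rigid combinatorial type: a trivalent rational graph with two vertices $v_1,v_2$ joined by one bounded edge, each $v_i$ being incident to one horizontal (weight $d_2$) and one vertical (weight $d_1$) unbounded end, with the balancing condition forcing the bounded edge to have primitive direction proportional to $(d_2,d_1)/\gcd(d_1,d_2)$ with weight $\gcd(d_1,d_2)$. At each of the two trivalent vertices, two orthogonal weighted ends meet with $|\det(e_1,e_2)|=1$, so $m_{v_i}=d_1d_2$; the Block--G\"ottsche refinement yields $[d_1d_2]_q/[1]_q$ at each vertex. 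The classical Mikhalkin count thus gives $(d_1d_2)^2=d_1^2d_2^2$, matching the result of \cite{BBvG1}, and confirms in particular that a single combinatorial class contributes.

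Multiplying the two vertex contributions, the $q$-refined multiplicity of the unique contributing curve is $\bigl([d_1d_2]_q/[1]_q\bigr)^2=[d_1d_2]_q^2/[1]_q^2$, which gives \eqref{eq:fzerolog}.

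\smallskip

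The main obstacle (and the only non-trivial input) is the invocation of the refined tropical correspondence in \cite{Bou19a}: one must check that its hypotheses apply to the fully toric log~CY surface $\bbF_0(D)$ with $l=4$ maximally-tangent ends and three interior point insertions against $(-1)^g\lambda_g$, and that the normalisation by $(2\sin(\hbar/2))^{l-2}$ in \eqref{eq:GWexp} matches precisely the Block--G\"ottsche convention used there. All subsequent steps are purely combinatorial and are already contained in the genus-zero analysis of \cite{BBvG1}; there is no need to revisit the classification of contributing tropical curves.
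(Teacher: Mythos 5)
Your proposal is correct and follows essentially the same route as the paper: the genus-zero tropical count of \cite{BBvG1} for the toric pair $\bbF_0(0,0,0,0)$ is upgraded to all genera by replacing Mikhalkin multiplicities with Block--G\"ottsche multiplicities via the refined tropical correspondence of \cite{Bou19a}, yielding $[d_1d_2]_q^2/[1]_q^2$ from the single rigid trivalent tropical curve with two vertices of multiplicity $d_1d_2$. The paper states this in the same (even terser) way, so there is nothing to add beyond your explicit check of the combinatorics and normalisation.
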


\section{Log-local correspondence}
\label{sec:logloc_gen}

In this section, we prove the following log-local correspondence theorem.

\begin{thm} \label{thm_log_local}
For every nef Looijenga pair $Y(D)$, the genus $0$ log invariants $N^{\rm log}_{0,d}(Y(D))$
and the genus $0$ local invariants $N^{\rm loc}_{0,d}(Y(D))$ are related by
\begin{equation} N^{\rm loc}_{0,d}(Y(D))= \left( \prod_{j=1}^l \frac{(-1)^{d \cdot D_j -1}}{
(d \cdot D_j)}\right) 
N^{\rm log}_{0,d}(Y(D)) \,.
\label{eq:loglocal}
\end{equation}
\end{thm}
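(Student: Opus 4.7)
The plan is to combine blow-up invariance with a case-by-case analysis of the representatives of maximal Picard rank from Table \ref{tab:classif}. Propositions \ref{prop:locblup} and \ref{prop:logblup} show that both sides of \eqref{eq:loglocal} are preserved under pull-back of the curve class along an interior blow-up at a general smooth point of the boundary; this reduces the statement, within each deformation family, to four representatives: $\delp_3(1,1)$ and $\delp_5(0,0)$ in the $l=2$ case, $\delp_3(0,0,0)$ in the $l=3$ case, and $\fzero(0,0,0,0)$ in the $l=4$ case. The $l=2$ quasi-tame-but-not-tame families $\delp_r(0,5-r)$ and $\fzero(0,4)$ will then follow on the local side by Proposition \ref{prop:local-deform}, matched on the log side via the degeneration argument described below.

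For the tame representatives $\delp_3(1,1)$, $\delp_3(0,0,0)$ and $\fzero(0,0,0,0)$, I would verify \eqref{eq:loglocal} by direct comparison of closed forms. On the log side the relevant genus-zero counts are the $q\to 1$ specialisations of Theorems \ref{prop:dp311}, \ref{thm:log_dp3_0_0_0} and \ref{thm:logf0_0000}, obtained via scattering diagrams. On the local side, Lemma \ref{lem:localgw} applies precisely because in the tame setting the quantum corrections to the mirror map \eqref{eq:mm} vanish (see \cref{sec:quasitameloc}), and the passage from $\psi$-insertions to $l-1$ point insertions is supplied by Theorems \ref{thm:dP33comp} and \ref{thm:F04comp}. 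With these inputs the matching reduces in each case to an elementary algebraic identity: for $\delp_3(1,1)$ it is the $q=1$ limit of the Pfaff--Saalschütz identity used in the proof of Theorem \ref{prop:dp311}, while for $l\geq 3$ it is multinomial bookkeeping.

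The conceptual core of the argument is the $l=2$ case --- especially $\delp_5(0,0)$, where the local side rests on the nontrivial mirror map calculation of Theorem \ref{thm:dP5} and where neither side is directly accessible by the tame/scattering techniques above. Here I would follow the van Garrel--Graber--Ruddat strategy, to be developed in detail in Section \ref{sec:logloc}: construct a log smooth degeneration of $E_{Y(D)}$ whose special fibre is obtained by gluing, for each boundary component $D_j$, a copy of $Y\times \mathbb{A}^1$ along $D_j\times \mathbb{A}^1$ to a rank-$2$ bundle over $\mathbb{P}(\mathcal{O}_{D_j}\oplus \mathcal{O}_{D_j}(-D_j))$, after a suitable birational modification to restore log smoothness; then apply the Abramovich--Chen--Gross--Siebert decomposition formula \eqref{eq:nlocdec} to write $N^{\rm loc}_{0,d}(Y(D))$ as a sum over tropical curves in the dual intersection complex of the central fibre. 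The proof concludes by establishing two facts: every tropical curve of non-maximal tangency type contributes zero, and the unique surviving tropical type reproduces $N^{\rm log}_{0,d}(Y(D))$ weighted by $\prod_j (-1)^{d\cdot D_j-1}/(d\cdot D_j)$, the latter extracted exactly as in \cite{vGGR} applied component by component to each $D_j$.

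I expect the first of these two facts to be the main obstacle. In the normal-crossings setting the tropical moduli space has considerably more strata than in the smooth-divisor case of \cite{vGGR}, so the vanishing of all non-maximal-tangency contributions is not automatic; the Calabi--Yau / anticanonical hypothesis on $D$ enters decisively here, as the contrasting non-CY counterexample of \cite{NR_new} illustrates. Once this combinatorial vanishing is in place, the extraction of the multi-cover multiplicities on the surviving stratum is a routine component-by-component application of the vGGR recipe, and reproduces the product factor of \eqref{eq:loglocal}.
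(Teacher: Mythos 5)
Your plan reproduces the paper's proof: reduction through the interior blow-up formulas (\cref{prop:logblup,prop:locblup}), direct comparison of the scattering and mirror-theorem closed forms for the $l=3,4$ representatives $\delp_3(0,0,0)$ and $\fzero(0,0,0,0)$ (\cref{thm:log_dp3_0_0_0,thm:dP33comp,thm:F04comp}), and for $l=2$ the degeneration to the normal cone combined with the Abramovich--Chen--Gross--Siebert decomposition formula, whose entire content is precisely the two facts you isolate --- the vanishing of every non-maximal-tangency rigid tropical type (\cref{thm_key}, established by the vertex-by-vertex analysis of \cref{sec:constr_trop}) and the multiple-cover evaluation of the unique surviving star-shaped type (\cref{lem_computation}), which together yield the factor $\prod_j (-1)^{d\cdot D_j-1}/(d\cdot D_j)$. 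The only immaterial structural difference is that the paper runs the degeneration argument uniformly for every $2$-component nef pair, whereas you dispatch the tame $l=2$ representative $\delp_3(1,1)$ by the closed-form comparison (which is also valid) and reserve the degeneration for the non-tame cases.
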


The proof will be divided in two parts. In Section \ref{sec:logloc}, we prove the 
result for $l=2$ by a degeneration to the normal cone argument.
In Section \ref{sec:loglocal_3_4}, we prove the result for 
$l=3$ and $l=4$ by direct comparison of the local 
results of Section \ref{sec:localGW} with the log results of 
Section \ref{sec:loggw}.

\subsection[Log-local for 2 components]{Log-local for 2 components}

\label{sec:logloc}

For convenience in the following proof, we state separately the case $l=2$ 
of Theorem \ref{thm_log_local}.

\begin{thm} \label{thm_log_local_2}
For every $2$-component nef Looijenga pair $Y(D)$, we have 
\begin{equation} N^{\rm loc}_{0,d}(Y(D))= \left( \prod_{j=1}^l \frac{(-1)^{d \cdot D_j -1}}{
(d \cdot D_j)}\right) 
N^{\rm log}_{0,d}(Y(D)) \,.\end{equation}
\end{thm}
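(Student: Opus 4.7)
The plan is to extend the degeneration-to-the-normal-cone argument of \cite{vGGR} to the two-component case, replacing the ordinary relative degeneration formula with the logarithmic decomposition formula of \cite{abramovich2017decomposition}. First I would construct a one-parameter family $\pi: \mathcal{E} \to \A^1$ with generic fibre isomorphic to $E_{Y(D)}$ and special fibre consisting of three irreducible components: the main piece $Y \times \A^2$, equipped with the divisorial log structure coming from $(D_1 \cup D_2) \times \A^2$, glued along $D_j \times \A^2$ (for $j=1,2$) to a rank-$2$ vector bundle $P_j$ over $\PP(\shO_{D_j} \oplus \shO_{D_j}(-D_j))$. A suitable corner blow-up on the codimension-$2$ strata of the total space turns the family into a log smooth degeneration, so that \cite{abramovich2017decomposition} applies.

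Applying the decomposition formula together with the deformation invariance of log Gromov--Witten invariants, the local count rewrites as
\[ N^{\rm loc}_{0,d}(Y(D)) = \sum_{h: \Gamma \to \Delta} \frac{m_h}{|\mathrm{Aut}(h)|}\, N^{{\rm loc},h}_{0,d}(Y(D)), \]
where $\Delta$ is the polyhedral dual complex of the central fibre and the sum ranges over the rigid tropical types compatible with the discrete data (genus $0$, total class $d$, maximal contact orders $d\cdot D_1$ and $d\cdot D_2$, and a single point insertion on the main component). I would then classify these tropical types and argue that only the \emph{star-shaped} type $h_\star$---a single internal vertex in the open stratum of $\Delta$ carrying both point insertions, with two legs of weights $d\cdot D_j$ pointing into the two boundary rays---yields a nonzero contribution. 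For every other type at least one vertex lies in a bubble stratum; exploiting the fibred structure of $P_j$ over $D_j$ together with the fibrewise torus symmetry of the bubble components, a virtual dimension count combined with a localisation argument should force the corresponding contribution to vanish.

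For the type $h_\star$, the gluing formula factorises $N^{{\rm loc},h_\star}_{0,d}(Y(D))$ as $N^{\rm log}_{0,d}(Y(D))$ on the main component, times a multi-cover contribution coming from each $P_j$. The latter is a routine computation on $\overline{\mathcal{M}}_{0,1}(\PP^1, d\cdot D_j)$ against the Euler class of the natural obstruction bundle, evaluating to $\tfrac{(-1)^{d\cdot D_j-1}}{(d\cdot D_j)^2}$. Combined with the edge-multiplicity factor $m_{h_\star}=(d\cdot D_1)(d\cdot D_2)$ produced by the decomposition formula, this reproduces the identity \eqref{eq:loglocal}.

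The main obstacle will be the systematic vanishing for non-star tropical types. In the $l=1$ case of \cite{vGGR} the relevant tropical type is essentially unique, so this subtlety never arises; here, however, $\Delta$ is genuinely two-dimensional and a priori admits many rigid tropical curves of expected dimension zero, including ones with several bounded edges meandering through the two bubble strata. I expect the cleanest route to be a uniform argument combining a virtual dimension calculation with the fibrewise $\C^*$-action on each $P_j$, which should force any insertion-free bubble vertex to push the virtual class into a lower-dimensional stratum of the moduli space; this analysis will likely occupy the bulk of the argument.
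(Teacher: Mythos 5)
Your strategy is the same as the paper's: degenerate $E_{Y(D)}$ via the degeneration of $Y$ to the normal cone of $D=D_1+D_2$, make the family log smooth by blowing up the ordinary double points of the total space sitting over the nodes of $D$, apply the decomposition formula of \cite{abramovich2017decomposition}, and identify the contribution of the star-shaped tropical type, whose edge multiplicity $(d\cdot D_1)(d\cdot D_2)$ combines with the multiple-cover factor $(-1)^{d\cdot D_j-1}/(d\cdot D_j)^2$ of a $\PP^1$-fibre with normal bundle $\cO(-1)\oplus\cO^{\oplus 3}$ (cf.\ \cite{MR2115262} and \cref{lem_computation}) to give \eqref{eq:loglocal}. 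Up to this point the proposal matches \cref{sec:logloc}.

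The genuine gap is the vanishing of all other rigid tropical types, i.e.\ the analogue of \cref{thm_key}, which is the bulk of the paper's proof and which you only assert via a ``virtual dimension count combined with a localisation argument'' that would kill ``any insertion-free bubble vertex''. As stated this criterion cannot be right: the star type $\bar h$ itself has bubble vertices carrying no insertions, and they contribute exactly the non-zero multiple-cover factor you use in the next step, so whatever argument rules out the other types must discriminate more finely than the presence of an insertion-free vertex in a bubble, and a fibrewise torus action by itself does not produce vanishing (the fixed loci of multiple covers of fibres contribute). The paper's mechanism is different and not a localisation: one chooses a flow on each tropical tree with sink at a vertex over the main component (such a vertex exists only because of the point insertion), pushes cohomology classes forward along the flow through the gluing strata, and exploits that the relative hyperplane class $H_{j,E}$ of $\PP(\ccL_j\oplus\cO)$ squares to zero on strata where $\ccL_j$ is trivial while $H_{j,E}^2=(\pi_E^{*}\pt_E)H_{j,E}$ along the $j$-th boundary; a dimension count at the sink then forces exactly two incoming bounded edges decorated by non-zero multiples of $H_{1}$ and $H_{2}$, and homological balancing at boundary vertices — this is precisely where the nefness of $D_1,D_2$ enters, through convexity of the local fans — pins the combinatorics down to $\bar h$. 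Your sketch never uses nefness, which the statement requires, so to complete the argument you would have to replace the localisation heuristic by an analysis of this kind (or a genuinely new vanishing mechanism that correctly spares the star type).
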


The proof of Theorem \ref{thm_log_local_2} takes the remainder of Section 
\ref{sec:logloc} and is a degeneration argument in log Gromov--Witten theory.

\subsubsection{Construction of the degeneration}
We first construct the relevant degeneration for a general 
$l$-component nef Looijenga pair 
$Y(D)=(Y,D_1+\cdots+D_l)$.

Let
$\bar{\nu}_{\bar{\cY}} \colon \bar{\mathcal{Y}} \rightarrow \A^1$ be the degeneration of $Y$
to the normal cone of $D$, obtained by
blowing up $D \times \{0\}$ in $Y \times \A^1$.
Irreducible components of the special fibre 
$\bar{\cY}_0 \coloneqq \bar{\nu}_{\bar{\cY}}^{-1}(0)$ are 
$Y$ and, for every 
$1 \leq j \leq l$, $\bar{\PP}_j \coloneqq \PP(\cO \oplus N_{D_j|Y})$, where 
$N_{D_j|Y}$ is the normal bundle to $D_j$ in $Y$.
For every double point $p \in D_j \cap D_{j'}$ of $D$, a local description of $\bar{\cY}_0$ is given by 
Figure \ref{figure_special_fibre}. 
In particular, we have a point $p^{\partial}$ in $\bar{\cY}_0$ 
where the total space $\bar{\cY}$ is singular. 
This can be seen as follows. Locally near a double point
$p \in D_{j'} \cap D_j$, the degeneration to the normal cone admits a 
toric description, whose fan is given by the closure of the cone over the 
polyhedral decomposition of $\bbR_{\geq 0}^2$ in Figure \ref{figure_fan}.
The point $p^{\partial}$ corresponds to the 3-dimensional cone obtained by taking the closure of the cone over the unbounded region of
$\bbR_{\geq 0}^2$ in \ref{figure_fan}. This cone is generated by four rays, so is not simplicial and so $p^{\partial}$ is a singular point.
More precisely, $p^{\partial}$
is an ordinary double point in $\bar{\cY}$. Every singular point of $\bar{\cY}$ is of the form $p^{\partial}$ for $p$ a double point of $D$.

\begin{figure}
\caption{Local description of $\bar{\cY}_0$}
\begin{center}
\setlength{\unitlength}{1cm}
\begin{picture}(6,7) 
\thicklines
\put(3,3){\circle*{0.1}}
\put(3,3){\line(-1,0){3}}
\put(3,3){\line(0,-1){3}}
\put(3,3){\line(1,1){2}}
\put(5,5){\circle*{0.1}}
\put(5,5){\line(-1,0){5}}
\put(5,5){\line(0,-1){5}}
\put(2.7,2.7){$p$}
\put(5.2,5.2){$p^{\partial}$}
\put(1,1){$Y$}
\put(2.4,1){$D_j$}
\put(5,1){$D_{j}^{\partial}$}
\put(4,1.5){$\bar{\PP}_j$}
\put(1,2.5){$D_{j'}$}
\put(1,5.3){$D_{j'}^{\partial}$}
\put(1.5,4){$\bar{\PP}_{j'}$}
\end{picture} 
\end{center}
\label{figure_special_fibre}
\end{figure}

\begin{figure}
\caption{Toric polyhedral decomposition of $\bbR_{\geq 0}^2$ describing locally 
$\bar{\cY}_0$ (fan picture)}
\begin{center}
\setlength{\unitlength}{1cm}
\begin{picture}(6,5)
\thicklines
\put(2,1){\circle*{0.1}}
\put(2,1){\line(1,0){4}}
\put(2,1){\line(0,1){4}}
\put(4,1){\circle*{0.1}}
\put(2,3){\circle*{0.1}}
\put(2,3){\line(1,-1){2}}
\end{picture} 
\end{center}
\label{figure_fan}
\end{figure}

We resolve the singularities of $\bar{\cY}$ by blowing up the
ordinary double points $p^{\partial}$, and we obtain a new degeneration 
$\nu_{\cY} \colon \cY \rightarrow \A^1$. The total space 
$\cY$ is now smooth and the special fibre $\cY_0 \coloneqq \nu_{\cY}^{-1}(0)$
is a normal crossings divisor on $\cY$. We view $\cY$ as a log scheme for the 
divisorial log structure defined by $\cY_0 \subset \cY$. Viewing $\A^1$
as a log scheme for the divisorial log stucture defined by $\{ 0\} \subset 
\A^1$, the morphism $\nu_{\cY} \colon \cY \rightarrow \A^1$ can naturally be viewed as a log smooth log morphism.

Irreducible components of $\cY_0$ consist of $Y$, for every 
$1 \leq j \leq l$ the strict transform 
$\PP_j$ of the $\bar{\PP}_j$, and for every double point $p$
of $D$ the exceptional divisor 
$\bS_{p} \simeq \PP^1 \times \PP^1$ created by the blow-up of 
$p^{\partial}$.
Locally near a double point $p \in D_j \cap D_{j'}$,  
irreducible components of $\cY_0$ are glued together as in 
Figure \ref{figure_special_fibre_2}. Locally near $p$, the total space $\cY$ 
admits a toric description whose fan is the closure of the cone over the 
polyhedral decomposition of $\bbR_{\geq 0}^2$ given in \cref{figure_fan_2}.
Remark that the log structure that we consider on $\cY$ is only partially compatible with this local toric description: one needs to remove from the toric boundary the horizontal toric divisors in order to obtain the divisorial log structure defined by the special fibre.

\begin{figure}
\caption{Local description of $\cY_0$}
\begin{center}
\setlength{\unitlength}{1cm}
\begin{picture}(6,7) 
\thicklines
\put(3,3){\circle*{0.1}}
\put(3,3){\line(-1,0){3}}
\put(3,3){\line(0,-1){3}}
\put(3,3){\line(1,1){1}}
\put(4,4){\circle*{0.1}}
\put(4,4){\line(1,0){1}}
\put(4,4){\line(0,1){1}}
\put(5,5){\circle*{0.1}}
\put(5,4){\circle*{0.1}}
\put(4,5){\circle*{0.1}}
\put(5,5){\line(-1,0){5}}
\put(5,5){\line(0,-1){5}}
\put(1,1){$Y$}
\put(2.4,1){$D_j$}
\put(5,1){$D_{j}^{\partial}$}
\put(5.1,4.2){$D_{j,p}^{\partial}$}
\put(4,1.5){$\PP_j$}
\put(1,2.5){$D_{j'}$}
\put(1,5.3){$D_{j'}^{\partial}$}
\put(4,5.3){$D_{j',p}^{\partial}$}
\put(1.5,4){$\PP_{j'}$}
\put(4.3,4.4){$\bS_p$}
\end{picture} 
\end{center}
\label{figure_special_fibre_2}
\end{figure}

\begin{figure}
\caption{Toric polyhedral decomposition of $\bbR_{\geq 0}^2$ describing locally 
$\cY_0$ (fan picture)}
\begin{center}
\setlength{\unitlength}{1cm}
\begin{picture}(6,5)
\thicklines
\put(2,1){\circle*{0.1}}
\put(2,1){\line(1,0){4}}
\put(2,1){\line(0,1){4}}
\put(4,1){\circle*{0.1}}
\put(2,3){\circle*{0.1}}
\put(2,3){\line(1,-1){2}}
\put(4,3){\circle*{0.1}}
\put(2,3){\line(1,0){4}}
\put(4,1){\line(0,1){4}}
\end{picture} 
\end{center}
\label{figure_fan_2}
\end{figure}

For every $1 \leq j \leq l$, let 
$\ccL_j$ be the line bundle on $\cY$ defined by minus the divisor obtained by taking the closure in $\cY$ of the divisor $D_j \times (\A^1-\{0\}) 
\subset Y \times (\A^1 -\{0\})$.
We have 
\beq \ccL_j|_{\cY_0} 
= \cO_{\cY_0}\left(-(D_{j}^{\partial} \cup \bigcup_{p \in D_j} 
D_{j,p}^{\partial})\right) \,,\eeq
where the union is taken over the double points $p$
of $D$ contained in $D_j$.

We define $\cV \coloneq \Tot(\bigoplus_{j=1}^l \ccL_j)$
and denote by $\pi_{\cV} \colon \cV \rightarrow \cY$
and $\nu_{\cV} \colon \cV \rightarrow \A^1$ the natural projections. 
We also denote by $\cV_0 \coloneqq \nu_{\cV}^{-1}(0)$ the special fibre and by 
$\pi_{\cV_0} \colon \cV_0 \rightarrow \cY_0$ the restriction of $\pi_{\cV}$
to the special fibre.

The irreducible components of $\cV_0$ are
\beq\cV_{0,Y}\coloneqq \Tot(\cO_Y^{\oplus l})\,,\eeq 
for every $1 \leq j \leq l$ 
\beq\cV_{0,j} \coloneqq \Tot(\cO_{\PP_j}(-D_j^{\partial}) \oplus \cO_{\PP_j}^{\oplus (l-1)})\,\eeq
and, for every double point 
$p \in D_j \cap D_{j'}$ of $D$, 
\beq \cV_{0,p} \coloneqq \Tot(\cO_{\bS_p}(-D_{j,p}^{\partial}) 
\oplus \cO_{\bS_p}(- D_{j',p}^{\partial}) \oplus \cO_{\bS_p}^{\oplus (l-2)})\,.\eeq

We view $\cV$ as a log scheme for the divisorial log structure defined by 
$\cV_0 \subset \cV$, and then $\nu_{\cV} \colon \cV \rightarrow \A^1$ is naturally a log smooth log morphism. Remark that the log structure on $\cV$
is the pullback of the log structure on $\cY$, i.e.\ the log morphism $\pi_{\cV}
\colon \cV \rightarrow \cY$ is strict. In particular, $\cV$ and $\cY$
have identical tropicalisations.

For every $1 \leq j \leq l$, we consider the projectivisation
$\PP(\ccL_j \oplus \cO_{\cY})$ of $\ccL_j$ and the corresponding 
fibrewise compactification 
\beq \PP(\cV) \coloneqq \PP(\ccL_1 \oplus \cO_{\cY}) \times_{\cY} 
\dots \times_{\cY} \PP(\ccL_l \oplus \cO_{\cY}) \eeq
of $\cV$. We denote by 
$\pi_{\PP(\cV)} \colon \PP(\cV) \rightarrow \cY$ and 
$\nu_{\PP(\cV)} \colon \PP(\cV) \rightarrow \A^1$
the natural projections. We also denote by 
$\PP(\cV_0) \coloneqq \nu_{\PP(\cV)}^{-1}(0)$ the special fibre
and by 
$\pi_{\PP(\cV_0)} \colon \PP(\cV_0) \rightarrow \cY_0$
the restriction of $\pi_{\PP(\cV)}$ to the special fibre.
We denote by 
$\PP(\cV_{0,Y})$, $\PP(\cV_{0,j})$, $\PP(\cV_{0,p})$
the irreducible components of $\PP(\cV_0)$
obtained by compactification of $\cV_{0,Y}$, 
$\cV_{0,j}$, $\cV_{0,p}$.

We view $\PP(\cV)$ as a log scheme for the divisorial log 
structure defined by $\PP(\cV_0) \subset \PP(\cV)$, and then 
$\nu_{\PP(\cV)} \colon \PP(\cV) \rightarrow \A^1$ is naturally a log 
smooth log morphism. Remark that the log structure on 
$\PP(\cV)$ is the pullback of the log structure on 
$\cY$, i.e.\ the log morphism 
$\pi_{\PP(\cV)} \colon \PP(\cV) \rightarrow \cY$ is strict.
In particular, $\PP(\cV)$ and $\cY$ have identical tropicalisations.

Let $\Delta$ be the polyhedral complex obtained by taking the fibre over
$1$ of the tropicalisation of $\nu_{\PP(\cV)} \colon \PP(\cV) \rightarrow \A^1$. 
Combinatorially, $\Delta$ is the dual intersection complex of the 
special fibre $\cV_0$, see Figure \ref{figure_fan_3}. Vertices of $\Delta$ are:
\begin{itemize}
    \item 
$v_Y$, corresponding to the irreducible component $\PP(\cV_{0,\cY})$, 
\item for every 
$1 \leq j \leq l$, $v_j$ corresponding to the irreducible component 
$\PP(\cV_{0,j})$, 
\item for every double point $p$ of $D$, $v_p$ corresponding to the
irreducible component $\PP(\cV_{0,p})$.
\end{itemize}
Edges of $\Delta$ are:
\begin{itemize}
    \item for every $1 \leq j \leq l$, $e_{Y,j}$ connecting $v_Y$ and $v_j$, corresponding to the divisor 
    $\PP(\cV_{0,Y}) \cap \PP(\cV_{0,j})$,
    \item for every double point $p \in D_j \cap D_{j'}$ of $D$, $e_{j,j'}^p$
    connecting $v_j$ and $v_{j'}$, corresponding to the component of the  divisor 
    $\PP(\cV_{0,j}) \cap \PP(\cV_{0,j'})$ containing $p$, 
    \item for every double point $p \in D_j \cap D_{j'}$ of $D$, 
    $e_{p,j}$ connecting $v_p$ and $v_j$, corresponding to the divisor 
    $\PP(\cV_{0,j}) \cap \PP(\cV_{0,p})$, and $e_{p,j'}$ connecting $v_p$ and $v_{j'}$,
    corresponding to the divisor $\PP(\cV_{0,j'}) \cap \PP(\cV_{0,p})$.
\end{itemize}
Faces of $\Delta$ are:
\begin{itemize}
    \item for every double point $p \in D_j \cap D_{j'}$ of $D$, a triangle 
    $f_p$ of sides $e_{Y,j}$, $e_{Y,j'}$, $e_{j,j'}^p$, corresponding to the triple intersection $\PP(\cV_{0,Y}) \cap \PP(\cV_{0,j}) \cap \PP(\cV_{0,j'})$,
    \item for every double point $p \in D_j \cap D_{j'}$ of $D$, a triangle 
    $g_p$ of sides $e_{j,j'}^p$, $e_{p,j}$, $e_{p,j'}$, corresponding to the triple intersection $\PP(\cV_{0,p}) \cap 
    \PP(\cV_{0,j}) \cap \PP(\cV_{0,j'})$.
\end{itemize}

As we are assuming that the components of $D$ 
form a cycle, the boundary $\partial \Delta$  
of $\Delta$ 
can be described as 
\beq \partial \Delta = \bigcup_{1 \leq j \leq l} (\partial \Delta)_j \,,\eeq
where for every $1 \leq j \leq l$,
\beq (\partial \Delta)_j \coloneqq \bigcup_{p \in D_j \cap D_{j'}} e_{p,j}\,.\eeq

\begin{figure}
\caption{Local description of $\Delta$}
\begin{center}
\setlength{\unitlength}{1cm}
\begin{picture}(6,4)
\thicklines
\put(2,1){\circle*{0.1}}
\put(2,1){\line(1,0){2}}
\put(2,1){\line(0,1){2}}
\put(4,1){\circle*{0.1}}
\put(2,3){\circle*{0.1}}
\put(2,3){\line(1,-1){2}}
\put(4,3){\circle*{0.1}}
\put(2,3){\line(1,0){2}}
\put(4,1){\line(0,1){2}}
\put(3,2.6){$g_p$}
\put(2.5,1.5){$f_p$}
\put(3,1.8){$e_{j,j'}^p$}
\put(1.5,1){$v_Y$}
\put(4.2,1){$v_j$}
\put(4.2,3){$v_p$}
\put(1.5,3){$v_{j'}$}
\put(1.2,2){$e_{Y,j'}$}
\put(4.2,2){$e_{p,j}$}
\put(2.8,3.2){$e_{p,j'}$}
\put(2.8,0.7){$e_{Y,j}$}
\end{picture} 
\end{center}
\label{figure_fan_3}
\end{figure}

We view $\PP(\cV_0)$ as a log scheme by restriction of 
the log structure on 
$\PP(\cV)$. 
We denote by $\nu_{\PP(\cV_0)} \colon \PP(\cV_0) \rightarrow \pt_{\NN}$ the corresponding log smooth log morphism to the standard log point.
We view the curve class $d$ as a class on $\PP(\cV_0)$ via the embedding 
$\cY_0 \rightarrow \PP(\cV_0)$ induced by the zero section of $\cV_0$.
Let 
$\ol{\mmm}_{0,m}(Y^{\loc} (D),d)$  
be the moduli space of genus $0$ class $d$
 stable log maps to 
 $\nu_{\PP(\cV_0)} \colon \PP(\cV_0) \rightarrow \pt_{\NN}$ 
with $m$ marked points with contact order $0$ with $\PP(\cV_{0,Y})$.
Let $[\ol{\mmm}_{0,m}( \PP(\cV_0),d)]^{\vir}$ 
be the corresponding virtual fundamental class, of dimension $l-1+m$. 
Using the nefness of the divisors $D_j$, the condition 
$d \cdot D_j >0$ for every $1 \leq j \leq l$, and the
deformation invariance of log Gromov--Witten invariants, we have 
\beq N^{\rm loc}_{0,d}(Y(D)) = 
\int_{[\ol{\mmm}_{0,l-1}( \PP(\cV_0),d)]^{\vir}} \prod_{k=1}^{l-1} \ev_k^{*}(\pi_{\PP(\cV_0)}^* [\pt_Y]) \,,\eeq
where $\ev_k$ is the evaluation at the $k$-th interior 
marked point and $[\pt_Y]$ is the class of a point on 
$Y \subset \cY_0$.

\subsubsection{Degeneration formula}

According to the decomposition formula of Abramovich--Chen--Gross--Siebert \cite{abramovich2017decomposition}, we have
\beq [\ol{\mmm}_{0,l-1}( \PP(\cV_0),d)]^{\vir} = \sum_{h \colon \Gamma \rightarrow \Delta} \frac{m_h}{|\Aut(h)|}
[\ol{\mmm}^h_{0,l-1}( \PP(\cV_0),d)]^{\vir} \,.\eeq
The sum is over the genus $0$ rigid decorated parametrised tropical curves $h \colon \Gamma \rightarrow \Delta$,
where $\Gamma$ has $l-1$ unbounded edges,
all contracted by $h$ to $v_Y$, and the sum of classes attached to the vertices of $\Gamma$ is $d$.
The moduli space $\ol{\mmm}^h_{0,l-1}( \PP(\cV_0),d)$ 
parametrises genus $0$ class $d$
stable log maps to 
$\nu_{\PP(\cV_0)} \colon \PP(\cV_0) \rightarrow 
\pt_{\NN}$ marked by $h$.

Therefore, we have 
\beq N^{\rm loc}_{0,d}(Y(D))
=\sum_{h \colon \Gamma \rightarrow \Delta} 
\frac{m_h}{|\Aut(h)|} 
N^{{\rm loc}, h}_{0,d}(Y(D))\,,
\eeq
where 
\beq N^{{\rm loc}, h}_{0,d}(Y(D)) \coloneqq
\int_{[\ol{\mmm}^h_{0,l-1}( \PP(\cV_0),d)]^{\vir} }
\prod_{k=1}^{l-1} \ev_k^{*}(\pi_{\PP(\cV_0)}^{*} [\pt_Y] )\,,\eeq
and 
$\ev_k$ is the evaluation at the $k$-th marked point.
Thus, for every $h \colon \Gamma \rightarrow \Delta$, 
we have to compute $N^{{\rm loc},h}_d(Y(D))$.

Let $\Delta^h$ be a polyhedral complex obtained by refining the polyhedral decomposition of $\Delta$ and containing the $h(\Gamma)$ in its one-skeleton, i.e.\ such that, for every 
vertex $V$ of $\Gamma$, $h(V)$ is a vertex of $\Delta^h$, and for every edge $E$ of $\Gamma$, $h(E)$ is an edge of
$\Delta^h$. 
We denote by $\cY_0^h$, $\cV_0^h$, $\PP(\cV_0^h)$ 
the corresponding log modifications of 
$\cY_0$, $\cV_0$, $\PP(\cV_0)$.
Let $\ol{\mmm}^h_{0,l-1}( \PP(\cV_0^h),d)$ 
the moduli space of stable log maps to $\PP(\cV_0^h)$ marked by $h$.
By the invariance of log Gromov--Witten invariants under log 
modification \cite{AW}, we have 
\beq N^{{\rm loc}, h}_{0,d}(Y(D)) \coloneqq
\int_{[\ol{\mmm}^h_{0,l-1}( \PP(\cV_0^h),d)]^{\vir} }
\prod_{k=1}^{l-1} \ev_k^{*}(\pi_{\PP(\cV_0^h)}^{*} [\pt_Y] )\,.\eeq

For every vertex $V$ of $\Gamma$, let $\PP(\cV_{0,V}^h)$ be the irreducible 
component of $\PP(\cV_0^h)$ corresponding to the vertex $h(V)$ of $\Delta^h$. 
We view $\PP(\cV_{0,V}^h)$ as a log scheme for the divisorial log structure 
defined by the divisor $\partial \PP(\cV_{0,V}^h)$, which is the union of intersection divisors with the other irreducible components of $\PP(\cV_0^h)$.
Similarly, we define the component $\cY_{0,V}^h$
of $\cY_0^h$ and $\partial \cY_{0,V}^h$.

If $h(V) \in \Delta^h - \partial \Delta^h$, then 
\beq \cV_{0,V}^h = \Tot( \cO_{\cY_{0,V}^h}^{\oplus l}) \,.\eeq
If furthermore, $h(V) \notin \bigcup_{j=1}^l e_{Y,j}$, then $(\cV_{0,V}^h, \partial \cV_{0,V}^h)$
is a toric variety with its toric boundary.

If $h(V) \in e_{p,j} -v_p$ for some $p \in D_j \cap D_{j'}$, let 
$D_{j,V}^{\partial}$ be the irreducible component contained in 
$\cY_{0,V}^h$ of the intersection with $\cY_0^h$ of the closure of 
$D_j\times (\A^1-\{0\})$ in $\cY^h$. Then, we have
\beq \cV_{0,V}^h= \Tot (\cO_{\cY_{0,V}^h}(-D_{j,V}^{\partial}) \oplus \cO_{\cY_{0,V}^h}^{\oplus l-1)} )\,,\eeq
and $(\cV_{0,V}^h, \partial \cV_{0,V}^h \cup D_{j,V}^{\partial})$ is a toric variety with its toric boundary.

If $h(V)=v_p$ for some $p \in D_j \cap D_{j'}$, then, still denoting by 
$\bS_p$, $D_{j,p}^{\partial}$, and $D_{j,p}^{\partial}$ the strict tranforms in $\cY_0^h$ of $\bS_p$, $D_{j,p}^{\partial}$, and $D_{j,p}^{\partial}$, we have 
\beq \cV_{0,V}^h = \cV_{0,p} ^h=\Tot(\cO_{\bS_p}(-D_{j,p}^{\partial}) 
\oplus \cO_{\bS_p}(- D_{j',p}^{\partial}) \oplus \cO_{\bS_p}^{\oplus (l-2)})\,,\eeq
and 
$(\cV_{0,V}^h, \partial \cV_{0,V}^h \cup D_{j,p}^{\partial} \cup D_{j',p}^{\partial})$
is a toric variety with its toric boundary.

For every vertex $V$ of $\Gamma$, let $M_V$ be the moduli space of genus $0$ stable log maps to $\PP(\cV_{0,V}^h)$, with class given by the class decoration of $V$, and contact orders specified by the local behavior of $h$ around $V$.
Our goal is to compute the invariant 
$N^{{\rm loc},h}_d(Y(D))$ in terms of the virtual classes
$[M_V]^{\virt}$. We are in a particularly favorable 
situation: we consider curves of genus $0$ and the dual
intersection complex $\Delta^h$ has dimension $2$. In such case, the degeneration formula in log Gromov--Witten theory has a particular simple form, as described in Section 6.5.2 
of \cite{ranganathan2019logarithmic}
(see also Section 4 of \cite{parker2015gluing}
for the corresponding discussion in the language of exploded manifolds).

We choose a flow on $\Gamma$ such that unbounded edges are ingoing and such that every vertex has at most one outgoing edge. 
Such flow exists as $\Gamma$ has genus $0$ and then there is exactly one vertex, that we denote by $V_0$, without outgoing incident edge, that we call the sink.
All vertices distinct from $V_0$ have exactly one outgoing edge.
In fact, for every vertex $V$ of $\Gamma$, we can find such flow
with sink $V_0=V$.

For every edge $E$ of $\Gamma$, we denote by $X_E$
the stratum of $\PP(\cV_0^h)$ dual to $E$. The stratum $X_E$
is a divisor if $E$ is bounded and 
is the irreducible
component
$\cV_{0,Y}^h$ if $E$ is unbounded. 
For every $E$, $X_E$ is 
a projective bundle over a stratum $Y_E$ of $\cY_0^h$.
We denote by 
$\pi_E \colon X_E \rightarrow Y_E$ the corresponding projection.
More precisely, $X_E$ is naturally a fibre product over $Y_E$
of $l$ bundles in projective lines.

For every edge $E$ incident to a vertex $V$, we have the evaluation map 
\beq \ev_{V,E} \colon M_V \rightarrow X_E \,.\eeq

For every vertex $V$ distinct from $V_0$, let $\cE_{in}(V)$
be the set of ingoing incident edges to $V$, and let $E_V$ be the outgoing incident edge to $V$.
The virtual class 
$[M_V]^{virt}$ defines a map
\beq \eta_V \colon \prod_{E \in \cE_{in}(V)} \hhh^{*}(X_E) 
\rightarrow  \hhh^{*}(X_{E_V}) \eeq
by
\beq \eta_V
\left(
 \prod_{E \in \cE_{in}(V_0)} \alpha_E \right)
\coloneqq (\ev_{V,E_V})_* 
\left( 
\left(
\prod_{E \in \cE_{in}(V)}
\ev_{V,E}^* \alpha_E \right) \cap [M_V]^{virt} \right) \,.\eeq
Note that if $\cE_{in}(V)$ is empty, then $\eta_V$ is a map of the form 
\beq \eta_V \colon \Q \rightarrow \hhh^{*}(X_{E_V}) \,.\eeq

Denote by $\cE_{in}(V_0)$ the set of ingoing incident edges to $V_0$.
The virtual class $[M_{V_0}]^{virt}$
defines a map 
\beq \eta_{V_0} \colon \prod_{E \in \cE_{in}(V_0)} \hhh^{*}(X_E) 
\rightarrow \Q \eeq
by
\beq \eta_{V_0} \left(
 \prod_{E \in \cE_{in}(V_0)} \alpha_E \right)
\coloneqq \int_{[M_{V_0}]^{virt}} 
\prod_{E \in \cE_{in}(V_0)}
\ev_{V_0,E}^* \alpha_E \,.\eeq

Denote by $\cE_{\infty}(\Gamma)$ the set of unbounded edges of 
$\Gamma$.
Composing the maps $\eta_V$ and $\eta_{V_0}$, we obtain a map
\beq \eta_h \colon \prod_{E \in \cE_\infty(\Gamma)}
\hhh^{*}(X_E) \rightarrow \Q \,.\eeq

For every edge $E$ of $\Gamma$, let $\pt_E \in \hhh^2(Y_E)$ be the class of a point 
on $Y_E$. We consider the class $\pi_E^{*} \pt_E \in \hhh^2(X_E)$.
The degeneration formula is then
\beq N^{{\rm loc},h}_d(Y(D))
= \eta_h \left (\prod_{E \in \cE_{\infty}(\Gamma)} \pi_E^{*} \pt_E \right) \,.\eeq

We define a rigid genus $0$ rigid parametrised tropical curve 
$\bar{h} \colon \bar{\Gamma} \rightarrow \Delta$ as follows.
Let $\bar{\Gamma}$ be the star-shaped graph consisting of vertices $V_j$ for 
$0 \leq j \leq l$, and edges $E_j$ connecting $V_0$ and $V_j$ for 
$1 \leq j \leq l$. We assign the length $1/(d \cdot D_j)$ to the edge $E_j$.
Let $\bar{h} \colon \bar{\Gamma} \rightarrow \Delta$ be the piecewise linear map
such that $\bar{h}(V_0)=v_Y$ and $\bar{h}(V_j)=v_j$ for 
$1 \leq j \leq l$. In particular, we have 
$\bar{h}(E_j)=e_{Y,j}$ for $1 \leq j \leq j$. As $e_{Y,j}$ has integral length
$1$, we deduce that $E_j$ has weight $d \cdot D_j$.
Finally, curve classes decoration of the vertices are given by: 
$d_{V_0}=d$ and $d_{V_j}$ is equal to 
$(d \cdot D_j)$ times the class of a 
$\PP^1$-fibre of $\PP_j$ for $1 \leq j \leq l$.

\begin{lem} \label{lem_computation}
\beq N^{{\rm loc}, \bar{h}}_{0,d} (Y(D)) = \left( \prod_{j=1}^l \frac{(-1)^{d \cdot D_j -1}}{(d \cdot D_j)^2}\right) N_{0,d}^{\log}(Y(D)) \,.\eeq
\end{lem}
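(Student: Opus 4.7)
The plan is to apply the log gluing formula of Section 6.5.2 of \cite{ranganathan2019logarithmic} specifically to $\bar{h}$, factor the resulting invariant as a product over the vertices of $\bar{\Gamma}$, and evaluate each vertex contribution separately.

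First, I would unpack the gluing formula for $\bar{h}$. Since $\bar{\Gamma}$ is star-shaped with sink $V_0$, $N^{{\rm loc}, \bar{h}}_{0,d}(Y(D))$ decomposes as the pairing $\eta_{\bar{h}}(\prod_{k=1}^{l-1} \pi_{E_k}^{*} \pt_{E_k})$, where at the central vertex the moduli $M_{V_0}$ parametrises genus $0$ class-$d$ stable log maps to $\PP(\cV_{0,Y})$ with $l$ internal marked points of tangency $d \cdot D_j$ along each divisor $\PP(\cV_{0,Y}) \cap \PP(\cV_{0,j})$ and $l-1$ free marked points, while at each leaf $V_j$ the moduli $M_{V_j}$ parametrises genus $0$ stable log maps to $\PP(\cV_{0,j})$ of class $(d \cdot D_j)$ times a $\PP^1$-fibre class of $\PP_j \to D_j$, carrying a single marked point of tangency $d \cdot D_j$ along $X_{E_j}$.

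Second, I would identify the central integral with $N^{\rm log}_{0,d}(Y(D))$. The starting observation is that each $\ccL_j$ restricts trivially to $Y \subset \cY_0$, since the divisor cutting out $\ccL_j$ lies in the other irreducible components of $\cY_0$; hence $\cV_{0,Y} \simeq Y \times \A^l$. The class condition forces every map in $M_{V_0}$ to factor through the zero section $Y \hookrightarrow \PP(\cV_{0,Y})$, and the vertical obstruction bundle, whose fibre is $H^1(C, \cO_C^{\oplus l})$, vanishes in genus $0$. Consequently the natural morphism from $M_{V_0}$ to $\ol{\mmm}^{\log}_{0,2l-1}(Y(D), d)$ is an isomorphism matching the virtual fundamental classes, under which the $\pi_{E_k}^{*} \pt_{E_k}$-insertions along the unbounded edges pull back to the $l-1$ point insertions defining $N^{\rm log}_{0,d}(Y(D))$.

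Third, I would evaluate each leaf integral. The curve-class together with the maximal contact condition at $V_j$ force every map in $M_{V_j}$ to factor through a single $\PP^1$-fibre $F \subset \PP_j$ as a degree $n := d \cdot D_j$ cover fully ramified over the two points $F \cap D_j^{\partial}$ and $F \cap D_j$. Such a cover is unique up to its $\ZZ/n$ automorphism group, contributing a factor $1/n$. The normal directions in $\cV_{0,j} = \Tot(\cO_{\PP_j}(-D_j^{\partial}) \oplus \cO_{\PP_j}^{\oplus (l-1)})$ split into $l-1$ trivial summands (producing no obstruction in genus $0$) and the line bundle $\cO_{\PP_j}(-D_j^{\partial})$, which restricts to $\cO_{\PP^1}(-1)$ along $F$ and pulls back to $\cO(-n)$ on the source. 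An Aspinwall--Morrison-type evaluation of the Euler class of $R^1\pi_{*}f^{*}\cO(-1)$ against the virtual class then yields a vertex contribution of $(-1)^{n-1}/n^2$.

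Multiplying the central contribution $N^{\rm log}_{0,d}(Y(D))$ by the product of leaf contributions $\prod_{j=1}^{l} (-1)^{d \cdot D_j - 1}/(d \cdot D_j)^2$ produces the claimed identity. The main technical obstacle is the identification of virtual classes in the second step: one must verify that the perfect obstruction theory on $M_{V_0}$ inherited from $\PP(\cV_0)$, after quotienting out the trivial vertical directions, agrees with the intrinsic obstruction theory on $\ol{\mmm}^{\log}_{0,2l-1}(Y(D), d)$, and that the log structures on either side match compatibly across the $l$ nodes over $X_{E_j}$. This should follow from a careful but essentially standard application of log deformation theory, exploiting the triviality of $\ccL_j|_Y$, though the multi-node bookkeeping at $V_0$ is where most of the work will concentrate.
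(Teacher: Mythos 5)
Your proposal is correct and follows essentially the same route as the paper's proof: choose the flow with sink $V_0$, apply the degeneration/gluing formula to $\bar{h}$, identify the central vertex contribution with $N^{\rm log}_{0,d}(Y(D))$ (using the triviality of the bundles over $Y$ in genus $0$), and evaluate each leaf vertex as the fully ramified degree-$(d\cdot D_j)$ multicover of a $\PP^1$-fibre of $\PP_j$, whose normal directions contribute the factor $(-1)^{d\cdot D_j-1}/(d\cdot D_j)^2$. The only difference is presentational: the paper simply cites the Bryan--Pandharipande multicover computation for this factor, whereas you rederive it via the $1/n$ automorphism and the Euler class of $R^1\pi_*f^*\cO(-1)$, and you flag the matching of obstruction theories at $V_0$ which the paper leaves implicit.
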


\begin{proof}
We choose the flow on $\Gamma$ with sink $V_0$. Applying the degeneration formula gives immediately the result, using the fact that the normal bundle in $\PP(\cV_0)$ to a $\PP^1$-fibre of $\PP_j$ is 
$\cO(-1) \oplus \cO^{\oplus (l+1)}$ and so the corresponding multicover contribution is 
\beq \frac{(-1)^{d \cdot D_j -1}}{(d \cdot D_j)^2}\eeq
by \cite{MR2115262} (see the proof of Theorem 5.1).
\end{proof}

\begin{thm} \label{thm_key}
Assume $l=2$.
Let $h \colon \Gamma \rightarrow \Delta^h$ be a rigid decorated parametrised tropical curve as above with $N^{{\rm loc}, h}_{0,d} (Y(D)) \neq 0$. Then $h = \bar{h}$.
\end{thm}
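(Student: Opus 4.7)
The plan is to classify the rigid decorated parametrised tropical curves $h \colon \Gamma \to \Delta$ satisfying the prescribed constraints, and then to show by case analysis that every $h \neq \bar{h}$ contributes zero to the decomposition formula.

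Since $\Gamma$ has a single unbounded edge, there is a distinguished vertex $V_0 \in \Gamma$ with $h(V_0) = v_Y$ bearing the unique leg (the point insertion). For any other vertex $V$ of $\Gamma$, the image $h(V)$ falls into one of five strata of $\Delta$: the interior of some edge $e_{Y,j}$; one of the boundary vertices $v_j$; the interior of some edge $e^p_{j,j'}$ or $e_{p,j}$; a corner $v_p$; or the interior of a $2$-face $f_p$ or $g_p$. A direct tropical balancing argument at $V_0$, combined with the positivity $d \cdot D_j > 0$ and the genus-zero assumption, shows that $\Gamma$ is a tree in which each boundary vertex $v_j$ is reached by a unique directed path emanating from $V_0$, with total weight $d \cdot D_j$.

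The crucial vanishing input is that whenever a vertex $V$ maps to a corner $v_p$, to the interior of a boundary-edge $e^p_{j,j'}$ or $e_{p,j}$, or to the interior of a $2$-face, the local contribution $\eta_V$ is zero. The geometric mechanism is that $\cV^h_{0,V}$ is then a rank-$2$ bundle over (a toric modification of) the exceptional surface $\bS_p \simeq \PP^1 \times \PP^1$ (or a toric stratum thereof), carrying the distinguished divisors $D^{\partial}_{j,p}$. A careful dimension count shows that the virtual class $[M_V]^{\vir}$, paired with the pullback of the point class $\pi_{E_V}^{*}[\pt_{E_V}]$ coming from the outgoing edge, is forced to lie in a cohomology degree incompatible with the virtual dimension; equivalently, the outgoing evaluation factors through a proper substratum, and the desired integral vanishes by the projection formula.

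Once these cases are eliminated, the surviving rigid tropical curves have vertices only in $\{v_Y, v_1, v_2\} \cup \mathrm{int}(e_{Y,1}) \cup \mathrm{int}(e_{Y,2})$. Any subdividing vertex on $e_{Y,j}$ corresponds to an extra component covering a trivial $\PP^1$-fibre of $\PP_j \to D_j$, whose multi-cover contribution is controlled by \cite{MR2115262} and forces the vertex to be absorbed into the endpoint $V_j$ by the rigidity condition. What remains is precisely the star-shaped configuration with three vertices and two edges of weights $d \cdot D_1$ and $d \cdot D_2$, i.e.\ $\bar{h}$. The main obstacle I expect is the vanishing at corner vertices $v_p$: the excess geometry on $\bS_p$ is subtle, and a clean proof will likely require a further log-modification argument or a $T$-equivariant localisation along the fibre directions of $\cV^h_{0,V}$. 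Once Theorem~\ref{thm_key} is established, combining it with Lemma~\ref{lem_computation} and the multi-cover factor $(d \cdot D_j)$ from the degeneration formula yields Theorem~\ref{thm_log_local_2}.
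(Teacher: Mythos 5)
Your overall skeleton (classify the contributing rigid tropical curves, kill everything except the star-shaped $\bar h$) matches the paper's, but the step you lean on is precisely the one you cannot supply, and as stated it is not the mechanism that actually works. You claim a \emph{local} vanishing: that $\eta_V = 0$ whenever a vertex maps to a corner $v_p$, to the interior of a boundary edge $e^p_{j,j'}$ or $e_{p,j}$, or to a $2$-face. No such local statement is proved in the paper, and it is not what makes the theorem true: the vertex moduli spaces over $\bS_p$ or over the boundary strata of $\PP_j$ carry perfectly nonzero multiple-cover contributions in general, and the point classes you invoke do not enter at arbitrary outgoing edges --- in the degeneration formula the insertions $\pi_E^{*}\pt_E$ sit only on the unbounded edges at $v_Y$, so your ``dimension count against the point class from the outgoing edge'' has no class to pair against at a generic internal vertex. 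You acknowledge this yourself when you flag the corner vertices $v_p$ as the expected obstacle; that obstacle is the proof.

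The paper's argument is global rather than local. One fixes a flow on $\Gamma$ with sink $V_0$ over $v_Y$ and tracks the cohomology classes $\alpha_E$ pushed along the flow; the key algebra is that of the tautological classes $H_{j,E}$ on the $\PP^1$-bundle strata, with $H_{j,E}^2 = 0$ off $(\partial\Delta)_j$ and $H_{j,E}^2 = (\pi_E^{*}\pt_E)\,H_{j,E}$ on it. Toric homological balancing together with nefness of the $D_j$ (Lemma \ref{lem_toric_homological_balancing}) shows that any excursion of the curve into $(\partial\Delta)_j$ injects a nonzero multiple of $H_{j}$ into the flow, which then propagates to the sink (Lemma \ref{lem_H_propagation}); since the sink carries the single point insertion and has virtual dimension $3$, a degree count (Lemma \ref{lem_key_l_2}) forces exactly two bounded edges at $V_0$, carrying $H_1$ and $H_2$ respectively, and forbids any class of the form $H_j^2$ or $H_1H_2$ from arriving there. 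Rigidity (used to exclude parallel-edge configurations and extra sources at $v_Y$, as in Lemma \ref{lem_H_everywhere}), the uniqueness statements for the exit vertices $V_j$, and the dimension argument of \cite{vGGR} at $V_j$ then pin down the star shape. So the vanishing you need is an artifact of the pairing at $V_0$ exceeding its virtual dimension, not of local triviality at boundary vertices; without an argument of this kind (or a genuinely new proof of your local claim, which I do not believe holds as stated), your proposal does not close.
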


\cref{thm_key} follows from a judicious analysis of the possible topologies of contributing tropical curves, which we perform in \cref{sec:constr_trop}. Theorem \ref{thm_log_local_2} then follows from the combination of 
Theorem \ref{thm_key}, Lemma \ref{lem_computation}, and the decomposition formula using that 
$|\Aut(\bar{h})|=1$ and $m_{\bar{h}} = \prod_{j=1}^l (d \cdot D_j)$.

\subsection{The log-local correspondence for $3$ and $4$ components}
\label{sec:loglocal_3_4}

We end the proof of Theorem \ref{thm_log_local} for $l=3$ and $l=4$.
For $l=3$, it is enough to treat the case of 
$\delp_3(0,0,0)$ as all the other $3$-component cases are obtained from it by blow-up.
The result for 
$\delp_3(0,0,0)$ follows by comparing the local result given by Theorem
\ref{thm:dP33comp} with the log result given by Theorem 
\ref{thm:log_dp3_0_0_0}.

For $l=4$, the result follows by comparing the local result given by 
\eqref{eq:local_4_comp} and the log result given by \eqref{eq:fzerolog}.

\section{Open Gromov--Witten theory}
\label{sec:opengw}

In this section we relate the quantised scattering calculations of \cref{sec:loggw}
to the higher genus open Gromov--Witten theory of Aganagic--Vafa A-branes. We first give in \cref{sec:ogw} an overview of the framework of \cite{Li:2004uf} to cast open toric Gromov--Witten theory within the realm of formal relative invariants, and recall the topological vertex formalism of  Aganagic--Klemm--Mari\~no--Vafa. Our treatment throughout this section, while self-contained, will keep the level of detail to the necessary minimum, and we refer the reader to \cites{Li:2004uf,Fang:2011qd} for further details.  The reader who is familiar with this material may wish to skip to \cref{sec:logopenpr} where the stable log counts of \cref{sec:loggw} are related to open Gromov--Witten theory, with the main statement condensed in \cref{thm:logopen}, and proved in \cref{sec:logopen}.

In the following, for a partition $\lambda \vdash d$ of $d\in \bbN$ we write $|\lambda|=d$ for the order of $\lambda$, $\ell_\lambda =r$ for the cardinality of the partitioning set, $\kappa_\lambda := \sum_{i=1}^{\ell_\lambda} \lambda_i(\lambda_i-2i+1)$ for its second Casimir invariant, and let $m_j(\lambda):=\#\{\lambda_i | \lambda_i=j \}_{i=1}^{\ell_\lambda}$, $z_\lambda \coloneqq \prod_j m_j(\lambda)! j^{m_j(\lambda)}$.  We furthermore denote $\cP$ the set of partitions, and $\cP^d$ the set of partitions of order~$d$. We will extensively need, particularly in the proof of \cref{thm:logopen}, some classical results on principally-specialised shifted symmetric functions, for which notation and necessary basic results are collected in \cref{sec:prelim}.

\subsection{Toric special Lagrangians}
\label{sec:ogw}

 Let $X$ be a smooth complex toric threefold with $K_X \simeq \cO_X$. If the affinisation morphism to $\mathrm{Spec}(\Gamma(X, \cO_X))$ is projective, $X$ can be realised as a symplectic quotient $\bbC^{r+3}\GIT{}{G}$ \cite{MR2015052} where $G\simeq U(1)^r$ acts on the affine co-ordinates $\{z_i\}_{i=1}^{r+3}$ of $\bbC^{r+3} = \mathrm{Spec}\bbC[z_1, \dots, z_{r+3}]$ by 
$$(t_1,\ldots, t_r) \cdot (z_1,\ldots, z_{r+3}) =\l(\prod_{i=1}^r t_i^{w_1^{(i)}}\cdot z_1,\ldots,  \prod_{i=1}^r t_i^{w_{r+3}^{(i)}} \cdot z_{r+3}\r),$$
where $w_{j}^{(i)}\in \bbZ$, $i=1, \dots, r$, $j=1,\dots, r+3$ are the weights of the $G$-action. This is a Hamiltonian action with respect to the canonical K\"ahler form on $\bbC^{r+3}$,
\beq
\omega := \frac{\ri}{2}\sum_{i=1}^{r+3}\rd z_i \wedge \rd \bar{z}_i\,,
\label{eq:symform}
\eeq
with moment map
$$
\widetilde{\mu}(z_1,\ldots, z_{r+3}) =
\l(\sum_{i=1}^{r+3} w^{(1)}_i|z_i|^2,\ldots, \sum_{i=1}^{r+3} w^{(r)}_i |z_i|^2\r)\,.
$$
If $(t_1,\ldots, t_k) \in \hhh^{1,1}(X;\bbR) \simeq (\mathfrak{u}(1)^r)^\star$  is a K\"{a}hler class, then $X$ is the geometric quotient
\beq
X = \widetilde{\mu}^{-1}(t_1,\ldots, t_r)/G\,.
\label{eq:quotX}
\eeq
with symplectic structure given by the Marsden--Weinstein reduction $\omega_{t}$ of \eqref{eq:symform} onto the quotient \eqref{eq:quotX}, where $[\omega_{t}]=(t_1, \ldots, t_r)\in \hhh^{1,1}(X;\bbR)$. 

We will be concerned with a class of special Lagrangian submanifolds $L=L_{\widehat{w},c}$ of $(X,\omega_t)$
constructed by Aganagic--Vafa \cite{Aganagic:2000gs} which are invariant under the natural Hamiltonian torus action on $X$. They are defined by 
\beq
\sum_{i=1}^{r+3} \widehat{w}_i^1|z_i|^2 = c\,,\quad
\sum_{i=1}^{r+3} \widehat{w}_i^2 |z_i|^2 = 0\,,\quad
\sum_{i=1}^{r+3} \arg z_i  = 0\,,
\eeq
with $\widehat{w}^a_i\in \bbZ$, $\sum_{i=1}^{r+3} \widehat{w}^a_i =0$, and $c \in \bbR$. These Lagrangians have the topology of $\bbR^2 \times S^1$, and they intersect a unique torus fixed curve $C_{L}$ along an $S^1$: we say that $L$ is an {\it inner} (resp. {\it outer}) brane if $C \simeq \bbP^1$ (resp. $\bbC$). Throughout the foregoing discussion we will assume that $L$ is always an outer brane.

Let $T \simeq (\bbC^\star)^2$ be the algebraic subtorus of $(\bbC^\star)^3 \subset X$ acting trivially on $K_X$, and $T_\bbR \simeq U(1)^2$ be its maximal compact subgroup. Then by construction any toric Lagrangian $L$ is preserved by $T_\bbR$, which acts on $\bbC \times S^1$ by scaling $(\lambda_1, \lambda_2) \cdot (w, \theta) \to (\lambda_1 w, \lambda_2 \theta)$. Writing $\mu_T:X\to \bbR^2 \simeq (\mathfrak{u}(1)^2)^*$ for the moment map of the $T_\bbR$-action, the union of the 1-dimensional $(\bbC^\star)^3$ orbit closures of $X$ is mapped by $\mu_T$ to a planar trivalent metric graph $\Gamma_X$ whose sets of vertices $(\Gamma_X)_0$, compact edges $(\Gamma_X)_1^{\rm cp}$ and non-compact edges $(\Gamma_X)_1^{\rm nc}$ correspond respectively to $T$-fixed points, $T$-invariant proper curves, and $T$-invariant affine lines in $X$ respectively. Since the moment map is an integral quadratic form, the tangent directions of the edges have rational slopes in $\bbR^2$: we can explicitly keep track of this information by regarding $\Gamma_X$ as a topological oriented graph\footnote{In doing so we forget the metric information about $\Gamma_X$ which stems from a choice of a K\"ahler structure on $X$: this is inconsequential for the definition of the invariants in the next section. We thus make a slight abuse of notation by indicating the decorated topological graph obtained by forgetting the information about the lengths of the edges by the same symbol $\Gamma_X$.} decorated by the assignment to each vertex $v \in (\Gamma_X)_0$ of primitive integral lattice vectors $\mathfrak{p}^{e}_v \in \bbZ^2$, representing the directions of the edges $e$ emanating from $v \in (\Gamma_X)_0$. The graph $\Gamma_X$ is determined bijectively by the weights $w^{(i)}_j$, and its knowledge suffices to reconstruct 
$X$. 
\begin{rmk} Let $\Sigma(X)$ be the fan of $X$. As $K_X \simeq \mathcal{O}_X$, 
$\Sigma(X)$ is a cone in $\mathbb{R}^3$ over an integral polygon $P$ in 
$\mathbb{R}^2 \times \{ 1 \} \subset \mathbb{R}^3$. The graph 
$\Gamma_X$ can be obtained as the dual graph of $P$ and taking orientations to be 
outgoing at every vertex. Conversely, one can recover 
(the $\mathrm{SL}(2,\mathbb{Z}$)-equivalence class of) $P \subset \mathbb{R}^2$
as the dual polygon of 
$\Gamma_X$, and then $\Sigma(X)$ as the cone in $\mathbb{R}^3$ over 
$\mathbb{R}^2 \times \{ 1\}$.
\label{rmk:fangraph}
\end{rmk}
If $L$ is a toric outer Lagrangian, its image under $\mu_T$ is a point $\mu_T(L)$ lying on the non-compact edge $\mu_T(C)$ representing the curve it is incident to. Write $e_L:=\mu_T(C)$, $v$ for its adjacent vertex, and $e_L'$ for the first edge met by moving clockwise from $e_L$.
\begin{defn}\label{def:framing}
A {\it framing} of $L$ is the choice of an integral vector $\mathsf{f}$ such that $\mathsf{p}_v^{e_L}\wedge \mathsf{p}_v^{e_L'}=\mathsf{p}_v^{e_L}\wedge \mathsf{f}$; equivalently, $\mathsf{f}=\mathsf{p}_v^{e_L'}-f \mathsf{p}_v^{e_L}$ for some $f \in \bbZ$. We say that $L$ is canonically framed if $f=0$, i.e. $\mathsf{f}=\mathsf{p}_v^{e_L'}$. 
\end{defn}

\begin{rmk}\label{rmk:clockwise}
By construction, since $\mathsf{f}\wedge \mathsf{p}_v^{e_L}>0$, a framing at an outer vertex is always pointing in the clockwise direction 
\end{rmk}

\begin{defn}
We call $(X,L,\mathsf{f})$ a \emph{toric Lagrangian triple} if
\bit
\item $X$ is a semi-projective toric CY3 variety;
\item $L=\sqcup_i L_{\widehat{w}_i,c_i}$ is a disjoint union of Aganagic--Vafa special Lagrangian submanifolds of $X$;
\item $\mathsf{f}$ is the datum of a framing choice for each connected component of $L$.
\eit
\end{defn}

We will write $\Gamma_{(X,L,\mathsf{f})}$ for the graph obtained from $\Gamma_X$ by the extra decoration of an integral vector incident to the edge $e_L$ representing the toric outer Lagrangian $L$ at framing $\mathsf{f}$ (see \cref{fig:toricgraph}). 

\begin{example}
Let $w^{(1)}=(1,1,-1,-1)$, $\widehat{w}^{(1)}=(1,0,-1,0)$, $\widehat{w}^{(1)}=(0,1,-1,0)$. For any $t\neq 0$, the corresponding toric variety $X$ is the resolved conifold $\mathrm{Tot}(\cO_{\bbP^1}(-1)\oplus\cO_{\bbP^1}(-1))$, with $\int_{0_* \bbP^1}\omega_{t}=t$. 
\begin{figure}[!h]
\centering
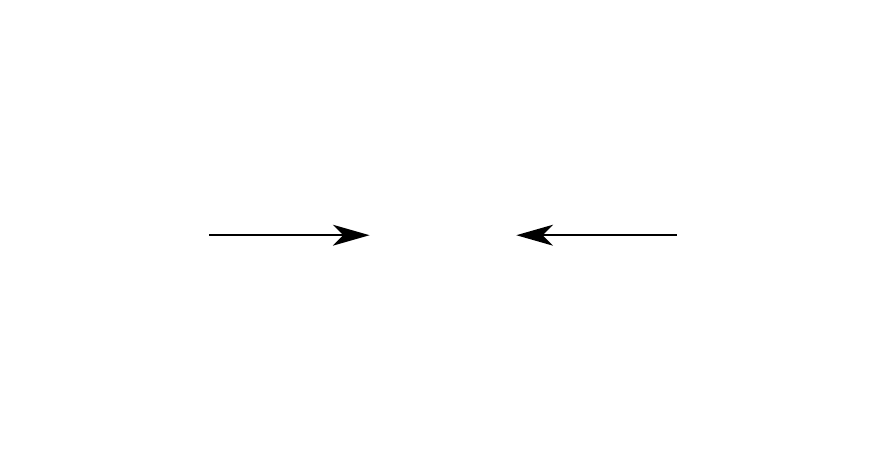
\caption{The toric Calabi--Yau graph $\Gamma_{(X,L,\mathsf{f})}$ of the resolved conifold with an outer Lagrangian at framing $\mathsf{f}=\mathsf{f}_{\rm can}-\mathsf{p}_{v_1}^{e_1}$ (i.e. $f=1$).}
\label{fig:toricgraph}
\end{figure}
The compact edge $e_5$ corresponds to the $\bbP^1$ given by the zero section of $X$. The edges $e_i$, $i=1,2,3,4$ correspond to the $T$-invariant $\bbA^1$-fibres above the points $[1:0]$ and $[0:1]$ of the $\bbP^1$ base. The weights $\widehat{w}^{(i)}$ furthermore determine a toric Lagrangian, whose image in the toric graph lies in $e_1$, and is depicted in \cref{fig:toricgraph} at framing $\mathsf{f}=\mathsf{p}^{e_5}_{v_1}-\mathsf{p}^{e_1}_{v_1}$. 
\end{example}

\subsubsection{Open Gromov--Witten invariants}

In informal terms, the open Gromov--Witten theory of $(X, L=L_1 \cup \dots \cup L_s)$ of toric Lagrangians $L_i$, $i=1, \dots, s$ is a virtual count of maps to $X$ from open Riemann surfaces of fixed genus, relative homology degree, and boundary winding data around $S^1 \hookrightarrow L$. This raises two orders of problems when trying to define these counts in the algebraic category, as the boundary conditions for the curve counts are imposed in odd real dimension, and the target geometry is non-compact. A strategy to address both issues simultaneously for framed outer toric Lagrangians, and which we will follow for the purposes for the paper, was put forward by Li--Liu--Liu--Zhou \cite{Li:2004uf}, which we briefly review below. The main idea in \cite{Li:2004uf} is to replace the toric Lagrangian triple $(X,L,\mathsf{f})$ by a formal relative Calabi--Yau pair $(\widehat{X}, \widehat{D})$, where $\widehat{X}$ is obtained as the formal neighbourhood along a partial compactification, specified by $L$ and the framing $\mathsf{f}$, of the toric 1-skeleton of $X$, and $\widehat{D}=\widehat{D}_1+ \dots + \widehat{D}_s$ is a formal divisor\footnote{See \cite[Section 5]{Li:2004uf} for the details of the relevant construction.} in the partial compactification $\widehat{X}$ with $K_{\widehat{X}}+\widehat{D}=0$, the aim being to trade the theory of open stable maps with prescribed windings along the boundary circles on $L$ by a theory of relative stable maps with prescribed ramification profile above torus fixed points in $\widehat{X}$, as previously suggested in \cite{Li:2001sg}. The resulting moduli space $\overline{\cM}_{g;\beta;\mu_1, \dots, \mu_s}^{\rm rel}(\widehat{X},\widehat{D})$ of  degree $\beta$ stable maps from $\ell(\mu_1)+\ldots + \ell(\mu_s)$-pointed, arithmetic genus $g$ nodal curves with ramification profile $\mu_i$ above $\widehat{D}_i$ at the punctures is a formal Deligne--Mumford stack carrying a perfect obstruction theory $[\cT^1 \to \cT^2]$ of virtual dimension $\ell(\mu_1)+\ldots + \ell(\mu_s)$. While the moduli space is not itself proper, it inherits a $T\simeq (\bbC^\star)^2$ action from $\widehat{X}$ with compact fixed loci, and open Gromov--Witten invariants 
\beq  
O_{g; \beta; (\mu_1, \dots, \mu_s)}(X,L,\mathsf{f})
\coloneqq \frac{1}{|\mathrm{Aut}(\vec\mu)|}\int_{[\overline{\cM}_{g;\beta;\mu_1, \dots, \mu_s}^{\rm rel}(\widehat{X},\widehat{D})]^{{\rm vir},T}}
\frac{\re_T (\cT^{1,m})}{\re_T (\cT^{2,m})}
\eeq
where $\cT^{i,m}$, $i=1,2$ denote the moving parts of the obstruction theory, are defined in a standard manner by $T$-virtual localisation \cite{MR1666787}. It is a central result of \cite{Li:2004uf} that the Calabi--Yau condition on $T$ entails that  $O_{g, \beta, (\mu_1, \dots, \mu_s)}(X,L,\mathsf{f})$ are non-equivariantly well-defined rational numbers: the invariants however do depend on the framings $\mathsf{f}_i$ specified to construct the formal relative Calabi--Yau $(\widehat{X}, \widehat{D})$, in keeping with expectations from large $N$ duality \cite{Ooguri:1999bv}.

It will be helpful to package the open Gromov--Witten invariants $O_{g,\beta,\mu_1, \dots, \mu_s}(X,L,\mathsf{f})$ into formal generating functions. Let $\mathsf{x}^{(i)}=(\mathsf{x}^{(i)}_1,\mathsf{x}^{(i)}_2, \ldots)$, $i=1, \dots, s$ be formal variables and for a partition $\mu$ define $\mathsf{x}^{(i)}_\mu := \prod_{j=1}^{\ell(\mu)}  \mathsf{x}^{(i)}_{\mu_j}$. We furthermore abbreviate $\vec{\mathsf{x}}_{\vec\mu} = (\mathsf{x}_{\mu_1}^{(1)}, \dots, \mathsf{x}_{\mu_s}^{(s)})$, $\vec\mu=(\mu_1, \dots, \mu_s)$, $|\vec\mu|=\sum_i |\mu_i|$ and $\ell(\vec\mu)=\sum_i \ell(\mu_i)$, and define the connected generating functions
\bea
\mathsf{O}_{\beta;\vec \mu}(X,L,\mathsf{f})(\hbar) &\coloneqq & \sum_{g} \hbar^{2g-2+\ell(\vec\mu)}  O_{g; \beta; \vec \mu}(X,L,\mathsf{f}) \nn \\
\cO_{\vec \mu}(X,L,\mathsf{f})(Q, \hbar) &\coloneqq & \sum_\beta \mathsf{O}_{\beta,\vec \mu}(X,L,\mathsf{f})(\hbar) Q^\beta \nn \\
\mathfrak{O}(X,L,\mathsf{f})(Q,\hbar, \mathsf{x})
& \coloneqq & \sum_{\vec\mu \in (\cP)^s}\cO_{\vec \mu}(X,L,\mathsf{f})(Q, \hbar) \vec{\mathsf{x}}_{\vec \mu}\,,
\label{eq:openfreeen}
\eea
as well as generating functions of disconnected invariants in the winding number and representation bases
\bea
\mathfrak{Z}(X,L,\mathsf{f})(Q, \hbar, \mathsf{x}) &:= & \exp\l(\mathfrak{O}(X,L,\mathsf{f})(Q, \hbar, \mathsf{x}) \r) \nn \\ &=:& \sum_{\vec\mu \in \cP^s}  \cZ_{\vec\mu}(X,L,\mathsf{f})(Q,\hbar)  \vec{\mathsf{x}}_{\vec \mu}, \nn \\
&=:& \sum_{\vec\mu \in \cP^s} \sum_{\vec \nu \in (\cP)^s} \prod_{i=1}^s \frac{\chi_{\nu_i}(\mu_i)}{z_{\mu_i}} \cW_{\vec\nu}(X,L,\mathsf{f})(Q, \hbar)  \vec{\mathsf{x}}_{\vec \mu}\,. \nn \\
\label{eq:opengf}
\eea
Here $\chi_\nu(\mu)$ denotes the irreducible character of $S_{|\nu|}$ evaluated on the conjugacy class labelled by $\mu$. 
When $\mathsf{x}=0$, \eqref{eq:opengf} reduces to the ordinary generating function of disconnected Gromov--Witten invariants of $X$.

\subsubsection{The topological vertex}

The invariants \eqref{eq:opengf} can be computed algorithmically to all genera using the topological vertex of Aganagic--Klemm--Mari\~no--Vafa \cite{Aganagic:2003db}. We can succinctly condense this into the following three statements.
\ben[leftmargin=*]
\item Let $X= \bbC^3$, $L=\cup_{i=1}^3 L_i$, and $L_i=L_{\widehat{w}^{(i)},c}$ with $\widehat{w}_j^{(i)}= \delta_{i,j}-\delta_{i ,j+1~\mathrm{mod}~3}$, $i=1, \dots, 3$ be the outer Lagrangians of $\bbC^3$ as in \cref{fig:3vertex}, and fix framing vectors $\mathsf{f}_i$ for each of them. Then
\beq
\cW_{\vec \mu}(\bbC^3,L,\mathsf{f}) = \prod_{i=1}^3 q^{f_i \kappa(\mu_i)/2}(-1)^{f_i |\mu_i|} \cW_{\vec \mu}(\bbC^3,L,\mathsf{f}_{\rm can})
\label{eq:framvert}
\eeq
where $q=\re^{\ri \hbar}$.
\begin{figure}
\centering
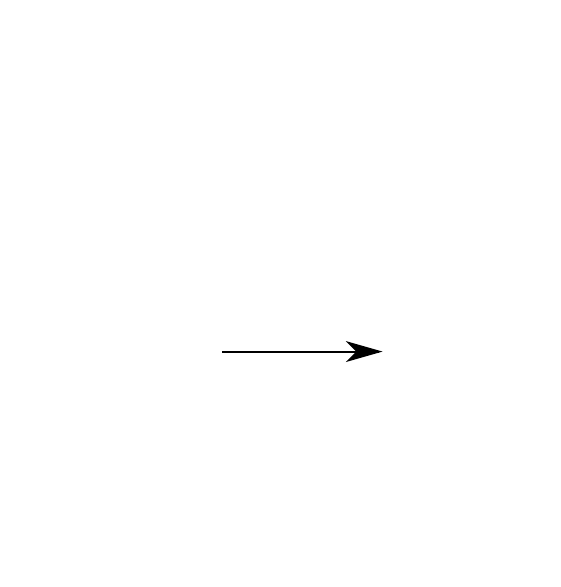
\caption{The framed vertex $(\bbC^3, L_1 \cup L_2 \cup L_3)$, depicted with framings $\mathsf{f}_1=\mathsf{p}_v^{e_2}$, $\mathsf{f}_2=\mathsf{p}_v^{e_2}+\mathsf{p}_v^{e_3}$,  $\mathsf{f}_3=\mathsf{p}_v^{e_1}$.}
\label{fig:3vertex}
\end{figure}
\item Let $(X^{(1)},L^{(1)},\mathsf{f}^{(1)})$, $(X^{(2)}, L^{(2)},\mathsf{f}^{(2)})$ be smooth toric Calabi--Yau 3-folds with framed outer toric Lagrangians $L^{(i)}=\cup_{j=1}^{s_i} L^{(i)}_j$. Suppose that there exist non-compact edges $\widetilde{e}_i \in (\Gamma_{(X^{(i)},L^{(i)},\mathsf{f}^{(i)})})_1^{\rm nc}$  emanating from vertices $\widetilde{v}_i \in (\Gamma_{(X^{(i)},L^{(i)},\mathsf{f}^{(i)})})_0$  such that $\mu_T(L^{(i)}_{s_i})\cap \widetilde{e}_i \neq \emptyset$, and that moreover $\mathsf{p}_{\widetilde{v}_1}^{\widetilde{e}_1} = -\mathsf{p}_{\widetilde{v}_2}^{\widetilde{e}_2}$, $\mathsf{f}^{(1)}_{s_1} = \mathsf{f}^{(2)}_{s_2}$ (see \cref{fig:glueing}). We can construct a planar trivalent graph $\Gamma_{X_1 \cup_{e_{12}} X_2}$ decorated with triples of primitive integer vectors at every vertex by considering the disconnected union of $\Gamma_{X^{(1)}}$ and $\Gamma_{X^{(2)}}$, deleting $\widetilde{e}_1$ and $\widetilde{e}_2$, and adding a compact edge $e_{12}$ connecting $\widetilde{v}_1$ to $\widetilde{v}_2$. A toric Calabi--Yau 3-graph reconstructs uniquely a smooth toric CY3 with a $T$ action isomorphic to the $T$-equivariant formal neighbourhood of the configuration of rational curves specified by the edges, and we call $X$ the threefold determined by the glueing procedure such that $\Gamma_X=\Gamma_{X_1 \cup_{e_{12}} X_2}$. In the same vein, the collection of framed Lagrangians $L^{(i)}$ on $X_i$ determine framed outer Lagrangians $L=\cup_{i=1}^{s_1+s_2-2} L_i$ on $X$: we have canonical projection maps $\pi_{i} : \Gamma_X \to \Gamma_{X^{(i)}}$, and we place an outer Lagrangian brane at framing $f_j$ on all non-compact edges $e$ such that $\pi_i(e) \cap \mu_T(L^{(i)}_{j})\neq \emptyset$ for some $j$. Write $\vec{\mu}=(\mu_1^{(1)}, \dots, \mu_{s_1-1}^{(1)},\mu_1^{(2)}, \dots, \mu_{s_2-1}^{(2)})$, $\vec{\mu}_{12}^{(1)}=(\mu_1^{(1)}, \dots, \mu_{s_1-1}^{(1)}, \nu_{12})$, $\vec{\mu_{12}}^{(2)} = (\mu_1^{(2)}, \dots, \mu_{s_2-1}^{(2)}, \nu_{12}^T)$. Then the following glueing formula holds
\bea
\cW_{\vec \mu}(X,L,\mathsf{f})(Q, \hbar) &=& \sum_{\nu_{12} \in \cP} (-Q_{\beta_{12}})^{|\nu_{12}|} q^{f_{12}\kappa(\nu_{12})/2}(-1)^{f_{12}} \cW_{\vec \mu_{12}^{(1)}}(X^{(1)},L^{(1)},\mathsf{f}^{(1)})(Q, \hbar) \nn \\ & & \cW_{\vec \mu_{12}^{(2)}}(X^{(2)},L^{(2)},\mathsf{f}^{(2)})(Q, \hbar)\,.
\label{eq:glueing}
\eea
Here $f_{12}=\det(\mathsf{p}_{\widetilde{e}_1'}, \mathsf{p}_{\widetilde{e}_2'})$, where $\widetilde{e}_i'\in (\Gamma_{(X,L,\mathsf{f})})_1$ is the first edge met when moving counterclockwise from $\widetilde{e}_i$, and $Q_{\beta_{12}}$ is the exponentiated K\"ahler parameter associated to the homology class $\beta_{12}=[\mu_T^{-1}(e_{12})] \in \hhh_2(X, \bbZ)$. The glueing formula \eqref{eq:glueing} originally proposed by \cite{Aganagic:2003db} is derived in \cite{Li:2004uf} as a consequence of Li's degeneration formula for relative Gromov--Witten theory \cite{MR1938113}. 
\item The glueing formula \eqref{eq:glueing} allows to recursively compute open Gromov--Witten invariants of any  toric Lagrangian triple $(X, L, \mathsf{f})$ starting from those of the framed vertex, i.e. affine 3-space with framed toric Lagrangians incident to each co-ordinate line. The framing transformation \eqref{eq:framvert} further reduces the problem to the knowledge of the open Gromov--Witten invariants of $(\bbC^3, L=L_1\cup L_2 \cup L_3, \mathsf{f}^{\rm can})$ in canonical framing $\mathsf{f}_i=\mathsf{f}^{\rm can}_i \coloneqq \mathsf{p}_{i+1}~\mathrm{mod}~3$. This is given by
\beq
\cW_{\mu_1,\mu_2,\mu_3}(\bbC^3,L,\mathsf{f}^{\rm can})(\hbar) = q^{\kappa(\mu_1)/2} \sum_{\delta \in \cP} s_{\frac{\mu_1^t}{\delta}}(q^{\rho+\mu_3})s_{\frac{\mu_2}{\delta}}(q^{\rho+\mu_3^t}) s_{\mu_3}(q^{\rho})\,.
\label{eq:vertex}
\eeq
where the shifted skew Schur function $s_\frac{\a}{\b}(q^{\rho+\gamma})$ is defined in \eqref{eq:shiftskewschur}.
The formula \eqref{eq:vertex} follows from an explicit evaluation of formal relative Gromov--Witten invariants in terms of descendent triple Hodge integrals.  It was first proved in \cites{MR2257399,Li:2004uf} when $\mu_3 = \emptyset$, and
the general case was established in \cite{moop}.
\een
An immediate consequence of \eqref{eq:glueing} and \eqref{eq:vertex} is that if $i: (X,L,\mathsf{f}) \hookrightarrow (X', L', \mathsf{f}')$ is an embedding of toric Lagrangian triples corresponding to an embedding of graphs $i_\# : \Gamma_{(X,L,\mathsf{f})} \hookrightarrow \Gamma_{(X', L', \mathsf{f}')}$, where $\Gamma_{(X', L', \mathsf{f}')}$ is obtained from $\Gamma_{(X,L,\mathsf{f})}$ by addition of a single vertex $v_2$ and glueing along a compact edge $e_{12}$ to a vertex $v_1 \in (\Gamma_{(X,L,\mathsf{f})})_0$ by the above procedure, then 
\beq
\cW_{\vec \mu}(X,L,\mathsf{f})(Q, \hbar) = \cW_{\vec \mu}(X',L',\mathsf{f}')(Q, \hbar)\big|_{Q_{\beta_{12}}=0}\,.
\label{eq:forget}
\eeq
\begin{figure}[t]
\centering
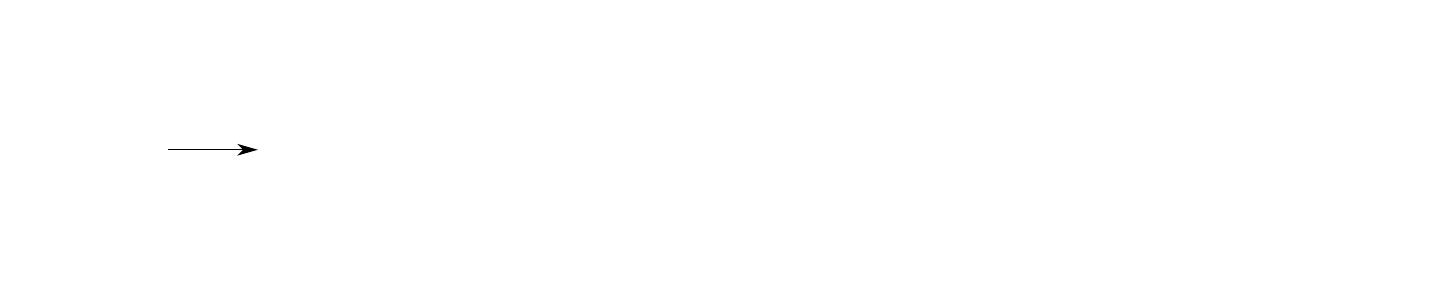
\caption{The glueing procedure for the topological vertex. In the notation of the text, we have $s_1=s_2=3$,  $\widetilde{e}_1=e_2^{(1)}$, $\widetilde{e}_2=e_3^{(2)}$, $\widetilde{e}'_1=e_3^{(1)}$, $\widetilde{e}'_2=e_1^{(2)}$.}
\label{fig:glueing}
\end{figure}

\subsection{The higher genus log-open principle}
\label{sec:logopenpr}

In this Section we associate certain toric Lagrangian triples to the geometry of Looijenga pair, under an additional condition given by the following Definition.

\begin{defn}
Let $Y(D=D_1+\dots +D_l)$ be a nef Looijenga. We say that it satisfies \textbf{Property O} if $E_{Y(D)}$ deforms to $E_{Y'(D')}$ for a Looijenga pair $Y'(D'=D_1'+\dots+D_l')$ such that
\bit
\item $Y'$ is a toric surface,
\item $D_i'$ is a prime toric divisor  $\forall~i=1, \dots, l-1$,
\item any non-trivial effective curve in $Y'$ is $D'_l$-convex. 
\eit
\label{def:propO}
\end{defn}

\begin{example}\label{ex:propO}
Denote by $Y'(D'_1,D'_2)$ the toric surface whose fan is given by  \cref{fig:dp3open}, with $D'_1=H-E_3$ and the class of $D'_2$ corresponding to the sum of the other rays. $Y'(D'_1,D'_2)$ is obtained from $\ptwo(1,4)=\ptwo(D_1,D_2)$ by blowing up a smooth point on $D_1$ and two infinitesimally close points on $D_2$. Moving the latter two apart (while staying on $D_2$) determines a deformation to $\delp_3(0,2)$. Given nefness of $D'_2$, it follows that $\delp_3(0,2)$ satisfies Property O. By Proposition \ref{prop:local-deform}, $\delp_3(1,1)$ also satisfies Property O. The property holds after blowing down $(-1)$-curves, including for $\fzero(0,4)$. Applying \cref{prop:local-deform} it thus also holds for $\fzero(2,2)$ and $\ftwo(2,2)$.

\end{example}

\begin{example} Consider now $\delp_4(D_1,D_2)$ with $D_1^2=0$. Deforming $\delp_4(D_1,D_2)$ to a smooth  toric surface with $D_1$ a toric divisor leads the fan of \cref{fig:dp3open} with an additional ray in the lower half-plane. Up to deformation, there are two ways of doing so: by either adding a ray between $H-E_1-E_2$ and $E_2$, or between $E_2$ and $E_1-E_2$. Either way, this creates a curve $C$ with $C\cdot (-K-D_1)<0$ and therefore $\delp_4(0,1)$ does not satisfy Property~O. The same argument applies to $\delp_5(0,0)$.

\end{example}

\begin{example} When $l>2$, Property~O is always  satisfied for all surfaces except for $\delp_3(0,0,0)$. The only way of deforming $\delp_3$ to a toric surface with $D_1$ and $D_2$ toric is to take the fan of \cref{fig:annulidp2} and add a ray in the lower left quadrant. But this creates a curve $C$ with $C\cdot (-K-D_1-D_2)<0$ and hence $\delp_3(0,0,0)$ does not satisfy Property~O.
\end{example}

From \cref{tab:classif},  Property~O coincides with quasi-tameness of $Y(D)$, with the sole exception of $\delp_3(0,0,0)$. \\


We make some informal comments about the geometric transition from stable log maps to open maps, which inform the construction of the open geometries below. This discussion is motivated by \cite[Section 7]{MR2386535} and in particular a natural generalisation of \cite[Conjecture 7.3]{MR2386535}. That description applied to our setting makes clear the structure of the toric Lagrangians.
Denote by $(Y,D=D_1+\cdots+D_l)$ a possibly non-compact log Calabi--Yau variety.
For a maximally tangent stable log map to $(Y,D)$, the expectation is that maximal tangency $d_j$ with $D_j$ can be replaced by an open boundary condition of winding number $d_j$ with a special Lagrangian $L_j$ near $D_j$.
The special Lagrangian needs to have the property that it bounds a holomorphic disk $\mathcal{D}$ in the normal bundle to $D_j$, see \cite[Section 7]{MR2386535}.
This property dictates how to compactify $Y\setminus D_j$: in a toric limit $\mathcal{D}$ is simply the disk used to compactify the edge the framing lies on.
If $d$ is a $D_j$-convex curve class, then we can alternatively remove the maximal tangency condition by twisting the geometry by $\shO_Y(-D_j)$.
$D_j$-convexity then guarantees that no maps move into the fibre direction. To obtain the Calabi--Yau threefold geometry from a surface, we adopt the convention of twisting by the last divisor $D_l$.

In the toric limits of Construction \ref{constr:YopD}, the choice of framing corresponds to a choice of compactification. If an outer edge $e$ has framing $f$, then, see \cite[\S3.2]{Li:2004uf}, the normal bundle of the compactification $C$ of $e$ is $\shO(f)\oplus\shO(-1-f)$. 
In our setting, one line bundle is the normal bundle $\shO_C(C^2)$ of the curve $C$ in the surface and the other is the normal bundle $\shO_Y(-D_l)\big{|}_C$ of the curve in the fibre direction. In Construction \ref{constr:YopD}, it follows from our conventions that if the framing points to the interior of the polytope, then the normal bundle of $C$ in the surface is $\shO(f)$, and if the framing points to the outside of the polytope, then the normal bundle of $C$ in the surface is $\shO(-1-f)$. In particular, from a Looijenga pair $Y(D_1,\dots,D_l)$ satisfying Property~O, we construct a dual Aganagic--Vafa open Gromov--Witten geometry via the following construction.

\begin{construction}\label{constr:YopD}
Let $Y(D_1,\dots,D_l)$ be a Looijenga pair satisfying Property O for $Y'(D_1',\dots,D'_l)$. Denote by $\Delta_{Y'}$ the polytope of $Y'$ polarised by $-K_{Y'}$.
We assume that $\Delta_{Y'\setminus (\cup_{j\neq l}D'_j)}$ is 2-dimensional or equivalently that $D'_l$ is not toric, implying $l<4$.
Denote by $e_j$ the edge of $\Delta_{Y'}$ corresponding to $D'_j$ for $1\leq j\leq l-1$ and denote by $e_l,\dots,e_{l+r}$ the remaining edges. Up to reordering, we may assume that the $e_i$ are oriented clockwise. We construct a toric Lagrangian triple $Y^{\rm op}(D) \coloneqq (X,L,\mathsf{f})$ as follows.
In $\Delta_{Y'}$ remove the edge $e_1$ and replace it by a framing $\mathsf{f}_1$ on $e_{l+r}$ parallel to $e_1$. By \cref{def:framing} and \cref{rmk:clockwise}, there is a unique way to do so and $\mathsf{f}_1$ is pointing into the interior of $\Delta_{Y'}$. Denote the resulting graph by $\Delta^1$. If $l=2$, add outer edges to $\Delta^1$ so that each vertex satisfies the balancing condition and denote the resulting toric Calabi--Yau graph by $\Gamma$. If $l=3$, in $\Delta^1$ remove the edge $e_2$ and replace it by a framing on $e_3$ parallel to $e_2$. Denote the resulting graph by $\Delta^2$. Add edges to $\Delta^2$ so that each vertex satisfies the balancing condition and denote the resulting toric Calabi--Yau graph by $\Gamma$.
\end{construction}

The graph $\Gamma$ in Construction \ref{constr:YopD} gives the discriminant locus of the SYZ fibration of the toric Calabi--Yau threefold $X=\mathrm{Tot}(K_{Y'\setminus (\cup_{j\neq l} D'_j)})$. The base of the fibration is an $\RR$-bundle over the polyhedron $\Delta_{Y'\setminus (\cup_{j\neq l} D'_j)}$. The framings determines toric special Lagrangians $L_j$, and the added outer edges correspond to the toric fibres of $\shO(-D'_l)$. As is readily seen from the fan, $f_1$, resp.\ $-1-f_2$, is the degree of the normal bundle of the divisor in $Y'$ corresponding to $e_{l+r}$, resp.\ to $e_3$. The framing keeps track of the compactification of $Y'\setminus (\cup_{j\neq l} D'_j)$.

\begin{rmk}
Tangency with more than one point can be incorporated by having parallel framings on different outer edges.
\end{rmk}

\begin{rmk}
If $\Delta_{Y'\setminus (D'_1\cup D_2')}$ is not 2-dimensional, then we blow up $Y$ in a smooth point of $D$ such that the resulting $\widetilde{Y}(\widetilde{D})$ satisfies Property O. We construct $\widetilde{Y}^{\rm op}(\widetilde{D})$, and recover the open invariants of $Y(D)$ by considering the curve classes that do not meet the exceptional divisor. In particular, for $l>3$ we  stipulate that Construction~\ref{constr:YopD} can be extended through suitable flopping of $(-1,-1)$-curves in the toric Calabi--Yau 3-fold geometry. We leave a precise formulation to future work, and develop the sole example relevant to our paper to illustrate this.
\end{rmk}

\begin{example}\label{ex:pairofpants}
We adapt the construction to the only nef Looijenga pair with 4 boundary components $\fzero(H_1,H_2,H'_1,H_2')$. 
Since $\Delta_{\fzero\setminus(H_1\cup H_2\cup H'_1)}$ is 1-dimensional, we start by blowing up a smooth point of $H'_1$. In a toric deformation, we obtain $\delp_2(0,0,-1,0)$ with divisors $D'_1=H_1$, $D'_2=H_2$, $D'_3=H'_1-E$ and $D'_4=H'_2$. 
We assume that corresponding $e_1,\dots,e_5$ are ordered clockwise in $\Delta_{\delp_1}$. Start with the graph $\Delta^2$ from Construction \ref{constr:YopD}. Balance the vertices and flop the inner edge. On the inner edge, add a framing parallel to $e_3$. The result is the graph of \cref{fig:delp2threeholes}, with the notational shift $\mathsf{f}_2\leftrightarrow D'_1$, $\mathsf{f}_3\leftrightarrow D'_2$, $\mathsf{f}_1\leftrightarrow D'_3$. To obtain the graph for $\fzero(0,0,0,0)$, we remove the two outer edges that have no framing. The result is Figure \ref{fig:f0threeholes}.

\end{example}

\begin{figure}[t]
\centering
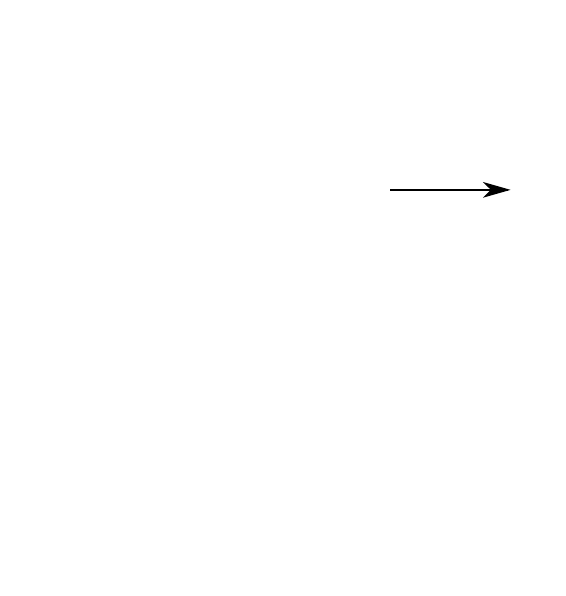
\caption{The toric CY3 graph of $\delp_2^{\rm op}(0,0,-1,0)$.}
\label{fig:delp2threeholes}
\end{figure}

\begin{lem}\label{lem:h2Y}

Let $Y(D_1,\dots,D_l)$ and $Y'(D_1',\dots,D'_l)$ be as in Construction \ref{constr:YopD}. Then $\hhh_2(Y, \bbZ)=\hhh_2(Y', \bbZ)$ is generated by the divisors corresponding to $e_3,\dots,e_{l+r}$.

\end{lem}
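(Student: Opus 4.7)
The plan is to establish the two assertions in Lemma~\ref{lem:h2Y} separately, the first by a deformation argument and the second by a direct toric computation.

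To identify $\hhh_2(Y,\bbZ)$ with $\hhh_2(Y',\bbZ)$, I would invoke Property~O: the total spaces $E_{Y(D)}$ and $E_{Y'(D')}$ are connected by a smooth proper family. The zero sections inside each total space are distinguished (e.g.\ as the common vanishing locus of the fibre coordinates, or equivalently as the fixed locus of the natural fibrewise $\Gm^l$-action), so the family restricts to a deformation between the smooth rational surfaces $Y$ and $Y'$. In particular $Y$ and $Y'$ are diffeomorphic, yielding an isomorphism $\hhh_2(Y,\bbZ)\cong \hhh_2(Y',\bbZ)$. In practice this can be read off case-by-case from the explicit constructions in Proposition~\ref{prop:local-deform}.

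For the generation statement, I would exploit the toric structure of $Y'$. Let $v_1,\dots,v_{l+r}\in N\simeq \bbZ^2$ be the primitive ray generators of the fan of $Y'$, labelled so that $v_i$ corresponds to the edge $e_i$ of $\Delta_{Y'}$. Since $Y'$ is a smooth complete toric surface, the Picard group admits the standard presentation
\begin{equation*}
\pic(Y')\;\cong\;\bigoplus_{i=1}^{l+r}\bbZ\,[D_{v_i}]\,\Big/\,\Big\langle\,\textstyle\sum_{i=1}^{l+r}\langle v_i,m_j\rangle\,[D_{v_i}]\;:\;j=1,2\,\Big\rangle,
\end{equation*}
for any basis $m_1,m_2$ of the dual lattice $M\simeq\bbZ^2$; moreover $\hhh_2(Y',\bbZ)\cong\pic(Y')$ since $Y'$ is a smooth rational surface. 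The key point, and the only non-formal step, is that the clockwise labelling prescribed in Construction~\ref{constr:YopD} places $e_1$ and $e_2$ on consecutive edges of $\Delta_{Y'}$, so that $v_1$ and $v_2$ span a smooth $2$-dimensional cone of the fan of $Y'$. Smoothness forces $\det(v_1,v_2)=\pm 1$, so the $2\times 2$ matrix $\bigl(\langle v_i,m_j\rangle\bigr)_{i,j\in\{1,2\}}$ is invertible over $\bbZ$. Solving the two relations simultaneously for $[D_{v_1}]$ and $[D_{v_2}]$ in terms of $[D_{v_3}],\dots,[D_{v_{l+r}}]$ then exhibits the latter $l+r-2$ classes as a free basis of $\hhh_2(Y',\bbZ)$.

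The main potential obstacle is in the first step, where one must verify that the Property~O deformation really restricts to a deformation of the zero sections, ruling out pathological behaviour in which the base surface changes discontinuously. This is transparent from the explicit bundle-theoretic deformations used in Proposition~\ref{prop:local-deform}, which arise either from deformations of the defining line bundles on a fixed base or, as in the $\bbF_0 \rightsquigarrow \bbF_2$ case, from the projectivisation of a family of rank-two bundles whose base deformation is itself under explicit control.
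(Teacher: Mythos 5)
Your proof is correct, but the key step is argued differently from the paper. For the generation statement the paper orders the maximal cones of the fan of $Y'$ cyclically, forms the associated cones $\tau_i$, and invokes the theorem of \cite[\S 5.2]{MR1234037} that the corresponding orbit closures generate $\hhh_*(Y',\bbZ)$, which immediately singles out the divisors of $e_3,\dots,e_{l+r}$ as generators of $\hhh_2$. You instead use the standard presentation $M \to \bbZ^{\Sigma(1)} \to \pic(Y') \to 0$ and observe that, since the clockwise labelling of Construction~\ref{constr:YopD} makes $e_1,e_2$ adjacent and $Y'$ is smooth, the rays $v_1,v_2$ form a $\bbZ$-basis of $N$, so the two linear-equivalence relations can be solved integrally for $[D_{v_1}],[D_{v_2}]$. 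This is more elementary (no appeal to the homology filtration theorem), and it gives slightly more, namely that the remaining $l+r-2$ classes are a free basis rather than merely generators; the paper's route, on the other hand, is a one-line application of a general toric fact valid in any dimension. Both arguments rest on the same structural input, the cyclic clockwise ordering of the edges in the construction. For the identification $\hhh_2(Y,\bbZ)=\hhh_2(Y',\bbZ)$ the paper is silent (it is implicit in the deformation equivalence underlying Property~O), whereas you justify it via the explicit families of \cref{prop:local-deform}, correctly flagging that one must check the deformation of the total spaces restricts to a deformation of the zero sections; in the explicit bundle-theoretic families used in the paper this is indeed visible, so your hedge is appropriate.
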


\begin{proof}
In the fan of $Y'$, define an ordering of the 2-dimensional cones by letting $\sigma_i$ be the cone corresponding to $e_i\cap e_{i+1}$ when $1\leq i < l+r$ and let $\sigma_{l+r}$ be the cone corresponding to $e_{l+r}\cap e_1$. Define cones $\tau_i:=\sigma_i\cap \bigcap_{j\in J_i} \sigma_j$, where $J_i$ is the set of $j>i$ such that $\sigma_i\cap\sigma_j$ is 1-dimensional. Then $\tau_1=\{0\}$, $\tau_i$ is the ray corresponding to $e_{i+1}$ for $2\leq i < l+r$ and $\tau_{l+r}=\sigma_{l+r}$.
By \cite[\S 5.2, Theorem]{MR1234037}, these cones generate $\hhh_*(Y', \bbZ)$ and hence the divisors corresponding to $e_3,\dots,e_{l+r}$ generate $\hhh_2(Y', \bbZ)$.
\end{proof}

Note that $\hhh^{\rm rel}_2(Y^{\rm op}(D),\bbZ)$ is generated by the curve classes $[e]$ corresponding to inner edges $e$ and by the relative disk classes $[\mathcal{D}_e]$ corresponding to outer edges $e$ with framings. By the corresponding short exact sequence, the latter can be identified with $[S^1]\in\hhh_1(S^1,\bbZ)$, where $L\supset S^1=\partial \mathcal{D}_e$ and the degrees in the $[S^1]$ keep track of the winding numbers. By construction, the $e$ thus described are edges of $\Delta_{Y'}$.

\begin{defn}\label{def:iota}
Let $Y(D_1,\dots,D_l)$ and $Y'(D_1',\dots,D'_l)$ be as in Construction \ref{constr:YopD}.
Define
\beq
\iota: \hhh^{\rm rel}_2(Y^{\rm op}(D), \bbZ) \rightarrow  \hhh_2(Y, \bbZ)
\eeq
by sending $[e]$ to the divisor corresponding to $e$ in $Y$.
\end{defn}

\begin{prop}\label{prop:iota}
The morphism $\iota$ is an isomorphism. 
\end{prop}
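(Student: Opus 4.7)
The plan is to prove \cref{prop:iota} by exhibiting explicit $\bbZ$-bases on the two sides and checking that $\iota$ induces a bijection between them. Both ranks are determined by the combinatorics of $\Delta_{Y'}$, and the identification ultimately reduces to matching boundary conditions along $L$ with tangency orders along $D$.

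First, I would set up the relative homology long exact sequence for the pair $(X,L)$ in Construction~\ref{constr:YopD}. Since each connected component of $L$ is diffeomorphic to $\bbR^2\times S^1$, we have $\hhh_2(L,\bbZ)=0$ and $\hhh_1(L,\bbZ)\simeq\bbZ^{l-1}$; moreover, each Aganagic--Vafa brane $L_k$ bounds an explicit holomorphic disk $\mathcal{D}_{e_{k}}$ in the fibre direction of $\shO(-D'_l)$, so the connecting map $\hhh_1(L,\bbZ)\to\hhh_1(X,\bbZ)$ is zero. This produces a split short exact sequence
\[
0\longrightarrow \hhh_2(X,\bbZ)\longrightarrow \hhh_2^{\rm rel}(Y^{\rm op}(D),\bbZ)\longrightarrow \hhh_1(L,\bbZ)\longrightarrow 0,
\]
whose splitting is realised on the chain level by the disk classes $[\mathcal{D}_{e_k}]$ associated to the framed outer edges of $\Gamma$.

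Next, I would read off explicit bases from the toric graph $\Gamma$ of Construction~\ref{constr:YopD}. Since $X$ retracts onto $Y'\setminus (D'_1\cup\cdots\cup D'_{l-1})$, the classes of the inner edges of $\Gamma$ --- i.e.\ those edges $e_i$ of $\Delta_{Y'}$ (with $i\geq l$) not adjacent to the deleted edges $e_1$ (and $e_2$ if $l=3$) --- provide a basis of $\hhh_2(X,\bbZ)$ via toric intersection theory on $Y'$. The classes $[\mathcal{D}_{e_{l+r}}]$ (and $[\mathcal{D}_{e_3}]$ when $l=3$) complete this to a basis of $\hhh_2^{\rm rel}(Y^{\rm op}(D),\bbZ)$. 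On the closed side, \cref{lem:h2Y} furnishes a generating set for $\hhh_2(Y,\bbZ)=\hhh_2(Y',\bbZ)$ by the divisor classes corresponding to $e_3,\dots,e_{l+r}$; since $Y'$ is a smooth projective toric surface with $l+r$ prime toric divisors, its Picard rank is $l+r-2$, and this generating set is in fact a basis.

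To conclude, I would verify that $\iota$ sends basis to basis. By construction, for an inner edge $e_i$ of $\Gamma$ the class $[e]$ is mapped to the divisor class $[e_i]$ on $Y'$, and the disk classes $[\mathcal{D}_{e_{l+r}}]$ (and $[\mathcal{D}_{e_3}]$) are mapped to $[e_{l+r}]$ (resp.\ $[e_3]$) by the identification of winding numbers along $L_k$ with tangency orders along $D_k$. Counting gives $(l+r-2)-(l-1)=r-1$ inner edges of $\Gamma$ plus $l-1$ framed outer edges, totalling $l+r-2$ generators, matching the rank of $\hhh_2(Y,\bbZ)$. Thus $\iota$ is a surjection of free abelian groups of equal rank mapping a basis to a basis, hence an isomorphism.

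The main obstacle is the combinatorial bookkeeping in the last step: the two edges $e_2$ and $e_{l+r}$ that lose one endpoint when $e_1$ is deleted must be tracked carefully (and analogously for $l=3$), since $e_2$ does not produce a generator on the open side while its divisor class on $Y$ is recoverable from the toric linear equivalences amongst $e_3,\dots,e_{l+r}$. One has to check that the toric relations on $\hhh_2(Y',\bbZ)$ among the divisors dual to $e_1,\dots,e_{l+r}$ translate, under $\iota$, precisely to the relations on $\hhh_2^{\rm rel}(Y^{\rm op}(D),\bbZ)$ coming from the balancing condition at each trivalent vertex of $\Gamma$; this compatibility is what makes $\iota$ well-defined across the two cases $l=2$ and $l=3$ of the construction.
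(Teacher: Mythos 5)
Your argument is correct and follows essentially the same route as the paper, which deduces \cref{prop:iota} directly from \cref{lem:h2Y}: the relative classes of the inner and framed outer edges generate $\hhh^{\rm rel}_2(Y^{\rm op}(D),\bbZ)$, these edges are exactly $e_3,\dots,e_{l+r}$ of $\Delta_{Y'}$, and their divisor classes generate $\hhh_2(Y',\bbZ)=\hhh_2(Y,\bbZ)$, so $\iota$ matches generators with the ranks agreeing. Your write-up merely makes explicit the relative homology sequence and the rank count that the paper leaves implicit, so there is nothing to correct.
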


\begin{proof}
This is a direct consequence of Lemma \ref{lem:h2Y}.
\end{proof}

\begin{example}\label{ex:pairofpants2}
We continue with \cref{ex:pairofpants}. Following \cref{fig:delp2threeholes}, denote by $e_i$ the edge with framing $\mathsf{f}_i$ for $i=1,2,3$. Generalising \cref{def:iota}, we define $\iota : \hhh^{\rm rel}_2(\delp_2^{\rm op}(0,0,-1,0), \bbZ) \stackrel{\sim}{\rightarrow}  \hhh_2(\delp_2, \bbZ)$ by
\beq
\iota[e_1]=[D_2']=[H_2]\,,  \quad  \iota[e_2]=[D_4'-E]=[H_2-E]\,, \quad \iota [e_3]=[D'_3]=[H_1-E]\,,
\eeq
which yields an isomorphism.
\end{example}

\begin{thm}[The higher genus log-open principle]
Suppose $Y(D)$ satisfies Property~O. 
Then,
\beq 
O_{0;\iota^{-1}(d)}(Y^{\rm op}(D)) = N_{0,d}^{\rm loc}(Y(D)) =  \prod_{i=1}^{l}  \frac{(-1)^{d \cdot D_i+1}}{d \cdot D_i}  N_{0,d}^{\rm log}(Y(D))\,.
\label{eq:logopen0}
\eeq
Moreover, if $Y(D)$ is tame, 
\beq
 \mathsf{O}_{\iota^{-1}(d)}(Y^{\rm op}(D))(-\ri \log q)
 = [1]_q^{l-2} \,
  \frac{(-1)^{d \cdot D_l+1}}{[d \cdot D_l]_q} \, \prod_{i=1}^{l-1} \, \frac{(-1)^{d \cdot D_i+1}}{d \cdot D_i} \,
 \mathsf{N}_{d}^{\rm log}(Y(D))(-\ri \log q)\,.
\label{eq:logopen}
\eeq

\label{thm:logopen}
\end{thm}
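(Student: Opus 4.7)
\textbf{Proof strategy for \cref{thm:logopen}.}

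My plan is to establish both assertions case-by-case across the finite list of tame Looijenga pairs satisfying Property~O, then extend by deformation/blow-up to the quasi-tame list. The genus-zero chain of equalities \eqref{eq:logopen0} reduces, via the already-established log-local correspondence \cref{thm_log_local}, to proving only the open = local statement $O_{0;\iota^{-1}(d)}(Y^{\rm op}(D)) = N_{0,d}^{\rm loc}(Y(D))$. The all-genus statement \eqref{eq:logopen} is strictly stronger and specializes to this in the $q\to 1$ limit, so for tame pairs it suffices to establish \eqref{eq:logopen} directly; the $q\to 1$ specialization then handles the genus-zero open = local for the tame stratum, and the remaining quasi-tame genus-zero cases follow by the deformation invariance of local Gromov--Witten theory (which matches the two sides of \eqref{eq:logopen0}) together with $N_{0,d}^{\rm log}$ being determined via \eqref{eq:vGGR}/\cref{thm_log_local}.

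The computation of the left-hand side of \eqref{eq:logopen} proceeds via Construction~\ref{constr:YopD}: I would assemble the toric diagram $\Gamma_{Y^{\rm op}(D)}$ for each tame pair (covering the families with maximal Picard rank, namely $\delp_3(1,1)$ for $l=2$, $\delp_3(0,0,0)$ for $l=3$ — which is excluded by Property~O and so not needed — and $\bbF_0(0,0,0,0)$ for $l=4$, plus $\bbP^2(1,1,1),\bbF_0(2,0,0),\delp_1(1,1,0),\delp_2(1,0,0)$ for $l=3$). For each such geometry, I apply the topological vertex glueing formula \eqref{eq:glueing} together with the framed vertex \eqref{eq:vertex}, and extract $\mathsf{O}_{\iota^{-1}(d)}(Y^{\rm op}(D))$ as the coefficient of the appropriate one-part partitions via the Frobenius expansion of the $\cW_{\vec\nu}$; the isomorphism $\iota$ of \cref{prop:iota} identifies the K\"ahler/disc classes on the open side with the curve class $d$ on the log side. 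The right-hand side is known in closed form from the $q$-scattering calculations of \cref{prop:dp311}, \cref{thm:log_dp3_0_0_0} and \cref{thm:logf0_0000}.

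The heart of the proof is then a combinatorial identification of these two closed-form answers. For $l=2$ I expect, following the remark after \cref{prop:dp311}, that the matching of the Schur-sum produced by the vertex with the $q$-binomial product \eqref{eq:dP311log} reduces, after standard Cauchy/dual-Cauchy identities and principal specializations of skew Schur functions (see \cref{sec:prelim}), to Jackson's $q$-Pfaff--Saalsch\"utz summation for $_3\phi_2$, precisely as the authors anticipate. For $l=3$ and $l=4$ the vertex sum has fewer legs carrying non-trivial partitions (two legs decorated with one-row partitions of winding $d\cdot D_i$ for $i=1,\dots,l-1$ and one leg decorated with representations summed over to produce the closed/local contribution along $D_l$), which collapses the nested Schur sum after applying $s_{(k)}(q^{\rho+\nu}) = q^{\cdots}\cdot \mathrm{(hook\ product)}$ and comparing with the $q$-refined Block--G\"ottsche weight assigned by the quantum broken line count in \cref{prop:calc-log-gw-q}. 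The blow-down cases of lower Picard rank are then handled uniformly: by construction a corner-blow-up of $Y(D)$ corresponds on the open side to setting the corresponding Novikov parameter to zero, compatibly with the edge-forgetting property \eqref{eq:forget}, and on the log side to the blow-up formula \cref{prop:logblup}, so both generating series change in the same manner and the identity propagates.

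The principal obstacle is the last step above: turning the multi-sum of Schur specializations produced by the topological vertex into the single $q$-binomial (for $l=2$) or single $q$-bracketed power (for $l=4$) produced by the scattering diagram, uniformly in $d$. The $l=2,\;Y=\delp_3(1,1)$ case is the most delicate, as the $q$-scattering output involves a four-fold product of $q$-binomials matched against a three-leg vertex Schur sum; the required identity is equivalent to a Jackson-type $_3\phi_2$ summation, and I expect to dispatch it by applying the dual Cauchy identity to eliminate the inner $\delta$-summation in \eqref{eq:vertex} and then invoking the $q$-Pfaff--Saalsch\"utz identity \eqref{eq:qPS}. All blow-down instances within the tame family then follow without further combinatorial input, and the quasi-tame genus-zero closure follows from \cref{thm_log_local} and the deformation equivalences of \cref{prop:local-deform}.
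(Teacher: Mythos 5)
Your proposal is correct and follows essentially the same route as the paper: reduce to the maximal Picard rank representatives via blow-up formulas, compute the open side by the topological vertex (hook/Cauchy/principal-specialisation manipulations), compare with the closed-form $q$-scattering answers — with the $l=2$ comparison carrying exactly the $q$-Pfaff--Saalsch\"utz content — and obtain \eqref{eq:logopen0} from the $q\to1$ limit together with \cref{thm_log_local} and the deformation equivalences of \cref{prop:local-deform}. The one correction: the reduction to lower Picard rank goes through \emph{interior} blow-downs (not corner blow-ups), and on the open side this changes $Y^{\rm op}(D)$ by a flop of a $(-1,-1)$-curve, so the invariance needs flop invariance of the topological vertex in addition to the edge-forgetting property \eqref{eq:forget}, as in \cref{prop:openblup}.
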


\begin{rmk}
As is evident from Example \ref{ex:propO}, $Y^{\rm op}(D)$ depends on the toric model and hence is not unique.  However, it can be checked directly for the examples of \cref{tab:classif}  that if  $(X^{(1)}, L^{(1)}, \mathsf{f}^{(1)})$ and $(X^{(2)}, L^{(2)}, \mathsf{f}^{(2)})$ correspond to two such choices, then $\exists~\varpi: \hhh_2^{\rm rel}(X^{(1)},L^{(1)},\Z) \stackrel{\sim}{\rightarrow} \hhh_2^{\rm rel}(X^{(2)},L^{(2)},\Z)$ such that $\iota^{(1)} = \iota^{(2)} \circ \varpi$.
\end{rmk}

\subsection{Proof of \cref{thm:logopen}}
\label{sec:logopen}

In order to work our way to a general $Y(D)$ satisfying Property~O, we  
first show that if $\pi: Y'(D') \to Y(D)$ is an interior blow-up, Construction \ref{constr:YopD} implies that the higher genus open GW invariants $\mathsf{O}_{\iota^{-1}(d)}(Y^{\rm op}(D))$ 
satisfy the same blow-up formula \eqref{eq:logblup} of the log invariants on the r.h.s. of \eqref{eq:logopen}.

\begin{prop}[Blow-up formula for open GW invariants]
Let $\pi : \tilde Y(\tilde D) \to Y(D)$ be an interior blow-up of Looijenga pairs with both $\tilde Y(\tilde D)$ and $Y(D)$ satisfying Property~O, and denote $\pi^*_{\rm op}$ 
the monomorphism of Abelian groups defined by
\beq 
\xymatrix{ 
\hhh_2(Y(D),\bbZ) \ar[d]^{\iota^{-1}}  \ar@{^{(}->}[r]^{\pi^*} &
  \hhh_2(\tilde Y(\tilde D),\bbZ)\ar[d]^{\iota^{-1}}    \\
                     \hhh_2^{\rm rel}(Y^{\rm op}(D),\bbZ)  \ar@{^{(}->}[r]^{\pi^*_{\rm op}} 
 &  
\hhh_2^{\rm rel}(\tilde Y^{\rm op}(\tilde D),\bbZ)  & 
}
\eeq 
Then $\mathsf{O}_{j}(Y^{\rm op}(D)) = \mathsf{O}_{\pi^*_{\rm op} j}(\tilde Y^{\rm op}(\tilde D))$ for all $j\in\hhh_2^{\rm rel}(Y^{\rm op}(D),\bbZ)$.
\label{prop:openblup}
\end{prop}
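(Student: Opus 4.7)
\emph{Proof plan.} The strategy is to realise both sides as coefficients in topological vertex computations and to compare them via the glueing and forgetful identities of \cref{sec:ogw}. First, I would use Property~O for both $Y(D)$ and $\tilde Y(\tilde D)$, together with the deformation invariance of open Gromov--Witten invariants of toric Lagrangian triples, to replace each Looijenga pair with a toric representative to which Construction~\ref{constr:YopD} directly applies. An interior blow-up at a smooth point of $D_i$ deforms, in an $E_{Y(D)}$-preserving family, to a blow-up at a torus-fixed point lying on the strict transform of $D_i'$ but not on the other boundary components; concretely this amounts to inserting a single extra ray into the fan of $Y'$ between the two rays bordering $D_i'$, thereby producing the fan of $\tilde Y'(\tilde D')$.

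The second (and most delicate) step is to translate this modification of fans into a comparison of the toric Calabi--Yau $3$-graphs $\Gamma_{Y^{\mathrm{op}}(D)}$ and $\Gamma_{\tilde Y^{\mathrm{op}}(\tilde D)}$ produced by Construction~\ref{constr:YopD}. The extra ray of $\tilde Y'$ becomes an additional edge in the polygon $\Delta_{\tilde Y'}$, which under the prescription of the construction (remove $e_1$, and also $e_2$ if $l=3$, replacing each by a framing) survives as a new compact edge $e_E$ in the decorated graph. Checking this requires a short case analysis according to whether the blow-up point lies on $D_i'$ for $i<l$ (so that $D_i'$ is toric and the new ray is genuinely toric) or on the non-toric component $D'_l$; in either situation the framings, Lagrangian branes, and preexisting edges are left unchanged, and $\Gamma_{\tilde Y^{\mathrm{op}}(\tilde D)}$ is obtained from $\Gamma_{Y^{\mathrm{op}}(D)}$ by glueing a single fresh trivalent vertex along $e_E$, whose length is the K\"ahler parameter $Q_E$ attached to the exceptional class.

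With the graphs related in this way, the glueing formula \eqref{eq:glueing} combined with the forgetful relation \eqref{eq:forget} yields, for every profile $\vec\mu$,
\beq
\cW_{\vec\mu}(Y^{\mathrm{op}}(D))(Q,\hbar)
\;=\;
\cW_{\vec\mu}(\tilde Y^{\mathrm{op}}(\tilde D))(Q,Q_E,\hbar)\big|_{Q_E=0}\,.
\eeq
Passing to logarithms converts this into the corresponding identity for the connected generating series $\mathfrak{O}$, hence for the connected open invariants $\mathsf{O}_{\beta;\vec\mu}$. The commutativity of the diagram defining $\pi^*_{\mathrm{op}}$ and Proposition~\ref{prop:iota} identify the image of $\pi^*_{\mathrm{op}}$ with the sublattice of $\hhh^{\mathrm{rel}}_2(\tilde Y^{\mathrm{op}}(\tilde D),\bbZ)$ consisting of classes whose $[E]$-winding vanishes, and these are precisely the classes picked out on the right-hand side by the substitution $Q_E=0$. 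Extracting the coefficient of $Q^{\iota^{-1}(d)}$ then gives $\mathsf{O}_{\pi^*_{\mathrm{op}} j}(\tilde Y^{\mathrm{op}}(\tilde D)) = \mathsf{O}_j(Y^{\mathrm{op}}(D))$ for $j=\iota^{-1}(d)$, as claimed. The principal obstacle in the argument is the combinatorial verification of step two: Construction~\ref{constr:YopD} involves deleting edges and re-attaching framings, and one must check that these operations commute with the glueing of the new vertex $e_E$ in every configuration appearing in \cref{tab:classif}, as well as with the identification $\iota$ used to phrase the pullback $\pi^*_{\mathrm{op}}$.
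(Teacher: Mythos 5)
The weak link is your step two, and it is a genuine gap rather than a routine verification. You assert that $\Gamma_{\tilde Y^{\rm op}(\tilde D)}$ is obtained from $\Gamma_{Y^{\rm op}(D)}$ by attaching one fresh trivalent vertex along a new compact edge $e_E$, with all preexisting edges, branes and framings untouched, so that \eqref{eq:glueing} plus \eqref{eq:forget} and the substitution $Q_E=0$ finish the argument. That picture is not correct in general: Construction~\ref{constr:YopD} deletes the edges $e_1$ (and $e_2$ when $l=3$) and re-attaches them as framings on neighbouring legs, so inserting the exceptional ray into the fan of $Y'$ changes the local structure of the decorated graph near the brane/framing data, and the old graph is typically \emph{not} a subgraph of the new one. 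The paper's proof handles exactly this point differently: one shows that $Y^{\rm op}(D)$ is an open embedding not into $\tilde Y^{\rm op}(\tilde D)$ itself but into a \emph{flop} of $\tilde Y^{\rm op}(\tilde D)$ along a $(-1,-1)$-curve, and then combines the edge-forgetting identity \eqref{eq:forget} with the flop invariance of the topological vertex \cite{MR2380471}. The necessity of the flop is already visible inside the construction (see Example~\ref{ex:pairofpants}, where an inner edge must be flopped before the graph takes the required form), and without it your claimed graph relation fails precisely in the configurations where the blow-up interacts with the edges that were traded for framings.

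A related consequence is that your identification of $\operatorname{im}\pi^*_{\rm op}$ with "classes of vanishing $[E]$-winding, picked out by $Q_E=0$" is too naive in the original K\"ahler basis: as in Example~\ref{ex:pairofpants2}, the exceptional class enters several edge classes under $\iota$, and it is only after the flop (which sends $Q_E\mapsto Q_E^{-1}$ and shifts the adjacent classes by $[E]$) that restriction to the relevant sublattice becomes literally the operation of deleting an edge and setting its parameter to zero. Your first step (deforming to toric representatives and invoking deformation invariance) and your last step (passing from $\cW$ to connected generating functions and matching degrees through the commutative square) are fine and agree with the paper; what is missing is the flop, which is the actual content of the proposition, and supplying it would essentially reproduce the paper's argument.
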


\begin{proof}[Sketch of the proof] We provide an overview here and leave the details to the reader.
The claim is proved by noting that Construction \ref{constr:YopD} implies the following: if $Y(D)$ is obtained from $\tilde Y(\tilde D)$ by contraction of a $(-1)$ curve, then $Y^{\rm op}(D)$ is an open embedding into a flop of $\tilde Y^{\rm op}(\tilde D)$ along a $(-1,-1)$ curve. The resulting non-trivial equality of open Gromov--Witten invariants under restriction on the image of $\pi^*_{\rm op}$ is then a combination of the invariance of open Gromov--Witten invariants under `forgetting an edge' in \eqref{eq:forget} and the flop invariance of the topological vertex \cite{MR2380471}. 
\end{proof}

By the previous Proposition it then suffices to prove \cref{thm:logopen} for the pairs $Y(D)$ of highest Picard number for each value of $l=2,3,4$, as all other pairs are recovered from these by blowing-down. We show this from a direct use of the topological vertex to determine the l.h.s. of \eqref{eq:logopen}. The reader is referred to \cref{sec:prelim} for notation and basic results for shifted power sums $p_\a(q^{\rho+\gamma})$ and shifted skew Schur functions $s_\frac{\a}{\b}(q^{\rho+\gamma})$ in the principal stable specialisation. The notation $\{\a,\b\}_Q$ indicates the symmetric pairing on $\cP$ of \eqref{eq:cauchyprodpss}.

\subsubsection{$l=2$: holomorphic disks}
\label{sec:disks}
The classification of \cref{prop:class,prop:class2}, the deformation equivalences in \cref{prop:local-deform} and the definition of Property~O in \cref{def:propO} together imply that if $Y(D=D_1+\dots +D_l)$
and $Y(D'=D'_1+\dots +D'_l)$ are $l$-component Looijenga pairs both satisfying Property~O, then there is a toric model for both with resulting $Y^{\rm op}(D)=Y^{\rm op}(D')$: in other words a model $Y^{\rm op}(D)$ for the open geometry only depends on $Y$ and the number of irreducible components of $D$. Since $2$-component log CY surfaces with maximal boundary come in pairs $Y(D)$ and $Y(D')$ from \cref{tab:classif}, throughout this section we will simplify notation and write $\Upsilon(Y)\coloneqq Y^{\rm op}(D)=Y^{\rm op}(D')$ for the toric Lagrangian triple they share.

By \cref{prop:openblup}, it suffices to consider the case of highest Picard rank $Y=\delp_3$. For $Y(D)$ either $\delp_3(1,1)$ or $\delp_3(0,2)$, a toric model for $Y$ 
is given by the toric surface $Y'$ described by the fan of 
\cref{fig:dp3open}, and in particular $D'=H-E_3$ is a toric divisor. Therefore $Y(D)$ satisfies Property~O and, by \cref{rmk:fangraph}, $\Upsilon(\delp_3)$ is described by the toric CY3 graph 
of \cref{fig:dp3open}. With conventions as in \cref{fig:dp3open}, let $C_1 = \mu_T^{-1}(e_2)$, $C_2 = \mu_T^{-1}(e_5)$, $C_3 = \mu_T^{-1}(e_7)$ and for a relative 2-homology class $j\in \hhh_2(\Upsilon(\delp_3), \bbZ)$ write $j =  j_0 [S^1] + \sum_{i=1}^3 j_i [C_i]$. \\
We will compute generating functions of higher genus 1-holed open Gromov--Witten invariants of $\Upsilon(\delp_3)$ in class $j$ using the theory of the topological vertex. For simplicity, we'll employ the shorthand notation $\mathsf{O}_{j_1,j_2,j_3; j_0}(\Upsilon(\delp_3))$, resp. $\cO_{j_0}(\Upsilon(\delp_3))$, to denote the generating function $\mathsf{O}_{\beta; \mu}(\Upsilon(\delp_3))$, resp. $\cO_{\beta}(\Upsilon(\delp_3))$ in \eqref{eq:opengf} with $\beta=\sum_{i=1}^3 j_i [C_i]$ and $\mu=(j_0)$ a 1-row partition of length $j_0$. From \eqref{eq:openfreeen} and \eqref{eq:opengf}, we have
\bea
\cO_{j_0}(\Upsilon(\delp_3)) &=& \frac{\cZ_{(j_0)}(\Upsilon(\delp_3))}{\cZ_{\emptyset}(\Upsilon(\delp_3))} = \sum_{\nu \in \cP}  \frac{\chi_{\nu}( (j_0) )}{z_{(j_0)}} \frac{\cW_{\nu}(\Upsilon(\delp_3))}{\cW_{\emptyset}(\Upsilon(\delp_3))} \nn \\
&=&  \sum_{s=0}^{j_0-1}  \frac{(-1)^s}{j_0} \frac{\cW_{(j_0-s,1^s)}(\Upsilon(\delp_3))}{\cW_{\emptyset}(\Upsilon(\delp_3))}\,,
\label{eq:diskgf}
\eea
where we have used the Murnaghan--Nakayama rule \cite[Corollary~7.17.5]{MR1676282}
\beq
\chi_{\nu}( (j_0) ) =
\l\{
\bary{cc}
(-1)^s, & \nu = (j_0-s, 1^s)\,, \\
0 & \mathrm{else}.
\eary
\r.
\label{eq:murnak}
\eeq
The framing $\mathsf{f}$ in \cref{fig:dp3open} is shifted by one unit $f=-1$ from the canonical choice $\mathsf{f_{\rm can}}=\mathsf{p}_{v_1}^{e_2}$. From \eqref{eq:framvert}, \eqref{eq:glueing} and \eqref{eq:vertex}, we then have, for any $\a \in \cP$, that
\bea
& & \cW_\a(\Upsilon(\delp_3))(Q,\hbar) = \nn \\
& & (-1)^{|\a|}q^{-\kappa(\a)/2} \sum_{\lambda, \mu,\nu, \delta, \epsilon \in \cP} s_{\lambda^t}(-Q_1 q^{\rho+\a}) s_\a(q^\rho) s_\frac{\lambda}{\delta}(q^{\rho})s_\frac{\mu}{\delta}(q^{\rho}) Q_2^{|\mu|} s_\frac{\mu}{\epsilon}(q^{\rho})s_\frac{\nu}{\epsilon}(q^{\rho}) s_{\nu^t}(-Q_3 q^\rho) \nn \\
&=&
 \frac{(-1)^{|\a|} s_{\alpha^t}(q^\rho) \{\a,\emptyset\}_{Q_1} \{\a,\emptyset\}_{Q_1 Q_2} \{\emptyset,\emptyset\}_{Q_3} \{\emptyset,\emptyset\}_{Q_2 Q_3}}{\{\emptyset, \emptyset \}_{Q_2} \{\a,\emptyset\}_{Q_1 Q_2 Q_3}} \,,
 \nn \\
\eea
where we have used \eqref{eq:schurt} and, repeatedly, \eqref{eq:cauchyprodpss} to express the sums over partitions in terms of Cauchy products. Then, specialising to $\a=(j_0-s,1^s)$ a hook partition with $j_0$ boxes and $s+1$ rows, and using \eqref{eq:stanley} and \eqref{eq:hookpairing}, we have
\bea
\frac{\cW(\Upsilon(\delp_3))_{(j_0-s,1^s)}}{\cW(\Upsilon(\delp_3))_{\emptyset}}
&=&  \frac{(-1)^{j_0} s_{(s+1,1^{j_0-s-1})}(q^\rho) 
\{(j_0-s, 1^s),\emptyset\}_{Q_1} \{(j_0-s, 1^s),\emptyset\}_{Q_1 Q_2}}{\{(j_0-s, 1^s),\emptyset\}_{Q_1 Q_2 Q_3}}
\nn \\
&=&  \frac{(-1)^{j_0} q^{-\frac{1}{2}\binom{j_0}{2} + \frac{j_0 s}{2}}}{[j_0]_q [j_0-s-1]_q! [s]_q!} \frac{\prod_{k=0}^{j_0-1} (1-q^{k} Q_1 q^{-s}) \prod_{l=0}^{j_0-1} (1-q^{l} Q_1 Q_2 q^{-s})}{\prod_{m=0}^{j_0-1}  (1-q^{m} Q_1 Q_2 Q_3 q^{-s})}\,.\nn \\
\label{eq:Wdp3hooks}
\eea
\begin{figure}[t]
\begingroup%
  \makeatletter%
  \providecommand\color[2][]{%
    \errmessage{(Inkscape) Color is used for the text in Inkscape, but the package 'color.sty' is not loaded}%
    \renewcommand\color[2][]{}%
  }%
  \providecommand\transparent[1]{%
    \errmessage{(Inkscape) Transparency is used (non-zero) for the text in Inkscape, but the package 'transparent.sty' is not loaded}%
    \renewcommand\transparent[1]{}%
  }%
  \providecommand\rotatebox[2]{#2}%
  \newcommand*\fsize{\dimexpr\f@size pt\relax}%
  \newcommand*\lineheight[1]{\fontsize{\fsize}{#1\fsize}\selectfont}%
  \ifx\svgwidth\undefined%
    \setlength{\unitlength}{120.13718492bp}%
    \ifx\svgscale\undefined%
      \relax%
    \else%
      \setlength{\unitlength}{\unitlength * \real{\svgscale}}%
    \fi%
  \else%
    \setlength{\unitlength}{\svgwidth}%
  \fi%
  \global\let\svgwidth\undefined%
  \global\let\svgscale\undefined%
  \makeatother%
  \begin{picture}(1,0.98297525)%
    \lineheight{1}%
    \setlength\tabcolsep{0pt}%
    \put(0,0){\includegraphics[width=\unitlength,page=1]{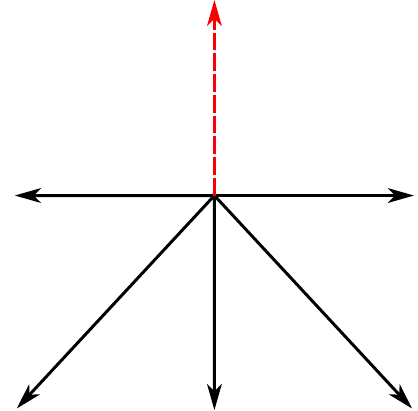}}%
    \put(0.21320338,0.87110191){\color[rgb]{0.97647059,0,0}\makebox(0,0)[lt]{\lineheight{1.25}\smash{\begin{tabular}[t]{l}$H-E_3$\end{tabular}}}}%
    \put(-0.00509817,0.55469234){\color[rgb]{0,0,0}\makebox(0,0)[lt]{\lineheight{1.25}\smash{\begin{tabular}[t]{l}$H-E_1-E_2$\end{tabular}}}}%
    \put(0.107125,0.2369378){\color[rgb]{0,0,0}\makebox(0,0)[lt]{\lineheight{1.25}\smash{\begin{tabular}[t]{l}$E_2$\end{tabular}}}}%
    \put(0.5287105,0.06018823){\color[rgb]{0,0,0}\makebox(0,0)[lt]{\lineheight{1.25}\smash{\begin{tabular}[t]{l}$E_1-E_2$\end{tabular}}}}%
    \put(0.78722804,0.23577819){\color[rgb]{0,0,0}\makebox(0,0)[lt]{\lineheight{1.25}\smash{\begin{tabular}[t]{l}$H-E_1-E_3$\end{tabular}}}}%
    \put(0.79054748,0.54476408){\color[rgb]{0,0,0}\makebox(0,0)[lt]{\lineheight{1.25}\smash{\begin{tabular}[t]{l}$E_3$\end{tabular}}}}%
  \end{picture}%
\endgroup%

\hspace{2cm}
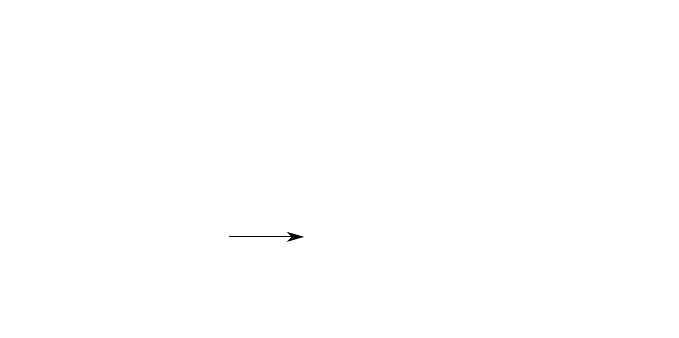
\caption{$\Upsilon(\delp_3) = \delp_3^{\rm op}(0,2)=\delp_3^{\rm op}(1,1)$ from the blow-up of the plane at three non-generic toric points.}
\label{fig:dp3open}
\end{figure}
Replacing this into \eqref{eq:diskgf} we get
\bea
& & \cO_{j_0}(\Upsilon(\delp_3))(Q,\hbar)  
 =    \sum_{s=0}^{j_0-1} \frac{(-1)^s}{j_0} \frac{\cW_{(j_0-s,1^s)}(\Upsilon(\delp_3))(Q,\hbar)}{\cW_{\emptyset}(\Upsilon(\delp_3))(Q,\hbar)} \nn \\
&=& \frac{(-1)^{j_0} q^{-\frac{1}{2}\binom{j_0}{2}}}{j_0 [j_0]_q!} \sum_{j_1,j_2,j_3=0}^\infty  q^{\frac{j_1(j_0-1)}{2}} \qbinom{j_0}{j_1-j_2}_q  \qbinom{j_0}{j_2-j_3}_q \qbinom{j_0+j_3-1}{j_3}_q (-1)^{j_1+j_3} Q_1^{j_1} Q_2^{j_2} Q_3^{j_3} \nn
\\ & &
\times \sum_{s=0}^{j_0-1} \qbinom{j_0-1}{s}_q (-q^{-j_1})^{s}  q^{\frac{1}{2}j_0s }   \nn \\
&=& \frac{(-1)^{j_0}}{j_0 [j_0]_q!} \sum_{j_1,j_2,j_3}^\infty   \qbinom{j_0}{j_1-j_2}_q  \qbinom{j_0}{j_2-j_3}_q  \qbinom{j_0+j_3-1}{j_3}_q (-1)^{j_1+j_3}Q_1^{j_1} Q_2^{j_2} Q_3^{j_3} \frac{[j_1-1]_q!}{[j_1-j_0]_q!} \,, 
\eea 
where the $q$-binomial theorem has been used to expand the products in \eqref{eq:Wdp3hooks} and to perform the summation over $s$ in the last line. Isolating the $\cO(Q_1^{j_1}Q_2^{j_2} Q_2^{j_3} )$ coefficient yields
\beq
\mathsf{O}_{j_1,j_2,j_3; j_0}(\Upsilon(\delp_3))(\hbar) = \frac{(-1)^{j_1+j_0+j_3} [j_0]_q}{j_0 [j_1]_q [j_0+j_3]_q}  \qbinom{j_0}{j_1-j_2}_q  \qbinom{j_0}{j_2-j_3}_q  \qbinom{j_0+j_3}{j_3}_q  \qbinom{j_1}{j_0}_q  \,.
\label{eq:opendP3}
\eeq
From \cref{fig:dp3open}, the lattice isomorphism $\iota: \hhh^{\rm rel}_2(\Upsilon(\delp_3),\bbZ) \to  \hhh_2(\delp_3, \bbZ)$ in this case reads
\bea
\iota[S^1]=[H-E_1-E_2]\,, & \quad & \iota[C_1]=[E_2]\,, \nn \\
\quad \iota [C_2]=[E_1-E_2]\,, & \quad & \iota[C_3] =[H-E_1-E_3]\,,
\eea
and the change-of-variables relating the curve degrees $(d_0, d_1, d_2,d_3)$ in $\hhh_2(\delp_3, \bbZ)$ and the relative homology variables $(j_0; j_1, j_2,j_3)$ in $\hhh^{\rm rel}_2(\Upsilon(\delp_3), \bbZ)$ is therefore
\beq
\begin{array}{c}
 d_0\to j_0+j_3\,, \\
 d_1\to j_2\,, \\
 d_2\to j_1-j_2+j_3\,, \\
 d_3\to j_0\,. \\
\end{array}
\label{eq:covdP3}
\eeq
Combining the change of variables  \eqref{eq:covdP3} and the log result given by \eqref{eq:dP311log} in \cref{prop:dp311}
 returns \eqref{eq:opendP3}, establishing \eqref{eq:logopen} for $Y(D)=\delp_3(1,1)$. Furthermore, taking the genus zero limit $q\to 1$ and using \cref{thm_log_local,lem:localgw,prop:local-deform} implies \eqref{eq:logopen0}, completing the proof of \cref{thm:logopen} for $Y(D)=\delp_3(D_1^2, D_2^2)$. Use of \cref{prop:openblup,prop:logblup} then concludes the proof of \cref{thm:logopen} for any $Y(D)$ satisfying Property~O with $l=2$.

\subsubsection{$l=3$: holomorphic annuli}

The 3-component Looijenga pair of highest Picard rank satisfying Property~O is $Y(D)=\delp_2(1,0,0)$. Taking $D_1=H-E_1$, $D_2=H-E_2$ we have that $Y$, $D_1$ and $D_2$ are toric, and $\delp_2^{\rm op}(1,0,0)$ is described by the toric CY3 graph in the left part of \cref{fig:annulidp2}. Write $C = \mu_T^{-1}(e_3)$, and for a relative 2-homology class $j\in \hhh^{\rm rel}_2(\delp_2^{\rm op}(1,0,0), \bbZ)$ write $j =  j_1 [\mathcal{D}_1] + j_2 [\mathcal{D}_2] + j_C [C]$ where $[\mathcal{D}_i]$ are integral generators of the first homology of the outer Lagrangians incident to edges adjacent to the vertices $v_i$, $i=1,2$ in \cref{fig:annulidp2}. As in the previous section, we will write $\mathsf{O}_{j_C; j_1,j_2}(\delp_2^{\rm op}(1,0,0))$ (resp. $\cO_{j_1,j_2}(\delp_2^{\rm op}(1,0,0))$) for the generating function $\mathsf{O}_{\beta, \vec \mu}(\delp_2^{\rm op}(1,0,0))$ (resp. $\cO_{\vec\mu}(\delp_2^{\rm op}(1,0,0))$) with $\beta=j_C [C]$ and $\vec \mu= ((j_1),(j_2))$ is a pair of 1-row partitions of length $(j_1,j_2)$. From \eqref{eq:openfreeen}, \eqref{eq:opengf} and \eqref{eq:murnak}, we have
\bea
\cO_{j_1,j_2}(\delp_2^{\rm op}(1,0,0))(Q,\hbar) &=& \frac{\cZ_{j_1,j_2}(\delp_2^{\rm op}(1,0,0))}{Z_{\emptyset, \emptyset}(\delp_2^{\rm op}(1,0,0))} 
-
\frac{\cZ_{(j_1),\emptyset}(\delp_2^{\rm op}(1,0,0)) \cZ_{\emptyset, (j_2)}(\delp_2^{\rm op}(1,0,0))}{\cZ_{\emptyset, \emptyset}(\delp_2^{\rm op}(1,0,0))^2} \nn \\
&=&  \sum_{s_1=0}^{j_1-1}\sum_{s_2=0}^{j_2-1}  \frac{(-1)^{s_1+s_2}}{j_1 j_2}
\bigg[
\frac{\cW_{(j_1-s_1,1^{s_1}),(j_2-s_2,1^{s_2})}(\delp_2^{\rm op}(1,0,0))}{\cW_{\emptyset,\emptyset}(\delp_2^{\rm op}(1,0,0))}
\nn \\ &-&
\frac{\cW_{(j_1-s_1,1^{s_1}),\emptyset}(\delp_2^{\rm op}(1,0,0))
\cW_{\emptyset,(j_2-s_2,1^{s_2})}(\delp_2^{\rm op}(1,0,0))
}{\cW_{\emptyset, \emptyset}(\delp_2^{\rm op}(1,0,0))^2}
\bigg] \,. \nn \\
\label{eq:annuligf}
\eea
\begin{figure}[t]
\centering
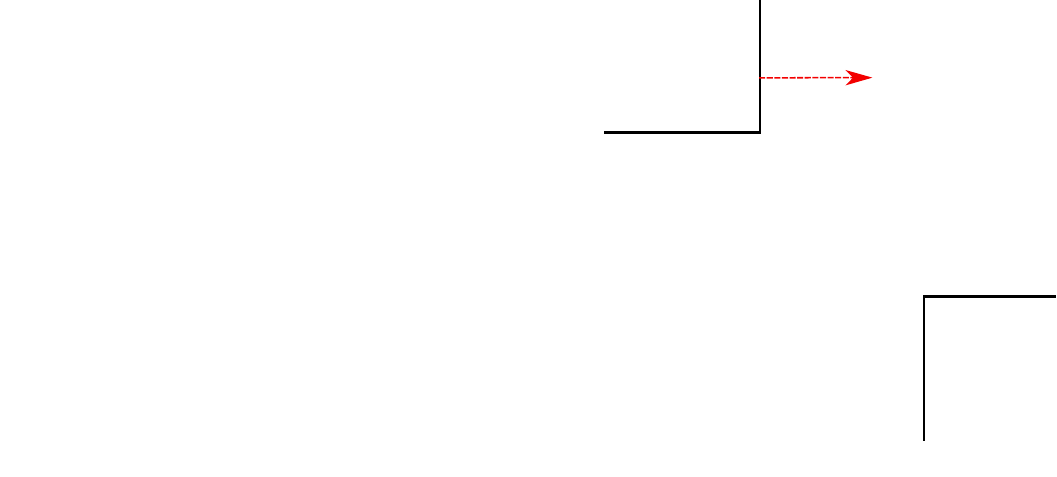
\caption{$\delp_2^{\rm op}(1,0,0)$ from $\delp_2$ and $D_1=H-E_1$, $D_2=H-E_2$.}
\label{fig:annulidp2}
\end{figure}
The framings $\mathsf{f}_1$ and $\mathsf{f}_2$ in \cref{fig:annulidp2} are, respectively, shifted by one unit $f=-1$ from the canonical choice $\mathsf{f_{\rm can}}=\mathsf{p}_{v_1}^{e_3}$, and equal to the canonical framing $\mathsf{f}_2=\mathsf{p}_{v_2}^{e_4}$. Then \eqref{eq:framvert}, \eqref{eq:glueing} and \eqref{eq:vertex} give
\bea
\cW_{\a\b}(\delp_2^{\rm op}(1,0,0))(Q,\hbar) & =&
(-1)^{|\a|}q^{-\kappa(\a)/2} \sum_{\mu,\delta,  \in \cP} s_{\mu^t}(-q^{\rho+\a} Q) s_\a(q^\rho) s_\frac{\mu}{\delta}(q^\rho)s_\frac{\beta}{\delta}(q^\rho) \nn \\
&=  & (-1)^{|\a|} s_{\a^t}(q^\rho) \{\a,\emptyset \}_Q \sum_{\delta \in \cP} s_{\frac{\beta^t}{\delta^t}}(-q^{-\rho}) s_{\delta^t} (-q^{\rho+\a}Q) \nn \\
&=  & (-1)^{|\a|} s_{\a^t}(q^\rho) \{\a,\emptyset \}_Q s_{\beta^t}(-q^{-\rho}, -q^{\rho+\a}Q)\,.
\eea
where we have used \eqref{eq:skewschurt}, \eqref{eq:doubleschur} and \eqref{eq:cauchyprodpss} to perform the summations over partitions. Then, restricting to $\a=(j_1-s_1,1^{s_1})$, $\b=(j_2-s_2,1^{s_2})$,
\bea
& &\cO_{j_1,j_2}(\delp_2^{\rm op}(1,0,0))(Q,\hbar)   
   =    \sum_{s_1=0}^{j_1-1} \frac{(-1)^{j_1+s_1+j_2+1}}{j_1 j_2}
 s_{(s_1+1,1^{j_1-s_1-1})}(q^\rho)
 \prod_{k=0}^{j_1-1}(1- q^k Q q^{-s_1})
  \nn \\
 & \times & 
  \l[p_{(j_2)}(-Q q^{\rho+(j_1-s_1,1^{s_1})} ,-q^{-\rho})-p_{(j_2)}(-Q q^{\rho} ,-q^{-\rho}) \r] \nn \\
  & = &   \sum_{s_1=0}^{j_1-1} \frac{(-1)^{j_1+s_1+j_2+1}}{j_1 j_2}
 s_{(s_1+1,1^{j_1-s_1-1})}(q^\rho)
 \prod_{k=0}^{j_1-1}(1- q^k Q q^{-s_1})
  \nn \\
 & \times & 
  \l[p_{(j_2)}(-Q q^{\rho+(j_1-s_1,1^{s_1})} )-p_{(j_2)}(-Q q^{\rho} ) \r] 
   \nn \\
& = &   \sum_{s_1=0}^{j_1-1} \frac{(-1)^{j_1+s_1+j_2+1}}{j_1 j_2}
 s_{(s_1+1,1^{j_1-s_1-1})}(q^\rho)
 \prod_{k=0}^{j_1-1}(1- q^k Q q^{-s_1})(-Q q^{-s_1-1/2})^{j_2}
  \l[
  q^{j_2j_1}-1
\r]
\nn \\
  & = & \frac{(-1)^{j_1+1} Q^{j_2}[j_1 j_2]_q}{j_1 j_2 [j_2+m]_q} 
  \sum_{m=0}^{j_1} 
\qbinom{j_1}{m}_q 
  \qbinom{j_2+m}{j_1}_q (-Q)^{m}.
\eea 
where in the first row we have used \eqref{eq:schurvspow} and \eqref{eq:murnak}, in the second the fact that for a 1-row partition $\a=(d)$, $p_{(d)}(x_1, \dots, x_n, \dots ; y_1, \dots, y_n, \dots )=p_{(d)}(x_1, \dots, x_n, \dots) + p_{(d)}(y_1, \dots, y_n, \dots )$, and in the third row the fact that the difference of infinite power sums in the term in square brackets telescopes to just two terms; the final calculations are repeated applications of the $q$-binomial theorem. Extracting the $\cO(Q^{j_C})$ coefficient we get
\beq
\mathsf{O}_{j_C; j_1,j_2}(\delp_2^{\rm op}(1,0,0))(\hbar) =  \frac{(-1)^{j_1+1+j_C+j_2} [j_1 j_2]_q}{j_1 j_2 [j_C]_q} 
\qbinom{j_1}{j_C-j_2}_q
  \qbinom{j_C}{j_1}_q \,.
\label{eq:opendP2_1_0_0}
\eeq
From \cref{fig:annulidp2}, the homomorphism of homology groups $\iota: \hhh_2^{\rm rel}(\delp_2^{\rm op}(1,0,0),\bbZ) \to \hhh_2(\delp_2, \bbZ)$ is given by
\beq
\iota[D_1]= [E_1],  \quad  \iota[D_2]=[E_2], \quad
\iota[C]= [H-E_1-E_2],
\eeq
and the resulting map of curve degrees is
\beq
\begin{array}{c}
 d_0\to j_C, \\
 d_1\to j_1, \\
 d_2\to j_2. \\ \\
\end{array}
\label{eq:covdP2ann}
\eeq
Together with the log results given by \eqref{eq:log_dp3_0_0_0}
in \cref{thm:log_dp3_0_0_0} 
and the blow-up formulas of \cref{prop:logblup,prop:openblup}
for the log and open invariants, this proves \cref{thm:logopen} for $l=3$. 

\subsubsection{$l=4$: holomorphic pairs of pants}

According to \cref{ex:pairofpants}, for the only 4-component case $Y(D)= \bbF_0(0,0,0,0)$, we have that $Y^{\rm op}(D)$ is given by the 3-dimensional affine space with Aganagic--Vafa A-branes $L^{(i)}$, $i=1,2,3$ at framing shifted by $-1$, $0$, and $-1$ ending on the three legs of the vertex, as in \cref{fig:f0threeholes}. We will be concerned with counts of 3-holed open Gromov--Witten invariants of $\bbF_0^{\rm op}(0,0,0,0)$, with winding numbers $(j_1, j_1, j_2)$, see Example \ref{ex:pairofpants2}.

The connected generating function, by \eqref{eq:openfreeen} and \eqref{eq:opengf}, is
\bea
& &
\mathsf{O}_{j_1,j_1, j_2}(\bbF_0^{\rm op}(0,0,0,0))(\hbar) = \nn \\
&=& \cZ_{(j_1)(j_1)(j_2)}(\bbF_0^{\rm op}(0,0,0,0))-\cZ_{(j_1)(j_1)\emptyset}(\bbF_0^{\rm op}(0,0,0,0))\cZ_{\emptyset\emptyset(j_2)}(\bbF_0^{\rm op}(0,0,0,0)) \nn \\ 
&-& \cZ_{(j_1)\emptyset(j_2)}(\bbF_0^{\rm op}(0,0,0,0))\cZ_{\emptyset(j_1)\emptyset}(\bbF_0^{\rm op}(0,0,0,0)) - \cZ_{\emptyset(j_1)(j_2)}(\bbF_0^{\rm op}(0,0,0,0))\cZ_{(j_1)\emptyset\emptyset}(\bbF_0^{\rm op}(0,0,0,0)) \nn \\ 
&+& 2 \cZ_{(j_1)\emptyset\emptyset}(\bbF_0^{\rm op}(0,0,0,0)) \cZ_{\emptyset(j_1)\emptyset}(\bbF_0^{\rm op}(0,0,0,0))\cZ_{\emptyset\emptyset(j_2)}(\bbF_0^{\rm op}(0,0,0,0)).
\label{eq:f03hF}
\eea
where, by \eqref{eq:murnak},
\beq
\cZ_{(j_1),(j_1),(j_2)}(\bbF_0^{\rm op}(0,0,0,0)) = \sum_{s_0,s_1,s_2} \frac{(-1)^{s_0+s_1 +s_2}}{j_1^2 j_2} \cW(\bbF_0^{\rm op}(0,0,0,0))_{(j_1-s_0,1^{s_0}),(j_1-s_1,1^{s_1}), (j_2-s_2,1^{s_2})},
\eeq
and, from \eqref{eq:framvert} and \eqref{eq:vertex},
\bea
\cW_{\a\b\gamma}(\bbF_0^{\rm op}(0,0,0,0)) &=& (-1)^{|\a|+|\gamma|}\sum_{\delta} s_{\frac{\a^t}{\delta}}(q^{\rho+\gamma}) s_{\frac{\b}{\delta}}(q^{\rho+\gamma^t}) s_{\gamma^t}(q^\rho).
\label{eq:w03hrep}
\eea
\begin{figure}[t]
\centering
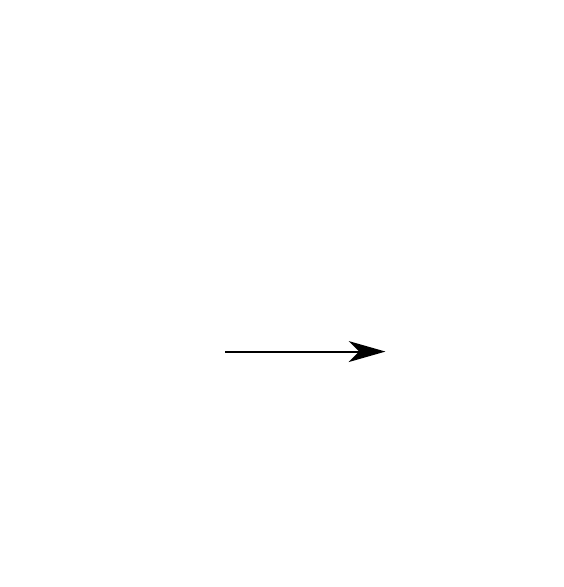
\caption{The toric CY3 graph of $\bbF^{\rm op}_0(0,0,0,0)$.}
\label{fig:f0threeholes}
\end{figure}
Elementary manipulations from \eqref{eq:f03hF}--\eqref{eq:w03hrep} lead to
%
\bea
& & \mathsf{O}_{j_1,j_1, j_2}(\bbF_0^{\rm op}(0,0,0,0))(\hbar) = \nn \\ &=& \sum_{s_0,s_1,s_2} \frac{(-1)^{s_0+s_1 +s_2+j_1+j_2}}{j_1^2 j_2}\bigg[\big(s_{\a^t}(q^{\rho+\gamma})-s_{\a^t}(q^\rho) \big) \big(s_{\b}(q^{\rho+\gamma^t})-s_{\b}(q^\rho) \big) s_{\gamma^t}(q^\rho) \nn \\
&+& \sum_{\delta\neq \emptyset} \l( s_{\frac{\a^t}{\delta}}(q^{\rho+\gamma}) s_{\frac{\b}{\delta}}(q^{\rho+\gamma^t})-s_{\frac{\a^t}{\delta}}(q^{\rho}) s_{\frac{\b}{\delta}}(q^{\rho})  \r) s_{\gamma^t}(q^\rho)\bigg]
\label{eq:f03hF2}
\eea
with $\a=(j_1-s_0,1^{s_0})$, $\b=(j_1-s_1,1^{s_1})$, $\gamma=(j_2-s_2,1^{s_2})$. The first row, after carrying out the sums over $s_0$, $s_1$ and $s_2$ using \eqref{eq:schurvspow} and \eqref{eq:murnak}, is equal to 
\bea
& & 
\sum_{s_2=0}^{j_2-1} \frac{(-1)^{s_2+1+j_2}}{j_1^2 j_2}\big(p_{(j_1)}(q^{\rho+(j_2-s_2,1^{s_2})})-p_{(j_1)}(q^\rho) \big) \big(p_{(j_1)}(q^{\rho+(s_2+1,1^{j_2-s_2-1})})-p_{(j_1)}(q^\rho) \big) \nn \\ 
& \times & 
s_{(s_2+1,1^{j_2-s_2-1})}(q^\rho) \nn \\
&=& 
\sum_{s_2=0}^{j_2-1} \frac{(-1)^{s_2}}{j_1^2 j_2} \big(q^{j_1(j_2-s_2-1/2)}-q^{j_1(-s_2-1/2)} \big) \big(q^{j_1(s_2+1/2)}-q^{j_1(s_2+1/2-j_2)} \big) s_{(j_2-s_2,1^{s_2})}(q^\rho) \nn \\
&=& \frac{1}{j_1^2 j_2} \frac{[j_1 j_2]_q^2}{[j_2]_q},
\eea
while the second row is equal to zero. Indeed, when $\delta=\a^t$, we have $s_{\frac{\beta}{\delta}}(x)=\delta_{\b\a^t}$ (since $|\a|=|\b|=j_1$ in our case, $\a^t \preceq \b$ implies $\a^t=\b$), so the terms appearing in the difference in the second row of \eqref{eq:f03hF2} are either individually zero or cancel out each other. When $\delta \neq \a^t$, we can use \cref{lem:skewhooks} to expand $s_\frac{\a^t}{\delta}(x)$ in terms of ordinary Schur functions $s_{\lambda}(x)$ with $|\lambda|=|\a|-|\delta|$: it is easy to see that in the sum over $s_0$ the contribution labelled by each such Young diagram appears exactly twice and weighted with opposite signs. Therefore,
\beq
\mathsf{O}_{j_1,j_1, j_2}(\bbF_0^{\rm op}(0,0,0,0))(\hbar) = \frac{1}{j_1^2 j_2} \frac{[j_1 j_2]_q^2}{[j_2]_q}.
\label{eq:f03hFfin}
\eeq
By construction from Examples \ref{ex:pairofpants} and \ref{ex:pairofpants2},
\beq
\begin{array}{c}
 d_1\to j_2, \\
 d_2\to j_1, \\ \\
\end{array}
\eeq
and comparing with \eqref{eq:fzerolog} gives \eqref{eq:logopen}, which concludes the proof of \cref{thm:logopen}. \begin{flushright}$\Box$\end{flushright}

\begin{table}[!t]
\begin{minipage}{.45\textwidth}
\centering
\begin{tabular}{|c|c|}
\hline
 $Y(D)$     &  $\Gamma_{Y^{\rm op}(D)}$ \\
 \hline
     \makecell{  $\bbP^2(1,4)$ \\ } & \makecell{ \includegraphics[scale=.8]{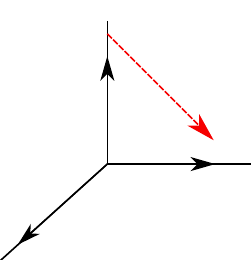} \\ } \\ \hline
     \makecell{ \vspace{-.1cm} \\ $\delp_1(1,3)$ \\ \vspace{-.1cm}} &  \multirow{2}{*}{\includegraphics[scale=.7]{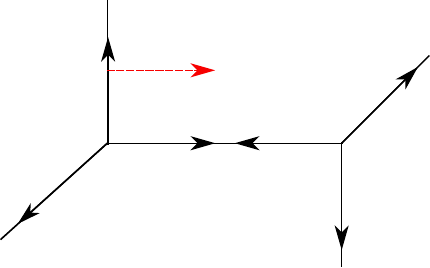}} \\   \cline{1-1}
     \makecell{ \vspace{-.1cm} \\ $\delp_1(0,4)$  \\ \vspace{-.1cm}} & \\ \hline
     \makecell{ \vspace{-.1cm} \\ $\delp_2(1,2)$ \\ \vspace{-.1cm}} & \makecell{\vspace{-.1cm} \\ \multirow{2}{*}{\includegraphics[scale=0.5]{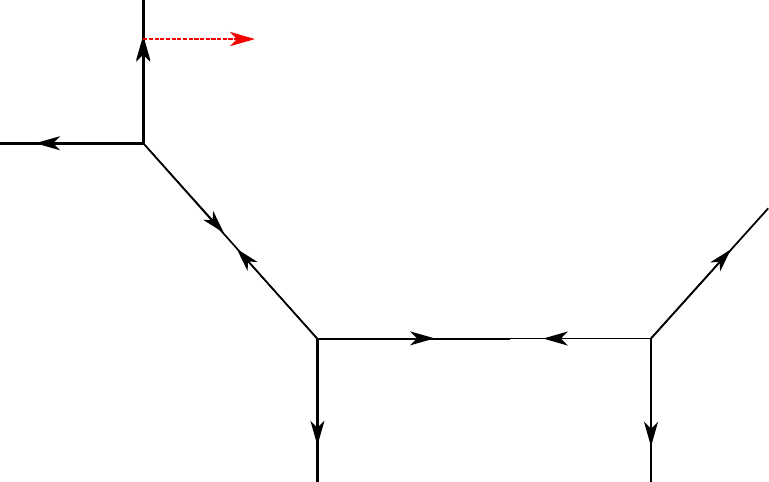}} \\ \vspace{.1cm}} \\   \cline{1-1}
     \makecell{ \vspace{-.1cm} \\ $\delp_2(0,3)$  \\ \vspace{-.1cm}} & \\ \hline
     \makecell{ \vspace{-.1cm} \\ $\delp_3(1,1)$ \\ \vspace{-.1cm}} & \makecell{ \vspace{-.1cm} \\ \multirow{2}{*}{\includegraphics[scale=0.5]{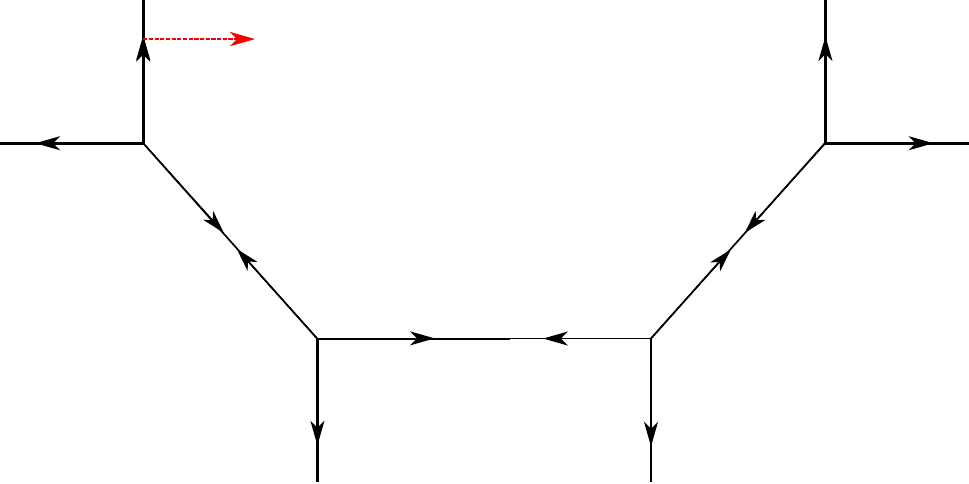}} \\ \vspace{.1cm}} \\   \cline{1-1}
     \makecell{ \vspace{-.1cm} \\ $\delp_3(0,2)$  \\ \vspace{-.1cm}} & \\ \hline
          \makecell{ \vspace{-.1cm} \\ $\bbF_0(2,2)$ \\ \vspace{-.1cm}} & \makecell{ \vspace{-.3cm} \\ \multirow{2}{*}{\includegraphics[scale=0.5]{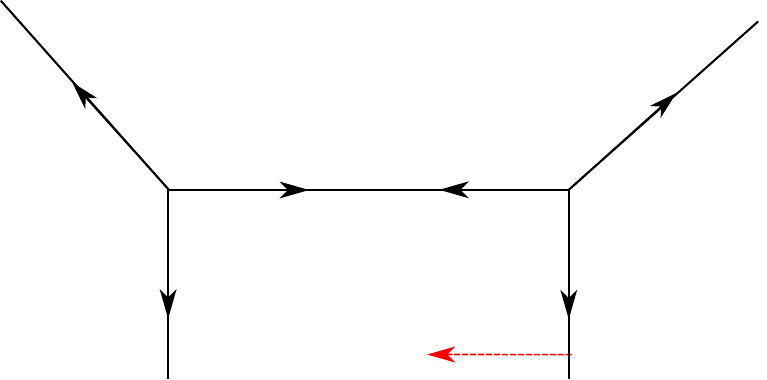}}} \\   \cline{1-1}
     \makecell{ \\ $\bbF_0(0,4)$  \\ \vspace{-.1cm}} & 
     \\   \hline
\end{tabular}
\end{minipage}
\begin{minipage}{.45\textwidth}
\centering
\begin{tabular}{|c|c|}
\hline
 $Y(D)$     &  $\Gamma_{Y^{\rm op}(D)}$ \\
 \hline
     \makecell{\vspace{.01cm} \\ $\bbP^2(1,1,1)$ \\ \vspace{.01cm}} & \makecell{\vspace{.01cm} \\ \includegraphics[scale=.8]{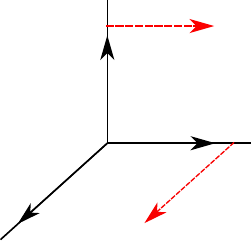} \\ \vspace{.01cm}} \\   \hline
     \makecell{ \vspace{.01cm} \\ $\delp_1(1,1,0)$ \\ \vspace{.01cm}} &  \makecell{\vspace{.01cm} \\ \includegraphics[scale=.8]{annulidp1.pdf} \\ \vspace{.01cm}} \\   \hline
     \makecell{ \vspace{.01cm} \\ $\delp_2(1,0,0)$ \\ \vspace{.3cm}} &  \makecell{\vspace{.01cm}\\ \includegraphics[scale=0.5]{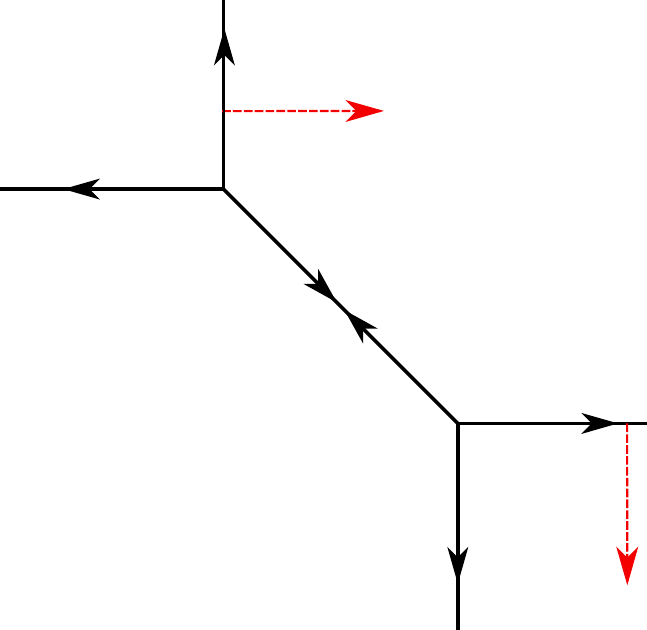}\\\ } \\  \hline
     \makecell{ \vspace{.01cm} \\ $\bbF_0(2,0,0)$ \\ \vspace{.3cm}} &  \makecell{\vspace{.01cm} \\ \includegraphics[scale=.8]{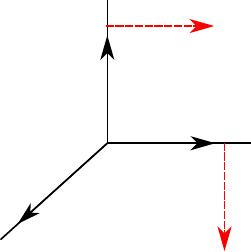} \\ \vspace{.01cm}} \\   \hline
          \makecell{ \vspace{.01cm} \\ $\bbF_0(0,0,0,0)$ \\ \vspace{.01cm}} & \makecell{\vspace{.01cm} \\ \includegraphics[scale=.8]{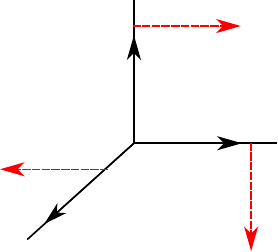} \\ \vspace{.01cm}} \\
     \hline
\end{tabular}
\end{minipage}
\caption{$Y^{\rm op}(D)$ for $l$-component Looijenga pairs satisfying Property~O.}
\label{tab:YopD}
\end{table}

\section{KP and quiver DT invariants}
\label{sec:quivers}

\subsection{Klemm--Pandharipande invariants of CY4-folds}

Let $Z$ be a smooth projective complex Calabi--Yau variety of dimension four and $d \in \hhh_2(Z, \bbZ)$. Since $\mathrm{vdim}\overline{\cM}_{g,n}(Z,d)=1-g+n$, the only non-vanishing genus zero primary Gromov--Witten invariants of $Z$ without divisor insertions are\footnote{By the same formula, there are non-vanishing elliptic unpointed Gromov--Witten invariants for $Z$, which will not concern us in this paper. There are no Gromov--Witten invariants for a CY4 in genus $g>1$.}
\bea
\mathrm{GW}_{0,d;\gamma}(Z) & \coloneqq & \int_{\overline{\cM}_{0,1}(Z,d)} \ev^\star_1{\gamma}, \quad \gamma \in \hhh^4(Z, \bbZ) \,.
\eea 
The same considerations apply to the case of $Z$ the Calabi--Yau total space of a rank-$(4-r)$ concave vector bundle on an $r$-dimensional smooth projective variety. It was proposed by Greene--Morrison--Plesser in \cite[App.~B]{Greene:1993vm} and further elaborated upon by Klemm--Pandharipande in \cite[Sec.~1.1]{Klemm:2007in} that a higher-dimensional version of the Aspinwall--Morrison should conjecturally produce integral invariants $\mathrm{KP}_{0,d}(Z)$, virtually enumerating rational degree-$d$ curves incident to the Poincar\'e dual cycle of $\gamma$:
\beq
\mathrm{GW}_{0,d; \gamma}(Z) = \sum_{k \mid d} \frac{\mathrm{KP}_{0,d/k; \gamma}(Z)}{k^2}\,.
\label{eq:kp}
\eeq
\begin{conj}[Klemm--Pandharipande]
$\mathrm{KP}_{0,d; \gamma}(Z) \in \bbZ$.
\label{conj:KP}
\end{conj}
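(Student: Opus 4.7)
The plan is to establish integrality in the specific case $Z = E_{Y(D)}$ for $Y(D)$ a 2-component quasi-tame nef Looijenga pair, which is the setting where the Gromov--Witten machinery developed in the rest of the paper applies. The general projective CY4 case appears to require substantially different tools (such as the symplectic methods of Ionel--Parker \cite{MR3739228}) and is not directly accessible through the correspondences of Sections \ref{sec:logloc_gen}--\ref{sec:opengw}; I would restrict attention to the local-surface setting and defer the general conjecture.

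First I would chain together the correspondences of the preceding sections. Combining the definition \eqref{eq:kp_intro} with the log-local correspondence (Theorem \ref{thm_log_local}) rewrites $\mathrm{KP}_d(E_{Y(D)})$ as a M\"obius-inverted sum of genus-$0$ maximal tangency log invariants of $Y(D)$. The log-open correspondence (Theorem \ref{thm:logopen}) then identifies these with the genus-$0$ open Gromov--Witten invariants of the toric Calabi--Yau 3-fold $Y^{\rm op}(D)=(X,L,\mathsf{f})$ produced by Construction \ref{constr:YopD}. Passing to the BPS/LMOV change of variables \eqref{eq:Omegad_intro_0} at $q=1$, one obtains the identification $\mathrm{KP}_d(E_{Y(D)})=\Omega_d(Y(D))(1)$ up to an overall sign, so the problem reduces to proving integrality of the genus-$0$ LMOV invariants of $Y^{\rm op}(D)$.

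Next I would invoke the branes-quivers correspondence of Panfil--Sulkowski \cite{Panfil:2018faz,Kimura:2020qns}. The crucial observation--to be verified case by case using the classification of Table \ref{tab:classif} and the explicit open geometries catalogued in Table \ref{tab:YopD}--is that every $Y^{\rm op}(D)$ arising from a quasi-tame 2-component Looijenga pair is a \emph{strip geometry} (a chain of $(-1,-1)$-resolved vertices terminated by two outer Aganagic--Vafa Lagrangian branes). The Panfil--Sulkowski construction then produces a symmetric quiver $\mathsf{Q}(Y(D))$ whose nodes are in bijection with the basic holomorphic disks on $L$, and a lattice isomorphism $\kappa\colon \bbZ\mathsf{Q}(Y(D))_0\xrightarrow{\sim} \hhh_2(Y,\bbZ)$ matching LMOV invariants to the numerical DT invariants $\mathrm{DT}^{\rm num}$ of $\mathsf{Q}(Y(D))$, up to sign shifts supported on the vertex classes $v_i$ (these shifts account for disk multi-covers and explain the $\sum_i \alpha_i \delta_{d,v_i}$ correction in \eqref{eq:kpdt_intro}).

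Finally, Efimov's theorem (recalled in Section \ref{sec:quiverdt_intro}) guarantees that $\mathrm{DT}^{\rm num}_d(\mathsf{Q})\in \bbZ_{\geq 0}$ for every symmetric quiver $\mathsf{Q}$; pulling this back through $\kappa$ yields integrality of $\mathrm{KP}_d(E_{Y(D)})$ up to the explicit sign-shift in \eqref{eq:kpdt_intro}. The hard part will be the case-by-case verification that $Y^{\rm op}(D)$ genuinely is a strip geometry for each of the nine deformation types of quasi-tame 2-component Looijenga pairs, and the combinatorial construction of the isomorphism $\kappa$ and the integer shifts $\alpha_i$ in each case; the prototype computation of \eqref{eq:surprise} for $\bbP^2(1,4)$, where the 2-loop quiver emerges, suggests the correct general pattern.
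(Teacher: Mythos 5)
Your proposal follows essentially the same route as the paper: the paper does not prove the conjecture in full generality either, but establishes it precisely for $E_{Y(D)}$ with $Y(D)$ a 2-component quasi-tame pair (\cref{thm:kpdt}) via exactly your chain — the log-local and log-open correspondences, the Panfil--Sulkowski strip/quiver correspondence producing a symmetric quiver whose numerical DT invariants match the KP invariants up to sign and vertex shifts, and Efimov's theorem. The case-by-case strip-geometry verification you anticipate is carried out in the paper through the explicit product form \eqref{eq:Wsymm} of the one-holed amplitudes obtained in the proof of \cref{thm:logopen}, so your plan matches the paper's argument in substance.
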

A symplectic proof of \cref{conj:KP} for projective $Z$, although likely adaptable to the non-compact setting, was given by Ionel--Parker in \cite{MR3739228}. \\

Our main focus will be on $Z$  a non-compact CY4 local surface (i.e. $r=2$). In this case there is a single generator $\gamma=[\mathrm{pt}]$ for the fourth cohomology of $Z$, given by the Poincar\'e dual of the point class on the zero section, and we will henceforth use the simplified notation $\mathrm{KP}_{0,d}(Z) \coloneqq \mathrm{KP}_{0,d; [\mathrm{pt}]}(Z)$

\subsection{Quiver Donaldson--Thomas theory}

Let $\mathsf{Q}$ be a quiver with an ordered set $\mathsf{Q}_0$ of $n$ vertices $v_1, \dots v_n \in \mathsf{Q}_0$ and a set of oriented edges $\mathsf{Q}_1 = \{\a:v_i \to v_j  \}$. We let $\bbN \mathsf{Q}_0$ be the free abelian semi-group generated by $\mathsf{Q}_0$, and for $\mathsf{d}=\sum d_i v_i$, $\mathsf{e}=\sum e_i v_i \in \bbN \mathsf{Q}_0$ we write $E_\mathsf{Q}(\mathsf{d},\mathsf{e})$ for the Euler form
\beq
E_\mathsf{Q}(\mathsf{d},\mathsf{e}) \coloneqq \sum_{i=1}^n d_i e_i -\sum_{\a:v_i \to v_j} d_i e_j\,.
\eeq
We assume in what follows that $\mathsf{Q}$ is symmetric, that is, for every $i$ and $j$, the number of oriented edges from $v_i$ to $v_j$ is equal to the number of oriented edges from 
$v_j$ to $v_i$. The Euler form is then a symmetric bilinear form.
To $C$ a symmetric bilinear pairing on $\bbZ^n$, we associate the generalised $q$-hypergeometric series
\beq
\Phi_C(q; x_1, \dots, x_n) \sum_{\mathsf{d} \in \bbN^n}^\infty \frac{\big(-q^{1/2}\big)^{C(\mathsf{d},\mathsf{d})} \mathsf{x}^\mathsf{d}}{\prod_{i=1}^n (q;q)_{d_i}},
\eeq
where $\mathsf{x}^\mathsf{d} =\prod_{i=1}^n x_i^{d_i}$. The motivic Donaldson--Thomas partition function associated to the cohomological Hall algebra of $\mathsf{Q}$ (without potential) is the generating function \cite{MR2956038}
\beq
P_\mathsf{Q}(q; x_1, \dots, x_{n}) \coloneqq \Phi_{E_\mathsf{Q}}(q; x_1, \dots, x_n) 
\eeq
 and the motivic DT invariants $\mathrm{DT}_{\mathsf{d}; i}(\mathsf{Q})$ of $\mathsf{Q}$ are the formal Taylor coefficients in the expansion of its plethystic logarithm \cites{MR2956038, MR2851153, MR2889742}:
\bea
P_\mathsf{Q}(q; x_1, \dots, x_n) &=& \mathrm{Exp}\l(\frac{1}{[1]_q} \sum_{\mathsf{d} \neq 0} \sum_{i \in \bbZ} \mathrm{DT}_{\mathsf{d}; i}(\mathsf{Q}) \mathsf{x}^\mathsf{d} (-q^{1/2})^{-i} \r) \nn \\
&=&
\exp \l[\sum_{n=1}^\infty \frac{1}{n [n]_{q}} \sum_{\mathsf{d} \neq 0} \sum_{i \in \bbZ} \mathrm{DT}^\mathsf{Q}_{\mathsf{d}; i} \mathsf{x}^{n\mathsf{d}} (-q^{1/2})^{-ni}\r] \nn \\
&=& \prod_{\mathsf{d} \neq 0} \prod_{i \in \bbZ} \prod_{k \geq 0} \l(1-(-1)^i \mathsf{x}^\mathsf{d} q^{-k-(i+1)/2} \r)^{-\mathrm{DT}_{\mathsf{d}; i}(\mathsf{Q})}.
\label{eq:DTmot}
\eea
It will be of particular interest for us to consider a suitable semi-classical limit of \eqref{eq:DTmot}
\bea
y^{(i)}_\mathsf{Q}(x_1, \dots, x_n) & \coloneqq & \lim_{q \to 1} \frac{P_\mathsf{Q}(q; x_1, \dots, q^{1/2}x_i, \dots, x_n)}{P_\mathsf{Q}(q; x_1, \dots, q^{-1/2} x_i, \dots x_n)} \nn \\
&=& \lim_{q \to 1}  \mathrm{Exp}\l( \sum_{\mathsf{d} \neq 0} \frac{1}{[1]_q}\sum_{i \in \bbZ} [d_i]_{q} \mathrm{DT}^\mathsf{Q}_{\mathsf{d}; i} \mathsf{x}^\mathsf{d} (-q^{1/2})^{-i} \r) \nn \\
&=& \prod_{\mathsf{d} \neq 0} \prod_{i \in \bbZ}\l( 1- \mathsf{x}^\mathsf{d}\r)^{-|\mathsf{d}| \mathrm{DT}^{\rm num}_{\mathsf{d}}(\mathsf{Q})},
\label{eq:yQ}
\eea
where
\beq
\mathrm{DT}^{\rm num}_{\mathsf{d}}(\mathsf{Q}) \coloneqq \sum_{i \in \bbZ} (-1)^i \mathrm{DT}_{\mathsf{d},i}(\mathsf{Q})
\label{eq:DTnum}
\eeq
are the numerical DT invariants. From \eqref{eq:yQ}, the numerical invariants can be extracted from the logarithmic primitive of $y_Q^{(i)}(\mathsf{x})$ w.r.t. $x_i$,
\beq
\int \frac{\rd x_i}{x_i} \log y^{(i)}_Q(\mathsf{x}) =: \sum_{\mathsf{d} \neq 0} A_{\mathsf{d}}(\mathsf{Q}) \mathsf{x}^{\mathsf{d}} \,,
\eeq
as
\beq
A_{\mathsf{d}}(\mathsf{Q}) = \sum_{k \mid d} \frac{\mathrm{DT}^{\rm num}_{\mathsf{d/k}}(\mathsf{Q})}{k^2}\,.
\eeq
The generating series $y_\mathsf{Q}(x_1, \dots, x_n) \coloneqq \prod_{i=1}^n y^{(i)}_\mathsf{Q}(x_1, \dots, x_n)$ has an interpretation as a generating function of Euler characteristics of certain non-commutative Hilbert schemes $\mathrm{Hilb}_\mathsf{d}(\mathsf{Q})$ attached to the moduli space of semi-stable representations of the quiver $\mathsf{Q}$ \cites{MR2511752, MR2889742},
\beq
y_\mathsf{Q}(x_1, \dots, x_n) = \sum_{\mathsf{d} \in \bbZ \mathsf{Q}_0} \chi(\mathrm{Hilb}_\mathsf{d}(\mathsf{Q})) \mathsf{x}^\mathsf{d} \in \bbZ[[\mathsf{x}]]\,.
\eeq
In particular, this implies that $(\sum_{i=1}^n d_i) \mathrm{DT}^{\rm num}_{\mathsf{d}}(\mathsf{Q})\in \bbZ$. More is true \cites{MR2956038, MR2889742} by the following
\begin{thm}[Efimov, \cite{MR2956038}]
The numerical Donaldson--Thomas invariants of a symmetric quiver $\mathsf{Q}$ without potential are positive integers, $\mathrm{DT}^{\rm num}_{\mathsf{d}}(\mathsf{Q}) \in \bbN$.
\label{thm:efimov}
\end{thm}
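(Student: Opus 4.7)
The plan is to establish the claim via the cohomological Hall algebra (CoHA) formalism of Kontsevich--Soibelman together with Efimov's structural theorem. Concretely, for a symmetric quiver $\mathsf{Q}$ without potential, one forms the CoHA
\[
\mathcal{H}_{\mathsf{Q}} \;=\; \bigoplus_{\mathsf{d} \in \bbN\mathsf{Q}_0} \hhh^{*}_{G_{\mathsf{d}}}\big(\mathrm{Rep}(\mathsf{Q},\mathsf{d}),\bbQ\big),
\]
equipped with the convolution product induced by extensions of representations. I would first verify that, after the standard cohomological shift by $-E_{\mathsf{Q}}(\mathsf{d},\mathsf{d})$, the $(\mathsf{d},i)$-bigraded Poincar\'e series of $\mathcal{H}_{\mathsf{Q}}$ equals $P_{\mathsf{Q}}(q;\mathsf{x})$ as defined in \eqref{eq:DTmot}, using the fact that $\mathrm{Rep}(\mathsf{Q},\mathsf{d})$ is an affine space and $\hhh^{*}_{G_{\mathsf{d}}}(\mathrm{pt})$ is a polynomial ring in $\sum d_i$ variables.

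The heart of the argument is Efimov's theorem that, because $\mathsf{Q}$ is \emph{symmetric}, the CoHA $\mathcal{H}_{\mathsf{Q}}$ is isomorphic, as a bigraded vector space, to the free supercommutative algebra
\[
\mathcal{H}_{\mathsf{Q}} \;\cong\; \mathrm{Sym}^{\bullet}\!\left( V_{\mathsf{Q}} \otimes \bbQ[u]\right),
\]
where $u$ has cohomological degree $2$ and $V_{\mathsf{Q}} = \bigoplus_{\mathsf{d}\neq 0,\, i \in \bbZ} V_{\mathsf{d},i}$ is a bigraded vector space with finite-dimensional graded pieces. Taking plethystic logarithms on both sides and matching with the definition \eqref{eq:DTmot}, this directly identifies
\[
\mathrm{DT}_{\mathsf{d};i}(\mathsf{Q}) = \dim_{\bbQ} V_{\mathsf{d},i} \in \bbN,
\]
so the motivic DT invariants are non-negative integers; this already establishes integrality of each $\mathrm{DT}_{\mathsf{d};i}(\mathsf{Q})$ and hence of $\mathrm{DT}^{\rm num}_{\mathsf{d}}(\mathsf{Q})$ by \eqref{eq:DTnum}.

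The remaining and more delicate step is \emph{positivity} of the alternating Euler characteristic $\mathrm{DT}^{\rm num}_{\mathsf{d}}(\mathsf{Q}) = \sum_i (-1)^i \dim V_{\mathsf{d},i}$. Here I would follow Efimov's parity argument: for symmetric $\mathsf{Q}$, the bigraded space $V_{\mathsf{d}}$ is concentrated in a single cohomological parity depending on $\mathsf{d}$, so that all non-zero contributions to the alternating sum come with the same sign. The cleanest way to see this is via the existence of an $\mathfrak{sl}_2$-action (Hard Lefschetz type) on $V_{\mathsf{d}}$ coming from multiplication by the tautological class $u$, together with a weight-purity argument for the mixed Hodge structure on $\mathcal{H}_{\mathsf{Q}}$, which forces $V_{\mathsf{d},i}$ to vanish unless $i$ has a fixed parity $\epsilon(\mathsf{d})$; then $\mathrm{DT}^{\rm num}_{\mathsf{d}}(\mathsf{Q}) = (-1)^{\epsilon(\mathsf{d})} \dim V_{\mathsf{d}} \geq 0$.

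The main obstacle is precisely this last step: the mere existence of the free-supercommutative decomposition is the PBW theorem for CoHAs (which is formal), but pinning down the parity/purity of $V_{\mathsf{d}}$ requires the full force of Efimov's theorem, which uses the symmetry of $\mathsf{Q}$ in an essential way through a Koszul duality argument and a spectral sequence degeneration that is genuinely non-trivial. Everything else in the proof -- the identification of Poincar\'e series with $P_{\mathsf{Q}}$, the plethystic comparison, and the passage from motivic to numerical invariants -- is bookkeeping once this structural result is in hand.
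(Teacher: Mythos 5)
The first thing to note is that the paper does not prove this statement at all: it is imported verbatim from Efimov \cite{MR2956038} and used as a black box (notably in the proof of \cref{thm:kpdt}), so there is no internal argument to compare against. Read as an attempted proof, your proposal has a genuine gap, and it is a circularity: the step you yourself identify as ``the main obstacle'' --- pinning down the parity concentration of the generating space $V_{\mathsf{d}}$, or more fundamentally the existence of the free supercommutative decomposition with generators of the form $V^{\mathrm{prim}}\otimes\bbQ[u]$ --- is deferred to ``the full force of Efimov's theorem'', which is precisely the statement to be proved. At the one point where a new argument is required, none is supplied.

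You have also inverted the division of labour between the formal and the deep parts. The hard content of Efimov's result is the freeness itself: that the sign-twisted (supercommutative) CoHA of a symmetric quiver is free supercommutative on a bigraded space $V^{\mathrm{prim}}\otimes\bbQ[u]$ with each $V^{\mathrm{prim}}_{\mathsf{d}}$ finite dimensional. This is the Kontsevich--Soibelman conjecture and is \emph{not} a formal PBW statement; Efimov proves it by an explicit analysis of the shuffle-algebra model of $\mathcal{H}_{\mathsf{Q}}$, not by Koszul duality or a purity/hard-Lefschetz argument. By contrast, the parity statement you treat as the delicate point is the easy part: since $\mathrm{Rep}(\mathsf{Q},\mathsf{d})$ is affine, $\hhh^{*}_{G_{\mathsf{d}}}(\mathrm{Rep}(\mathsf{Q},\mathsf{d}),\bbQ)\cong\hhh^{*}_{G_{\mathsf{d}}}(\mathrm{pt},\bbQ)$ is concentrated in even cohomological degrees, so after the shift by $E_{\mathsf{Q}}(\mathsf{d},\mathsf{d})$ the degree-$\mathsf{d}$ piece of the CoHA, and hence $V^{\mathrm{prim}}_{\mathsf{d}}$, lives in a single parity determined by $E_{\mathsf{Q}}(\mathsf{d},\mathsf{d})\bmod 2$; the factor $1/[1]_q$ in \eqref{eq:DTmot} accounts for the $\bbQ[u]$ tensor factor, so $\mathrm{DT}_{\mathsf{d};i}(\mathsf{Q})=\dim V^{\mathrm{prim}}_{\mathsf{d},i}\in\bbN$, and with the sign conventions of \eqref{eq:DTmot}--\eqref{eq:DTnum} the alternating sum collapses to $\mathrm{DT}^{\rm num}_{\mathsf{d}}(\mathsf{Q})=\dim V^{\mathrm{prim}}_{\mathsf{d}}\geq 0$. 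So a correct write-up would either cite Efimov for the freeness (as the paper does) and then do exactly the bookkeeping you describe, or reproduce his freeness argument; what cannot be done is to take the freeness as formal and invoke the theorem itself for the remainder.
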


\subsection{KP integrality from DT theory}

The genus zero log-local and log-open correspondences of \cref{thm:logopen} imply that KP invariants of toric local surfaces are, up to a sign and possibly an integral shift, numerical DT invariants of a symmetric quiver. Combined with \cref{thm:efimov} this gives an algebro-geometric proof of \cref{conj:KP} for $Z= \mathrm{Tot}(\cO(-D_1) \oplus \cO(-D_2) \to Y)$.

\begin{thm}
Let $Y(D)$ be a 2-component quasi-tame Looijenga pair. Then there exists a symmetric quiver $\mathsf{Q}(Y(D))$ with $\chi(Y)-1$ vertices and a lattice isomorphism $\kappa: \bbZ(\mathsf{Q}(Y(D)))_0 \stackrel{\sim}{\rightarrow}  \hhh_2(Y,\bbZ) $ such that
\beq
\mathrm{DT}_{d}^{\rm num}(\mathsf{Q}(Y(D))) =
\bigg|\mathrm{KP}_{\kappa(d)}(E_{Y(D)})+ \sum_i \a_i \delta_{d,v_i}\bigg| \,,
\label{eq:kpdt}
\eeq
with $\a_i \in \{-1,0,1\}$. In particular, $\mathrm{KP}_{d}(E_{Y(D)})\in \bbZ$.
\label{thm:kpdt}
\end{thm}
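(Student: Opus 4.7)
The plan is to string together four ingredients: the log-open correspondence of \cref{thm:logopen}, the log-local correspondence of \cref{thm_log_local}, the Panfil--Sulkowski branes-quivers dictionary for strip-like toric Calabi--Yau $3$-folds, and Efimov's positivity theorem \cref{thm:efimov}. Comparing \cref{tab:classif} with \cref{def:propO} reveals that every $2$-component quasi-tame Looijenga pair satisfies Property~O, so \cref{constr:YopD} produces a toric Lagrangian triple $Y^{\rm op}(D)$; moreover, as can be read off from \cref{tab:YopD}, this triple is always a \emph{strip geometry}, i.e.\ a chain of rational curves carrying one Aganagic--Vafa Lagrangian at an extremal leg, at a framing which is explicit from \cref{constr:YopD}.

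I would then define $\mathsf{Q}(Y(D))$ to be the Panfil--Sulkowski symmetric quiver associated to this strip triple: its vertices enumerate basic holomorphic disks, its self-loops and edges record self-linking and mutual-linking numbers determined by the framing and the web of compact $\bbP^1$s, and its motivic DT series \eqref{eq:DTmot} is by construction the reduced open topological string partition function of $Y^{\rm op}(D)$, i.e.\ the open LMOV generating function $\Omega_{\iota^{-1}(d)}(Y^{\rm op}(D))(q)$ of \eqref{eq:Omegad_intro_0}. A case-by-case count of basic disks through the $l=2$ rows of \cref{tab:YopD} shows that their number equals $\chi(Y)-1$, and the lattice map $\kappa$ is obtained by composing the tautological identification of basic disks with a $\bbZ$-basis of $\hhh_2^{\rm rel}(Y^{\rm op}(D),\bbZ)$ and the isomorphism $\iota$ of \cref{prop:iota}. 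The key mechanism behind \eqref{eq:kpdt} is that the classical $q\to 1$ limit of the LMOV change of variables \eqref{eq:Omegad_intro_0} coincides with the Klemm--Pandharipande change of variables \eqref{eq:kp_intro} for the local CY$_4$-fold $E_{Y(D)}$; combining this observation with the log-open equality \eqref{eq:logopen0} then forces
\beq \mathrm{DT}_d^{\rm num}(\mathsf{Q}(Y(D))) = \Big| \mathrm{KP}_{\kappa(d)}(E_{Y(D)}) + \sum_i \alpha_i \delta_{d,v_i} \Big|, \eeq
with $\alpha_i \in \{-1,0,1\}$ accounting for the one-boundary (single-disk) sector present on the open/LMOV side but absent from the enumeration of rational curves on the CY$_4$ side. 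Integrality of $\mathrm{KP}_{\kappa(d)}(E_{Y(D)})$ is then immediate from the positivity statement $\mathrm{DT}_d^{\rm num}(\mathsf{Q}(Y(D))) \in \bbN$ of \cref{thm:efimov}.

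The hard part will be the explicit verification, performed case by case along the twelve $l=2$ rows of \cref{tab:YopD}, that the combinatorially prescribed Panfil--Sulkowski quiver does reproduce the closed-form open-vertex expressions derived in \cref{sec:logopen} after applying \eqref{eq:Omegad_intro_0}, and that the residual discrepancy is indeed localised at the basic-disk degrees $v_i$ with coefficients $\alpha_i \in \{-1,0,1\}$. Matching the framing and orientation conventions of \cref{constr:YopD} with those of Panfil--Sulkowski and Kimura--Sulkowski, and organising the computation so that the ensuing identities between $q$-hypergeometric DT partition functions and topological-vertex sums become manifest (and in particular use \cref{prop:local-deform} to reduce deformation-equivalent pairs like $\delp_r(1,4-r)$ and $\delp_r(0,5-r)$ to a common $Y^{\rm op}(D)$), is the main bookkeeping task.
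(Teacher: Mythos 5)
Your proposal is correct and follows essentially the same route as the paper: Property~O plus Construction~\ref{constr:YopD} give a strip geometry $Y^{\rm op}(D)$, the Panfil--Sulkowski strips--quivers dictionary identifies its one-holed topological vertex amplitudes with a symmetric-quiver motivic DT series, the $q\to 1$ BPS change of variables together with the log-open/log-local equality \eqref{eq:logopen0} converts disk BPS invariants into $\mathrm{KP}$ invariants up to the shift $\sum_i\a_i\delta_{d,v_i}$, and Efimov's theorem yields integrality. The only difference is one of bookkeeping: where you envisage a case-by-case check over the $l=2$ rows of \cref{tab:YopD}, the paper argues uniformly from the closed product form \eqref{eq:Wsymm} of the one-holed amplitude, from which the adjacency matrix \eqref{eq:adjmat}, the vertex count $r+s+1=\chi(Y)-1$, and the shifts \eqref{eq:kpdtshift} are read off at once.
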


\begin{proof}

The statement is a direct consequence of \cref{thm:logopen} combined with the strips-quivers correspondence of \cite{Panfil:2018faz}, which we briefly review here in our context. 
Since $Y(D)$ is a $2$-component quasi-tame pair, it satisfies Property~O by the discussion of \cref{sec:logopen}. From \cref{lem:skewhooks} and the proof of \cref{thm:logopen} (see in particular \eqref{eq:Wdp3hooks}), we have
%
\beq
\frac{\cW_{(j_0)}(Y^{\rm op}(D))(Q,\hbar)}{\cW_{\emptyset}(Y^{\rm op}(D))(Q,\hbar)} = \frac{(-1)^{f j_0} q^{(f+1/2)\binom{j_0}{2}}}{[j_0]_q!}
\frac{\prod_{i=1}^r (\tilde Q^{(1)}_i;q)_{j_0}}{\prod_{k=1}^s (\tilde Q^{(2)}_{k};q)_{j_0}}
\label{eq:Wsymm}
\eeq
where $f$ is the integral shift of $\mathsf{f}$ from canonical framing,  $(r,s)$ are non-negative integers with $r+s+1=\chi(Y)-1$, and $\tilde Q_i=\prod_{m=1}^{r+s} Q_m^{a_{m,i}}$ with $a_{m,i} \in \{-1,0,1\}$, $i=1, \dots, r+s$. Elementary manipulations and use of the $q$-binomial theorem (see \cite[\S~4.1]{Panfil:2018faz}) show that
\bea
\psi_{Y(D)}(Q, \hbar, z) & \coloneqq &
\sum_{j_0 \geq 0} \frac{\cW_{(j_0)}(Y^{\rm op}(D))(Q,\hbar)}{\cW_{\emptyset}(Y^{\rm op}(D))(Q,\hbar)}  z^{j_0}  \nn \\
&=& \frac{\prod_{i=1}^r (\tilde Q_i;q)_{\infty}}{\prod_{k=1}^s (\tilde Q_{r+k};q)_{\infty}} \Phi_{C(Y(D))}(q^{(r-s-1)/2} z, \tilde Q^{(1)}_1, \dots, \tilde Q^{(1)}_r, q^{1/2} \tilde Q^{(2)}_{1}, \dots, q^{1/2} \tilde Q^{(2)}_{s}), \nn \\
\label{eq:psiYD}
\eea
where
\beq
C(Y(D)) = 
\l( 
\bary{c|ccc|cccc}
& \multicolumn{3}{c}{\overbrace{\rule{2cm}{0pt}}^{r}} & \multicolumn{3}{c}{\overbrace{\rule{2cm}{0pt}}^{s}} & \\
f+1 & 1 & \dots & 1 & 1 & \dots & 1 \\
\cline{1-7}
1 & 0 & \dots & 0 & 0 & \dots & 0 & \rdelim\}{3}{1em}[$r$] \\
\vdots & & \dots & & \vdots & \\
1 & 0 & \dots & 0 & 0 & \dots & 0 \\
\cline{1-7}
1 & 0 & \dots & 0 & 1 & \dots & 0 & \rdelim\}{3}{1em}[$s$]\\
\vdots & & \dots & & \vdots & \\
1 & 0 & \dots & 0 & 0 & \dots & 1 \\
\eary
\r)
\label{eq:adjmat}
\eeq
and moreover, the genus-zero limit of the logarithm of \eqref{eq:psiYD} is the generating function of disk invariants of $Y^{\rm op}(D)$ \cite{Aganagic:2003qj},
\bea
\lim_{\hbar \to 0} \hbar \log \psi_{Y(D)}(Q,\hbar, z) &=&  \lim_{\hbar \to 0} \hbar \cO(Y^{\rm op}(D))(Q,\hbar, \mathsf{x})\big|_{\mathsf{x}_{\vec\mu}=z^{j_0} \delta_{\vec \mu, (j_0)}} \nn \\
&=& \sum_{\beta} O_{0;j_1, \dots, j_{r+s}; (j_0)}(Y^{\rm op}(D)) z^{j_0} \prod_{i=1}^{r+s} Q_i^{j_i}\,.
\label{eq:psig0}
\eea
The matrix $C$ has non-negative off-diagonal entries, and $\Phi_C(q; x_1, \dots, x_{r+s+1})$ cannot therefore be immediately interpreted as a motivic quiver DT partition function. However, writing $\mathsf{Q}(Y(D))$ for the symmetric quiver with adjacency matrix $C(Y(D))$, we have \cite[App.~A]{Panfil:2018faz},
\bea
\Phi_{C(Y(D))}(q; x_1, \dots, x_{r+s+1})
&=& 
\prod_{\mathsf{d} \neq 0} \prod_{j \in \bbZ} \prod_{k \geq 0} \l(1-(-1)^j \mathsf{x}^\mathsf{d} q^{-k-(j+1)/2} \r)^{-\cE^{C(Y(D))}_{\mathsf{d}; j}} \nn \\
&=&
\Phi_{E_{\mathsf{Q}(Y(D))}}(q^{-1}; q^{-1/2} x_1, \dots, q^{-1/2} x_{r+s+1})\,.
\eea
The exponents $\cE^{C(Y(D))}_{\mathsf{d}; j}$ are then equal to the motivic DT invariants of $\mathsf{Q}(Y(D))$ up to sign. Furthermore, the numerical DT invariants also agree with the absolute value of $\cE^{C(Y(D)), \rm num}_{\mathsf{d}} \coloneqq  \sum_j (-1)^j \cE^{C(Y(D))}_{\mathsf{d}; j}$ \cite[App.~A]{Panfil:2018faz},
\beq
\mathrm{DT}^{\rm num}_{\mathsf{d}}(\mathsf{Q}(Y(D)))=\big|\cE^{C(Y(D)), \rm num}_{\mathsf{d}}\big|\,.
\eeq
For $j=(j_0, j_1, \dots, j_{r+s})$, define now the disk BPS invariants of $Y^{\rm op}(D)$ by
\beq 
O_{0, j_1, \dots, j_{r+s}; (j_0)}(Y^{\rm op}(D))
\coloneqq \sum_{k \mid \gcd(j_0, \dots, j_{r+s})} \frac{1}{k^2} \mathfrak{D}_{j/k}(Y^{\rm op}(D))\,.
\label{eq:diskbps}
\eeq
From \eqref{eq:Wsymm} and \eqref{eq:psig0}, we have that $\mathfrak{D}_{\tau(d)}(Y^{\rm op}(D))+\sum_i \a_i \delta_{d,v_i}=\cE^{C(Y(D)), \rm num}_{\mathsf{d}}$, where
\beq
\a_i= \l\{ 
\bary{cc}
0 & i=1, \\
-1 & i=2, \dots, r+1 \\
+1 & i=r+2, \dots, r+s+1
\eary
\r.
\label{eq:kpdtshift}
\eeq
and 
\beq 
\tau(d_1, \dots, d_{r+s+1})= \l(d_1, \sum_{m=1}^{r+s} a_{m,2} d_{m+1}, \dots,  \sum_{m=1}^{r+s} a_{m,r+s} d_{m+1}\r)\,.
\eeq 
But by \eqref{eq:logopen0}, $O_{j}(Y^{\rm op(D)})=N^{\rm loc}_{\iota(j)}(Y(D))$, and therefore $\mathfrak{D}_{j}(Y^{\rm op}(D))=\mathrm{KP}_{\iota(j)}(E_{Y(D)})$, from which the claim follows by setting $\kappa \coloneqq \iota \circ \tau$.

\end{proof}

\begin{table}[t]
    \centering
    \begin{tabular}{|c|c|}
    \hline
        $Y(D)$ & $\mathsf{Q}(Y(D))$  \\
        \hline\hline
        \makecell{\vspace{-.4cm} \\ $\bbP^2(1,4)$ \\}
 & \makecell{\vspace{-.2cm} \\ \includegraphics[]{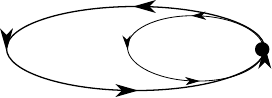}\\} \\ \hline
         $\mathbb{F}_0(2,2)$ &  \multirow{2}{*}{\includegraphics[]{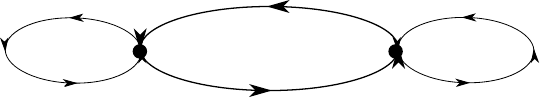}} \\ \cline{1-1}
         $\mathbb{F}_0(0,4)$ & \\ 
        \hline
        $\mathrm{dP}_1(1,3)$ & \multirow{2}{*}{\includegraphics[]{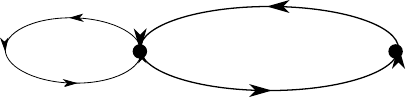}} \\ \cline{1-1}
        $\mathrm{dP}_1(0,4)$ & \\ \hline
        $\mathrm{dP}_2(1,2)$ & \multirow{2}{*}{\includegraphics[]{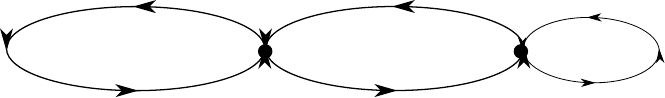}} \\ \cline{1-1}
        $\mathrm{dP}_2(0,3)$ & \\ \hline
        $\mathrm{dP}_3(1,1)$ & \multirow{2}{*}{\includegraphics[]{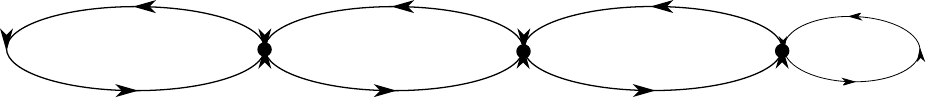}} \\ \cline{1-1}
        $\mathrm{dP}_3(0,2)$ & \\ \hline
    \end{tabular}
    \caption{Quivers for 2-component quasi-tame Looijenga pairs.}
    \label{tab:quivers}
\end{table}

\begin{rmk}
\cref{thm:kpdt}, combined with \cref{thm_log_local}, resembles previous correspondences identifying log~GW invariants to DT invariants of quivers, and in particular \cite{Bou18}, but it differs from them in a number of key respects: the quiver DT invariants here are identified with the (absolute value of the) BPS invariants of the local geometry, and therefore imply a finer integrality property of the log invariants via \eqref{eq:loglocal} and \eqref{eq:kp}. Furthermore, unlike in \cite{Bou18}, the motivic refinement is not expected to reconstruct the open Gromov--Witten count at higher genus, as the higher orders in $\hbar$ of \eqref{eq:psig0} include contributions of open stable maps with more than one boundary component. A separate discussion of the open BPS structure of the higher genus theory is the subject of the next Section.
\end{rmk}

\begin{example} Let $Y(D)=\bbP^2(1,4)$.  
In this case we have $r=s=0$, $f=1$,  and $\mathsf{Q}(\bbP^2(1,4))$ is the 2-loop quiver. Moreover, the identification of dimension vectors with curve degrees is simply the identity, $\kappa=\mathrm{id}$, and the integral shift in \eqref{eq:kpdt} and \eqref{eq:kpdtshift} vanishes, $\a_1=0$. Then, by \cref{thm:kpdt}, the absolute value of the KP invariants of $E_{\bbP^2(1,4)}$ gives the unrefined DT invariants of $\mathsf{Q}(\bbP^2(1,4))$. We can in fact check directly that $\mathrm{KP}_{d}(E_{\bbP^2(1,4)})=(-1)^d\mathrm{DT}^{\rm num}_d(\mathsf{Q}(\bbP^2(1,4)))$: according to \cite[Thm.~3.2]{MR2889742}, we have 
\beq \mathrm{DT}^{\rm num}_d(\mathsf{Q}(\bbP^2(1,4))) =\frac{(-1)^d}{d^2} \sum_{k|d} \mu\l(\frac{d}{k}\r)(-1)^k \binom{2k-1}{k-1}\,\eeq and the result follows from \eqref{eq:NlocP2} and the equality 
\beq \frac{1}{2} \binom{2k}{k}=\frac{1}{2} \frac{(2k)!}{(k!)^2}
= \frac{1}{2}\frac{2k}{k} \frac{(2k-1)!}{k!(k-1)!} =\binom{2k-1}{k-1}\,.\eeq 
\end{example}

\begin{rmk}[Non-quasi-tame pairs]
The condition in \cref{thm:kpdt} that $Y(D)$ is a 2-component quasi-tame pair is likely to be necessary. For example, for $Y(D)$ a non-quasi-tame pair, we do not expect that the result of the finite summation \eqref{eq:dP5} can be further simplified down to a form akin to \eqref{eq:localgw} as a ratio of products of factorials, unlike the case of the hypergeometric summations in the proof of \cref{prop:dp311}. A little experimentation shows that, writing $N_{0,d}^{\rm loc}(\delp_5(0,0))=\frac{m(d)}{n(d)}$ with $\gcd(m(d),n(d))=1$, the numerator $m(d)$ is  divisible by very large primes $\approx 10^7$ for low degrees $d_i \approx 10^1$ with $d_i \neq d_0$, $i>0$. This creates a tension with $m(d)$ being a product of factorials with arguments linear in $d_i$ with coefficients $\approx 10^1$, as those would be divisible by at most the largest prime in the range $\approx 10^1-10^2$. As generating functions of numerical DT invariants are always generalised hypergeometric functions \cite{Panfil:2018sis}, and their coefficients are therefore always products of ratios of factorials in the degrees, the KP/DT correspondence of \cref{thm:kpdt} is unlikely to extend to the non-quasi-tame setting.
\end{rmk}

\begin{rmk}[$l>2$] For $l$-components pairs with $l>2$, a correspondence between quivers and $(l-1)$-holed open GW partition functions has received some preliminary investigation in the context of the links-quivers correspondence \cites{Kucharski:2017ogk,Ekholm:2018eee} where open stable maps are considered with the same colouring by symmetric Young diagrams for all the connected components of the boundary. The general case of stable maps with arbitrary windings which is relevant for our purposes may however fall outside the remit of the open BPS/quiver DT correspondence. In particular, suppose that $\mathsf{Q}$ is a symmetric quiver such that $P_\mathsf{Q}(\a_1 x,\dots, \a_r x, \b_1 y, \dots, \b_2 y)=\sum_{m,n} x^m y^n \cW_{(m),(n)}(X,L_1\cup L_2, \mathsf{f}_1,\mathsf{f}_2)$ with $\a_i$, $\b_i\in \bbC[q]$ and framed toric special Lagrangians $L_1$, $L_2$ in a Calabi--Yau threefold $X$. The simplest instance is $X=\bbC^3$ and $L_1$, $L_2$ are framed toric Lagrangians on different legs: this arises e.g. by considering $\delp_1^{\rm op}(1,1,0)$ and $\bbF_0^{\rm op}(2,0,0)$. It is easy to check that the analogue of the semi-classical limit \eqref{eq:psig0} for the annulus generating function would be
\beq
\lim_{q\to 1} D^q_x D^q_y \log  P_\mathsf{Q}(\a_1 x,\dots, \a_r x, \b_1 y, \dots, \b_s y) = \sum_{j_1,j_2,\beta} x^{j_1} y^{j_2} O_{0; j_1,j_2}(X,L_1\cup L_2,\mathsf{f}_1,\mathsf{f}_2),
\label{eq:limannuli}
\eeq
where $D^q_x$ denotes the $q$-derivative w.r.t $x$. When $X=\bbC^3$, a natural guess in line with the disk case would be to take $\mathsf{Q}$ a quiver with two vertices, with dimension vectors in bijection with winding numbers along the homology circles in $L_1$ and $L_2$. However it is straightforward to verify from \eqref{eq:DTmot} that for $r+s=2$ the l.h.s. of \eqref{eq:limannuli} does not have a limit as $q\to 1$ unless $\mathsf{Q}$ is disconnected, in which case the limit is identically zero, and hence disagrees with the r.h.s.. Although this may not necessarily extend to quivers with higher number of vertices and finely tuned identifications of dimension vectors with winding degrees, it does suggest that a suitable generalisation of the correspondence might be required to encompass the counts of annuli as well.
\end{rmk}

\section{Higher genus BPS invariants}
\label{sec:higein}
For $Y(D)$ a (not necessarily tame) $l$-component Looijenga pair satisfying Property~O, we define
\bea
\Omega_{d}(Y(D))(q) 
& \coloneqq & 
[1]_q^{2} \l(\prod_{i=1}^{l-1} \frac{d\cdot D_i}{[d\cdot D_i]_q}\r)  \sum_{k | d}  \frac{\mu(k)}{k}
\mathsf{O}_{\iota^{-1}(d/k)}(Y^{\rm op}(D))(-\ri k \log q)\, ,
\label{eq:Omegad}
\eea 
and for $Y(D)$ an $l$-component pair, not necessarily satisfying Property O, we write
\beq
\Omega_{d}(Y(D))(q) 
 \coloneqq 
 [1]^2_q
 \left( \prod_{i=1}^{l} \frac{ 1 }{[ d \cdot D_i]_{q}} \right)
 \sum_{k | d}
 (-1)^{ d/k \cdot D + l} \,
[k]_{q}^{l-2} \,  k^{l-2} \, \mu(k) \,
\mathsf{N}_{d/k}^{\rm log}(Y(D))(-\ri k \log q)\, .
\label{eq:Omegad2}
\eeq
The compatibility of \eqref{eq:Omegad} and \eqref{eq:Omegad2} when $Y(D)$ satisfies both tameness and Property O follows from \cref{thm:logopen}. From \cref{tab:classif} and the discussion following \cref{def:propO}, any quasi-tame $l$-component Looijenga pair either satisfies Property~O, or it is tame, or both: in this setting we will take $\Omega_d(Y(D))(q)$ to be either of the applicable definitions \eqref{eq:Omegad} or \eqref{eq:Omegad2}. We further write simply $\Omega_{d}(Y(D))$ for the genus-zero limit $\Omega_{d}(Y(D))(1)$,
%
\bea
\Omega_{d}(Y(D))
& \coloneqq & \frac{1}{\prod_{i=1}^{l} (d \cdot D_i)}
 \sum_{k | d}(-1)^{\sum_{i=1}^{l}  d/k \cdot D_i+1} \frac{\mu(k)}{k^{4-2l}}
N_{0,d/k}^{\rm log}(Y(D)) \nn \\
& = & 
\label{eq:Omega0d}
\sum_{k | d} \frac{\mu(k) }{k^{4-l}}
O_{0,\iota^{-1}(d/k)}(Y^{\rm op}(D))  \\
& = & 
\sum_{k | d} \frac{ \mu(k)}{k^{4-l}}
N_{d/k}^{\rm loc}(Y(D))\,. \nn
\eea 
A priori we can only expect $\Omega_d(Y(D)) \in \bbQ$ and $\Omega_d(Y(D))(q) \in \bbQ(q^{1/2})$. By \eqref{eq:Omegad2} and \eqref{eq:Omega0d}, however, $\Omega_d(Y(D))$ and $\Omega_d(Y(D))(q)$ are amenable to a physical interpretation as Labastida--Mari\~no--Ooguri--Vafa (LMOV) partition functions \cites{Labastida:2000yw, Labastida:2000zp, Ooguri:1999bv,Marino:2001re}. These heuristically count BPS domain walls in an M-theory compactification on $Y^{\rm op}(D)$ (see in particular \cite[Eq.~2.10]{Marino:2001re}):
writing $\Omega_d(Y(D))(q)=\sum_j n_{d,j}(Y(D)) q^j$, the LMOV invariants $n_{d,j}(Y(D))$ would compute the net degeneracy of M2-branes with spin $j$ and magnetic and bulk charge specified by $d$, ending on an M5-brane wrapped around the framed toric Lagrangian $L$ in $Y^{\rm op}(D)=(X,L,\mathsf{f})$. From the vantage point of string theory, \eqref{eq:Omegad2} (resp. \eqref{eq:Omega0d}) are then expected to be integral Laurent polynomials (resp. integers: for $l=2$, since $\Omega_d(Y(D))=\mathrm{KP}_d(E_{Y(D)}) = \mathfrak{D}(Y^{\rm op}(D))$ by \eqref{eq:kp}, \eqref{eq:diskbps}, \eqref{eq:Omega0d} and \eqref{eq:logopen0}, this is implied by \cref{thm:kpdt}). The next Theorem shows that this is indeed the case.
\begin{thm}[The higher genus open BPS property]
Let $Y(D)$ be a quasi-tame Looijenga pair. Then
$\Omega_{d}(Y(D))(q) \in  q^{-\frac{\mathsf{g}_{Y(D)}(d)}{2}} \bbZ[q] $
for an integral quadratic polynomial $\mathsf{g}_{Y(D)}(d)$.
\label{thm:openbps}
\end{thm}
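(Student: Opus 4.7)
The plan is to exploit the explicit closed-form expressions for the log/open generating functions obtained earlier in the paper and reduce integrality to a standard cyclotomic/Möbius argument. The plan proceeds in four steps.

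First I would reduce to a handful of pairs of highest Picard rank. By the blow-up formulas \cref{prop:logblup} and \cref{prop:openblup}, both the log generating series $\mathsf{N}^{\log}_d(Y(D))(\hbar)$ and the open series $\mathsf{O}_{\iota^{-1}(d)}(Y^{\rm op}(D))(\hbar)$ are preserved under interior blow-up of $Y(D)$; deformation equivalence via \cref{prop:local-deform} likewise preserves the local geometry and hence (via \eqref{eq:Omegad}/\eqref{eq:Omegad2}) the left-hand side. This reduces the theorem to the cases $\delp_3(1,1)$ or $\delp_3(0,2)$ (for $l=2$), $\delp_3(0,0,0)$ (for $l=3$) and $\mathbb{F}_0(0,0,0,0)$ (for $l=4$), together with the non-tame quasi-tame two-component pairs, for which the open formulation \eqref{eq:Omegad} together with the topological vertex calculation of \cref{sec:disks} is always available.

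For $l=2$, I would run the argument underlying \cref{thm:kpdt} at the $q$-refined level. The open disk generating series \eqref{eq:Wsymm} is literally a $q$-hypergeometric function $\Phi_{C(Y(D))}$ with symmetric integer matrix $C(Y(D))$ as in \eqref{eq:adjmat}; consequently the LMOV series \eqref{eq:Omegad} equals, up to the framing sign and the shift by the integral vector $\alpha_i$ of \eqref{eq:kpdtshift}, the motivic generating function of a symmetric quiver without potential. Efimov's theorem then yields $\Omega_d(Y(D))(q)\in q^{-\mathsf{g}(d)/2}\mathbb{Z}[q]$, the half-integer shift $\mathsf{g}(d)$ being visibly quadratic in $d$ since the $q$-Pochhammer products in \eqref{eq:Wsymm} have total $q$-degree quadratic in the dimension vector.

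For $l\in\{3,4\}$, I would substitute the explicit formulas \eqref{eq:log_dp3_0_0_0} and \eqref{eq:fzerolog} into the definition \eqref{eq:Omegad2}. Using the tautology $[k]_q=[1]_{q^k}$, the $[1]_q^{-(l-2)}$ prefactor in \eqref{eq:GWexp} and the $[k]_q^{l-2}k^{l-2}$ in \eqref{eq:Omegad2} combine so that the bracket $\Omega_d(Y(D))(q)$ takes the shape
\begin{equation}
\Omega_d(Y(D))(q)=\frac{[1]_q^2}{\prod_{i=1}^l[d\cdot D_i]_q}\sum_{k\mid d}(-1)^{d/k\cdot D+l}\,k^{l-2}\,\mu(k)\,\mathsf{P}_{d/k}(q^k),
\end{equation}
where $\mathsf{P}_n(q)\in\mathbb{Z}[q^{\pm 1/2}]$ is a product of $q$-binomial coefficients in linear functions of $n$. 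Since each $q$-binomial is self-dual under $q\mapsto q^{-1}$ up to a monomial, the whole expression is symmetric in $q\leftrightarrow q^{-1}$ up to an overall $q^{-\mathsf{g}_{Y(D)}(d)/2}$ with $\mathsf{g}_{Y(D)}(d)$ quadratic, so after that normalisation it suffices to prove it is a Laurent polynomial with integer coefficients. Writing $[d\cdot D_i]_q=q^{-(d\cdot D_i-1)/2}\prod_{n>1,\,n\mid d\cdot D_i}\Phi_n(q)$, this in turn reduces to a vanishing check at each primitive $n$-th root of unity $\zeta_n$, $n>1$.

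The cyclotomic divisibility in the last paragraph is the main obstacle, and I would settle it by the standard Möbius-inversion device. Fix $n>1$ dividing some $d\cdot D_i$. For each $k\mid d$, write $k=k_1k_2$ with $k_1=\gcd(k,n)$. Specialising a $q$-binomial $\qbinom{a}{b}_{q^k}$ at $q=\zeta_n$ depends only on the residues of $a,b$ modulo $n/k_1$ (Lucas' $q$-analogue), so the contributions from $k$ in a fixed class modulo $n$ combine into a single term multiplied by $\sum_{k_2\mid d/k_1,\,\gcd(k_2,n)=1}\mu(k_1k_2)k_2^{l-2}$, which vanishes by Möbius inversion unless $n\mid k$; in that residual case the factor $k^{l-2}$, together with the $[1]_q^2$ in the numerator for $l=2$, provides exactly the cancellation needed against the pole of $[d\cdot D_i]_q^{-1}$ at $\zeta_n$. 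The most delicate bookkeeping occurs for the non-tame quasi-tame two-component pairs $\delp_r(0,5-r)$ and $\mathbb{F}_0(0,4)$, where \cref{conj:log_defo} is only proved at $q=1$: there the argument must instead be run on the open side via the quiver description of \cref{thm:kpdt} lifted to the motivic level, which is unconditional. Packaging these cases together with the tame ones completes the proof and identifies $\mathsf{g}_{Y(D)}(d)$ as the total $q$-degree shift of the relevant $q$-binomial product, manifestly a quadratic form with integer coefficients in $d$.
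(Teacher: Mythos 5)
Your reduction to the maximal-Picard-rank cases and the idea of substituting the closed $q$-binomial formulas and checking regularity at roots of unity is indeed the paper's strategy for $l=3,4$; but there are two genuine gaps. The main one is your $l=2$ step: you identify the LMOV series \eqref{eq:Omegad} with the motivic generating series of the symmetric quiver $\mathsf{Q}(Y(D))$ and invoke Efimov's theorem. That identification only holds in the genus-zero limit $q\to 1$ (this is exactly \cref{thm:kpdt}): the quiver series $\psi_{Y(D)}$ in \eqref{eq:psiYD} is built from the amplitudes $\cW_{(j_0)}$ coloured by symmetric \emph{representations}, and its higher-order terms in $\hbar$ mix in open maps with several boundary components, whereas $\Omega_d(Y(D))(q)$ is defined from the connected one-boundary winding-basis series $\mathsf{O}_{\iota^{-1}(d)}$. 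The paper explicitly warns (remark after \cref{thm:kpdt}) that the motivic refinement of the quiver is \emph{not} expected to reconstruct the higher-genus open count, so Efimov's theorem does not give $\Omega_d(Y(D))(q)\in q^{-\mathsf{g}/2}\bbZ[q]$; the paper instead proves the $l=2$ case (for $\delp_3(1,1)$, and hence $\delp_3(0,2)$ and all blow-downs) by a direct arithmetic analysis of \eqref{eq:omegad}. Your fallback for the non-tame quasi-tame pairs ("quiver description lifted to the motivic level") runs into the same obstruction; the paper handles them simply because they share the open geometry and hence the definition \eqref{eq:Omegad} with a tame partner.

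The second gap is in the cyclotomic step. The possible poles of $\Omega_d(q)$ on the unit circle are \emph{double} poles (e.g.\ from $[d_0]_q[d_1+d_2+d_3-d_0]_q$ for $\delp_3(1,1)$, or $[d_1]_q^2[d_2]_q^2$ for $\bbF_0$), so it is not enough that the Möbius-weighted divisor sum vanish at a primitive root of unity: one needs vanishing to order two. In the paper this is exactly the role of \cref{lem:binom0} (the $q$-derivative of $\qbinom{n}{m}_q$ vanishes at $\omega_d$ when $d\mid m\mid n$), combined with a careful trichotomy over divisors $k\mid\tilde d$ producing the pairwise cancellation, and a separate lemma ($[\gcd(n,m)]_q/[n+m]_q\,\qbinom{n+m}{m}_q\in\bbZ[q]$ up to a monomial) disposing of roots of unity that are not $\tilde d$-th roots. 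Your sketch addresses neither the order-two vanishing nor these other roots, and the cancellation mechanism you cite (the factor $k^{l-2}$, plus $[1]_q^2$) is vacuous for $l=2$ where $k^{l-2}=1$; for $l=3$ there is also the $k$-dependent factor $[\hat d/k^2]_{q^k}$ in the numerator whose near $k$-independence at the root of unity has to be checked before any Möbius cancellation can be applied. These points need to be supplied for the argument to close.
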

Clearly, from \eqref{eq:GWexp} and \eqref{eq:Omegad}-\eqref{eq:Omegad2}, we have
$\Omega_d(q)=\Omega_d(q^{-1})$, so \cref{thm:openbps} implies in particular
that $\Omega_d(q)$ is a Laurent polynomial truncating at $\cO(q^{\pm\mathsf{g}_{Y(D)}(d)/2})$.\\

To prove \cref{thm:openbps} we shall need the following two Lemmas. Let $\omega_d$ be a primitive $d^{\rm th}$ root of unity.

\begin{lem}[The $q$-Lucas theorem \cite{MR178267}] 
Let $n \geq m $ be non-negative integers. Then
\beq
\qbinom{n}{m}_{\omega_d} = \omega_d^{\frac{m(m-n)}{2}} \binom{\lfloor n/d \rfloor}{\lfloor m/d \rfloor} \qbinom{n-d\lfloor n/d \rfloor}{m-d \lfloor m/d \rfloor}_{\omega_d}. 
\eeq
In particular, if $d \mid m$ and $d \mid n$,  $\qbinom{n}{m}_{\omega_d} = \omega_d^{\frac{m(m-n)}{2}} \binom{ n/d }{m/d}$.
\label{lem:qlucas}
\end{lem}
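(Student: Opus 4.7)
The plan is to deduce the lemma from the classical $q$-binomial theorem of Gauss by evaluating both sides of the generating function identity
\beq
\prod_{j=0}^{n-1}\bigl(1+xq^{j}\bigr)=\sum_{m=0}^{n} q^{m(m-1)/2}\qbinom{n}{m}_{q}^{\mathrm{std}} x^{m},
\qquad \qbinom{n}{m}_q^{\mathrm{std}}\coloneqq \prod_{i=1}^{m}\frac{1-q^{n-i+1}}{1-q^i},
\eeq
at $q=\omega_d$, and then translating the result from the unsymmetric $q$-binomial $\qbinom{n}{m}_q^{\mathrm{std}}$ to its symmetric counterpart $\qbinom{n}{m}_q$ via the elementary relation
\beq
\qbinom{n}{m}_q = q^{-m(n-m)/2}\,\qbinom{n}{m}_q^{\mathrm{std}}.
\eeq

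First I would establish the cyclotomic factorisation that lies at the heart of $q$-Lucas: the identity $\prod_{i=0}^{d-1}(t-\omega_d^i)=t^d-1$ applied with $t=-1/x$ gives
\beq
\prod_{i=0}^{d-1}\bigl(1+x\,\omega_d^{i}\bigr)=1-(-x)^{d}.
\eeq
Writing $n=ad+r$ with $0\leq r<d$ and using $\omega_d^{ad+i}=\omega_d^i$, the $q=\omega_d$ specialisation of the Gauss product factors as
\beq
\prod_{j=0}^{n-1}\bigl(1+x\,\omega_d^{j}\bigr)=\bigl(1-(-x)^d\bigr)^{a}\,\prod_{j=0}^{r-1}\bigl(1+x\,\omega_d^{j}\bigr).
\eeq
Expanding the first factor binomially, extracting the coefficient of $x^{m}$ with $m=bd+s$, $0\leq s<d$, and matching with the right-hand side of Gauss's identity yields the classical (unsymmetric) $q$-Lucas congruence
\beq
\omega_d^{m(m-1)/2}\,\qbinom{n}{m}_{\omega_d}^{\mathrm{std}} = (-1)^{b(d-1)}\binom{a}{b}\,\omega_d^{s(s-1)/2}\,\qbinom{r}{s}_{\omega_d}^{\mathrm{std}}.
\eeq

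The final step is to convert both sides to the symmetric normalisation. Multiplying through by $\omega_d^{-m(n-1)/2}$ on both sides, and repeatedly using the parity identity $m(m-1)-m(n-1) \equiv -m(n-m)\pmod{2d}$ together with the analogous one for $(r,s)$, one collects all the resulting phases into a single exponent of $\omega_d$ which, after a short arithmetic simplification using $n\equiv r$, $m\equiv s\pmod d$, reduces modulo $d$ to $m(m-n)/2$, yielding exactly the stated formula. The particular case $d\mid n$, $d\mid m$ is immediate: then $r=s=0$, so $\qbinom{r}{s}_{\omega_d}=1$, and the exponent $m(m-n)/2=d^{2}b(b-a)/2$ is an integer multiple of $d$, so $\omega_d^{m(m-n)/2}=1$, recovering the well-known fact $\qbinom{n}{m}_{\omega_d}=\binom{n/d}{m/d}$.

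The principal delicate point is the bookkeeping of the half-integer exponents $m(m-n)/2$, $s(s-r)/2$ at $\omega_d$: when $m(n-m)$ is odd one must fix a square root $\omega_d^{1/2}=\omega_{2d}$ and verify that the final identity is independent of this choice, while for even $d$ the sign $(-1)^{b(d-1)}$ arising from $\prod_i(1+x\omega_d^i)=1-(-x)^d$ needs to be absorbed into the phase factor $\omega_d^{m(m-n)/2}$ via the congruence argument above. This parity tracking is the only genuinely non-routine part of the argument; the rest is a direct generating-function computation.
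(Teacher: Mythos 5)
The paper does not prove this Lemma at all: its ``proof'' is a pointer to the literature (``See e.g.\ [Thm 2.2] of the cited reference''), so your generating-function argument is not competing with anything in the text. Your route --- Gauss's identity $\prod_{j=0}^{n-1}(1+xq^j)=\sum_m q^{m(m-1)/2}\qbinom{n}{m}^{\mathrm{std}}_q x^m$, the cyclotomic factorisation $\prod_{i=0}^{d-1}(1+x\omega_d^i)=1-(-x)^d$, and coefficient extraction --- is the standard proof of $q$-Lucas, and everything up to and including your unsymmetric identity $\omega_d^{m(m-1)/2}\qbinom{n}{m}^{\mathrm{std}}_{\omega_d}=(-1)^{b(d-1)}\binom{a}{b}\omega_d^{s(s-1)/2}\qbinom{r}{s}^{\mathrm{std}}_{\omega_d}$ is correct, as is the conversion factor $\qbinom{n}{m}_q=q^{-m(n-m)/2}\qbinom{n}{m}^{\mathrm{std}}_q$.

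The gap is in your last step, where you assert that the collected phases ``reduce modulo $d$ to $m(m-n)/2$.'' They do not. Writing $n=ad+r$, $m=bd+s$, the conversion gives
\beq
\qbinom{n}{m}_{\omega_d}=\omega_d^{\frac{m(m-n)+s(r-s)}{2}}\binom{a}{b}\qbinom{r}{s}_{\omega_d},
\eeq
and the extra factor $\omega_d^{s(r-s)/2}$ is not identically $1$: one has $m(m-n)\equiv s(s-r)\pmod{d}$ but only $\pmod{d}$, not $\pmod{2d}$, so the half-exponents do not cancel. A concrete counterexample to the formula as stated (and hence to your claimed simplification) is $n=5$, $m=4$, $d=3$: the left side is $[5]_q/[1]_q\big|_{q=\omega_3}=1+2(\omega_3+\omega_3^2)=-1$, while $\omega_3^{m(m-n)/2}\binom{1}{1}\qbinom{2}{1}_{\omega_3}=\omega_3\,(\omega_3^{1/2}+\omega_3^{-1/2})$ equals $\pm\omega_3$ for either branch of the square root, never $-1$; the corrected exponent $\frac{m(m-n)+s(r-s)}{2}=-\tfrac32$ does give $-1$. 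So the defect you are trying to argue away is actually inherited from the Lemma's statement, which omits $\omega_d^{s(r-s)/2}$. This is harmless for the paper: the only instances used (in Lemma \ref{lem:binom0} and Theorem \ref{thm:openbps}) are either the case $d\mid m$, $d\mid n$, where $r=s=0$ and your argument closes cleanly (and $\omega_d^{m(m-n)/2}=\omega_d^{b(b-a)d^2/2}$ is well defined with a consistent branch), or cases where the reduced binomial $\qbinom{r}{s}_{\omega_d}$ vanishes outright. You should either restrict your final claim to those cases or carry the extra factor $\omega_d^{s(r-s)/2}$ through and note the discrepancy with the statement.
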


\begin{proof}
See e.g. \cite[Thm 2.2]{MR1176155} for a proof.
\end{proof}

\begin{lem}
Let $d \mid m \mid n\in \bbZ^+$. Then
$\de_q \qbinom{n}{m}_{q}\big|_{q=\omega_d} = 0$.
\label{lem:binom0}
\end{lem}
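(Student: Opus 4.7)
The plan is to pass from the symmetric to the standard polynomial $q$-binomial. Writing $\qbinom{n}{m}_q = q^{-m(n-m)/2} P(q)$ with $P(q) \coloneqq \prod_{k=1}^{m} (1-q^{n-m+k})/(1-q^k)$, the desired vanishing $\de_q \qbinom{n}{m}_q|_{q=\omega_d}=0$ is equivalent to the identity
\[
\omega_d\, P'(\omega_d) \;=\; \tfrac{m(n-m)}{2}\, P(\omega_d).
\]
The value $P(\omega_d) = \binom{n/d}{m/d}$ is nonzero (and is consistent with Lemma 8.1), so the task reduces to evaluating $P'(\omega_d)/P(\omega_d)$.

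Setting $m=bd$ and $n=bcd$ (permitted since $d \mid m \mid n$), the hypothesis $d \mid n-m$ implies that in every factor of $P(q)$ the numerator exponent is a multiple of $d$ if and only if the denominator exponent is. I would therefore split $P = A\cdot B$ into
\[
A(q) \coloneqq \prod_{l=1}^{b} \frac{1-q^{d(b(c-1)+l)}}{1-q^{dl}}, \qquad
B(q) \coloneqq \prod_{l=0}^{b-1}\prod_{r=1}^{d-1} \frac{1-q^{d(b(c-1)+l)+r}}{1-q^{dl+r}},
\]
so that $A(q) = \tilde A(q^d)$ is a polynomial in $u = q^d$, while $B$ is regular at $\omega_d$ with $B(\omega_d)=1$ (each factor specialises to $(1-\omega_d^r)/(1-\omega_d^r) = 1$). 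The two log-derivatives can then be computed separately: for $B$, direct differentiation combined with the standard identity $\sum_{r=1}^{d-1}\omega_d^r/(1-\omega_d^r) = -(d-1)/2$ yields $\omega_d B'(\omega_d)/B(\omega_d) = d(d-1)b^2(c-1)/2$; for $A$, the chain rule gives $\omega_d A'(\omega_d)/A(\omega_d) = d\, u\tilde A'(u)/\tilde A(u)|_{u=1}$, and a short Taylor expansion of the log-derivative of $\tilde A$ at $u=1$ produces the finite value $b^2(c-1)/2$.

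Adding the two contributions,
\[
\omega_d P'(\omega_d)/P(\omega_d) \;=\; \tfrac{d b^2(c-1)}{2} + \tfrac{d(d-1)b^2(c-1)}{2} \;=\; \tfrac{d^2 b^2(c-1)}{2} \;=\; \tfrac{m(n-m)}{2},
\]
which is precisely the required identity. The only technical subtlety lies in the Taylor analysis for $\tilde A$: each term $l u^{l-1}/(1-u^l)$ in the log-derivative sum has a simple pole at $u=1$, but since numerator and denominator factors are paired by the constant exponent shift $b(c-1)d$, the polar parts $1/(u-1)$ cancel and the residual finite parts combine cleanly to $b^2(c-1)/2$. I do not foresee any other obstacle.
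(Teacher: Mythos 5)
Your proof is correct, and it takes a genuinely different route from the paper's. The paper never computes a logarithmic derivative: it rescales to $q^{m(n+1)/2}\qbinom{n}{m}_q=e_m(q,\dots,q^n)$ via the Cauchy binomial theorem, differentiates the product $\prod_{i=1}^n(1+tq^i)$ in $q$, extracts the $\mathcal{O}(t^m)$ coefficient at $q=\omega_d$ using the $q$-Lucas theorem together with the vanishing $\qbinom{n}{i}_{\omega_d}=0$ for $d\nmid i$, and then reads off $\de_q\qbinom{n}{m}_q\big|_{\omega_d}=0$ by comparing with the product-rule expansion of $\de_q\big(q^{m(n+1)/2}\qbinom{n}{m}_q\big)$. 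You instead strip off the monomial $q^{-m(n-m)/2}$, reduce the claim to $\omega_d P'(\omega_d)=\tfrac{m(n-m)}{2}P(\omega_d)$ for the standard Gaussian binomial $P$, and evaluate the logarithmic derivative directly after splitting $P=A\cdot B$ into the $d$-divisible part $A(q)=\tilde A(q^d)$, with $\tilde A(u)$ the Gaussian binomial $\binom{bc}{b}_u$, and the complementary part $B$ with $B(\omega_d)=1$; the root-of-unity sum $\sum_{r=1}^{d-1}\omega_d^r/(1-\omega_d^r)=-(d-1)/2$ and the pole cancellation at $u=1$ then give $\tfrac{d(d-1)b^2(c-1)}{2}+\tfrac{d\,b^2(c-1)}{2}=\tfrac{m(n-m)}{2}$, exactly as required. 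I checked both evaluations and they are right (the value $u\tilde A'(u)/\tilde A(u)\big|_{u=1}=b^2(c-1)/2$ also follows at once from the palindromic symmetry of the coefficients of $\tilde A$). Your argument is arguably more self-contained: it bypasses the elementary-symmetric-function/generating-function detour and needs $q$-Lucas only to confirm $P(\omega_d)\neq 0$, which your factorisation already gives for free as $P(\omega_d)=A(\omega_d)B(\omega_d)=\binom{bc}{b}$; it also yields the sharper statement that the logarithmic derivative of the Gaussian binomial at $\omega_d$ equals $m(n-m)/(2\omega_d)$. One cosmetic slip: inside $\tilde A(u)$ the constant shift pairing numerator and denominator exponents is $b(c-1)$, not $b(c-1)d$ (the latter is the shift of the $q$-exponents before substituting $u=q^d$); your final value $b^2(c-1)/2$ is nonetheless the correct one.
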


\begin{proof}
For every $i<n$ with $d \nmid i$ we have $\qbinom{n}{i}_{\omega_d}=0$,
since then
\beq
\qbinom{n-d\lfloor n/d \rfloor}{i-d \lfloor i/d \rfloor}_{\omega_d}=\qbinom{0}{i~\mathrm{mod}~d}_{\omega_d} = 0
\label{eq:qbinvan}
\eeq
The Cauchy binomial theorem,
\beq
\sum_{m=0}^{n} t^m q^{m(n+1)/2} \qbinom{n}{m}_q = \prod_{i=1}^n (1+ t q^i)\,,
\label{eq:cauchybin}
\eeq
implies that
\beq
q^{m(n+1)/2} \qbinom{n}{m}_q =  e_m(q, \dots, q^n)\,,
\label{eq:binelpol}
\eeq
where $e_j(x_1, \dots, x_n)$ is the $j^{\rm th}$ elementary symmetric polynomials in $n$~variables. 
We differentiate \eqref{eq:binelpol} and evaluate at $q=\omega_d$, where now $d \mid m \mid n$. Write $n=a b d$, $m=b d$ for $a, b \in \bbZ^+$. From \eqref{eq:cauchybin} we find
\bea
\de_q \prod_{i=1}^n (1+ t q^i) &=& \prod_{i=1}^n (1+ t q^i) \l( \sum_{j=1}^{n} \frac{j t q^{j-1}}{1+t q^j} \r) \nn \\
&=& \sum_{i=0}^n t^i e_i(q, \dots, q^n) \cdot t \sum_{j=1}^n \sum_{k=0}^\infty j (-t)^{k} q^{k j+j-1}\,.
\eea
Let us now evaluate at $q=\omega_d$ and take the $\mathcal{O}(t^m)$ coefficient on both sides. We have
\bea
\de_q e_m(q, \dots, q^n)\big|_{q=\omega_d} &=& [t^m]  \sum_{i=0}^n t^i e_i(\omega_d, \dots, \omega_d^n) \cdot t \sum_{j=1}^n \sum_{k=0}^\infty j (-t)^{k} \omega_d^{k j+j-1} \nn \\
 &=& [t^{b d}]  \sum_{i=0}^{a b} t^{d i} \omega_{d}^{i d (n+1)/2} \omega_d^{i d (i d-n)/2} \binom{a b}{i}  \cdot t \sum_{j=1}^n \sum_{k=0}^\infty j (-t)^{k} \omega_d^{k j+j-1} \nn \\
&=& \sum_{i=0}^{b -1} \omega_{d}^{i d(d+1)/2} \binom{a b}{i}   \sum_{j=1}^{a b d} j (-1)^{bd-1-id} \omega_d^{b d j-1-i d j} \nn \\
&=&
 (-1)^{m+1} \frac{n(n+1)}{2 \omega_d} \sum_{i=0}^{b-1} (-1)^{i} \binom{a b}{i}   \nn \\
&=& \frac{(-1)^{b+m} n(n+1)}{2 a\omega_d} \binom{a b}{b},
\label{eq:emrof1}
\eea
where we have used \eqref{eq:qbinvan} and \cref{lem:qlucas}. On the other hand,
\bea
\frac{\de}{\de q} q^{m(n+1)/2} \qbinom{n}{m}_q \bigg|_{q=\omega_d} &=& \frac{m(n+1)}{2 \omega_d} \omega_d^{m(m+1)/2} \binom{ab}{b} + \omega_d^{m(n+1)/2} \de_q  \qbinom{n}{m}_q \bigg|_{q=\omega_d} \nn \\
&=& \frac{m(n+1)}{2 \omega_d} (-1)^{b+m} \binom{ab}{b}+ \omega_d^{m(n+1)/2} \de_q  \qbinom{n}{m}_q \bigg|_{q=\omega_d} \,,
\label{eq:binomrof1} 
\eea
where in tracking down the last sign factor we have been mindful that $(-1)^{b m}=(-1)^{m}$ since $b \mid m$. The claim then follows by equating \eqref{eq:emrof1} to \eqref{eq:binomrof1}.
\end{proof}

\begin{proof}[Proof of \cref{thm:openbps}] We break up the proof of the Theorem by considering each value of $l$ separately.
\begin{description}[leftmargin=*]
\item[$l=2$] it suffices to prove the theorem in the case $Y(D)=\mathrm{dP}_3(1,1)$, since $\Omega_d(\mathrm{dP}_3(1,1))=\Omega_d(\mathrm{dP}_3(0,2))$ from \eqref{eq:Omegad} and the discussion of \cref{sec:disks}, and all other cases are then recovered from the blow-up formulas of \cref{prop:logblup,prop:openblup}. Let $\tilde d \coloneqq \mathrm{gcd}(d_0, d_1, d_2, d_3)$.
  We first plug \eqref{eq:dP311log} into \eqref{eq:Omegad},
\bea
 \Omega_{d}(\mathrm{dP}_3(1,1))(q) &=& [1]_q^2 \sum_{k \mid \tilde d} \mu(k) \frac{(-1)^{(d_1+d_2+d_3)/k}}{[d_0/k]_{q^k} [(d_1+d_2+d_3-d_0)/k]_{q^k}}
\Theta_{d/k}(q^k)
, \nn \\ &= &
  \frac{[1]_q^2}{[d_0]_{q} [d_1+d_2+d_3-d_0]_{q}} \sum_{k \mid \tilde d} \mu(k) (-1)^{(d_1+d_2+d_3)/k} \Theta_{d/k}(q^k),
\label{eq:omegad}
\eea
where
\beq
\Theta_{d}(q) \coloneqq
 \qbinom{d_0}{d_1}_{q}  \qbinom{d_1}{d_0-d_2}_{q}  \qbinom{d_1+d_2+d_3-d_0}{d_1}_{q}  \qbinom{d_1}{d_0-d_3}_{q}.
\eeq
It is immediate to verify that $ \Omega_{d}(\mathrm{dP}_3(1,1))(q)\in q^{-\frac{\mathsf{g}_{\mathrm{dP}_3(1,1)}(d)}{2}} \bbZ[[q]]$, with
\beq
\mathsf{g}_{\mathrm{dP}_3(1,1)}(d)=2 \left(d_1+d_2+d_3-d_0\right) d_0-d_1^2-d_2^2-d_3^2-d_1-d_2-d_3+2
\eeq
since $\qbinom{n}{m}_q \in q^{-m(n-m)/2} \bbZ[q]$, $1/[n]_q \in q^{n/2} \bbZ[[q]]$,
$[m]_q \in q^{-m/2} \bbZ[[q]]$  as formal Laurent series at $q=0$ with truncating principal part for any positive integers $n,m$.
Furthermore, away from $q=0,\infty$, 
$ \Omega_{d}(\mathrm{dP}_3(1,1))(q)\in \bbQ(q^{1/2})$ is a rational function of $q^{1/2}$ with at worst double poles possibly at the zeroes of $[d_0]_{q} [d_1+d_2+d_3-d_0]_{q}$, namely
$q=\omega_{d_0}^j$, $j=1,\dots, d_0-1$, and $q=\omega_{d_1+d_2+d_3-d_0}^j$, $j=1,\dots, d_1+d_2+d_3-d_0-1$. We shall now prove that $ \Omega_{d}(\mathrm{dP}_3(1,1))(q)$ is in fact regular on the unit circle.

First off, upon expanding all $q$-analogues in \eqref{eq:omegad} in cyclotomic polynomials, 
\beq [n]_q = \prod_{d \mid n}\Phi_d(q)\,, \eeq
it is straightforward to check that \cite[Lemma 5.2]{MR3128457}
\beq
\frac{[\mathrm{gcd}(n,m)]_q}{[n+m]_q} \qbinom{n+m}{m}_q \in  q^{\frac{n+m-n m - \mathrm{gcd}(n,m)}{2}} \bbZ[q]\,,
\eeq
which implies that $ \Omega_{d}(\mathrm{dP}_3(1,1))(q)$ is regular on the unit circle outside of $\{\omega_{\tilde d}^j \}_{j=0}^{\tilde d}$, where we recall that $\tilde d \coloneqq \mathrm{gcd}(d_0, d_1, d_2, d_3)$.
Let now $ \widetilde{\Omega}_{d}(\mathrm{dP}_3(1,1))(q) \coloneqq \frac{[d_0]_q [d_1+d_2+d_3-d_0]_q}{[1]_q^2} \Omega_d$ and $\tilde d_i = d_i/\tilde d$. From \cref{lem:qlucas}, we have
\bea
\Theta_{d/k}\big(\omega_{\tilde d}^{k j}\big) &=&
(-1)^{ (d_1+d_2+d_3) j/k}
\binom{\tilde d_1 \epsilon_{k,j} }{\tilde d_2 \epsilon_{k,j}} \binom{\tilde d_1 \epsilon_{k,j}}{ (\tilde d_0- \tilde d_2) \epsilon_{k,j}}
\nn \\ & & \binom{(\tilde d_1+ \tilde d_2+\tilde d_3-\tilde d_0) \epsilon_{k,j}}{\tilde d_1 \epsilon_{k,j}}
\binom{\tilde d_1 \epsilon_{k,j}}{(\tilde d_0-\tilde d_3) \epsilon_{k,j}} \,.
\eea 
where $\epsilon_{k,j}=\mathrm{gcd}(\tilde d/k,j)$. 
Then
\bea
\widetilde{\Omega}_{\tilde d}(\mathrm{dP}_3(1,1))(\omega_{\tilde d}^j) &=& \sum_{k
    \mid \tilde d} \mu\Big(\frac{\tilde d}{k}\Big) (-1)^{ (\tilde d_1+\tilde d_2+\tilde d_3) k (j+1)}
\binom{\tilde d_1 \epsilon_{\tilde d/k,j}}{\tilde d_2 \epsilon_{\tilde d/k,j}}  \binom{\tilde d_1 \epsilon_{\tilde d/k,j}}{(\tilde d_0-\tilde d_2) \epsilon_{\tilde d/k,j}} 
\nn \\ 
& &
\binom{(\tilde d_1+ \tilde d_2+\tilde d_3-\tilde d_0) \epsilon_{\tilde d/k,j}}{\tilde d_1 \epsilon_{\tilde d/k,j}}
\binom{\tilde d_1 \epsilon_{\tilde d/k,j}}{(\tilde d_0-\tilde d_3) \epsilon_{\tilde d/k,j}} \,.
\label{eq:divsumrootun}
\eea
Consider first $\tilde d \neq 1$ and write $\nu_p(n)$, $\mathrm{rad}(n)$ for, respectively, the $p$-adic valuation and the radical of  $n \in \bbZ^+$.  Let $k | \tilde d$ and suppose w.l.o.g. that $\tilde d/k$ has no repeated prime factors, $\tilde d/k=\mathrm{rad}(\tilde d/k)$. 
Then, for $\omega_{\tilde d} \neq 1$, the
  following trichotomy holds:
  \begin{itemize}
\item $\tilde d/k \nmid j$: there exists $p'$ prime with $p' \mid \tilde d/k$, $p'\nmid j$. Let 
      $k' \coloneqq k p'$. Then
$k' \mid \tilde d$, $\mathrm{gcd}(k',j)=\mathrm{gcd}(k,j)$, 
$\mu(\tilde d/k')=-\mu(\tilde d/k)$. Moreover $(-1)^{k' (j+1)}=(-1)^{k (j+1)}$, which is obvious when $p'$ is odd, 
and it also holds when $p'=2$ since in that case $j$ must be odd.
Then the contributions from $k'$ and $k$ to the sum
\eqref{eq:divsumrootun} cancel each
other.
\item 
$\tilde d/k \mid j$ and there exists $p'<\tilde d$  such that $p' \mid d$ and $p' \nmid j$. In this case we have $p' \nmid \tilde d/k$, $p' \mid k$. Let
  $k' \coloneqq k/p'$. Then as before $\mu(\tilde d/k) = -\mu(\tilde d/k')$,
  $(-1)^{k'(j+1)}=(-1)^{k'(j+1)}$ and $\mathrm{gcd}(k',j)=\mathrm{gcd}(k,j)$,
  and the summand corresponding to $k'$ has opposite sign to the one
  corresponding to $k$ in \eqref{eq:divsumrootun}.
\item 
$\tilde d/k \mid j$ and $\tilde d$ has no prime factor $p' \nmid j$. Suppose for simplicity that $\mathrm{rad}(j)/\mathrm{rad}(\tilde d)$ is odd, the even case being essentially identical. Then $\eqref{eq:divsumrootun}$ is unchanged upon replacing $j=:\prod_{p \mid j} p^{\nu_p(j)} $ with $\prod_{p \mid j, p\mid \tilde d} p^{\nu_p(j)}$, so we may assume that $\mathrm{rad}(j)=\mathrm{rad}(\tilde d)$. Let $p'$ be such that $\nu_{p'}(\tilde d) > \nu_{p'}(j)$ and let $k' \coloneqq k/p'$. Then once again the contributions of $k$ and $k'$ to \eqref{eq:divsumrootun} cancel each other.
  \end{itemize}
All in all, the above shows that $\widetilde{\Omega}_d(\mathrm{dP}_3(1,1))$ vanishes at
$\omega_{\tilde d}^j$ for all $\tilde d>1$, $j=1, \dots, \tilde d-1$. But by \cref{lem:binom0} these are all double zeroes, and therefore
$\Omega_d(\mathrm{dP}_3(1,1))$ is regular therein. Moreover, $\Omega_d(\mathrm{dP}_3(1,1))$ is regular by construction at $q=1$, where its value is given by replacing all $q$-expressions in \eqref{eq:omegad} by their classical counterparts. Hence $\Omega_d(\mathrm{dP}_3(1,1))
\in \bbQ[q^{\pm 1/2}]$ is a rational Laurent polynomial; but we also know that
$\Omega(\mathrm{dP}_3(1,1))_d \in q^{-\mathsf{g}_{\mathrm{dP}_3(1,1)}(d)/2} \bbZ[[q]]$ is an integral Laurent series,
which thus truncates at $\cO\big(q^{\mathsf{g}_{\mathrm{dP}_3(1,1)}(d)/2}\big)$. The statement of the theorem follows.\\

\item[$l=3$] as before, we prove the statement for $Y(D)=\mathrm{dP}_3(0,0,0)$ and recover all $3$-component pairs by restriction in the degrees.
Let $\tilde d \coloneqq \mathrm{gcd}(d_0, d_1, d_2, d_3)$ and $\hat d \coloneqq d_0^2-d_0 (d_1+d_2+d_3)+d_1 d_2+d_1 d_3+d_2 d_3$.  From \eqref{eq:log_dp3_0_0_0}
\bea
 \Omega_d(\mathrm{dP}_3(0,0,0))(q) &=& [1]_q^2 \sum_{k \mid \tilde d} \mu(k) k  \frac{(-1)^{(d_0+d_1+d_2)/k+1}  [\frac{\hat d}{k^2}]_{q^k}}{[d_1/k]_q [d_2/k]_q [d_3/k]_{q^k}} 
\Xi_{d/k}(q^k) \,,
\label{eq:omega2d}
\eea
where
\beq
\Xi_{d}(q) \coloneqq
 \qbinom{d_1}{d_0-d_2}_{q}  \qbinom{d_2}{d_0-d_3}_{q}  \qbinom{d_3}{d_0-d_1}_{q} \,.
\eeq
Outside $q=0, \infty$, $\Omega_d(\mathrm{dP}_3(0,0,0))(q)$ has at worst double poles at $q=\omega_d^j$ only; also it is verified directly that $q^{\mathsf{g}_{\mathrm{dP}_3(0,0,0)}(d)/2}\Omega_d$ has a Taylor expansion at $q=0$ with integer coefficients, where
\beq
\mathsf{g}_{\mathrm{dP}_3(0,0,0)}(d)=\mathsf{g}_{\mathrm{dP}_3(1,1)}(d).
\eeq
For $q=1$, the ratios of $q$-numbers in \eqref{eq:omega2d} limits to the corresponding classical counterparts, so $\Omega_d(\mathrm{dP}_3(0,0,0))(1)$ is well-defined. Suppose then $q=\omega_{\tilde d}^j \neq 1$. We have that
\bea
 \frac{[\frac{\hat d}{k^2}]_{q^k}}{[d_1/k]_{q^k} [d_2/k]_{q^k} [d_3/k]_{q^k}}
&=&
\frac{\widehat{d}}{k d_1 d_2 d_3} \l[ \frac{\omega_{\tilde d}^{2 j}}{ \big(q-\omega_{\tilde d}^{j}\big)^2}+\frac{1}{ q-\omega_{\tilde d}^{j}} +\cO(1)\r] \,.
\eea
%
This is nearly $k$-independent, save for the factor of $k$ that cancels the one present in the summand of \eqref{eq:omega2d}. By the same arguments of the previous point, the resulting divisor sum $$ \sum_{k \mid \tilde d} \mu(k) (-1)^{(d_0+d_1+d_2)/k+1} \Xi_{d/k}(q^k)$$ vanishes quadratically at $\omega_{\tilde d}^j$, and therefore $\Omega_d(\mathrm{dP}_3(0,0,0))(q)$ is regular on the unit circle, concluding the proof.\\

\item[$l=4$]
this consists of the single case $Y(D)=\bbF_0(0,0,0,0)$. Let $\tilde d \coloneqq \mathrm{gcd}(d_1, d_2)$. We have, from \eqref{eq:f03hFfin},
that
\bea
 \Omega_d(\bbF_0(0,0,0,0))(q) &=& \frac{[1]_q^2}{[d_1]_q^2 [d_2]_q^2} \sum_{k \mid \tilde d} \mu(k) k^2 [d_1 d_2/k^2]_{q^{k}}^2 \,.
\label{eq:omega3d}
\eea
In this case we have
\beq
\mathsf{g}_{\bbF_0(0,0,0,0)}(d)=2(d_1d_2-d_1- d_2+1)\,.
\eeq
As before, $\Omega_d(\bbF_0(0,0,0,0))(q)$ is a rational function with an integral Taylor-Laurent expansion at $q=0$, order $\mathsf{g}_{\bbF_0(0,0,0,0)}(d)/2$ singularities at $q=0,\infty$ and possibly double poles at $q=\omega_{\tilde d}^j$. Expanding \eqref{eq:omega3d} at $\omega_{\tilde d}^j$ yields
\bea
 \Omega_d(\bbF_0(0,0,0,0))(q) &=& \sum_{k \mid d} \mu (k)\l[
\frac{\omega _{\tilde d}^j \left(\omega _{\tilde d}^j-1\right){}^2}{\left(q-\omega _{\tilde d}^j\right){}^2}+\frac{2 \omega _{\tilde d}^j
   \left(\omega _{\tilde d}^j-1\right)}{q-\omega _{\tilde d}^j}\r]+ \cO(1)
\eea
which vanishes up to $\cO(1)$ since $\sum_{k \mid d} \mu(k)=0$, hence $\Omega_d(\bbF_0(0,0,0,0))(q)\in q^{-\mathsf{g}_{\bbF_0(0,0,0,0)}(d)/2}\bbZ[q]$.
\end{description}
\end{proof}

\section{Orbifolds}\label{sec:orbi}

In \cite{BBvG1}, we proposed in the context of toric pairs that the log-local principle should extend to $Y$ a possibly singular $\bbQ$-factorial projective variety. We expect that this should also hold for nef Looijenga pairs, at least as long as the orbifold singularities are at the intersection of the divisors: the log GW theory is then well-defined since $Y(D)$ is log smooth, and the local GW theory makes sense by viewing $Y$ and $E_{Y(D)}$ as smooth Deligne--Mumford stacks. In particular, introducing singularities gives new infinite lists of examples of nef/quasi-tame/tame Looijenga pairs.

We propose that also \cref{thm:logopen,thm:kpdt,thm:openbps} may extend to the orbifold setting. We present the simplest instance here, and defer a more in-depth discussion, including criteria for the validity of the orbifold version of \cref{thm:logopen,thm:kpdt,thm:openbps}, to \cite{Bousseau:2020ryp}.

\begin{figure}[t]
\caption{$\scatt\bbP_{(1,1,n)}$}
\label{fig:P11nscatt}
\begin{tikzpicture}[smooth, scale=1.2]
\draw[<->] (-2.5,2) to (-1.5,2);
\draw[<->] (-2,1.5) to (-2,2.5);
\node at (-1.67,2.15) {$\scriptstyle{x}$};
\node at (-2.15,2.3) {$\scriptstyle{y}$};
\draw[step=1cm,gray,very thin] (-2.5,-2.5) grid (3.5,2.5);
\draw[thick] (3.5,0) to (-2.5,0);
\draw[thick] (0,-2.5) to (0,0);
\draw[thick] (0,0) to (1.25,2.5);
\node at (-0.4,-1.7) {$D_2$};
\node at (0.9,1.2) {$D_1$};
\node at (-1.5,0.3) {$E$};
\node at (-2.3,0) {$\times$};
\node at (0.55,2.2) {$\sstyle{(1,n+1)}$};
\node at (0.7,0.2) {$\sstyle{1+tx^{-1}}$};
\draw[<->,thick] (1.5,-2.5) to (1.5,0) to (2.75,2.5);
\node at (2.1,-2.2) {$\sstyle{y^{-(n+1)d}}$};
\node at (3.2,2.2) {$\sstyle{x^{d}y^{(n+1)d}}$};
\node at (3.2,0.4) {$\sstyle{\binom{(n+1)d}{d}t^{d}x^{-d}y^{-(n+1)d}}$};
\node at (2,1) {$\bullet$};
\node at (2.2,1.1) {$\sstyle{p}$};
\end{tikzpicture}
\end{figure}

\begin{example}
Let $Y=\bbP_{(1,1,n)}$ be the weighted projective plane with weights $(1,1,n)$, and $D=D_1+D_2$ with 
$D_1$ a toric line passing through the orbifold point and $D_2$ a smooth member of the linear system given by the sum of the two other toric divisors. Since $D_1 \sim \frac{H}{n}$, $D_2 \sim \frac{(n+1)}{n} H$, $H^2 = n$, we have
$D_1^2=\frac{1}{n}$ and $D_2^2=\frac{(n+1)^2}{n}$. Therefore $\bbP_{(1,1,n)}(\frac{1}{n}, \frac{(n+1)^2}{n})$ is a tame orbifold Looijenga pair.

Local Gromov--Witten invariants of $Y(D)$ can be computed by the orbifold quantum Riemann--Roch theorem of \cite{MR2578300}: when restricted to point insertions, it gives \eqref{eq:localgw} specialised to the case at hand, and we get
\beq
N_{0,d}^{\rm loc}\l(\bbP_{(1,1,n)}\Big(\frac{1}{n},
\frac{(n+1)^2}{n} \Big)\r) = \frac{(-1)^{n d}}{(n+1)d^2} \binom{(n+1) d}{d}\,. 
\label{eq:p11nloc}
\eeq 

A toric model and a quantised scattering diagram for $Y(D)$ can be constructed as follows. The fan of $\bbP_{(1,1,n)}$ has 1-skeleton given by $(-1,0)$, $(0,-1)$ and $(1,n)$. Start by adding a ray in the direction $(-1,1)$, and denote $E$ the corresponding divisor. Then the proper transform of $D_{(-1,0)}$ is a $(-1)$-curve, which we contract. The complement of the proper transform of $D$ now has Euler characteristic 0, hence is $(\bbC^*)^2$, and therefore the variety is toric. Applying the $\mathrm{SL}(2,\bbZ)$ transformation
\beq
\begin{pmatrix}
1 & 0 \\
1 & 1 \\
\end{pmatrix}
\eeq
we obtain the toric model depicted in \cref{fig:P11nscatt}, for which the broken line calculation is straightforward. The result is
\beq
N_{0,d}^{\rm log}\l(\bbP_{(1,1,n)}\Big(\frac{1}{n},
\frac{(n+1)^2}{n} \Big)\r) = \binom{(n+1)d}{d},  \label{eq:p11nlog}
\eeq
\beq \mathsf{N}_d^{\rm log}\l(\bbP_{(1,1,n)}\Big(\frac{1}{n},
\frac{(n+1)^2}{n} \Big)\r) = \qbinom{(n+1)d}{d}_q.
\label{eq:p11nlogref}
\eeq
To construct $\bbP_{(1,1,n)}^{\rm op}\big(\frac{1}{n},
\frac{(n+1)^2}{n} \big)$, we delete the line $D_1$. Then $\cO(-D_2)$ is trivial on $\bbP_{(1,1,n)} \setminus D_1 = \bbC^2$, and $\mathrm{Tot}(K_{\bbP_{(1,1,n)} \setminus D_1})=\bbC^3$, with an outer toric Lagrangian at framing shifted by $n$. A topological vertex calculation of higher genus 1-holed open Gromov--Witten invariants as in \cref{sec:disks} shows that
\beq
\mathsf{O}_d\l(\bbP_{(1,1,n)}^{\rm op}\Big(\frac{1}{n},
\frac{(n+1)^2}{n} \Big)\r) = \frac{(-1)^{nd}}{d [(n+1)d]_q}\qbinom{(n+1)d}{d}_q.
\label{eq:p11nop}
\eeq
\cref{eq:p11nloc,eq:p11nlog,eq:p11nlogref,eq:p11nop} together imply that \cref{thm_log_local,thm:logopen} extend to this case as well. The arguments in the proof of \cref{thm:kpdt} also apply verbatim, with $\mathsf{Q}\big(\bbP_{(1,1,n)}(\frac{1}{n},
\frac{(n+1)^2}{n})\big)$ the $(n+1)$-loop quiver. An interesting consequence is that the integrality statement of \cref{conj:KP} appears to persist in the orbifold world too. The proof of the higher genus open BPS property in \cref{thm:openbps} also carries through to this setting with no substantial modification.

\begin{figure}[t]
\includegraphics[scale=1.3]{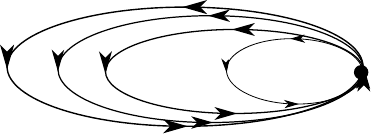}
\caption{The quiver for $Y(D)=\bbP(1,1,3)(\frac{1}{3}, \frac{16}{3})$.}
\end{figure}

\end{example}

\begin{appendix}

\section{Proof of \cref{thm:dP5}}
\label{app:PdP5}

Let $Y$ be the toric surface given by the fan of \cref{fig:PdP5}. It is described by the exact sequence
\beq
\label{eq:PdP5seq}
  \begin{CD}
    0 @>>> \ZZ^6 @>{
      \begin{pmatrix}
	1 &  1 & 0 &  0 & 0  & -2 \\ 
	0  & 0 & 1 &  0 & 0  & 1 \\ 
	1  & 0 & 0 &  1 & 0 &  0 \\ 
	0  & 0 & 0 &  0 & 1 &  0 \\ 
	0  & 1 & 0 &  1 & 0 &  0 \\ 
	1  & 0 & 1 &  0 & 0 &  0 \\ 
	0  & 0 & 0 &  1 & 0 &  0 \\ 
	0  & 0 & 0 &  0 & 1 &  1
      \end{pmatrix}}>>  
    \ZZ^8 @>{
      \begin{pmatrix} \textstyle
        1 & 1 & 0 & -1 & -1 & -1 & 0 & 1  \\
        0 & 1 & 1 & 1 & 0 & -1 &  -1 &  -1 \\
      \end{pmatrix}}>> \ZZ^2 @>>> 0\,
  \end{CD}
\eeq
showing that $Y$ is a GIT quotient $\bbC^{8} \GIT{} (\bbC^\star)^6 = (\bbC^{8} \setminus \{x_i x_j=0 \}_{\stackrel{(i,j) \neq (1,8),}{\tiny j \neq i+1}})/ (\bbC^\star)^6$, with $(\tau_1, \dots, \tau_6) \in (\bbC^\star)^6$ acting as
\beq
\begin{pmatrix}
x_1 \\ 
x_2 \\
x_3 \\
x_4 \\
x_5 \\
x_6 \\
x_7 \\
x_8 \\
\end{pmatrix}
\longrightarrow 
\begin{pmatrix}
\tau_1 \tau_2 \tau_6^{-2} x_1 \\ 
\tau_3 \tau_6 x_2 \\
\tau_1 \tau_4 x_3 \\
\tau_5 x_4 \\
\tau_2 \tau_4 x_5 \\
\tau_1 \tau_3 x_6 \\
\tau_4 x_7 \\
\tau_5 \tau_6 x_8 \\
\end{pmatrix}
\eeq
\begin{figure}[t]
\input{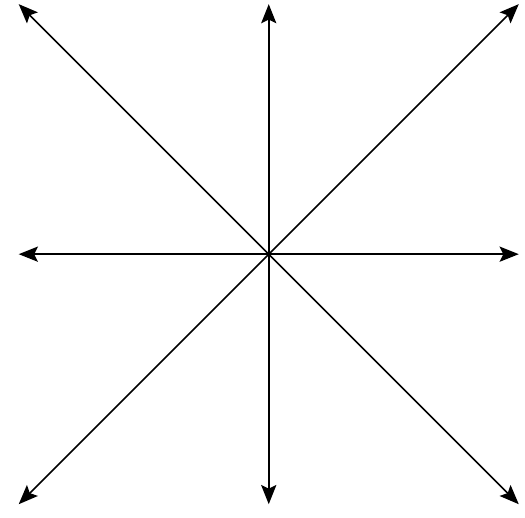_t}
\caption{The fan of $\mathrm{Bl}_{4 {\rm pts}} \bbP^1 \times \bbP^1$}
\label{fig:PdP5}
\end{figure}
There are dominant birational morphisms $Y \stackrel{\pi_1}{\longrightarrow} \bbP^2$, $Y \stackrel{\pi_2}{\longrightarrow} \bbP^1 \times \bbP^1$, obtained by deleting the loci $\{ x_i=0\}_{i\in \{2,4,6,7,8\}}$ and $\{x_{2i}=0\}$ respectively. Therefore $Y\simeq \mathrm{Bl}_{4 {\rm pts}} \bbP^1 \times \bbP^1$, or equivalently, $Y$ is a five-point toric blow-up of $\bbP^2$, and deforms to $\mathrm{dP}_5$ upon taking the points in general position. From \eqref{eq:PdP5seq} and \cref{fig:PdP5}, in terms of the hyperplane $H$ and exceptional classes $E_i \in \mathrm{Pic}(\mathrm{dP}_5)$ the toric divisors $T_i := \{x_i=0\}$ read
\bea
&
T_1 = H - E_1 - E_2 - E_4\,, ~ T_3 = H - E_1 - E_3 - E_5\,, ~
T_5 = E_2 - E_4, ~ T_7 = E_3 - E_5\,, \nn \\
& T_2 = E_1\,,~ T_4 = E_4\,,~ T_6 = H - E_2 - E_3\,,~ T_8 = E_5\,.
\label{eq:divPdP5}
\eea
Under this identification the $-2$-curve classes $T_{2k+1}$ do not belong to $\mathrm{NE}(\mathrm{dP}_5)$ (see the discussion of \cref{sec:eff}); however they do have by construction effective representatives in $A_1(Y)$, since they are prime toric divisors. 

To write the $I$-function, we fix the following set of $\frac{1}{2} \bbZ$-generators of $A_1(Y)$:
\beq
C_i = \left\{
\bary{cl}
T_{2i}, & i=1,\dots, 4~, \\
D_{i+4}, & i=1,2~	,
\eary
\right.
\eeq
where $D_1=H-E_1= T_1+T_3+2T_6$, $D_2=2H-E_2-E_3-E_4-E_5=T_2+T_4+T_5+T_7-T_8$. We will write $\varphi_i$ with $(\varphi_i, C_j)=\delta_{ij}$ for their dual basis in cohomology, and denote curve classes in this basis as $d = \sum_i \frac{\sigma_i \delta_i}{2} C_i$ with $\delta_i\in \bbZ$, and $\sigma_i=-1$ for $1\leq i\leq 4$ and $\sigma_i=1$ otherwise. To write the twisted $I$-function $I^{E_{Y(D)}}$, we need to expand $\theta_a= c_1(\cO(T_a))$ and $\kappa_i = c_1(\cO(D_i))$ in \eqref{eq:Ifunloc}, yielding
\bea
& &
I^{E_{Y(D)}}(y,z) =
\sum_{\delta_i \in \bbZ}
\Bigg[
\frac{ y_1^{-\frac{\delta _1}{2}}
  y_2^{-\frac{\delta _2}{2}} y_3^{-\frac{\delta _3}{2}} y_4^{-\frac{\delta _4}{2}} y_5^{\frac{\delta _5}{2}} y_6^{\frac{\delta
  _6}{2}} (-1)^{\delta _5+\delta _6} }{  \left(1-\frac{2 \varphi_1}{z}\right)_{\delta _1} \left(1-\frac{2 \varphi_2}{z}\right)_{\delta _2} \left(1-\frac{2 \varphi_3}{z}\right)_{\delta
   _3} \left(1-\frac{2 \varphi_4}{z}\right)_{\delta _4}\left(\frac{z+\varphi_1+\varphi_2+\varphi_5}{z}\right)_{\frac{1}{2} \left(-\delta _1-\delta
  _2+\delta _5\right)} }
\nn \\
& &
\frac{\left(2 \varphi_6-\lambda _1\right) \left(2 \varphi_5-\lambda _2\right) \left(\frac{z+2 \varphi_6-\lambda _1}{z}\right)_{\delta _6-1} \left(\frac{z+2 \varphi_5-\lambda _2}{z}\right)_{\delta _5-1}}{ z \left(\frac{z+\varphi_3+\varphi_4+\varphi_5}{z}\right)_{\frac{1}{2} \left(-\delta _3-\delta _4+\delta _5\right)   \left(\frac{z+\varphi_1+\varphi_3+\varphi_6}{z}\right)_{\frac{1}{2} \left(-\delta _1-\delta _3+\delta _6\right)}
   \left(\frac{z+\varphi_2+\varphi_4+\varphi_6}{z}\right)_{\frac{1}{2} \left(-\delta _2-\delta _4+\delta _6\right)}}}
\Bigg],
\label{eq:IfunPdP5}
\eea
where
\beq (a)_n=a(a+1)\dots (a+n-1)\,.
\label{eq:pochh}
\eeq 
is the Pochhammer symbol. By \eqref{eq:mm}, the mirror map is extracted as the formal $\cO(z^0)$ Taylor coefficient around $z=\infty$. We find that the sole contributions to the mirror map arise from multiple covers of our chosen generators $C_i$, that is when $\delta_i=2 \sigma_i n$, $n \in \bbN^+$
\beq
\tilde t^i(y) = \sum_{\delta_i=1}^{\infty} \frac{(2 \delta_i-1)!}{(\delta_i!)^2}y^{\delta_i},
\eeq
which is closed-form inverted as
\beq
y_i(t) = \frac{\exp t^i}{(1-\exp t^i)^2}\,.
\label{eq:PdP5mm}
\eeq
Then\footnote{To obtain the small $J$-function, we should include a string-equation induced shift  by multiplying the $I$-function by an overall factor of $\re^{\lambda_1 \tilde t^5(y) +\lambda_2 \tilde t^6(y)/z}$, in order to guarantee that the small $J$-function satisfies its defining property to be the unique family of Lagrangian cone elements with a Laurent expansion of the form $z + t+O(1/z)$ at $z=\infty$. These would result in a correction of the foregoing discussion for degrees $\delta_i=0$ when $i=1,\dots, 4$. It is justified to ignore this for our purposes:  since $d \cdot D_i =0$ and $\cO(-D_i)$ is not a concave line bundle, the corresponding invariants are non-equivariantly ill-defined; and any sensible non-equivariant definition would satisfy automatically the log-local correspondence of \cref{sec:logloc_gen}, as the corresponding log invariants are trivially zero.}, $$J_{\rm small}^{E_{Y(D)}}=I^{E_{Y(D)}}(y(t),z)$$ and from \eqref{eq:NpsidJfun} and \eqref{eq:IfunPdP5} we find that whenever $d \neq 2 \sigma_i n$, $n \in \bbN^+$
\bea
N^{\rm loc, \psi}_{\delta_1, \dots, \delta_6}(Y(D)) &=& \frac{1}{\lambda_1 \lambda_2}  \Big[z^{-1} \re^{\sum t_i \varphi_i/z}  \mathbf{1}_{H_T(E_{Y(D)})}\Big]  I^{E_{Y(D)}}(y(t),z), \nn \\
&=& \big[\re^{\sum_i \delta_i  t_i} \big]  \sum_{\delta'_i}^\infty  \mathsf{S}^{[0]}_{\delta'_1, \dots, \delta'_6} \prod_{i=1}^6 y_i(t)^{\frac{\sigma_i\delta _i'}{2}},
\eea
where
\beq
\mathsf{S}^{[0]}_{\delta'_1, \dots, \delta'_6} :=
\frac{(-1)^{\delta _5'+\delta _6'} \left(\delta _5'-1\right)! \left(\delta _6'-1\right)!} {\delta _1'! \delta _2'! \delta _3'! \delta _4'! \left(\frac{1}{2} \left(\delta _5'-\delta _1'-\delta _2'\right)\right)! \left(\frac{1}{2} \left(\delta _5'-\delta _3'-\delta _4'\right)\right)! \left(\frac{1}{2} \left(\delta _6'-\delta _1'-\delta
   _3'\right)\right)! \left(\frac{1}{2} \left(\delta _6'-\delta _2'-\delta _4'\right)\right)!}\,.
\eeq
The arguments of the factorials in the denominator constrain the range of summation to extend over $\delta_i \neq 0$ alone; in particular the r.h.s. is a Taylor series in $(y_1^{-1/2},y_2^{-1/2},y_3^{-1/2},y_4^{-1/2},y_5^{1/2},y_6^{1/2})$, convergent in a ball centred at $y_i^{\sigma_i}=0$. We first perform the summation over $\delta_6'$ to obtain
\bea
\sum_{\delta'_6=0}^{\infty}\mathsf{S}^{[0]}_{\delta'_1, \dots, \delta'_6} y_6^{\delta'_6/2} &=&
\frac{(-1)^{\delta _2'+\delta _4'+\delta _5'} \left(\delta _2'+\delta _4'-1\right)! \left(\delta _5'-1\right)! \left(\frac{\re^{t_6}}{(\re^{t_6}+1)^2}\right)^{\frac{1}{2} \left(\delta _2'+\delta _4'\right)}  }{\delta _1'! \delta _2'! \delta _3'! \delta _4'! \left(\frac{1}{2} \left(-\delta _1'+\delta _2'-\delta _3'+\delta _4'\right)\right)! \left(\frac{1}{2} \left(-\delta _1'-\delta _2'+\delta _5'\right)\right)!
   \left(\frac{1}{2} \left(-\delta _3'-\delta _4'+\delta _5'\right)\right)!} \nn \\
& \times & \, _2F_1\left(\frac{1}{2} \left(\delta _2'+\delta _4'\right),\frac{1}{2} \left(\delta _2'+\delta _4'+1\right);\frac{1}{2} \left(-\delta _1'+\delta _2'-\delta _3'+\delta
   _4'+2\right);\frac{4 \re^{t_6}}{(\re^{t_6}+1)^2}\right), \nn \\
\eea
where
\beq
     {}_{p}F_{r}(a_1,\dots,a_p;b_1,\dots,b_r;z) \coloneqq 
    \sum_{k \geq 0} \frac{z^k}{k!} \frac{\prod_{j=1}^p (a_j)_k}{\prod_{j=1}^r(b_j)_k} 
    \label{eq:pFr}
\eeq 
is the generalised hypergeometric function. Applying Kummer's quadratic transformation,
\beq
\, _2F_1(a,b;a-b+1;z)=(z+1)^{-a} \, _2F_1\left(\frac{a}{2},\frac{a+1}{2};a-b+1;\frac{4 z}{(z+1)^2}\right)
\eeq
we obtain
\beq
N^{\rm loc, \psi}_{\delta_1, \dots, \delta_6}(Y(D))= \big[\re^{\sum_{i=1}^5 \delta_i  t_i} \big]  \sum_{\delta'_i}^\infty  \mathsf{S}^{[1]}_{\delta'_1, \dots, \delta'_5, \delta_6}\prod_{i=1}^5 y_i(t)^{\frac{\sigma_i\delta _i'}{2}},
\eeq
where
\bea
\mathsf{S}^{[1]}_{\delta'_1, \dots, \delta'_5, \delta_6} &:=&
\frac{(-1)^{\delta _2'+\delta _4'+\delta _5'} \left(\delta _5'-1\right)! \left(\frac{1}{2} \left(\delta _1'+\delta _3'\right)+\delta _6-1\right)! \left(\frac{1}{2}
   \left(\delta _2'+\delta _4'\right)+\delta _6-1\right)!}{\delta _1'! \delta _2'! \delta _3'! \delta _4'! \left(\frac{1}{2} \left(\delta _1'+\delta _2'+\delta
   _3'+\delta _4'-2\right)\right)! \left(\frac{1}{2} \left(-\delta _1'-\delta _2'+\delta _5'\right)\right)!} \nn \\ & \times & \frac{1}{\left(\frac{1}{2} \left(-\delta _3'-\delta _4'+\delta
   _5'\right)\right)! \left(-\frac{\delta _1'}{2}-\frac{\delta _3'}{2}+\delta _6\right)! \left(-\frac{\delta _2'}{2}-\frac{\delta _4'}{2}+\delta _6\right)!}\,.
\eea
Performing the same sequence of operations on the sum over $\delta_5'$ yields
\beq
N^{\rm loc, \psi}_{\delta_1, \dots, \delta_6}(Y(D))= \big[\re^{\sum_{i=1}^4 \delta_i  t_i} \big]  \sum_{\delta'_i}^\infty  \mathsf{S}^{[2]}_{\delta'_1, \dots, \delta'_4, \delta_5, \delta_6}\prod_{i=1}^4 y_i(t)^{\frac{\sigma_i\delta _i'}{2}},
\eeq
where
\bea
\mathsf{S}^{[2]}_{\delta'_1, \dots, \delta'_4, \delta_5, \delta_6} &:=&
\frac{(-1)^{\delta _1'+\delta _4'} \left(\frac{1}{2} \left(\delta _1'+\delta _2'\right)+\delta _5-1\right)! \left(\frac{1}{2} \left(\delta _3'+\delta
   _4'\right)+\delta _5-1\right)! \left(\frac{1}{2} \left(\delta _1'+\delta _3'\right)+\delta _6-1\right)! }{\delta _1'! \delta _2'! \delta _3'! \delta _4'! \left(\left(\frac{1}{2} \left(\delta _1'+\delta _2'+\delta _3'+\delta
   _4'-2\right)\right)!\right){}^2 \left(-\frac{\delta _1'}{2}-\frac{\delta _2'}{2}+\delta _5\right)! } \nn \\
& \times & 
\frac{\left(\frac{1}{2} \left(\delta _2'+\delta
   _4'\right)+\delta _6-1\right)!}{
\left(-\frac{\delta _3'}{2}-\frac{\delta _4'}{2}+\delta
   _5\right)! \left(-\frac{\delta _1'}{2}-\frac{\delta _3'}{2}+\delta _6\right)! \left(-\frac{\delta _2'}{2}-\frac{\delta _4'}{2}+\delta _6\right)!}\,.
\eea
The final step is to now plug in the mirror maps \eqref{eq:PdP5mm} for $i=1,...,4$. This gives
\beq
N^{\rm loc, \psi}_{\delta_1, \dots, \delta_6}(Y(D))= \sum_{j_1, \dots, j_4=0}^\infty  \mathsf{S}^{[3]}_{\delta'_1+2j_1, \dots, \delta'_4+2j_4, j_1, \dots, j_4, \delta_5, \delta_6}\,,
\eeq
where
\bea
\mathsf{S}^{[3]}_{\delta'_1, \dots, \delta'_4, j_1, \dots, j_4, \delta_5, \delta_6} &:=& \mathsf{S}^{[2]}_{\delta'_1, \dots, \delta'_4, \delta_5, \delta_6} \prod_{i=1}^4 \binom{\delta'_i}{j_i}\,.
\eea
The change-of-basis $\{C_1, \dots, C_6\} \rightarrow \{H-E_1-\dots -E_5, E_1, \dots E_5\}$ in \eqref{eq:divPdP5} and the corresponding change-of-variables in the curve degrees parameters $\{\delta_1, \dots, \delta_6\} \to \{d_0, \dots, d_5\}$ finally leads to \eqref{eq:dP5}.

\section{Infinite scattering}

\label{app:scatt}


We compute the invariants of
\cref{conj:dp302} for the geometries $\delp_1(0,4)$ and $\bbF_0(0,4)$. This application of our correspondences predicts new relations for $q$-hypergeometric sums in \cref{conj:dp1binom}. We provide calculations by picture and leave the details to the reader.

\label{sec:infscatt}

Denote by $E$ the exceptional divisor obtained by blowing up a point on $D_1$ in $\ptwo(1,4)$. We write a curve class $d\in\hhh_2(\delp_1(0,4),\bbZ)$ as $d=d_0(H-E)+d_1E$. 
If $d_0=0$ or $d_1=0$, then the moduli space of stable log maps is empty and 
$\mathsf{N}^{\rm log}_d(\delp_1(0,4))(\hbar)=0$. 
If $d_1>d_0$, then there are no irreducible curve classes and $\mathsf{N}^{\rm log}_d(\delp_1(0,4))(\hbar)=0$. 
The toric model of $\delp_1(0,4)$ is obtained from the toric model of $\ptwo(1,4)$ by adding a focus-focus singularity in the direction of $D_1$. The opposite primitive vectors in the $F_2$ and $D_1$ directions are $\gamma_1=(1,0)$ and $\gamma_2=(-1,-2)$. Since the absolute value of their determinant is 2 and not $1$, there is infinite scattering, which is described in Section \ref{sec:theta}. By choosing our broken lines to be sufficiently into the $x$-direction, we can restrict to walls that lie on the halfspace $x>0$. Then these walls have slope $(n+1)\gamma_1+n\gamma_2=(1,-2n)$, for $n\geq0$. The wallcrossing functions attached to them are $1+t^{n+1}t_1^nx^{-1}y^{2n}$. The broken line computation is summarised in \cref{fig:delp1scatt}.

\begin{thm} \label{thm:log_dp1_0_4}
Let $d_0 > d_1\geq 1$ and $d=d_0(H-E)+d_1E$. Then 
$\mathsf{N}^{\rm log}_d(\delp_1(0,4))(\hbar)$ equals
\beq
\sum_{m=1}^{d_1} \quad \sum_{\substack{ k_0+\sum_{j=1}^{m}k_j=d_1 \\ \sum_{j=1}^m n_jk_j=d_0-d_1 \\ k_1,\dots,k_m >0, \, k_0\geq 0 \\ n_1>n_2>\cdots>n_{m}>0 }} \quad
\qbinom{2d_0}{k_1}_q \qbinom{2d_0-2(n_1-n_2)k_1}{k_2}_q \cdots \qbinom{2d_0-2\sum_{j=1}^{m-1}(n_j-n_m)k_j}{k_m}_q \qbinom{2d_1}{k_{0}}_q \,.
\eeq
\end{thm}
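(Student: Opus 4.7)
The strategy is to apply the quantum broken-line recipe of \cref{prop:calc-log-gw-q} directly to the scattering diagram $\scatt(\delp_1(0,4))$ described above. With $\rho_1=(1,2)$ and $\rho_2=(0,-1)$ the primitive directions of the rays supporting the proper transforms of $D_1$ and $D_2$, one sets $m^1=d_1\rho_1$ and $m^2=2d_0\rho_2$; by \cref{prop:calc-log-gw-q}, $\mathsf{N}^{\log}_d(\delp_1(0,4))(\hbar)$ equals the coefficient of $t^{e_1}t_1^{e_2}$ in $C_{m^1,m^2}$, where $e_1=d_1$ and $e_2=d_0-d_1$ are the intersection numbers of $d$ with the two interior exceptional divisors of $\pi:\tilde Y\to\bar Y$. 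The task reduces to enumerating pairs of quantum broken lines $(\beta_1,\beta_2)$ of asymptotic momenta $m^1,m^2$ with common endpoint $p$ chosen deep in the halfspace $x\gg 0$ and matching end-monomials $m^1_\fend+m^2_\fend=0$.

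First I would identify all walls that can be bent at by the two broken lines. By the choice of $p$, only walls meeting $x>0$ are relevant, namely the wall emanating from the focus-focus singularity on the $D_1$-ray (with wall-crossing function $1+t_1 xy^2$) and the infinite family of scattering-produced walls of slope $(1,-2n)$, $n\geq 1$, decorated with $1+t^{n+1}t_1^n x^{-1}y^{2n}$. The central wall at $n=0$ is parallel to the asymptotic direction of $\beta_2$ and does not contribute, while walls propagating into $x<0$ are not reached by either broken line. A direct geometric check then shows that one of the broken lines encounters only a single crossing with the $D_1$-wall, producing, via the quantum binomial theorem, a factor $\qbinom{2d_1}{k_0}_q$ with $0\leq k_0\leq d_1$; the other broken line successively bends at a strictly decreasing sequence $n_1>n_2>\cdots>n_m>0$ of scattering walls.

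Second, I would inductively compute the transported monomial. Writing $m_j$ for the current monomial direction immediately before the $j$-th bending, one has $m_1=(0,-2d_0)$ and $m_{j+1}=m_j+k_j(1,-2n_j)$; the pairing $\langle n_{\mathfrak{d}_j/\mathfrak{u}_j},m_j\rangle$ with the inward primitive normal $(-2n_j,-1)$ equals $2d_0-2\sum_{i<j}(n_i-n_j)k_i$. Expanding $f_{\mathfrak{d}_j}^{\langle n_{\mathfrak{d}_j/\mathfrak{u}_j},m_j\rangle}$ by the quantum binomial theorem yields the $j$-th factor $\qbinom{2d_0-2\sum_{i<j}(n_i-n_j)k_i}{k_j}_q$ together with a monomial contribution $t^{k_j(n_j+1)}t_1^{k_jn_j}x^{-k_j}y^{2k_jn_j}$. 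The matching condition $m^1_\fend+m^2_\fend=0$ together with coefficient extraction in $(t,t_1)$ produces the constraints $k_0+\sum_{j=1}^m k_j=d_1$ and $\sum_{j=1}^m n_jk_j=d_0-d_1$, while the positivity $k_j\geq 1$ is forced by the requirement that $\beta_2$ actually bends at the $j$-th wall. Assembling the $q$-binomial factors in the order prescribed by the induction yields the displayed formula.

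The main obstacle lies in the first step: because of the infinite scattering, one must carefully justify that, out of the entire scattering diagram, exactly the listed subset of walls contributes, that $\beta_2$'s trajectory pins down the strictly decreasing order $n_1>\cdots>n_m$ of walls bent at, and that no further bending is picked up beyond the single $D_1$-crossing on the other side. All three points follow from tracking slopes and chamber positions along each broken line, but the combinatorics of the infinite family of outgoing scattering walls makes the bookkeeping delicate. Once the wall enumeration is settled, the remaining calculation is a direct and routine application of the quantum binomial theorem.
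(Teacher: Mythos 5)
Your overall strategy is the same as the paper's: the theorem is proved there by precisely the broken-line computation on the infinite scattering diagram that you set up, and your inductive evaluation of the transport exponents $2d_0-2\sum_{i<j}(n_i-n_j)k_i$ at the walls of slope $(1,-2n_j)$, together with the matching constraints $k_0+\sum_j k_j=d_1$ and $\sum_j n_jk_j=d_0-d_1$ and the positivity $k_j\geq 1$ at actual bends, is the heart of the argument and is correct.

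There is, however, a concrete error in the wall analysis, which you yourself single out as the delicate point. You attribute the factor $\qbinom{2d_1}{k_0}_q$ to a single crossing of ``the $D_1$-wall'' $1+t_1xy^2$, and you discard the $n=0$ wall as ``parallel to the asymptotic direction of $\beta_2$.'' Both statements are backwards. The broken line with asymptotic monomial $(xy^2)^{d_1}$ is parallel to the $D_1$-wall and its exponent pairs to zero with the wall's normal $(2,-1)$, so no bending can occur there; moreover any bend on that wall would carry powers of $t_1$, spoiling the required $t_1$-degree $d_0-d_1$, which is already exhausted by the bends at the walls with $n_j\geq 1$. The factor $\qbinom{2d_1}{k_0}_q$ comes instead from the $n=0$ wall, i.e.\ the positive $x$-axis carrying $1+tx^{-1}$: with the endpoint $p$ above the axis as in the paper's figure, the broken line coming from $y^{-2d_0}$ crosses it transversally with monomial exponent $(-\sum_j k_j,\,-2d_0+2\sum_j n_jk_j)=(-\sum_j k_j,\,-2d_1)$, hence pairing $2d_1$, contributing $\qbinom{2d_1}{k_0}_q\,t^{k_0}$. (What is parallel to $\beta_2$ and genuinely irrelevant is the central wall of the infinite scattering, in direction $(0,-2)$, which you seem to have conflated with the $n=0$ wall.) As written, your enumeration of contributing walls therefore does not assemble into the claimed formula. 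Two smaller slips: the update rule should be $m_{j+1}=m_j+k_j(-1,2n_j)$, not $m_j+k_j(1,-2n_j)$ (your own monomial contribution $x^{-k_j}y^{2k_jn_j}$ is the correct one); and the coefficient to extract is $t^{d_0}t_1^{d_0-d_1}$, not $t^{d_1}t_1^{d_0-d_1}$, since the exceptional curve tracked by $t$ pushes forward to the line class, giving $t$-exponent $d\cdot H=d_0$, while $d_1=d\cdot D_1$ is not an intersection with an exceptional curve — with your stated exponent the extraction would return zero. Once the contributing walls are correctly identified, your induction goes through verbatim and reproduces the paper's (sketched) proof.
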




For the case of $\fzero(0,4)$, let $D_1$ be a line of bidegree $(1,0)$ and let $D_2$ be a smooth divisor of bidegree $(1,2)$. Let $d$ be a curve class of bidegree $(d_1,d_2)$. We have 
$d\cdot D_1=d_2$ and $d\cdot D_2=2d_1+d_2$. Denote by $\mathrm{pt}_1$, resp. $\mathrm{pt}_2$, their intersection points and by $L_1$, resp. $L_2$, the lines of bidegree $(0,1)$ passing through $\mathrm{pt}_1$, resp. $\mathrm{pt}_2$. We blow up $\mathrm{pt}_1$ and $\mathrm{pt}_2$ leading to exceptional divisors $F_1$ and $F_2$ and blow down the strict transforms of $L_1$ and $L_2$. The result is the Hirzebruch surface $\ftwo$ with a focus-focus singularity on each of the fibrewise toric divisors, as in Figure \ref{fig:fzero0}.

\begin{figure}[t]
\begin{tikzpicture}[smooth, scale=1.2]
\draw[step=1cm,gray,very thin] (-2.5,-2.5) grid (2.5,2.5);
\draw[thick] (0,0) to (-2.5,0);
\draw[thick] (0,-2.5) to (0,2.5);
\draw[thick] (0,0) to (1.25,2.5);
\node at (0.3,-1.7) {$D_2$};
\node at (1.3,1.7) {$F_1$};
\node at (-0.3,1.7) {$D_1$};
\node at (-1.5,-0.3) {$F_2$};
\node at (-2.3,0) {$\times$};
\node at (0.75,1.5) {$\times$};
\end{tikzpicture}
\caption{$\fzero(0,4)$}
\label{fig:fzero0}
\end{figure}

Let $d$ be a curve class of bidegree $(d_1,d_2)$. 
The opposite primitive vectors in the $F_2$ and $F_1$ directions are $\gamma_1=(1,0)$ and $\gamma_2=(-1,-2)$. The absolute value of their determinant is 2, so there is infinite scattering as described in Section \ref{sec:theta}. We choose $p$ to be in the lower left quadrant with coordinate $(a,b)$ for $-1\ll a<0$ and $b\ll 0$. This depends on the degree and ensures that the broken lines are vertical at $p$. In particular, we can restrict to walls that lie on the halfspace $x<0$. Then these walls have slope $(n-1)\gamma_1+n\gamma_2=(-1,-2n)$, for $n\geq1$. The wall-crossing functions attached to them are $1+t^{n-1} t_1^n x y^{2n}$. The broken line calculation is summarised in \cref{fig:delp1scatt}.

\begin{thm} \label{thm:log_f0_0_4}
For $d_1\geq1$, the generating function $\mathsf{N}^{\rm log}_{(d_1,d_2)}(\fzero(0,4))(\hbar)$ equals
\begin{IEEEeqnarray*}{rC}
\sum_{m=1}^{\left \lfloor{  \frac{1}{2}(\sqrt{1+8d_1}-1)  }\right \rfloor }
\sum_{\substack{ d_1 = \sum_{j=1}^m n_j k_j \\ k_m,\dots,k_1 >0 \\ n_m>\cdots>n_{1}>0 }}
\, & \qbinom{d_2+2d_1}{k_m}_q \, \cdots  \,
\qbinom{d_2 + 2n_i\sum_{j=i}^{m}k_j + 2\sum_{j=1}^{i-1} n_j k_j}{k_{i}}_q 
\, \cdots \\
 \cdots \, &
\qbinom{d_2+2n_2 \sum_{j=2}^{m}k_j + 2n_1k_1}{k_2}_q 
\qbinom{d_2+2n_1\sum_{j=1}^{m}k_j}{k_1}_q  
\qbinom{d_2}{\sum_{j=1}^{m}k_j}_q \,.
\end{IEEEeqnarray*}
\end{thm}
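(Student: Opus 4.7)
The plan is to apply Proposition~\ref{prop:calc-log-gw-q} to the toric model/scattering diagram of $\fzero(0,4)$ depicted in Figure~\ref{fig:fzero0}. Writing $\rho_1=(0,1)$ for the primitive ray of $D_1$ and $\rho_2=(1,2)$ for that of $D_2$ (in the chart fixed in the figure), a curve class of bidegree $(d_1,d_2)$ satisfies $d\cdot D_1=d_2$ and $d\cdot D_2=2d_1+d_2$, so one sets the two asymptotic monomials to be $z^{m^1}=z^{d_2 \rho_1}$ and $z^{m^2}=z^{(2d_1+d_2)\rho_2}$, and chooses the common endpoint $p=(a,b)$ with $-1\ll a<0$ and $b\ll 0$. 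A short inspection shows that the broken line with asymptotic monomial $z^{m^1}$ reaches $p$ without bending, while the broken line with asymptotic monomial $z^{m^2}$ bends only at the cascade of walls produced by the infinite scattering of the two focus-focus singularities of Figure~\ref{fig:fzero0}.

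I would next spell out the wall structure explicitly. The $x<0$ portion of the scattering diagram contains walls indexed by $n\geq 1$ of direction $(-1,-2n)$, with quantum wall-crossing function $f_n=1+t^{n-1}t_1^n\,x\,y^{2n}$. These are the only walls the bending broken line can cross, because $b$ is chosen deep enough to avoid the central wall of infinite scattering. Along the broken line with asymptotic monomial $z^{m^2}$, each wall $n_j$ can be crossed any number $k_j>0$ of times (equivalently, the $q$-transport across $f_{n_j}$ gives a $q$-binomial power via the $q$-binomial theorem recalled after Proposition~\ref{prop:calc-log-gw-q}), while the cumulative effect of these bends on the remaining tangent monomial produces a product of $q$-binomial coefficients, each of the form $\qbinom{\langle n_{\fd}, m\rangle}{k}_q$.

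The key step is then the combinatorial bookkeeping. Order the walls so that $n_m>\cdots>n_1>0$, and let $k_j$ be the number of times the broken line bends at wall $n_j$. The direction of the tangent monomial at the moment immediately before the $i$-th wall is crossed is determined by the sum $\sum_{j\geq i}k_j\rho_2 + \sum_{j<i}k_j((n_j-n_i)\gamma_1+n_j\gamma_2)$ of all preceding bending contributions, whence the $q$-pairing with $n_{\fd_{n_i}}$ produces exactly the factor $d_2+2n_i\sum_{j\geq i}k_j+2\sum_{j<i}n_jk_j$ appearing in the $i$-th $q$-binomial of the stated formula. A final crossing of the wall $\fd_{D_1}$ in the direction $\rho_1$ accounts for the $\qbinom{d_2}{\sum_j k_j}_q$ factor, while meeting of the two broken lines at $p$ with $m^1_{\fend}=-m^2_{\fend}$ forces $d_1=\sum_j n_jk_j$, which is the degree constraint imposed in the outer summation.

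The main obstacle is purely combinatorial: keeping track in closed form of how the tangent monomial of the bending broken line evolves through a cascade of crossings of walls of different slopes, and verifying that the quantum pairing $\langle n_{\fd_{n_i}}, m_i \rangle$ with the wall normals reproduces the precise argument of each $q$-binomial. Once this bookkeeping is settled, summing over the allowed partitions $d_1=\sum n_jk_j$ with $n_1<\cdots<n_m$ and $k_j>0$, and over the integer $m\leq \lfloor(\sqrt{1+8d_1}-1)/2\rfloor$ (the largest $m$ for which $d_1\geq 1+2+\cdots+m$ admits a strictly increasing decomposition), yields the displayed formula. Setting $q=1$ at the end recovers the genus $0$ log invariants, consistently with \cref{conj:dp302} specialised via the blow-down to $\fzero(0,4)$.
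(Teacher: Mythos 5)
Your overall strategy---computing $\mathsf{N}^{\rm log}_{(d_1,d_2)}(\fzero(0,4))(\hbar)$ from the quantum scattering diagram of the toric model in Figure \ref{fig:fzero0} via \cref{prop:calc-log-gw-q}, using the infinite-scattering walls with functions $1+t^{n-1}t_1^{n}xy^{2n}$---is exactly the route the paper takes, but the configuration you describe is not the one that occurs, and the bookkeeping that constitutes the actual proof is left undone. In the chart of Figure \ref{fig:fzero0} the ray of $D_2$ is $(0,-1)$, not $(1,2)$: the ray $(1,2)$ is $F_1$, which carries a focus-focus singularity, so your asymptotic monomial $z^{(2d_1+d_2)\rho_2}=(xy^2)^{2d_1+d_2}$ is not the theta function required by \cref{prop:calc-log-gw-q}; the correct one is $y^{-(2d_1+d_2)}$. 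Moreover, with the endpoint $p=(a,b)$, $-1\ll a<0$, $b\ll 0$, the roles of the two broken lines are the opposite of what you claim: the line asymptotic to the $D_2$-direction comes up from below and must be straight, because any bend at a wall of direction $(-1,-2n)$ adds positive powers of $x$ that can never be cancelled before reaching $p$ (the only wall carrying $x^{-1}$, namely the one emanating from the focus-focus singularity on $F_2$, lies far above $p$); it is the line asymptotic to the $D_1$-direction, with monomial $y^{d_2}$, that bends. In particular the factor $\qbinom{d_2}{\sum_j k_j}_q$ does not come from ``crossing the wall $\fd_{D_1}$'': there is no wall supported on the $D_1$ ray (walls arise only from the focus-focus singularities and their scattering). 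That factor is the transport of $y^{d_2}$ across the wall $1+tx^{-1}$ of the $F_2$ singularity, and the exponent $\sum_j k_j$ is forced afterwards by the vanishing of the $x$-exponent at $p$.

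The substantive content of the theorem is precisely the ``combinatorial bookkeeping'' you defer, and since the paper itself only gives the picture, a proof must carry it out: after the bend at the $F_2$-wall the monomial is $x^{-K}y^{d_2}$ with $K=\sum_j k_j$; the line then meets the walls of direction $(-1,-2n)$ in increasing order of $n$ (it sweeps from the negative $x$-axis towards the negative $y$-axis), and just before bending $k_i$ times at the wall $n_i$ its exponent vector is $(-\sum_{j\geq i}k_j,\, d_2+2\sum_{j<i}n_jk_j)$, so the pairing with the wall normal is $d_2+2n_i\sum_{j\geq i}k_j+2\sum_{j<i}n_jk_j$, which is the entry of the $i$-th $q$-binomial in the statement. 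Closure at $p$ against the straight line with monomial $y^{-(2d_1+d_2)}$ forces both the vanishing of the $x$-exponent and $\sum_j n_jk_j=d_1$, and one must also check that the resulting monomial $t^{d_1}t_1^{d_1}$ is exactly the coefficient $\prod_j t_j^{e_j}$ demanded by \cref{prop:calc-log-gw-q} (here $e_1=e_2=d_1$), and that no other configurations contribute (further bends below $p$, bends of the $D_2$-line, the walls in $x>0$, the central wall): this exclusion rests on the choice of $p$, where it is $a\neq 0$ that avoids the central wall along $(0,-1)$, not $b\ll 0$, whose role is to make the final segments vertical and to exclude bends of the straight line on $t$-degree grounds. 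The intermediate-direction formula you do write, $\sum_{j\geq i}k_j\rho_2+\sum_{j<i}k_j((n_j-n_i)\gamma_1+n_j\gamma_2)$, does not reproduce these entries (it omits the initial $d_2\rho_1$ and the $x^{-K}$ contribution, and uses the wrong $\rho_2$), so as written your proposal does not establish the displayed multi-sum.
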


\cref{conj:log_defo} predicts that the multi-variate $q$-hypergeometric sums of \cref{thm:log_dp1_0_4,thm:log_f0_0_4} dramatically simplify to remarkably compact $q$-binomial expressions. This is expressed by the following new conjectural $q$-binomial identities.

\begin{conj}\label{conj:dp1binom}
The $q$-hypergeometric sums of \cref{thm:log_dp1_0_4,thm:log_f0_0_4} are equal to
\bea
\mathsf{N}^{\rm log}_d(\delp_1(0,4))(\hbar) &=&\frac{[2d_0]_q}{[d_0]_q}\,\qbinom{d_0}{d_1}_q \qbinom{d_0+d_1-1}{d_0}_q \,, \\
%
    \mathsf{N}^{\rm log}_d(\fzero(0,4))(\hbar)
    &=& \frac{[2d_1+d_2]_q}{[d_2]_q} 
    \qbinom{d_1+d_2-1}{d_1}_q^2 \,,
\eea
where $q=\re^{\ri \hbar}$.

\end{conj}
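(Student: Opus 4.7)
The two identities in \cref{conj:dp1binom} are the specialisations of the $q$-refined log deformation invariance conjectured in \cref{conj:log_defo} to the deformation-equivalent pairs $E_{\delp_1(0,4)} \sim E_{\delp_1(1,3)}$ and $E_{\fzero(0,4)} \sim E_{\fzero(2,2)}$ established in \cref{prop:local-deform}. The overall plan is to reduce both identities to instances of \cref{conj:log_defo} for those specific pairs, and then to establish the latter either geometrically via a refined degeneration argument, or combinatorially as $q$-hypergeometric identities.

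For the $\delp_1$ case, the ``tame partner'' $\delp_1(1,3)$ has its log invariants computed explicitly by specialising \eqref{eq:dP311log}: the blow-up formula \cref{prop:logblup} applied to the contraction $\delp_3 \to \delp_1$ of $E_2, E_3$ sends the class $d_0(H-E_1)+d_1 E_1$ to $(d_0, d_1, d_0, d_0)$ in the notation of \cref{prop:dp311}, giving
\[
\mathsf{N}^{\rm log}_{(d_0,d_1)}(\delp_1(1,3))(\hbar) \;=\; \qbinom{d_0}{d_1}_q\, \qbinom{d_0+d_1}{d_0}_q.
\]
Combined with the intersection multiplicities $d \cdot D_1 = d_1,\, d \cdot D_2 = 2d_0$ on $\delp_1(0,4)$ and $d \cdot D_1' = d_0,\, d \cdot D_2' = d_0+d_1$ on $\delp_1(1,3)$, and with the elementary Pascal-type identity $\qbinom{d_0+d_1}{d_0}_q [d_1]_q = \qbinom{d_0+d_1-1}{d_0}_q [d_0+d_1]_q$, \cref{conj:log_defo} exactly reproduces the predicted right-hand side $\frac{[2d_0]_q}{[d_0]_q} \qbinom{d_0}{d_1}_q \qbinom{d_0+d_1-1}{d_0}_q$. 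The $\fzero(0,4)$ identity follows analogously, once $\mathsf{N}^{\rm log}_d(\fzero(2,2))(\hbar)$ is computed by a parallel finite-scattering argument, either via the $\fzero \rightsquigarrow \ftwo$ deformation of \cref{prop:local-deform} (whose toric-model scattering diagram for $\ftwo(2,2)$ is straightforward to write down) or via \cref{prop:logblup} applied to a nef blow-up with toric model.

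To prove \cref{conj:log_defo} in these cases, the approach I would pursue is a $q$-refined analogue of the degeneration argument underlying \cref{thm_log_local_2}. The deformations of \cref{prop:local-deform} are induced by relative Euler sequences on $\PP^1$ and admit natural log smooth extensions to families $\cY \to \A^1$ of log CY surfaces. One would then apply the decomposition formula of \cite{abramovich2017decomposition} to \eqref{eq:def-Nlog-g-d} with $\lambda_g$-insertion, using the $q$-refined scattering framework of \cref{prop:calc-log-gw-q} together with the higher-genus gluing formulae for the $\lambda_g$-class from \cite{bousseau2018quantum}, with the aim of identifying the multiplicative discrepancy between the two sides of \cref{conj:log_defo} with the ratio $\prod_j [d \cdot D_j']_q / [d \cdot D_j]_q$ of boundary $q$-intersection numbers.

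The principal obstacle is the infinite scattering appearing on the $\delp_1(0,4)$ and $\fzero(0,4)$ sides: the LHS is a genuinely infinite multi-variable $q$-sum over $(k_j, n_j)$ subject to the strict inequalities $n_1 > n_2 > \cdots > n_m > 0$, and collapsing it to the compact closed form of the RHS is not a routine application of standard $q$-summation formulae. A direct combinatorial attack, attempting an iterative reduction of the $m$-fold sum via the $q$-Chu--Vandermonde and $q$-Pfaff--Saalsch\"utz summations, seems plausible but requires careful handling of the ordering constraints on the partition parts; as noted just after \cref{conj:dp1binom}, this route has in fact been carried out by Krattenthaler in \cite{CKpriv}. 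In the classical limit $q \to 1$, the statement collapses onto the log--local correspondence \cref{thm_log_local} combined with the deformation invariance of local GW invariants, so the genuinely hard content is concentrated in the higher-genus / $q$-refined statement.
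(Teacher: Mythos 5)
Your reconstruction of where the conjectured closed forms come from is the same as the paper's: the right-hand sides are exactly what \cref{conj:log_defo} predicts once the tame partner is computed by blowing down from $\delp_3(1,1)$ (your bookkeeping is correct: the pullback class has $d_2=d_3=d_0$, \eqref{eq:dP311log} collapses to $\qbinom{d_0}{d_1}_q\qbinom{d_0+d_1}{d_0}_q$, and the Pascal-type manipulation with the factors $[d\cdot D_j]_q$, $[d\cdot D_j']_q$ reproduces both stated formulas). But this does not prove the statement: \cref{conj:dp1binom} is nothing other than the concrete content of \cref{conj:log_defo} for these two deformation pairs, given that the left-hand sides are already computed in \cref{thm:log_dp1_0_4,thm:log_f0_0_4}. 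Reducing the former to the latter is therefore circular, and the paper itself leaves the identity as a conjecture, proved only in the classical limit $q=1$ (via \cref{thm_log_local} plus deformation invariance of local invariants) and, for general $q$, by the external combinatorial argument of Krattenthaler \cite{CKpriv} -- which is precisely the step you also defer.

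The one genuinely new ingredient you propose -- a $q$-refined degeneration argument for \cref{conj:log_defo} -- rests on a false premise. The deformation equivalences of \cref{prop:local-deform} are equivalences of the local Calabi--Yau fourfolds $E_{Y(D)}$, not of the log Calabi--Yau surface pairs: $\delp_1(0,4)$ and $\delp_1(1,3)$ (likewise $\fzero(0,4)$ and $\fzero(2,2)$) are distinct deformation types of pairs, so there is no log smooth family $\cY\to\A^1$ of surfaces interpolating between them to which the decomposition formula of \cite{abramovich2017decomposition} with $\lambda_g$-insertions could be applied. Indeed, if such a family existed, deformation invariance of log Gromov--Witten invariants \cite{ManRu} would force $\mathsf{N}^{\rm log}_d(\delp_1(0,4))=\mathsf{N}^{\rm log}_d(\delp_1(1,3))$, contradicting the nontrivial ratio $\prod_j[d\cdot D_j]_q/[d\cdot D_j']_q$ that the conjecture itself requires. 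The genus-$0$ mechanism (pass through the local geometry, which does deform) has no higher-genus analogue here, since no higher-genus local theory of the CY4 enters the correspondence; this is exactly why the paper proves the all-genus log-open statement only for tame pairs, via scattering and the vertex, and not by degeneration. So the hard content of the conjecture -- collapsing the infinite-scattering multi-sums over $(n_j,k_j)$ to the closed $q$-binomial forms -- remains untouched by your proposal, and the geometric route you sketch would not close it.
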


\begin{figure}[t]
\begin{minipage}{0.48\textwidth}
\flushleft
\begin{tikzpicture}[smooth, scale=1.2]
\draw[step=1cm,gray,very thin] (-2.5,-4.5) grid (3.5,2.5);
\draw[<->] (-2.5,2) to (-1.5,2);
\draw[<->] (-2,1.5) to (-2,2.5);
\node at (-1.67,2.15) {$\scriptstyle{x}$};
\node at (-2.15,2.3) {$\scriptstyle{y}$};
\draw(-2.5,0) to (3.5,0);
\draw (0,-4.5) to (0,2.5);
\draw (0,0) to (1.25,2.5);
\node at (-0.25,-1.7) {$D_2$};
\node at (0.95,2.35) {$D_1$};
\node at (-0.25,1.7) {$F_1$};
\node at (-1.5,0.2) {$F_2$};
\node at (-2.3,0) {$\times$};
\node at (0.75,1.5) {$\times$};
\node at (-0.5,0.2) {$\sstyle{1+tx^{-1}}$};
\node[rotate=63.43] at (0.35,1) {$\sstyle{1+t_1xy^2}$};
\node at (0.4,-2.5) {$\cdots$};
\draw (0,0) to (2.25,-4.5);
\draw (0,0) to (1.125,-4.5);
\draw[<->,thick] (0.75,-4.5) to (0.75,-3) to (0.9,-1.8) to (1.5,0) to (2.75,2.5);
\node at (0.4,-4.2) {$\sstyle{y^{-2d_0}}$};
\node at (3.1,2.2) {$\sstyle{x^{d_1}y^{2d_1}}$};
%
\node at (2,1) {$\bullet$};
\node at (2.2,1.1) {$\sstyle{p}$};
\end{tikzpicture}
\end{minipage}
\begin{minipage}{0.48\textwidth}
\flushright
\begin{tikzpicture}[smooth, scale=1.2]
\draw[step=1cm,gray,very thin] (-3.5,-4.5) grid (2.5,2.5);
\draw[<->] (-3.5,2) to (-2.5,2);
\draw[<->] (-3,1.5) to (-3,2.5);
\node at (-2.67,2.15) {$\scriptstyle{x}$};
\node at (-3.15,2.3) {$\scriptstyle{y}$};
\draw(-3.5,0) to (2.5,0);
\draw (0,-4.5) to (0,2.5);
\draw (0,0) to (1.25,2.5);
\node at (0.3,-1.7) {$D_2$};
\node at (1.3,1.7) {$F_1$};
\node at (-0.3,1.7) {$D_1$};
\node at (-2.5,-0.3) {$F_2$};
%
\node at (-3.3,0) {$\times$};
\node at (0.75,1.5) {$\times$};
\node at (-2.5,0.2) {$\sstyle{1+tx^{-1}}$};
\node[rotate=63.43] at (0.35,1) {$\sstyle{1+t_1xy^2}$};
\node at (-0.3,-2.5) {$\cdots$};
\draw (0,0) to (-2.25,-4.5);
\draw (0,0) to (-1.125,-4.5);
\draw[<->,thick] (-0.75,-4.5) to (-0.75,-3) to (-0.9,-1.8) to (-1.5,0) to (-1.5,2.5);
\node at (-0.15,-4.25) {$\sstyle{y^{-2d_1-d_2}}$};
\node at (-1.75,2.2) {$\sstyle{y^{d_2}}$};
%
\node at (-0.75,-4) {$\bullet$};
\node at (-0.6,-3.9) {$\sstyle{p}$};
\end{tikzpicture}
\end{minipage}
\caption{Scattering diagrams of $\delp_1(0,4)$ (left) and $\fzero(0,4)$ (right).}
\label{fig:delp1scatt}
\end{figure}

A proof of the identities of \cref{conj:dp1binom} was communicated to us by C.~Krattenthaler \cite{CKpriv}. Note that the genus $0$ log-local correspondence of Theorem \ref{thm_log_local} and the deformation invariance of local Gromov--Witten invariants give an entirely geometric proof of their classical limit at $q=1$.

\section{Proof of \cref{thm_key}}
\label{sec:constr_trop}
Recall the notation of Section \ref{sec:logloc_gen} and let $h \colon \Gamma \rightarrow \Delta^h$ be a rigid decorated parametrised tropical curve with $N^{{\rm loc}, h}_{0,d} (Y(D)) \neq 0$. 
Our goal is to prove that $h=\bar{h}$.
This will be done by a series of Lemmas constraining further and further the possible shape of $h$.

\begin{lem} \label{lem_central_vertex}
There exists at least one vertex $V$ of 
$\Gamma$ with $h(V)=v_Y$.
\end{lem}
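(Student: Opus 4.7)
The plan is to observe that this lemma is essentially immediate from unpacking the definition of the parametrised tropical curves entering the decomposition formula. The $l-1$ unbounded edges of $\Gamma$ encode the $l-1$ marked points carrying the insertions $\ev_k^{*}(\pi_{\PP(\cV_0^h)}^{*}[\pt_Y])$; since representatives of $[\pt_Y]$ lie in the open stratum $Y\subset \cV_0$ whose dual vertex is precisely $v_Y$, the definition of a rigid decorated parametrised tropical curve contributing to $N^{{\rm loc},h}_{0,d}(Y(D))$ forces each such unbounded edge to be fully contracted to $v_Y$. This is exactly the constraint already stated in the text preceding the lemma.

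Next, I would invoke the standing assumption that $Y(D)$ is a Looijenga pair, so $l\geq 2$ and hence $l-1\geq 1$. In particular $\Gamma$ has at least one unbounded edge $E_\infty$. By definition, an unbounded edge is a half-line with a unique finite endpoint, which is a vertex $V$ of $\Gamma$. Since $h$ is piecewise linear and maps the whole of $E_\infty$ to the single point $v_Y$, it sends its finite endpoint $V$ to $v_Y$, yielding the desired vertex.

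I do not anticipate any substantive obstacle: the content of the lemma is logical rather than geometric, and serves only to anchor the subsequent combinatorial analysis. The heavier work will come in the following lemmas, which use this vertex as a base point and exploit the balancing condition, the prescribed contact orders along $D_1$ and $D_2$ (encoded by the weights of the unbounded edges dictated by $d\cdot D_j$), and the decoration of vertices by curve classes summing to $d$, in order to rule out all contributing $h\neq\bar h$.
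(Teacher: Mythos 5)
Your argument is correct and is essentially the paper's own proof: both hinge on the fact that the $l-1\geq 1$ point insertions force the marked points to lie over the interior of the component dual to $v_Y$, so the vertex carrying a marked point (equivalently, the finite endpoint of a contracted unbounded edge) maps to $v_Y$. The only cosmetic difference is that you phrase it via the contracted legs while the paper phrases it via the irreducible components containing the marked points, which is the same statement in the tropical dictionary.
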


\begin{proof}
We are considering stable log maps to $\PP(\cV_0)$ with $l-1>0$ marked points mapping to the interior of $\PP(\cV_{0,Y})$. So irreducible components containing these marked points map to $\PP(\cV_{0,Y})$, and the corresponding vertices of $\Gamma$ are mapped to $v_Y$ by $h$. 
\end{proof}

We choose a flow on $\Gamma$ such that unbounded edges are ingoing, such that every vertex has at most one outgoing edge, and such that the sink $V_0$ satisfies $h(V_0)=v_Y$. 
Such flow exists by Lemma \ref{lem_central_vertex}. Following the flow, the maps $\eta_V$
define a cohomology class $\alpha_E \in \hhh^{*}(X_E)$ for every edge $E$ of $\Gamma$. 
The degeneration formula can be rewritten as 
\beq N^{{\rm loc}, h}_{0,d} (Y(D))
=\eta_{V_0} \left( \prod_{E \in \cE_{in}(V_0)} \alpha_E \right) \,.\eeq

For every  $V$ vertex of $\Gamma$ with $h(V) \in (\partial \Delta)_j$,
denote by $D_{j,V}^{\partial}$ the divisor of $Y_V$ which is the component in 
$Y_V$ of the intersection with $\cY_{0}^h$ of the closure of 
$D_j \times (\A^1-\{0\})$ in $\cY^h$.

\begin{lem} \label{lem_toric_homological_balancing}
Let $V$ be a vertex of $\Gamma$ with $h(V) \in (\partial \Delta)_j$
for some $1 \leq j \leq l$. Then we have 
$d_V \cdot D_{j,V}^{\partial} > 0$ 
if and only if there is an edge $E$
of $\Gamma$ incident to $V$ such that $h(E) \not\subset (\partial \Delta)_j$.
\end{lem}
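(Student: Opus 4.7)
The plan is to deduce the lemma from the ``toric'' nature of the pair $(\cV_{0,V}^h, \partial \cV_{0,V}^h \cup D_{j,V}^{\partial})$ described in Section~\ref{sec:logloc}, combined with the standard tropical balancing/intersection computation for stable log maps to toric varieties. I would argue case by case according to the image of $V$ under $h$.

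\textbf{Setup.} Since $h(V) \in (\partial\Delta)_j$, the point $h(V)$ lies either in the relative interior of an edge $e_{p,j}$ for some $p \in D_j \cap D_{j'}$, or else $h(V) = v_p$ for some such $p$. In each case, by the local description recalled in Section~\ref{sec:logloc} (Figures~\ref{figure_fan_2} and~\ref{figure_fan_3}), the pair $(\cV_{0,V}^h, \partial \cV_{0,V}^h \cup D_{j,V}^{\partial})$ is a toric variety with its toric boundary, and the divisor $D_{j,V}^{\partial}$ corresponds to a specific ray $\rho_j$ in the fan $\Sigma_V$ of this toric variety. This ray $\rho_j$ is precisely the ray dual to the edge $(\partial\Delta)_j$ at $h(V)$: in other words, its primitive generator points in the unique direction transverse to $(\partial\Delta)_j$ into the interior of $\Delta$ at $h(V)$.

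\textbf{Balancing computation.} By the tropical/logarithmic correspondence for stable log maps to toric varieties (see e.g.~\cites{NiSi,ManRu}), the intersection number of the curve class $d_V$ with any toric divisor $D_\rho$ of $\cY_{0,V}^h$ (pulled back to $\cV_{0,V}^h$) is given as a sum of local contributions from the edges $E$ incident to $V$. More precisely, if $u_{(V,E)}$ denotes the primitive integral direction of $h(E)$ pointing out of $h(V)$, weighted by the weight of $E$, and $m_\rho$ is an integral linear functional on the cocharacter lattice of the torus which vanishes on all rays of $\Sigma_V$ distinct from $\rho$ and takes value $1$ on $\rho$, then
\[
d_V \cdot D_\rho \;=\; \sum_{E \in \cE(V)} \langle m_\rho, u_{(V,E)} \rangle \,.
\]
Applied to $\rho = \rho_j$ (and the corresponding divisor $D_{j,V}^{\partial}$), this formula produces the intersection we are after.

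\textbf{Sign of each contribution.} The two possibilities for an edge $E$ incident to $V$ are that $h(E) \subset (\partial\Delta)_j$, or that $h(E)$ exits $(\partial\Delta)_j$ into the interior of $\Delta$. In the first case the direction $u_{(V,E)}$ is parallel to $(\partial\Delta)_j$ and thus lies in the hyperplane on which $m_{\rho_j}$ vanishes, contributing $0$. In the second case $u_{(V,E)}$ has a strictly positive pairing with $m_{\rho_j}$, because it points into the interior of $\Delta$ away from the boundary face $(\partial\Delta)_j$ (and $m_{\rho_j}$ was chosen positive on this interior side). Summing, we see that $d_V \cdot D_{j,V}^{\partial}$ is a sum of non-negative terms, with a strictly positive summand arising exactly when there is at least one edge $E$ with $h(E) \not\subset (\partial\Delta)_j$. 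This yields both implications of the lemma.

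\textbf{Main obstacle.} The non-trivial step is unpacking the toric description of $\cV_{0,V}^h$ uniformly across the two cases (interior of $e_{p,j}$ versus the vertex $v_p$), and verifying that in each case the ray of $\Sigma_V$ corresponding to $D_{j,V}^{\partial}$ is genuinely the one dual to $(\partial\Delta)_j$. At $h(V)=v_p$ this requires tracking the effect of the blow-up of the ordinary double point $p^{\partial}$ (the exceptional divisor $\bS_p \simeq \PP^1\times\PP^1$ introduces extra toric data but the piece relevant to $D_{j,V}^{\partial}$ reduces to a single ray transverse to $(\partial\Delta)_j$, so the analysis goes through). Once this identification is made, the balancing computation above immediately yields the dichotomy.
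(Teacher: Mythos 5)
Your case analysis is incomplete, and the omission hides the only genuinely delicate point of the lemma. Since $(\partial \Delta)_j=\bigcup_p e_{p,j}$ and each edge $e_{p,j}$ joins $v_p$ to $v_j$, a vertex $V$ with $h(V)\in(\partial\Delta)_j$ may also satisfy $h(V)=v_j$; you list only the interior of $e_{p,j}$ and $v_p$ (both in your setup and in your ``main obstacle'' paragraph), so this case is never treated. It is exactly the case the paper isolates: there $Y_V$ is a toric blow-up of the Hirzebruch surface $\F_{D_j^2}$, the ray of $D_{j,V}^{\partial}$ is adjacent to the two rays giving the directions of $(\partial\Delta)_j$ at $v_j$, and those two directions are \emph{not} collinear — they sum to $-D_j^2$ times the $D_{j,V}^{\partial}$-direction. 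Hence the existence of a functional vanishing on the boundary directions and of a single sign on all the remaining (interior) rays — which is what your ``sign of each contribution'' step silently assumes when you speak of ``the interior side'' — is precisely the convexity of the complement of the $D_{j,V}^{\partial}$-ray, and it holds only because $D_j$ is nef, $D_j^2\geq 0$. Your argument never invokes nefness anywhere, which is the tell-tale sign of the gap: this lemma is where that hypothesis is consumed in the paper's proof.

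Two further problems, fixable but real. First, the functional $m_\rho$ you posit (vanishing on all rays of $\Sigma_V$ other than $\rho$ and equal to $1$ on $\rho$) does not exist once the fan has more than three rays; the correct mechanism is the full balancing relation $\sum_{\rho}(d_V\cdot D_\rho)\,u_\rho=0$, combined with the fact that for rays dual to edges of $\Delta^h$ the number $d_V\cdot D_\rho$ is the non-negative total contact order recorded by the edges of $\Gamma$ at $V$, and then paired against a functional vanishing on the two $(\partial\Delta)_j$-directions — whose existence with the needed sign is again the convexity issue above. Second, the ray of $D_{j,V}^{\partial}$ does not ``point into the interior of $\Delta$'': $D_{j,V}^{\partial}$ is a horizontal divisor, not a stratum of the special fibre, so its ray is the unique direction \emph{not} lying in $\Delta^h$ (the vertical ray in the paper's figures), while all edges of $\Gamma$ at $V$ map into the complementary lower part of the fan; as written your intersection formula has the orientation backwards, though the bookkeeping can be repaired. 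In the cases you do treat ($h(V)$ in the interior of $e_{p,j}$ or $h(V)=v_p$) the complement of the $D_{j,V}^{\partial}$-ray is a half-plane and your computation reduces to the paper's first case; the missing $v_j$ case, where $D_j^2\geq 0$ must be used, is the real content and is absent from your proposal.
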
 

\begin{proof}
First assume that $h(V) \neq v_j$. Then, $Y_V$ can be described as a
toric blow-up of $\PP^1 \times \PP^1$, where all the added rays 
are contained in the lower half-plane of the fan, and where the vertical ray corresponds to 
$D_{j,V}^{\partial}$, see Figure \ref{figure_fan_6}. The lower part of the fan gives a local picture of 
$\Delta^h$ near $h(V)$. By definition of the $\Delta^h$, every edge $E$ 
of $\Gamma$ incident to $V$ is mapped by $h$ to one of the rays in the lower part of the fan. We have 
$h(E) \not\subset (\partial \Delta)_j$ if and only if $E$ is contained in 
on of the rays in the strict lower part of the fan. The result then follows from toric homological balancing.

If $h(V) =v_j$, the argument is similar. Recall that we have $D_j \simeq \PP^1$.
The key point is that $D_j$ is nef and so 
$D_j^2 \geq 0$. Therefore, $Y_V$ can be described as a toric blow-up of the Hirzebruch surface $\F_{D_j^2}$, where all the added rays are contained in the lower half-part of the fan, and where the vertical ray, with self-intersection $D_j^2$, corresponds to $D_{j,V}^{\partial}$, see Figure 
\ref{figure_fan_7}. The lower part of the fan gives a local picture of $\Delta^h$
near $h(V)$.  By definition of the $\Delta^h$, every edge $E$ 
of $\Gamma$ incident to $V$ is mapped by $h$ to one of the rays in the lower part of the fan. We have $h(E) \not\subset (\partial \Delta)_j$ if and only if $h(E)$ is contained in 
on of the rays in the strict lower part of the fan. As $D_j^2 \geq 0$, the lower part of the fan is convex and so the result follows from toric homological balancing.

\begin{figure}
\caption{Toric description of $Y_V$ for $V \in (\partial \Delta)_j - \{v_j\}$}
\begin{center}
\setlength{\unitlength}{1cm}
\begin{picture}(6,5)
\thicklines
\put(3,3){\circle*{0.1}}
\put(3,3){\line(1,0){2}}
\put(3,3){\line(-1,0){2}}
\put(3,3){\line(0,1){2}}
\put(3,3){\line(0,-1){2}}
\put(3,3){\line(1,-1){2}}
\put(3,3){\line(-1,-1){2}}
\put(3,3){\line(1,-2){1}}
\end{picture} 
\end{center}
\label{figure_fan_6}
\end{figure}

\begin{figure}
\caption{Toric description of $Y_V$ for $V=v_j$}
\begin{center}
\setlength{\unitlength}{1cm}
\begin{picture}(6,5)
\thicklines
\put(3,3){\circle*{0.1}}
\put(3,3){\line(-1,0){2}}
\put(3,3){\line(0,1){2}}
\put(3,3){\line(0,-1){2}}
\put(3,3){\line(1,-1){2}}
\put(3,3){\line(-1,-1){2}}
\put(3,3){\line(1,-2){1}}
\end{picture} 
\end{center}
\end{figure}
\label{figure_fan_7}
\end{proof}

For every edge $E$ of $\Gamma$, we denote by $H_{j,E} \in \hhh^2(X_E)$
the first Chern class of the tautological line bundle 
$\cO_{\PP(\ccL_j|_{Y_E} \oplus \cO_{Y_E})}(1)$.
We have $H_{j,E}^2=-c_1(\ccL_j|_{Y_E}) H_{j,E}$. 
If $h(E) \not\subset (\partial \Delta)_j$, then $\ccL_j|_{Y_E}=\cO_{Y_E}$
and so $H_{j,E}^2=0$. If $h(E) \subset (\partial \Delta)_j$, then 
$\ccL_j|_{Y_E}=\cO(-1)$, and so 
$H_{j,E}^2=(\pi_E^{*} \pt_E) H_{j,E}$.

\begin{lem}\label{lem_boundary}
Let $V$ be a vertex of $\Gamma$
with $h(V) \in (\partial \Delta)_j$. 
Assume that there exists an ingoing edge $E$
incident to $V$ such that $\alpha_E$ is
a non-zero multiple of $H_{j,E}$. 
Then, the image by $h$ of the outgoing edge $E_V$
incident to $V$ is contained in $(\partial \Delta)_j$
and $\alpha_{E_V}$ is a non-zero multiple of $H_{j,E_V}$.
\end{lem}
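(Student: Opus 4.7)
The plan is to reduce the lemma to an incidence geometry/toric balancing statement on the component $X_V = \PP(\cV_{0,V}^h)$, which is a $(\PP^1)^l$-bundle over the toric stratum $Y_V$. First I would identify $H_{j,E}$ geometrically. Inside $X_V$, let $\hat D_{j,V}$ denote the infinity section of the $j$-th $\PP^1$-factor of the bundle $X_V \to Y_V$, namely the divisor
\[
\hat D_{j,V} \;=\; \PP(\ccL_j|_{Y_V}\oplus \cO_{Y_V})\big|_{\{[0:1]\text{ in the }j\text{-th factor}\}}\;\subset\; X_V.
\]
Then $[\hat D_{j,V}]$ restricts to $X_E$ as $H_{j,E}$, and $\hat D_{j,V} \cap X_{E'} \neq \emptyset$ precisely when $h(E') \subset (\partial\Delta)_j$, in which case the intersection class equals $H_{j,E'}$. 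Under this identification the hypothesis $\alpha_E \in \bbQ_{\neq 0} \cdot H_{j,E}$ translates to the statement that $\ev_{V,E}^{*} \alpha_E$ imposes the incidence condition that the $E$-contact point of any stable log map parametrised by $M_V$ lies on the section $\hat D_{j,V}$.

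Next I would argue that this incidence condition, combined with the toric homological balancing at $V$, in fact forces the image of every stable log map in the support of $\ev_{V,E}^{*} H_{j,E}\cap [M_V]^{\rm vir}$ to be entirely contained in $\hat D_{j,V}$. The point is that the contact order of $d_V$ with $\hat D_{j,V}$ is dictated by the toric balancing of $X_V$ along the rays of its fan (as in \cref{lem_toric_homological_balancing}), and the hypothesis $h(V)\in (\partial\Delta)_j$ guarantees that $\ccL_j|_{Y_V}$ is non-trivial, so that $\hat D_{j,V}$ is an actual boundary stratum of $X_V$ with respect to the log structure. The contact at the marked point corresponding to $E$ exhausts the intersection number $d_V\cdot \hat D_{j,V}$ available from the ``non-$(\partial\Delta)_j$'' directions, and the only way the virtual insertion can be non-zero is if the remaining intersection is absorbed by the curve sitting inside $\hat D_{j,V}$ as a whole. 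An excess-intersection / virtual pull-back argument analogous to the one used in \cite{vGGR} and Section 5 of \cite{bousseau2018quantum} converts this heuristic into an honest identity on the virtual class.

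Once the stable maps factor through $\hat D_{j,V}$, the pushforward along $\ev_{V,E_V}$ is automatically supported on $\hat D_{j,V}\cap X_{E_V}$. This forces the outgoing stratum $h(E_V)$ to lie in $(\partial\Delta)_j$, for otherwise $\hat D_{j,V}\cap X_{E_V}=\emptyset$ and $\alpha_{E_V}$ would vanish, contradicting the non-triviality we are about to check. On $X_{E_V}$ we then have $[\hat D_{j,V}\cap X_{E_V}]=H_{j,E_V}$, so $\alpha_{E_V}$ is a multiple of $H_{j,E_V}$. Non-vanishing follows from the same kind of local multi-cover computation as in \cref{lem_computation}: after restricting everything to $\hat D_{j,V}$, the contribution is modelled on a genus $0$ map to a projective bundle over $Y_V$ with prescribed ramification, whose Gysin pushforward is an explicit positive rational number.

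The main obstacle will be the rigorous verification of the ``factoring through $\hat D_{j,V}$'' step, i.e.\ that the virtual cycle $\ev_{V,E}^{*}H_{j,E}\cap [M_V]^{\rm vir}$ is actually supported on the moduli space of log maps into $\hat D_{j,V}$ rather than merely incident to it. I would handle this by exploiting the toric structure of $X_V$ to set up a $T$-equivariant virtual localisation, together with a comparison of perfect obstruction theories for $M_V$ and for its substack of maps to $\hat D_{j,V}$, mirroring the strategy used to handle the normal cone degeneration earlier in the section; everything else is a bookkeeping exercise in toric intersection theory.
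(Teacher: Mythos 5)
Your proposal hinges on a support claim that is not true and is not needed: you assert that capping $[M_V]^{\rm vir}$ with $\ev_{V,E}^{*}H_{j,E}$ forces the contributing stable log maps to factor through the section $\hat{D}_{j,V}$ of the $j$-th $\PP^1$-bundle. An incidence condition at one marked point does not localise the virtual class onto maps contained in that divisor, and in the relevant situations it cannot: when $d_V \cdot D^{\partial}_{j,V} > 0$ the curves genuinely move in the $j$-th fibre direction (the model case being multiple covers of $\PP^1$-fibres, as in \cref{lem_computation}) and meet the section only at points. The paper never needs containment; its proof is a two-line intersection-theoretic count on $X_{E_V}$ using the relation $H_{j,E}^2=-c_1(\ccL_j|_{Y_E})H_{j,E}$, i.e.\ $H_{j,E}^2=0$ when $h(E)\not\subset(\partial\Delta)_j$ and $H_{j,E}^2=(\pi_E^{*}\pt_E)H_{j,E}$ when $h(E)\subset(\partial\Delta)_j$. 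In the case $h(E_V)\subset(\partial\Delta)_j$ with $d_V\cdot D^{\partial}_{j,V}>0$, the output class is a multiple of $H_{j,E_V}^2=(\pi_{E_V}^{*}\pt_{E_V})H_{j,E_V}$, hence still a multiple of $H_{j,E_V}$ — precisely the case your ``factor through the section'' picture misdescribes.

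Your mechanism for the first half of the conclusion is also incorrect: you rule out $h(E_V)\not\subset(\partial\Delta)_j$ by claiming $\hat{D}_{j,V}\cap X_{E_V}=\emptyset$ in that case, but the section lives over all of $Y_V$ and meets every stratum $X_{E_V}$; what changes when $h(E_V)\not\subset(\partial\Delta)_j$ is only that $\ccL_j|_{Y_{E_V}}$ is trivial, so $H_{j,E_V}^2=0$. The paper's argument is: if $h(E_V)\not\subset(\partial\Delta)_j$, then \cref{lem_toric_homological_balancing} gives $d_V\cdot D^{\partial}_{j,V}>0$, which forces $\alpha_{E_V}$ to be proportional to $H_{j,E_V}^2=0$, contradicting the standing hypothesis $N^{{\rm loc},h}_{0,d}(Y(D))\neq 0$. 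The same standing hypothesis is what makes the output a \emph{non-zero} multiple of $H_{j,E_V}$; no local ``positive multi-cover'' computation is available or needed here (such contributions carry signs in any case), and the heavy machinery you invoke (equivariant localisation, comparison of obstruction theories) is a detour around a gap rather than a way to close it.
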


\begin{proof}
If $h(E_V) \not\subset (\partial \Delta)_j$,
then, by Lemma \ref{lem_toric_homological_balancing}, we have 
$d_V \cdot D_{j,V}^{\partial}>0$ and so $\alpha_{E_V}$
is proportional to $H_{j,E_V}^2=0$. Therefore, $\alpha_{E_V}=0$, in contradiction
with the assumption $N^{{\rm loc},h}_d(Y(D)) \neq 0$, and so this does not happen.

Therefore, we can assume that $h(E_V) \subset (\partial \Delta)_j$. 
If $d \cdot D_{j,V}^{\partial}>0$, then $\alpha_{E_V}$
is a multiple of $H_{j,E_V}^2 =(\pi_{E_V}^{*} \pt_{E_V})H_{j,E_V}$.
If $d \cdot D_{j,V}^{\partial}=0$, then $\alpha_{E_V}$
is a multiple of $H_{j,E_V}$. 
\end{proof}

\begin{lem} \label{lem_repulsive_1}
Let $V$ be a vertex of $\Gamma$ with an incident ingoing edge $E$
such that $\alpha_E$ is proportional to $H_{j,E}$. Then, $h(V) \notin (\partial \Delta)_j$.
\end{lem}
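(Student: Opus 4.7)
The plan is to argue by contradiction, propagating the hypothesis along the flow on $\Gamma$ using the previous lemma, until one hits the sink $V_0$ and obtains a contradiction with its image being $v_Y$.

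More precisely, suppose towards contradiction that $h(V) \in (\partial \Delta)_j$. Then the hypothesis of Lemma \ref{lem_boundary} is satisfied at $V$, so the unique outgoing edge $E_V$ incident to $V$ satisfies $h(E_V) \subset (\partial \Delta)_j$ and $\alpha_{E_V}$ is a non-zero multiple of $H_{j,E_V}$. Let $V^{(1)}$ denote the other endpoint of $E_V$. Since $(\partial \Delta)_j$ is a closed subcomplex of $\Delta^h$, the inclusion $h(E_V) \subset (\partial \Delta)_j$ forces $h(V^{(1)}) \in (\partial \Delta)_j$, while $E_V$ is by construction an ingoing edge at $V^{(1)}$ with $\alpha_{E_V}$ proportional to $H_{j,E_V}$. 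Hence the hypotheses of Lemma \ref{lem_boundary} are again met at $V^{(1)}$.

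Iterating along the flow, I obtain a sequence of vertices $V = V^{(0)}, V^{(1)}, V^{(2)}, \dots$ and edges $E_{V^{(k)}}$, all mapped to $(\partial \Delta)_j$ by $h$, with $\alpha_{E_{V^{(k)}}}$ a non-zero multiple of $H_{j, E_{V^{(k)}}}$. Because $\Gamma$ is a finite tree and every vertex except the sink $V_0$ has exactly one outgoing incident edge (and the flow is acyclic), this sequence must terminate at $V_0$, yielding $h(V_0) \in (\partial \Delta)_j$.

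This however contradicts the choice of flow, which was fixed in such a way that $h(V_0) = v_Y$, and $v_Y \notin (\partial \Delta)_j$ since $v_Y$ is the central vertex of $\Delta$ while $(\partial \Delta)_j = \bigcup_{p \in D_j \cap D_{j'}} e_{p,j}$ lies on the boundary. The only non-routine point in this plan is the bookkeeping check that at each step the image of the newly-produced outgoing edge indeed falls into the boundary stratum rather than escaping to the interior, but this is precisely the content of Lemma \ref{lem_boundary} and so no further work is required.
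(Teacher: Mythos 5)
Your proof is correct and is essentially the paper's own argument: an iterated application of Lemma \ref{lem_boundary} along the flow, showing all descendants of $V$ stay in $(\partial \Delta)_j$, contradicting $h(V_0)=v_Y$. You merely spell out the bookkeeping that the paper leaves implicit.
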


\begin{proof}
Else, by iterative application of Lemma \ref{lem_boundary}, all the descendants of $V$
are mapped by $h$ to $(\partial  \Delta)_j$, in contradiction with the fact that the sink
$V_0$ of $\Gamma$ is mapped by $h$ to $v_Y$.
\end{proof}

\begin{lem}\label{lem_repulsive_2}
Let $V$ be a vertex of $\Gamma$ such that $V \in (\partial \Delta)_j$ and such that there 
exists an ingoing edge $E$ incident to $V$
with $h(E) \not\subset (\partial \Delta)_j$.
Then, denoting by $E_V$ the outgoing edge incident to $V$, $\alpha_{E_V}$
is a non-zero multiple of $H_{j,E_V}$.
\end{lem}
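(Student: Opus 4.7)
My plan is to combine Lemma \ref{lem_toric_homological_balancing} with an analysis of the evaluation map $\ev_{V,E_V}$ and a non-vanishing argument based on the global hypothesis $N^{{\rm loc},h}_{0,d}(Y(D)) \neq 0$.

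First I would apply Lemma \ref{lem_toric_homological_balancing}: since $h(V) \in (\partial\Delta)_j$ and there exists an incident edge $E$ with $h(E) \not\subset (\partial\Delta)_j$, we obtain $d_V \cdot D_{j,V}^\partial > 0$. Beyond ensuring this positivity, the distinguished edge $E$ plays no further explicit role; the rest of the argument proceeds from the positivity of $d_V \cdot D_{j,V}^\partial$ together with the general structure of $M_V$ and the decomposition of the cohomology of $X_{E_V}$.

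Next I would establish divisibility of $\alpha_{E_V}$ by $H_{j,E_V}$. Writing $X_{E_V}$ as a fibre product over $Y_{E_V}$ of $l$ projective line bundles, any cohomology class is a polynomial in the tautological classes $H_{1,E_V},\ldots,H_{l,E_V}$ with coefficients pulled back from $\hhh^*(Y_{E_V})$. The positivity $d_V \cdot D_{j,V}^\partial > 0$ obtained in the previous step says that stable log maps in $M_V$ have image meeting $D_{j,V}^\partial$ inside $\cY_{0,V}^h$ with prescribed positive multiplicity; at the marked point corresponding to the outgoing edge $E_V$, the log structure then forces the evaluation into the distinguished infinity section of the $j$-th $\PP^1$-factor of $\PP(\cV_{0,V}^h)$, which is precisely the vanishing locus of $H_{j,E_V}$. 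A dimension count on the virtual class $[M_V]^{\vir}$, Calabi--Yau by construction of $\cV$, then pins $\alpha_{E_V}$ down to a \emph{scalar} multiple of $H_{j,E_V}$ rather than a strictly higher-codimension multiple.

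The main obstacle I expect is the non-vanishing of this scalar, which I would establish indirectly. If $\alpha_{E_V} = 0$, then propagating through the maps $\eta_{V'}$ along the unique descending path from $V$ to the sink $V_0$ (using the projection formula) forces at least one factor of the final integrand $\prod_{E\in\cE_{in}(V_0)}\alpha_E$ to vanish, so that $N^{{\rm loc},h}_{0,d}(Y(D)) = \eta_{V_0}\!\big(\prod_{E\in\cE_{in}(V_0)}\alpha_E\big) = 0$, contradicting the hypothesis. Combined with the divisibility of the previous step, this yields the claim. The most delicate point to justify rigorously in the divisibility step is the precise identification, via the log structure on $\PP(\cV_0^h)$, of the locus where the $E_V$-marked point lands with the vanishing locus of $H_{j,E_V}$; this will rely on the local toric model of $\cY^h$ near the boundary already described and a careful tracking of tangency orders along $D_{j,V}^\partial$ through the restriction of the universal log map.
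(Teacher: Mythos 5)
Your opening move is the paper's entire explicit proof: apply \cref{lem_toric_homological_balancing} to get $d_V\cdot D_{j,V}^{\partial}>0$, after which the paper simply asserts the conclusion, the implicit principle (used uniformly in this section, cf.\ the proof of \cref{lem_boundary}) being that positivity of $d_V\cdot D_{j,V}^{\partial}$ forces a factor of $H_{j,E_V}$ in the pushforward $\alpha_{E_V}$, with non-vanishing coming from the standing assumption $N^{{\rm loc},h}_{0,d}(Y(D))\neq 0$. Your final step (a vanishing $\alpha_{E_V}$ propagates along the flow to the sink and kills $N^{{\rm loc},h}_{0,d}$) is fine and is exactly how the ``non-zero'' part should be read.

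The gap is in the divisibility step, which is the real content. First, the claim that a dimension count pins $\alpha_{E_V}$ down to a \emph{scalar} multiple of $H_{j,E_V}$ is both unnecessary and false in general: $\alpha_{E_V}$ also carries the classes fed in by the other ingoing edges, and indeed in the proof of \cref{lem_boundary} the output is a multiple of $H_{j,E_V}^2=(\pi_{E_V}^{*}\pt_{E_V})H_{j,E_V}$; only membership in the ideal generated by $H_{j,E_V}$ is claimed or needed. Second, and more seriously, your mechanism for divisibility --- that the log structure forces the $E_V$-evaluation onto the section at infinity of the $j$-th $\PP^1$-factor, whose class is $H_{j,E_V}$ --- is unsubstantiated and points the wrong way: the log structure on $\PP(\cV)$ is divisorial along the special fibre only, so it imposes nothing along the infinity divisors, and since $\ccL_j|_{Y_V}=\cO(-D_{j,V}^{\partial})$ has negative degree on $d_V$ the relevant maps stay along the \emph{zero} section in the $j$-th direction rather than touching infinity. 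A support argument on the zero section would only give divisibility by its class, which under the convention $H_{j,E}^2=-c_1(\ccL_j|_{Y_E})H_{j,E}$ is $H_{j,E_V}+\pi_{E_V}^{*}c_1(\ccL_j|_{Y_{E_V}})$, not $H_{j,E_V}$ (and at this stage you may not assume $h(E_V)\not\subset(\partial\Delta)_j$, since that is \cref{lem_repulsive_3}, deduced \emph{from} the present lemma). The factor $H_{j,E_V}$ actually arises from the structure of the virtual class in the concave $j$-th direction --- the obstruction bundle $R^1\pi_{*}f^{*}\ccL_j$ and the resulting pushforward/multiple-cover computation, as in the degeneration analysis of \cite{vGGR} and in \cref{lem_computation} --- not from where the node lands; this is precisely the ``delicate point'' you defer, and it is the whole of the step the paper treats as immediate.
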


\begin{proof}
By Lemma \ref{lem_toric_homological_balancing}, we have 
$d_V \cdot D_{j,V}^{\partial} > 0$, and so the result follows.
\end{proof}

\begin{lem} \label{lem_repulsive_3}
Let $V$ be a vertex of $\Gamma$ such that $V \in (\partial \Delta)_j$
and such that there exists an incident to $V$ ingoing edge $E$ with $h(E) \not\subset (\partial \Delta)_j$.
Then, denoting by $E_V$ the outgoing edge incident to $V$, we have $h(E_V) \not\subset (\partial \Delta)_j$.
\end{lem}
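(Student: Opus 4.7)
The plan is to argue by contradiction, using the two preceding lemmas as a one–two punch. Suppose that $h(E_V) \subset (\partial \Delta)_j$. First I would observe that $V \neq V_0$: by construction the sink satisfies $h(V_0) = v_Y$, and $v_Y$ is an interior vertex of $\Delta$, so $v_Y \notin (\partial \Delta)_j$; hence the outgoing edge $E_V$ incident to $V$ is genuinely present.

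Next, since $h(V) \in (\partial \Delta)_j$ and there is an ingoing edge $E$ at $V$ with $h(E) \not\subset (\partial \Delta)_j$, the hypotheses of Lemma \ref{lem_repulsive_2} are met at $V$. Applying it, $\alpha_{E_V}$ is a non-zero multiple of $H_{j,E_V}$.

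Now let $V'$ denote the other endpoint of $E_V$, i.e.\ the vertex toward which the flow carries $E_V$. By the flow convention $E_V$ is an ingoing edge at $V'$, and we have just shown that $\alpha_{E_V}$ is a non-zero multiple of $H_{j,E_V}$. Lemma \ref{lem_repulsive_1} then forces $h(V') \notin (\partial \Delta)_j$. However, the assumption $h(E_V) \subset (\partial \Delta)_j$ implies that both endpoints of $E_V$, and in particular $V'$, are mapped to $(\partial \Delta)_j$. This contradiction proves that $h(E_V) \not\subset (\partial \Delta)_j$.

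The proof is essentially a direct chain of applications of the previously established lemmas, so I do not anticipate any serious obstacle; the only point that requires care is checking that $V$ is not the sink so that an outgoing edge actually exists, which follows immediately from $v_Y \notin \partial \Delta$.
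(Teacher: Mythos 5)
Your proof is correct and is essentially the paper's own argument: the paper proves this lemma by exactly the combination of Lemma \ref{lem_repulsive_2} (applied at $V$ to conclude that $\alpha_{E_V}$ is a non-zero multiple of $H_{j,E_V}$) and Lemma \ref{lem_repulsive_1} (applied at the successor vertex to exclude $h(E_V) \subset (\partial \Delta)_j$), which is precisely the chain you spell out. Your additional check that $V \neq V_0$, so that the outgoing edge $E_V$ exists, is a fine point the paper leaves implicit and is handled correctly since $h(V_0)=v_Y \notin \partial\Delta$.
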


\begin{proof}
The result follows from the combination of Lemma \ref{lem_repulsive_1} and Lemma \ref{lem_repulsive_2}.
\end{proof}

We say that a vertex $V$ of $\Gamma$ is a source if every bounded edge incident to $V$ is outgoing. 
As we are assuming that every vertex of $\Gamma$ has at most one outgoing edges, a source has a unique 
bounded incident edge.

\begin{lem} \label{lem_source_1}
Let $V$ be a source of $\Gamma$. Then, either $h(V)=v_Y$ or $h(V) \in \partial \Delta$.
\end{lem}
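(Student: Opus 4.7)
The strategy is to split on whether $V$ has any unbounded incident edges, using the flow convention that every non-sink vertex has exactly one outgoing edge. A source $V \neq V_0$ therefore has precisely one bounded incident edge $E_V$ (which is outgoing), together with possibly some unbounded incident edges; any other bounded incident edge would be ingoing, contradicting that $V$ is a source. The goal is to show that in the two cases ``some unbounded incident edge exists'' and ``$E_V$ is the only incident edge'' one is forced to $h(V)=v_Y$ and $h(V)\in \partial\Delta$ respectively.

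First I would handle the easy case: suppose $V$ carries at least one unbounded incident edge, corresponding to one of the $l-1=1$ interior marked points of the stable log map. By construction of $\ol{\mmm}_{0,l-1}(\PP(\cV_0),d)$ in Section~\ref{sec:logloc}, these marked points have contact order $0$ with $\PP(\cV_{0,Y})$, so the irreducible component of the domain curve attached to $V$ maps into the open stratum $\PP(\cV_{0,Y}) \setminus \bigcup_{V' \neq V_Y} \PP(\cV_{0,V'})$. This forces $h(V)=v_Y$.

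Now assume $V$ has no unbounded incident edge, so $E_V$ is its unique incident edge, and suppose for contradiction that $h(V)\notin\{v_Y\}\cup\partial\Delta$. Unpack the geometry of $Y_V$: by the description given just before Lemma~\ref{lem_toric_homological_balancing} (and Figures~\ref{figure_fan_6}, \ref{figure_fan_7} applied to this interior situation), $Y_V$ is a smooth \emph{projective} toric surface whose fan is a complete refinement of a neighbourhood of $h(V)$ in $\Delta^h$, and $X_V=\PP(\cV_{0,V}^h)$ is a trivial $(\PP^1)^l$-bundle over it (the bundles $\ccL_j|_{Y_V}$ are trivial since $h(V)\notin (\partial\Delta)_j$ for every $j$). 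The stable log maps parametrised by $M_V$ are thus genus zero maps to $Y_V$ of class $d_V\neq 0$, equipped with a single marked tangency point whose contact order with the toric boundary of $Y_V$ is dictated by the weight and primitive direction of $E_V$.

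The toric tropical correspondence (cf.\ the references \cite{NiSi,ManRu} invoked in the proof of Proposition~\ref{prop:calc-log-gw}) implies that any such stable log map gives rise to a balanced parametrised tropical curve in the fan of $Y_V$; moreover, the vertex $V$ of this tropical curve is constrained to have weighted direction vectors summing to zero, because $h(V)$ lies in the interior of $\Delta^h$ and thus in a cone of the fan of $Y_V$ where no additional log boundary contribution is present. With only one incident direction $w_{E_V}\mathsf{p}_V^{E_V}$ at $V$, balancing forces $w_{E_V}=0$, contradicting either that $E_V$ carries positive weight or that $d_V$ is non-trivial; either way the class $\alpha_{E_V}=(\ev_{V,E_V})_\ast [M_V]^{\rm vir}$ vanishes, forcing $N^{{\rm loc},h}_{0,d}(Y(D))=0$ and contradicting the hypothesis. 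Hence $h(V)\in\partial\Delta$ as required. The only delicate point is verifying that the tropical balancing really holds at $V$ in the logarithmic sense for $Y_V$ regarded with the divisorial log structure inherited from $\PP(\cV_0)$; this is the place where a careful matching of log structures (the log structure on $X_V$ being the pull-back from $\cY$) has to be spelled out, but once this is in place balancing is standard and the contradiction is immediate.
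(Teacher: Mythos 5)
Your proposal is correct and follows essentially the same route as the paper: the case of an unbounded incident edge reduces immediately to $h(V)=v_Y$ (those edges are contracted to $v_Y$ by the decomposition formula), and for a source whose only incident edge is the bounded outgoing one, you invoke the toric balancing condition at a vertex mapped to $\Delta - \partial\Delta - \{v_Y\}$, which cannot be satisfied by a $1$-valent vertex — exactly the paper's argument. Your extra verification of triviality of the $\ccL_j|_{Y_V}$ and the tropical-correspondence justification of balancing are just a more detailed spelling-out of the same step.
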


\begin{proof}
For $V$ such that $h(V) \in \Delta - \partial \Delta -\{v_Y\}$, the toric 
balancing condition holds at $h(V)$. This balancing condition cannot hold 
if there is a unique bounded edge incident to $V$.
\end{proof}

\begin{lem} \label{lem_source_2}
Let $V$ be a vertex of $\Gamma$ such that $V$ is a source and 
$h(V) \in \partial \Delta$. Then, there exists $1 \leq j \leq l$
such that $\alpha_{E_V}$ is a non-zero multiple of $H_{j,E_V}$.
\end{lem}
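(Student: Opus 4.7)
My plan is to argue in three steps, using the preceding lemmas and the toric geometry of the degeneration.

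First, I would pin down the local combinatorics of $\Gamma$ at $V$. Since unbounded edges of $\Gamma$ are all ingoing and contracted by $h$ to $v_Y \in \Delta - \partial \Delta$, none is incident to $V$. Combined with the source hypothesis and the condition that every vertex has at most one outgoing edge, this forces $V$ to have exactly one incident edge, the outgoing $E_V$. Pick $1 \le j \le l$ with $h(V) \in (\partial \Delta)_j$; if $h(V) = v_p$ for a nodal point $p \in D_j \cap D_{j'}$, there is a choice of $j$ to be made at the end of the argument.

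Second, I would exploit the product structure of $\PP(\cV_{0,V}^h)$. By construction, for $V \in (\partial \Delta)_j$ with $h(V) \neq v_p$, the only line-bundle factor of $\cV_{0,V}^h$ that is non-trivial is $\ccL_j|_{\cY_{0,V}^h} = \cO(-D_{j,V}^\partial)$, while in the nodal case $\ccL_{j'}$ is also non-trivial. Hence $\PP(\cV_{0,V}^h)$ splits as the fibre product over $\cY_{0,V}^h$ of one (resp.\ two) non-trivial $\PP^1$-bundle(s) with $l-1$ (resp.\ $l-2$) trivial $\PP^1$-factors. The K\"unneth decomposition of $H^*(X_{E_V})$ then shows that $H_{i,E_V}^2 = 0$ for $i \ne j$ (resp.\ $i \ne j,j'$), so any class produced by $\ev_{V,E_V *}$ on a cycle that is constant in the trivial directions is automatically concentrated in the $H_{j,E_V}$ (resp.\ $H_{j,E_V}\oplus H_{j',E_V}$) sector.

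Finally, I would verify that $\alpha_{E_V}$ is a non-zero multiple of $H_{j,E_V}$. The key point is that every stable log map in $M_V$ projects to a constant map along the trivial $\PP^1$-factors: any non-constant deformation there would create extra ingoing tropical legs at $V$, contradicting the source hypothesis. Hence each map sweeps a curve only in the non-trivial $\PP^1$-bundle direction, and evaluation at the unique marked point lands on the infinity section Poincar\'e-dual to $H_{j,E_V}$. Non-vanishing will follow from a local multiple-cover computation: the source component is a multiple cover of a $\PP^1$-fibre of $\PP(\cO(-D_{j,V}^\partial)\oplus\cO)$ with normal bundle $\cO(-1)\oplus\cO^{\oplus(l+1)}$ in $\PP(\cV_0)$, producing the non-zero factor $(-1)^{m-1}/m^2$ with $m = d_V \cdot D_{j,V}^\partial > 0$ by Lemma~\ref{lem_toric_homological_balancing}, as already used in the proof of Lemma~\ref{lem_computation} via \cite{MR2115262}. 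The main obstacle is the nodal subcase $h(V) = v_p$: there both $H_{j,E_V}$ and $H_{j',E_V}$ may a priori appear, and one needs to use the specific direction of the outgoing edge $E_V$ in $\Delta^h$ near $v_p$ to single out the one index among $\{j,j'\}$ for which the multi-cover contribution does not vanish.
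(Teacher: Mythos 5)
There is a genuine gap at the step where you need $d_V\cdot D_{j,V}^{\partial}>0$. You invoke Lemma~\ref{lem_toric_homological_balancing} to obtain this positivity, but that lemma is an equivalence whose hypothesis — the existence of an edge incident to $V$ with $h(E)\not\subset(\partial\Delta)_j$ — is exactly what must be established here: for a source mapped into $\partial\Delta$ the only incident edge is $E_V$, and a priori $h(E_V)$ could lie inside $(\partial\Delta)_j$, in which case $d_V\cdot D_{j,V}^{\partial}=0$ and (away from the nodal vertices $v_p$) no boundary divisor meets $d_V$ positively, so $\alpha_{E_V}$ would carry no factor of $H_{j,E_V}$ at all. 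The actual content of the paper's proof is precisely this point: since $V$ carries a single incident edge, the homological toric balancing on $Y_V$ (using the toric descriptions of $Y_V$ from Lemma~\ref{lem_toric_homological_balancing}) forces $h(E_V)$ to lie in the ray opposite to the one corresponding to $D_{j,V}^{\partial}$, whence $d_V\cdot D_{j,V}^{\partial}>0$; in the nodal case $h(V)=v_p$ the same balancing gives $d_V\cdot D_{k,V}^{\partial}>0$ for some $k\in\{j,j'\}$, and since the statement only asks for the existence of some index, no further selection among $\{j,j'\}$ is required. Your discussion of stable maps being constant along the trivial $\PP^1$-factors only constrains the fibre directions of $\PP(\cV_{0,V}^h)$, not the direction of $h(E_V)$ inside $\Delta^h$, so it cannot substitute for this balancing argument.

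A secondary problem is that your non-vanishing argument via the multiple-cover factor $(-1)^{m-1}/m^2$ presupposes that $d_V$ is a multiple of a $\PP^1$-fibre class; that is the content of Lemma~\ref{lem_source_4}, which the paper proves \emph{by} an argument of the present type (and which anyway requires $h(V)\neq v_p$), so using it here is circular and in any case unnecessary. Once the positivity $d_V\cdot D_{k,V}^{\partial}>0$ is in place, the conclusion that $\alpha_{E_V}$ is a non-zero multiple of $H_{k,E_V}$ follows exactly as in Lemmas~\ref{lem_boundary} and~\ref{lem_repulsive_2}, with no multiple-cover computation needed at this stage.
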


\begin{proof}
We know that $h(V) \in (\partial \Delta)_j$ for at least one $j$.

Assume first that $h(V) \neq v_p$ for every $p \in D_j \cap D_{j'}$, that is $h(V) \in (\partial \Delta)_j$
for a  unique $j$. As $V$ is a source, there is a single edge  incident to $V$. By homological toric balancing (see Figures 6 and 7), this 
is possible only if $h(E_V)$ is contained in the ray opposite to the ray corresponding to $D_{j,V}^{\partial}$, and in particular
we then  have $d_V \cdot D_{j,V}^{\partial}>0$.

It remains to treat the case where $h(V)=v_p$ for some 
$p \in D_j \cap D_{j'}$. In this case, we have $h(V)=v_p 
\in (\partial \Delta)_ij\cap (\partial \Delta)_{j'}$.
By homological toric balancing, we necessarily have
$d_V \cdot D_{k,V}^{\partial}>0$ for some $k \in \{j,j'\}$.
\end{proof}

\begin{lem} \label{lem_source_4}
Let $V$ be a vertex of $\Gamma$ such that $V$ is a source, $h(V) \in (\partial \Delta)_j$
for some $1 \leq j \leq l$, and $h(V) \neq v_p$ for every $p \in D_j \cap D_{j'}$. 
Then $d_{V}$ is a multiple of the class of a $\PP^1$-fibre of $Y_V$ and $E_V \not\subset (\partial \Delta)_j$.
\end{lem}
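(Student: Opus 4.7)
The plan is to carry out the argument via toric and log balancing at $V$, leveraging the near-triviality of its incidence structure. Observe first that since $h(V) \in (\partial \Delta)_j$ with $h(V) \neq v_Y$, the vertex $V$ has no incident unbounded edges (those being contracted to $v_Y$), and since $V$ is a source with a unique outgoing bounded edge $E_V$ under our flow convention, $E_V$ is in fact the only edge of $\Gamma$ incident to $V$. Moreover, by hypothesis $h(V) \neq v_j$ and $h(V) \neq v_p$ for all $p \in D_j \cap D_{j'}$, so the local description of $Y_V$ from the proof of Lemma~\ref{lem_toric_homological_balancing} applies: $Y_V$ is a toric blow-up of $\PP^1 \times \PP^1$ whose fan (Figure~\ref{figure_fan_6}) has the upward vertical ray representing $D_{j,V}^\partial$, two horizontal rays along the direction of $(\partial \Delta)_j$, and all remaining rays in the lower half-plane.

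I would first prove the assertion $h(E_V) \not\subset (\partial \Delta)_j$ by contradiction: if $h(E_V)$ were contained in $(\partial \Delta)_j$, then $V$ would admit no incident edge whose image escapes $(\partial \Delta)_j$, so Lemma~\ref{lem_toric_homological_balancing} would give $d_V \cdot D_{j,V}^\partial = 0$. The tropical balancing at $V$ on the toric surface $Y_V$ would then degenerate to $w_{E_V}\,\vec{u}_{E_V, V} = 0$, impossible for a non-degenerate edge. Hence $h(E_V) \not\subset (\partial \Delta)_j$, and another application of Lemma~\ref{lem_toric_homological_balancing} gives $m \coloneqq d_V \cdot D_{j,V}^\partial > 0$.

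With $m$ as above, the balancing condition at $V$ reads
\[
w_{E_V}\, \vec{u}_{E_V, V} + m\, \vec{n}_{D_{j,V}^\partial} = 0,
\]
where $\vec{n}_{D_{j,V}^\partial}$ is the primitive upward vertical direction of the fan; primitive integrality then forces $w_{E_V} = m$ and $\vec{u}_{E_V, V}$ to equal the primitive downward vertical. Consequently all intersections of $d_V$ with the toric divisors of $Y_V$ vanish except for $d_V \cdot D_{j,V}^\partial = m$ (from the boundary tangency) and $d_V \cdot F_{\mathrm{opp}} = m$ (from $E_V$ pointing into the opposite downward vertical ray). Since $\Pic(Y_V)$ is generated by the classes of toric divisors and a curve class is determined by its intersection numbers with them, $d_V$ must equal $m$ times the class of a generic $\PP^1$-fibre of the ruling $Y_V \to \PP^1$ that has $D_{j,V}^\partial$ as a section. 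The main subtlety lies in correctly accounting for the two distinct contributions to the balancing at the boundary vertex $V$, namely the single edge $E_V$ and the boundary tangency with $D_{j,V}^\partial$ recorded by $d_V \cdot D_{j,V}^\partial$; once this bookkeeping is in place, both conclusions of the lemma drop out simultaneously from primitive integrality and the toric description of $Y_V$.
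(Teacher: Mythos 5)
Your proof follows the same route the paper intends (its proof of Lemma \ref{lem_source_4} is simply ``similar to the proof of Lemma \ref{lem_source_2}'', i.e.\ single-edge toric homological balancing in the local model of $Y_V$), but there is a concrete gap: you assert that ``by hypothesis $h(V)\neq v_j$'', whereas the hypothesis only excludes the double-point vertices $v_p$; the vertex $v_j$ lies in $(\partial\Delta)_j$ and is \emph{not} excluded. As a result you only treat the case of Figure \ref{figure_fan_6}, where $Y_V$ is a toric blow-up of $\PP^1\times\PP^1$, and omit the case $h(V)=v_j$, where $Y_V$ is a toric blow-up of the Hirzebruch surface $\F_{D_j^2}$ (the second case in the proof of Lemma \ref{lem_toric_homological_balancing}). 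This omitted case is not marginal: it is precisely the configuration realised at the vertices $V_j$ of the distinguished tropical curve $\bar h$ (cf.\ Lemma \ref{lem_fibre}), and it is needed where Lemma \ref{lem_source_4} is invoked in Lemma \ref{lem_source_3}, since ``$h(V)\in(\partial\Delta)_j$ for a unique $j$'' includes $h(V)=v_j$.

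The repair is straightforward and keeps your bookkeeping: at $v_j$ the fan of $Y_V$ still has the vertical ray corresponding to $D_{j,V}^{\partial}$, now of self-intersection $D_j^2\geq 0$ (here nefness of $D_j$ enters), with all remaining rays, including the downward vertical one, in the closed lower half-plane, the two rays adjacent to $D_{j,V}^{\partial}$ being the $(\partial\Delta)_j$-directions. Since $V$ carries a single incident edge, balancing $(d_V\cdot D_{j,V}^{\partial})(0,1)+w_{E_V}\,u_{E_V}=0$ again forces $d_V\cdot D_{j,V}^{\partial}=w_{E_V}>0$ and $u_{E_V}=(0,-1)$, hence $h(E_V)\not\subset(\partial\Delta)_j$, and the vanishing of all the other toric intersection numbers identifies $d_V$ with a multiple of the fibre class of the ruling of $\F_{D_j^2}$. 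With this case added, your argument for the Figure \ref{figure_fan_6} case (which is correct as written) completes the proof in the same way as the paper.
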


\begin{proof}
Similar to the proof of Lemma \ref{lem_source_2}.
\end{proof}

\begin{lem} \label{lem_H_propagation}
Let $V$ be a vertex of $\Gamma$ with $V \neq V_0$ and an incident ingoing edge $E$
with $\alpha_E$ a non-zero multiple of $H_{j,E}$. Then, denoting by $E_V$
the outgoing edge incident to $V$, $\alpha_{E_V}$ is a non-zero multiple of $H_{j,E_V}$.
\end{lem}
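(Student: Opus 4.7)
The plan is to exploit the triviality of $\mathcal{L}_j$ on $Y_V$ to split off the $j$-th factor of the target, which will then pass unchanged through the evaluation--pushforward operation defining $\alpha_{E_V}$.

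First I would observe that by Lemma~\ref{lem_repulsive_1} the hypothesis on $E$ forces $h(V)\notin(\partial\Delta)_j$, so $\mathcal{L}_j|_{Y_V}\simeq\mathcal{O}_{Y_V}$ and the $j$-th factor of $\PP(\mathcal{V}_{0,V}^h)$ splits off as a trivial $\PP^1$-bundle: $\PP(\mathcal{V}_{0,V}^h)=\PP^1_j\times\PP(\mathcal{V}_{0,V}^{h,\neq j})$, with $H_{j,E}$ the pullback of $[\pt]\in\hhh^2(\PP^1_j)$ under the natural projection. Next I would verify that $h(E_V)\not\subset(\partial\Delta)_j$: since $(\partial\Delta)_j$ is a closed subcomplex containing $h(V)$ whenever it contains $h(E_V)$, this would contradict Lemma~\ref{lem_repulsive_1}. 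Consequently $\mathcal{L}_j|_{Y_{E_V}}$ is also trivial, $X_{E_V}$ admits the same $\PP^1_j$-splitting, and $H_{j,E_V}$ is the pullback of $[\pt]_{\PP^1_j}$.

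The same closure argument applied to every edge incident to $V$ shows that none of them map into $(\partial\Delta)_j$, so at every marked point of a stable log map $f\colon C\to\PP(\mathcal{V}_{0,V}^h)$ parametrised by $M_V$ the contact order of the $j$-th component $f_j\colon C\to\PP^1_j$ with the sections $\{0\}_j,\{\infty\}_j$ is zero. Since the total contact order equals the degree of $f_j$, this forces $f_j$ to be a constant map. In particular the two $j$-evaluations
$$\ev_{V,E}^{(j)},\ \ev_{V,E_V}^{(j)}\colon M_V\longrightarrow\PP^1_j$$
coincide, both returning the constant value of $f_j$, so $\ev_{V,E}^{*}H_{j,E}=\ev_{V,E_V}^{*}H_{j,E_V}$ in $\hhh^{*}(M_V)$.

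Writing $\alpha_E=c\,H_{j,E}$ with $c\neq 0$ and applying the projection formula to the pushforward through $\ev_{V,E_V}$ then gives
$$\alpha_{E_V}=(\ev_{V,E_V})_{*}\!\Bigl(c\,\ev_{V,E_V}^{*}H_{j,E_V}\cdot\!\!\prod_{E'\in\cE_{in}(V),\,E'\neq E}\!\!\ev_{V,E'}^{*}\alpha_{E'}\cap[M_V]^{\mathrm{vir}}\Bigr)=c\,H_{j,E_V}\cdot\gamma,$$
where $\gamma$ is the pushforward of the remaining factors, pulled back from the $\neq j$ factors of $X_{E_V}$. Since $H_{j,E_V}^{2}=-c_1(\mathcal{L}_j|_{Y_{E_V}})H_{j,E_V}=0$, no higher power of $H_{j,E_V}$ arises, and $\alpha_{E_V}$ is indeed a multiple of $H_{j,E_V}$; that the multiple is non-zero follows because if it vanished then the hypothesis $N^{{\rm loc},h}_{0,d}(Y(D))\neq 0$ would force the contribution at some descendant vertex to still see a multiple of $H_{j,\cdot}$, which by iterating Lemma~\ref{lem_boundary} would propagate all the way to the sink $V_0$ mapping to $v_Y\notin(\partial\Delta)_j$, contradiction.

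The main technical obstacle is justifying the product decomposition at the level of the virtual fundamental class: I would need to confirm that stable log maps to a product target with a trivial factor split canonically as pairs with a common source, and that the perfect obstruction theory is additive under this splitting, so that the projection formula above applies in the log Gromov--Witten setting. This is standard but requires care because of the basic log structure; alternatively one can bypass this by working virtually with the $\PP^1_j$-factor directly and using the fact that a degree-zero stable log map to $\PP^1_j$ with zero contact orders is genuinely a single constant, trivialising the $j$-contribution to the obstruction theory.
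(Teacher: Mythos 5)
Your skeleton is essentially the paper's. The paper splits into two cases: $h(V)\in(\partial\Delta)_j$, handled by Lemma~\ref{lem_boundary}, and $h(V)\notin(\partial\Delta)_j$, where it simply says the result is clear because $\ccL_j|_{Y_V}$ is trivial. You instead invoke Lemma~\ref{lem_repulsive_1} to rule out the first case altogether (legitimate: that lemma only depends on Lemma~\ref{lem_boundary}, so there is no circularity), and then you flesh out the second case via constancy of the maps in the $\PP^1_j$-direction plus the projection formula; your remark that no edge incident to $V$ has image inside the closed set $(\partial\Delta)_j$ is also correct. This is the intended mechanism behind the paper's ``clear''.

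The gap is in how you justify the pivotal constancy claim. You argue that $\pr_j\circ f$ is constant because ``the contact order with the sections $\{0\}_j,\{\infty\}_j$ is zero'' and ``total contact order equals degree''. But those horizontal sections are not log divisors in this setup: the log structure on $\PP(\cV_0^h)$, and hence on each component $\PP(\cV_{0,V}^h)$ used to define $M_V$, is the divisorial one along the intersections with the \emph{other irreducible components} of the special fibre, i.e.\ along the vertical divisors dual to the edges of $\Delta^h$. No contact orders are assigned along the zero or infinity sections of the $j$-th $\PP^1$-bundle, and the fact that no incident edge maps into $(\partial\Delta)_j$ says nothing about them, since $(\partial\Delta)_j$ indexes vertical strata, not these sections. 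As written, this step fails. The constancy must instead come from the curve class: one needs the $j$-th fibre degree of the decoration $d_V$ to vanish (a connected genus-$0$ curve of degree $0$ over $\PP^1_j$ is then constant in that direction), which holds because the $j$-th fibre degree is non-negative on effective classes of every component of $\PP(\cV_0^h)$ (trivially at $V$, and using nefness of $D_j$ elsewhere) while the fibre degrees sum to zero since $d$ is a class of the zero section. Two smaller points: once constancy is known, no product decomposition of the virtual class is needed — the equality $\ev_{V,E}^*H_{j,E}=\ev_{V,E_V}^*H_{j,E_V}$ and the projection formula along $\ev_{V,E_V}$ suffice, so your flagged technical obstacle evaporates; and the non-vanishing of the multiple is immediate, since $\alpha_{E_V}=0$ would propagate through the maps $\eta$ and give $N^{{\rm loc},h}_{0,d}(Y(D))=0$, contrary to the standing assumption — the detour through Lemma~\ref{lem_boundary} in your last sentence is unnecessary and does not parse as stated.
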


\begin{proof}
If $h(V) \in (\partial \Delta)_j$, then the result follows from Lemma 
\ref{lem_boundary}.
If $h(V) \notin (\partial \Delta)_j$, then the result is clear as the line bundle
$\ccL_j|_{Y_V}$ is trivial.
\end{proof}

\begin{lem} \label{lem_existence}
For every $1 \leq j \leq l$, there exists an edge $E$ of $\Gamma$ with $\alpha_E$ 
a non-zero multiple of $H_{j,E}$.
\end{lem}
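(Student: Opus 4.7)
The strategy is to exploit the nefness hypothesis $d \cdot D_j > 0$ to locate, for each fixed $j$, a vertex of $\Gamma$ at which the obstruction theory coming from $\ccL_j$ genuinely activates, and then to propagate that contribution through the flow.

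\textbf{Step 1 (finding an active vertex).} For a fixed $j$, consider the collection $\cV_j \coloneqq \{V \in \Gamma_0 : h(V) \in (\partial \Delta)_j\}$. A standard tropical intersection computation on $\Delta^h$ gives
\[
\sum_{V \in \cV_j} d_V \cdot D_{j,V}^{\partial} \;=\; d \cdot D_j \;>\; 0,
\]
so there exists $V^{\ast} \in \cV_j$ with $d_{V^{\ast}} \cdot D_{j,V^{\ast}}^{\partial} > 0$. Following the flow upstream from $V^{\ast}$, we arrive at a source $V$ of $\Gamma$ with $h(V) \in (\partial \Delta)_j$.

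\textbf{Step 2 (source analysis).} By Lemma~\ref{lem_source_2}, $\alpha_{E_V}$ is a non-zero multiple of $H_{k,E_V}$ for some $k$. If $h(V) \neq v_p$ for any corner $p$, then $k = j$ is forced and we are done. In the (potentially problematic) case $h(V) = v_p$ with $p \in D_j \cap D_{j'}$, the index $k$ could be $j'$ instead of $j$, and the source alone does not suffice.

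\textbf{Step 3 (handling corners — the main obstacle).} This is the principal difficulty: ruling out the scenario where every source over $(\partial \Delta)_j$ sits at a corner and contributes only $H_{j',E}$. I would handle this by a global counting argument on the tropical curve. Using the balancing condition at interior vertices and the identity of Step~1, the total contribution of $H_{j,E}$-type classes needed to saturate the intersection number $d \cdot D_j$ cannot come entirely from the $j'$-direction at the corners, since such contributions propagate (by Lemma~\ref{lem_H_propagation}) along $(\partial \Delta)_{j'}$ rather than $(\partial \Delta)_j$. Concretely, if no $j$-edge existed, one would obtain a subtree of $\Gamma$ on which the obstruction class $c_{\rm top}(\obstr_{D_j})$ restricts to zero, forcing the integral to vanish and contradicting $N^{{\rm loc}, h}_{0,d}(Y(D)) \neq 0$. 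Equivalently, one can work directly at the vertex $V^{\ast}$: since $d_{V^{\ast}} \cdot D_{j,V^{\ast}}^{\partial} > 0$, the local virtual class on $M_{V^{\ast}}$ carries a factor obtained from $c_1(\ccL_j|_{Y_{V^{\ast}}})$, and pushing forward under $\ev_{V^{\ast},E_{V^{\ast}}}$ yields a non-zero multiple of $H_{j,E_{V^{\ast}}}$ in $\alpha_{E_{V^{\ast}}}$ (modulo contributions killed by $H_{j,E}^2 = 0$ when $h(E) \not\subset (\partial \Delta)_j$).

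\textbf{Step 4 (propagation).} Once some edge $E$ carries $\alpha_E$ proportional to $H_{j,E}$, Lemma~\ref{lem_H_propagation} shows the property persists along the flow to the sink, in particular establishing existence. Combined with Lemma~\ref{lem_boundary} this also ensures compatibility with the boundary geometry.

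The only genuinely delicate point is Step~3: the combinatorial case analysis at corner vertices $v_p$, where one must prevent the $H$-class from ``leaking'' entirely into the wrong boundary direction. This is where the hypothesis $d \cdot D_j > 0$ (i.e.\ the nefness of $D_j$ combined with $D$-convexity of $d$) is used in an essential way.
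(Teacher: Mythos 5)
Your Step 1 together with the ``equivalently, one can work directly at the vertex $V^{\ast}$'' remark in Step 3 is exactly the paper's proof: since $d \cdot D_j > 0$ there is a vertex $V$ with $d_V \cdot D_{j,V}^{\partial} > 0$, and for such a vertex $\alpha_{E_V}$ is a non-zero multiple of $H_{j,E_V}$ (the same fact the paper already invokes in Lemma~\ref{lem_repulsive_2}), which settles the lemma in two lines. The detour through sources and corner vertices (Steps 2 and the first part of 3) is unnecessary, and its key claim is not justified: following the flow upstream from $V^{\ast}$ there is no reason the ancestors stay over $(\partial \Delta)_j$, so you need not reach a source with $h(V) \in (\partial \Delta)_j$ at all — the sources feeding $V^{\ast}$ may well map to $v_Y$ or elsewhere. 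Since the direct argument at $V^{\ast}$ is included and is self-contained, the proposal is correct in substance; just drop the source/corner analysis, which belongs to later lemmas (e.g.\ Lemmas~\ref{lem_source_2}--\ref{lem_source_3}) rather than to this one.
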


\begin{proof}
As $d \cdot D_j>0$, there exists a vertex $V$ of $\Gamma$ with 
$d_V \cdot D_{j,V}^{\partial}>0$.
Denoting by $E_V$ the outgoing edge incident to $V$, $\alpha_{E_V}$ is a non-zero 
multiple of $H_{j,E_V}$.
\end{proof}

For every $1 \leq j \leq l$, we denote by 
$V_j(h)$ the set of vertices $V$ of $\Gamma$
with $h(V) \in (\partial \Delta)_j$.
As $d \cdot D_j>0$, there exists a vertex $V$ of $\Gamma$ with 
$d_V \cdot D_{j,V}^{\partial}>0$, 
and so with $h(V) \in (\partial \Delta)_j$, and in particular 
$V_j(h)$ is non-empty.

\begin{lem} \label{lem_unique}
For every $1 \leq l \leq l$, there exists exactly one $V \in V_j(h)$
such that, denoting by $S(V)$ the successor of $V$, we have $S(V) \notin V_j(h)$.
We denote this vertex by $V_j$.
\end{lem}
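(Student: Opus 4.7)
The plan is to prove existence and uniqueness of the vertex $V_j$, with uniqueness reducing to a ``no-return'' principle for the flow after exiting $V_j(h)$.

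\emph{Existence.} Since $h(V_0) = v_Y \notin (\partial \Delta)_j$, we have $V_0 \notin V_j(h)$. Because $d \cdot D_j > 0$, the set $V_j(h)$ is non-empty (as already noted above). Picking any $V \in V_j(h)$, the flow path from $V$ down to $V_0$ must exit $V_j(h)$ at some stage, and the last vertex in $V_j(h)$ along this path is a valid choice for $V_j$.

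\emph{Uniqueness: the no-return principle.} The first step is to show that at any exit vertex $V \in V_j(h)$ with $S(V) \notin V_j(h)$, the class $\alpha_{E_V}$ is a non-zero multiple of $H_{j,E_V}$. Since $S(V) \notin V_j(h)$ forces $h(E_V) \not\subset (\partial \Delta)_j$, Lemma~\ref{lem_toric_homological_balancing} yields $d_V \cdot D_{j,V}^\partial > 0$; the local degeneration contribution at $V$ inside the projective bundle $X_V$ then produces an $H_{j,E_V}$-factor in $\alpha_{E_V}$, by the same mechanism used in the proof of Lemma~\ref{lem_repulsive_2}. Once this is established, Lemma~\ref{lem_H_propagation} propagates the $H_j$-class along every downstream edge, and Lemma~\ref{lem_repulsive_1} forces every downstream vertex to lie outside $(\partial \Delta)_j$, hence outside $V_j(h)$. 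In particular, after any exit the flow cannot re-enter $V_j(h)$.

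\emph{The main obstacle.} The remaining difficulty is to rule out the configuration of two distinct exit vertices $V, V' \in V_j(h)$ whose flow paths leave $V_j(h)$ independently and only reunite at a common descendant $W \notin V_j(h)$. By the previous step, both ingoing edges of $W$ carry classes proportional to $H_j$. Since $h(W) \notin (\partial \Delta)_j$, the line bundle $\ccL_j|_{Y_W}$ is trivial and hence $H_{j,E_W}^2 = 0$ on $X_{E_W}$. The plan is to feed this vanishing into the degeneration formula at $W$: after pulling back the two $H_j$-classes along the evaluation maps and pushing forward through $\ev_{W,E_W}$, the resulting class on $X_{E_W}$ should be expressible in terms of $H_{j,E_W}^2$ (up to classes pulled back from $Y_W$), and therefore vanish. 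Iterating the propagation then forces $N^{\rm loc,h}_{0,d}(Y(D)) = 0$, contradicting the hypothesis. Making this vanishing rigorous -- in particular, carefully bookkeeping the interaction of the two $H_j$-contributions with the virtual class $[M_W]^{\rm vir}$ under $\ev_{W,E_W}$-pushforward -- will be the central technical step, and is where the genus $0$, $l=2$ hypotheses of Theorem~\ref{thm_key} enter decisively.
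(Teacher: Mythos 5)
Your argument is essentially the paper's own proof: existence from $h(V_0)=v_Y$, the observation that at an exit vertex Lemma~\ref{lem_toric_homological_balancing} forces $d_V\cdot D_{j,V}^{\partial}>0$ so that $\alpha_{E_V}$ is a non-zero multiple of $H_{j,E_V}$, propagation via Lemma~\ref{lem_H_propagation} together with the repulsive lemmas, and uniqueness by noting that the flows from two distinct exit vertices would have to meet at a descendant (possibly $V_0$) where $H_{j,E}^2=0$, killing $N^{{\rm loc},h}_{0,d}(Y(D))$. The step you single out as the remaining technical obstacle is exactly what the paper dispatches in one line --- since $\ccL_j$ is trivial on components away from $(\partial\Delta)_j$, the product of the two classes proportional to $H_j$ at the meeting vertex is a multiple of $H_{j,E}^2=0$ --- and only the genus-$0$ (tree/flow) structure is used there: the $l=2$ hypothesis plays no role in this lemma, which the paper states and proves before assuming $l=2$.
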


\begin{proof}
As the sink $V_0$ satisfies $h(V_0)=v_Y$, there exists at least one 
$V \in V_j(h)$ such that $S(V) \notin V_j(h)$.

Remark that if $V \in V_j(h)$ is such that 
$S(V) \notin V_j(h)$, then by Lemma \ref{lem_toric_homological_balancing}, we have 
$d_V \cdot D_{j,V}^{\partial}>0$ and so $\alpha_{E_V}$
is a non-zero multiple of $H_{j,E_V}$. Therefore, by Lemma \ref{lem_H_propagation},
all descendant edges $E$ of $V$ have $\alpha_E$ equal to non-zero multiple of $H_{j,E}$. 
Also, by Lemma 
\ref{lem_repulsive_3}, we have $h(E) \not\subset (\partial \Delta)_j$ for every edge $E$
descendant from $V$.

Assume that we had $V_1$ and $V_2$ in $V_j(h)$ with $V_1 \neq V_2$,
$S(V_1) \notin V_j(h)$, and $S(V_2) \notin V_j(h)$. Then, the flow descendant from
$V_1$ and $V_2$ meet somewhere, either at a vertex $V \neq V_0$ with 
$h(E_V) \not\subset (\partial \Delta)_j$, or at $V_0$. In either case, we deduce from 
$H_{j,E_V}^2=0$ and $H_{j,V_0}^2=0$ that $N^{{\rm loc},h}_d(Y(D))=0$, contradiction.

\end{proof}

\begin{lem} \label{lem_anc}
Let $1 \leq j \leq l$. Every $V \in V_j(h)$
is an ancestor of $V_j$.
\end{lem}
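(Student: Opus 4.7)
The plan is a short iterative argument built on Lemma \ref{lem_unique}. First I would observe that the sink $V_0$ does not lie in $V_j(h)$: by construction $h(V_0) = v_Y$, while $(\partial \Delta)_j$ is by definition the union of the boundary edges $e_{p,j}$ for $p \in D_j \cap D_{j'}$, and the vertex $v_Y$ of $\Delta$ is not contained in any such edge. Hence $v_Y \notin \partial \Delta$, and in particular $V_0 \notin V_j(h)$.

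Now fix $V \in V_j(h)$ and form the successor sequence $V^{(0)} \coloneqq V$, $V^{(k+1)} \coloneqq S(V^{(k)})$, which is well-defined as long as $V^{(k)} \neq V_0$ since every non-sink vertex has exactly one outgoing edge. Because $\Gamma$ is a finite tree (it has genus $0$) and the flow is acyclic with unique sink $V_0$, this sequence reaches $V_0$ after finitely many steps. The heart of the argument is to apply iteratively the contrapositive of the characterising property of $V_j$ in Lemma \ref{lem_unique}: if $W \in V_j(h)$ and $W \neq V_j$, then $S(W) \in V_j(h)$. Starting from $V^{(0)} = V$, as long as $V^{(k)} \neq V_j$ this forces $V^{(k+1)} \in V_j(h)$, so the sequence remains inside $V_j(h)$. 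Since the sequence must eventually exit $V_j(h)$ (it terminates at $V_0 \notin V_j(h)$), there is a smallest index $N \geq 0$ with $V^{(N)} = V_j$. This exhibits $V_j$ as a flow-descendant of $V$, which is exactly what is meant by $V$ being an ancestor of $V_j$.

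The whole argument fits in a few lines once Lemma \ref{lem_unique} is available, so there is no substantive obstacle: all the geometric input (toric homological balancing at boundary vertices, propagation of $H_{j,E}$-classes along the flow, and the conflict that would arise from two distinct exit points) has already been absorbed into Lemma \ref{lem_unique}. The only bookkeeping step here is the opening observation that $v_Y \notin \partial \Delta$, which is immediate from the description of the polyhedral complex $\Delta$ recalled at the beginning of Section \ref{sec:logloc}.
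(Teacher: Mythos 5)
Your proof is correct and follows essentially the same route as the paper: since the sink maps to $v_Y \notin (\partial\Delta)_j$, the flow starting at any $V \in V_j(h)$ must eventually leave $V_j(h)$, and by Lemma \ref{lem_unique} the only exit point is $V_j$. Your write-up just makes the induction along the successor sequence explicit, which the paper leaves implicit.
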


\begin{proof}
As the sink $V_0$ is such that $h(V_0)=v_Y$, the flow descendant of $V$ has to go out of 
$(\partial \Delta)_j$, and this can only happen via $V_j$ by Lemma 
\ref{lem_unique}.
\end{proof}








\begin{lem} \label{lem_H_everywhere}
Let $E$ be a bounded edge of $\Gamma$ such that 
$\alpha_E$ is not a non-zero multiple of any $H_{j,E}$. Then we have $E=E_V$ where $V$ is a source of $\Gamma$ with $h(V) =v_Y$.
\end{lem}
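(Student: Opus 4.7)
I would deduce the lemma from the stronger inductive claim that \emph{for every vertex $V$ of $\Gamma$ that is not a source with $h(V)=v_Y$, the class $\alpha_{E_V}$ is a non-zero multiple of some $H_{j,E_V}$}. Writing an arbitrary bounded edge as $E=E_V$ with $V$ its tail under the flow, the contrapositive yields Lemma \ref{lem_H_everywhere} at once. The induction is on the depth $\delta(V)$, the maximal length of a directed path in $\Gamma$ from a source to $V$.

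The base case $\delta(V)=0$ concerns a source $V$. Our hypothesis $h(V)\neq v_Y$ combined with Lemma \ref{lem_source_1} forces $h(V)\in\partial\Delta$, and Lemma \ref{lem_source_2} then produces the required $\alpha_{E_V}$. For the inductive step $\delta(V)\geq 1$, consider the bounded ingoing edges $E'_i=E_{V'_i}$. If some $V'_i$ fails to be a source with $h(V'_i)=v_Y$, the induction hypothesis gives $\alpha_{E'_i}$ proportional to some $H_{j,E'_i}$, and Lemma \ref{lem_H_propagation} propagates the non-zero multiple to $\alpha_{E_V}$; this closes the easy case.

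The main obstacle is the complementary case, in which every $V'_i$ is a source with $h(V'_i)=v_Y$. Here each $h(E'_i)$ is a straight segment from $v_Y$ to $h(V)$, so they share a common direction $u=h(V)-v_Y$; tropical balancing at $V$ then forces the outgoing edge $E_V$ to lie along $\pm u$ as well, and hence every edge of $\Gamma$ incident to $V$ is parallel to $u$. I would now invoke rigidity of $h$: in this configuration, if $h(V)$ were not a $0$-stratum of $\Delta$, one could non-trivially deform the combinatorial type of $h$ by sliding $V$ — together with its descendants — along the common line $v_Y+\mathbb{R}u$, contradicting rigidity. Since $v_Y$ itself is excluded (this would require a contracted ingoing edge, again impossible in a rigid parametrised tropical curve), and since for $l=2$ the remaining $0$-strata of $\Delta$ — namely $v_1,v_2,v_p,v_{p'}$ — all lie on $\partial\Delta$, we conclude $h(V)\in(\partial\Delta)_j$ for some $j$. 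Because $h(E'_i)$ emanates from $v_Y\notin(\partial\Delta)_j$ one has $h(E'_i)\not\subset(\partial\Delta)_j$, and Lemma \ref{lem_repulsive_2} then delivers $\alpha_{E_V}$ as a non-zero multiple of $H_{j,E_V}$, completing the induction.

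The step I expect to be the most delicate is the rigidity argument above, specifically the passage from ``every edge at $V$ is parallel to $u$'' to ``$h(V)$ is a $0$-stratum of $\Delta$''. Making this precise requires a careful examination of the combinatorial type of $h$ and of which sliding deformations preserve it; in the $l=2$ regime one should be able to carry this out directly from the explicit description of $\Delta$ (five vertices, eight edges, four $2$-cells) given earlier in the section, but for higher $l$ a more systematic treatment would be needed.
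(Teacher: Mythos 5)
Your overall scheme --- inducting along the flow on the contrapositive statement, with Lemmas \ref{lem_source_1} and \ref{lem_source_2} for the base case and Lemma \ref{lem_H_propagation} for the easy inductive step --- is a legitimate reorganisation of the paper's argument and uses the same toolkit (the paper instead analyses the tree of ancestors of the given edge $E$, rules out source ancestors on $\partial\Delta$, and invokes rigidity twice, once to force a unique ancestor source and once to exclude intermediate $2$-valent vertices). The genuine problem is in your main case, where every bounded ingoing edge at $V$ comes from a source mapped to $v_Y$. You apply ``tropical balancing at $V$'' unconditionally to conclude that $h(E_V)$ is parallel to $u=h(V)-v_Y$, and from this you deduce that $h(V)$ must be a $0$-stratum of $\Delta$. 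Balancing in this naive form fails precisely when $h(V)\in\partial\Delta$: this is the content of Lemma \ref{lem_toric_homological_balancing}, by which an ingoing edge not contained in $(\partial\Delta)_j$ forces $d_V\cdot D_{j,V}^{\partial}>0$, i.e.\ the vertex is genuinely unbalanced inside $\Delta$ (the contributing curve $\bar{h}$ itself has vertices mapped to $v_1,v_2$ at which naive balancing fails). Consequently, for $h(V)$ in the relative interior of some $(\partial\Delta)_j$ your sliding argument has no parallelism to work with, and such positions are \emph{not} excluded: a vertex receiving an edge straight from $v_Y$ and emitting its outgoing edge along $(\partial\Delta)_j$ is perfectly consistent at this stage. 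As written, your induction silently discards these configurations and is therefore incomplete.

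The repair is short and uses a lemma you already cite: split first on whether $h(V)\in\partial\Delta$. If $h(V)\in(\partial\Delta)_j$ (interior point or vertex --- no enumeration of $0$-strata, and no restriction to $l=2$, is needed), then the ingoing edge emanating from $v_Y\notin(\partial\Delta)_j$ is not contained in $(\partial\Delta)_j$, so Lemma \ref{lem_repulsive_2} gives directly that $\alpha_{E_V}$ is a non-zero multiple of $H_{j,E_V}$, which is what the induction requires. If instead $h(V)\in\Delta\setminus(\partial\Delta\cup\{v_Y\})$, balancing does hold there (as asserted in the proof of Lemma \ref{lem_source_1}), all edges at $V$ are parallel to $u$, and sliding $h(V)$ along $u$ contradicts rigidity, exactly as in the paper's own proof. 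Finally, your exclusion of $h(V)=v_Y$ via the impossibility of contracted bounded edges in a rigid parametrised tropical curve is asserted rather than proved, but it sits at the same level of rigour as the corresponding unjustified step ``$h(V)\neq v_Y$'' in the paper's argument, so I do not count it as an additional gap.
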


\begin{proof}
By Lemma \ref{lem_source_1}, for a source $V$ of $\Gamma$, we have either 
$h(V)=v_Y$ or $h(V) \in \partial \Delta$. If one of the source ancestor $V$ of $E$ had 
$h(V) \in \partial \Delta$, we would have by combination of Lemma \ref{lem_source_2}
and Lemma \ref{lem_H_propagation} that $\alpha_E$ is a non-zero multiple of 
$H_{j,E}$ for some $1 \leq j \leq l$. Therefore, for every source $V$
ancestor of $E$, we have $h(V)=v_Y$. 

Assume by contradiction that there are at least two distinct sources ancestor of $E$. 
Then, there exists a vertex $V$ ancestor of $E$ where at least two distinct source edges meet.
As the source edges are emitted by sources mapped 
to $v_Y$ by $h$, they can only meet if their images by 
$h$ are contained in a common half-line in $\Delta$
with origin $v_Y$.
If $h(V) \in (\partial \Delta)_j$ for some $j$, then $\alpha_{E_V}$, and so 
$\alpha_E$ by Lemma \ref{lem_H_propagation}, would have been a non-zero multiple of 
$H_{j,E}$ by Lemma \ref{lem_H_propagation}. Therefore, $h(V) \in \Delta - \partial \Delta$.
On the other hand, we have $h(V) \neq v_Y$. Therefore, the toric balancing condition applies at 
$h(V)$ and $h(E_V)$ is parallel to the direction of the ingoing edges. Moving 
$h(V)$ along the common direction of all the edges incident to $V$ 
produces a contradiction with the assumed rigidity of $h$.

Therefore, $E$ admits a unique ancestor source $V$. So any other vertex of $\Gamma$
along the flow from $V$ to $E$ would have to be a 2-valent vertex, 
in contradiction with the rigidity of $h$. We conclude that 
$E=E_V$.
\end{proof}



Assume that $l=2$. We choose the flow such that $V_0$ is the vertex $V$ of $\Gamma$
incident to the ($l=2$!) unbounded edge of $\Gamma$.

\begin{lem} \label{lem_key_l_2}
The set of bounded edges of $\Gamma$ incident to 
$V_0$ consists of two elements $E_1$ and $E_2$ with 
$\alpha_{E_1}=\lambda_1 H_{1,E_1}$ and $\alpha_{E_2}= \lambda_2 H_{2,E_2}$, where $\lambda_1, \lambda_2 \in \Q-\{0\}$.
\end{lem}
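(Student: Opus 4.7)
The proof will proceed by three successive reductions, building directly on Lemmas~\ref{lem_central_vertex}--\ref{lem_H_everywhere}. \emph{First}, I plan to establish that $V_0$ has at least two bounded incident edges $E_1, E_2$ with $\alpha_{E_j}$ a non-zero multiple of $H_{j,E_j}$ for $j=1,2$. By Lemma~\ref{lem_existence}, for each $j$ there exists an edge of $\Gamma$ on which $\alpha$ is a non-zero multiple of $H_{j,\cdot}$; Lemma~\ref{lem_unique} singles out the ``exit'' vertex $V_j \in V_j(h)$ whose outgoing edge carries this class; and iterating Lemma~\ref{lem_H_propagation} along the unique flow path from $V_j$ down to the sink $V_0$ transmits the proportionality all the way to an edge $E_j$ incident to $V_0$.

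\emph{Second}, I will show that the two flows emanating from $V_1$ and $V_2$ cannot merge at any intermediate vertex $V$, and in particular $E_1 \neq E_2$. Indeed, at any such confluence Lemma~\ref{lem_H_propagation} applied to each incoming edge would force $\alpha_{E_V}$ to be simultaneously a non-zero multiple of the linearly independent classes $H_{1,E_V}$ and $H_{2,E_V}$ in the cohomology of the $\PP^1\times\PP^1$-bundle $X_{E_V}$, forcing $\alpha_{E_V}=0$ and hence $N^{{\rm loc},h}_{0,d}(Y(D))=0$, a contradiction.

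\emph{Third}, I will rule out further bounded edges at $V_0$. Suppose for contradiction that $V_0$ admits a third incident bounded edge $E_3$. Lemma~\ref{lem_H_everywhere} leaves only two possibilities: either $\alpha_{E_3}$ is a non-zero multiple of some $H_{j,E_3}$, or $E_3=E_V$ for a source $V$ of $\Gamma$ with $h(V)=v_Y$. The former is excluded by combining the uniqueness of the emitter $V_j$ (Lemma~\ref{lem_unique}) with the uniqueness of the flow descendant from $V_j$ (each vertex has a single outgoing edge): the only edge at $V_0$ carrying $H_{j,\cdot}$ is $E_j$ itself. The latter, where $V$ is a source mapped to $v_Y$, must be eliminated by a rigidity argument: since $l=2$ forces $V_0$ to be the unique vertex incident to the unbounded edge, such a $V$ has a single incident edge $E_3$ and no marked points, and one can then exhibit a nontrivial $1$-parameter deformation of $h$ by sliding $V$ along the direction of $h(E_V)$ (with compensating adjustments at the immediate flow successor), contradicting rigidity of $h$.

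The main technical obstacle lies in Step~3: formalising the ``sliding'' deformation requires a careful case analysis of whether the successor of $V$ in the flow maps to a vertex of $\Delta^h$, to the interior of an edge, or to a $2$-face, and verifying in each case that the deformation genuinely preserves the parametrised tropical curve conditions (balancing, weights, and prescribed contact orders). The hypothesis $l=2$ enters essentially here to guarantee that the source $V$ carries no unbounded edges, and hence no stabilising contact data that would obstruct the deformation.
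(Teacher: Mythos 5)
Your Step 2 is where the argument breaks. When the flows issued from $V_1$ and $V_2$ meet at an intermediate vertex $V$, Lemma \ref{lem_H_propagation} does not say that $\alpha_{E_V}$ is simultaneously a scalar multiple of $H_{1,E_V}$ and of $H_{2,E_V}$; in this paper ``non-zero multiple'' means divisible in the cohomology ring, and the two applications of the lemma combine to say that $\alpha_{E_V}$ is a non-zero multiple of the product $H_{1,E_V}H_{2,E_V}$, which is a non-zero class on $X_{E_V}$ (a fibre product of $\PP^1$-bundles): only $H_{1,E}^2$ and $H_{2,E}^2$ vanish, not $H_{1,E}H_{2,E}$. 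So no contradiction arises, and you cannot conclude that the two flows are disjoint; in fact the paper proves the non-merging statement (Lemma \ref{lem_gamma_intersection}) as a \emph{consequence} of Lemma \ref{lem_key_l_2}, so your implication runs in the wrong direction. What actually excludes the ``merged'' configuration --- a single bounded edge at $V_0$ carrying a multiple of $H_{1,E}H_{2,E}$ --- is a virtual dimension count at the sink: with a single maximal-tangency contact point, $M_{V_0}$ has virtual dimension $0+2$, while the insertion of $H_{1,E}H_{2,E}$ together with $\pi_{V_0}^{*}\pt_Y$ exceeds it, so this term contributes zero to $N^{{\rm loc},h}_{0,d}(Y(D))$.

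Even granting your three steps, you would only have shown that each $\alpha_{E_j}$ is \emph{divisible} by $H_{j,E_j}$, whereas the lemma asserts the equality $\alpha_{E_j}=\lambda_j H_{j,E_j}$ with $\lambda_j\in\Q\setminus\{0\}$. The missing ingredient is the second dimension count at $V_0$: in the two-edge case $M_{V_0}$ has virtual dimension $1+2$, and since the point insertion $\pi_{V_0}^{*}\pt_Y$ is already present, non-vanishing of the contribution forces $\alpha_{E_1}$ and $\alpha_{E_2}$ to have the minimal possible degree, i.e.\ to be honest scalar multiples of $H_{1,E_1}$ and $H_{2,E_2}$; nothing in your proposal delivers this. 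Finally, your Step 3 leaves the exclusion of a third bounded edge coming from a source mapped to $v_Y$ as an unproved ``sliding'' deformation (the very point you flag as the obstacle), while the paper disposes of extra edges cheaply: Lemma \ref{lem_H_everywhere} applied at $V_0$ shows every bounded edge there carries some $H_{j,E}$, and $H_{1}^2=H_{2}^2=0$ on $\PP(\cV_{0,V_0})$ shows there is at most one such edge for each $j$.
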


\begin{proof}
As $h(V_0)=v_Y$, we can apply Lemma 
\ref{lem_H_everywhere} to $V_0$. Therefore, for every bounded 
edge $E$ incident to $V_0$, there exists $1 \leq j \leq 2$
such that $\alpha_E$ is a non-zero multiple of $H_{j,E}$.
By combination of Lemma \ref{lem_existence} and Lemma \ref{lem_H_propagation}, 
for every $1 \leq j \leq 2$, there exists at least one bounded edge $E$ incident to $V_0$ with $\alpha_E$ a non-zero multiple of $H_{j,E}$.
As $H_1^2=H_2^2=0$ on $\PP(\cV_{0,V_0})$, for every $1 \leq j \leq 2$, there is at most one bounded edge incident to $V_0$ with 
$\alpha_E$ a non-zero multiple of $H_{j,E}$.

Therefore, we have two cases. Either the set of bounded edges incident to $V_0$ consists of one edge $E$ with $\alpha_E$ a non-zero multiple of 
$H_{1,E}H_{2,E}$, or the set of bounded edges
incident to $V_0$ consists of two edges $E_1$ and $E_2$ with $\alpha_{E_1}$ a non-zero multiple of 
$H_{1,E}$ but not of $H_{2,E}$, and 
$\alpha_{E_2}$ a non-zero multiple of 
$H_{2,E}$ but not $H_{1,E}$.

Let us show that the first case does not arise. If the set of bounded edges incident to $V_0$ consists of a single element, then the moduli space $M_{V_0}$ has virtual dimension $2$. 
Indeed, the virtual dimension of $M_{V_0}$ is $0+2$, where 
$0$ is the virtual dimension for rational curves in the log Calabi--Yau surface $Y$ intersecting the boundary divisor $D$ in a single point, and $2$ comes from the two extra trivial directions
$\cO_Y^{\oplus 2}$. But we need to integrate over $[M_{V_0}]^{\virt}$ the pullbacks of the class $H_{1,E}H_{2,E}$ (coming from the bounded edge $E$ incident to $V_0$) and the pullback of 
$\pi_{V_0}^{*}\pt_Y$ (coming from the unbounded edge incident to 
$V_0$. Therefore, the integrand is a class of degree at least $3>2$, and so this case does not arise if $N^{{\rm loc},h}_d(Y(D)) \neq 0$.

Thus, we are in the second case, where the set of bounded edges 
incident to $V_0$ consists of two edges $E_1$ and $E_2$ with $\alpha_{E_1}$ a non-zero multiple of 
$H_{1,E}$ but not of $H_{2,E}$, and 
$\alpha_{E_2}$ a non-zero multiple of 
$H_{2,E}$ but not $H_{1,E}$. In particular, 
the moduli space $M_{V_0}$ has virtual dimension $3$.
Indeed, the virtual dimension of $M_{V_0}$ is $1+2$, where 
$1$ is the virtual dimension for rational curves in the log Calabi--Yau surface $Y$ intersecting the boundary divisor $D$ in two points, and $2$ comes from the two extra trivial directions
$\cO_Y^{\oplus 2}$.
As we need to integrate over $[M_{V_0}]^{\virt}$ the pullbacks of the classes 
$\alpha_{E_1}$, $\alpha_{E_2}$, and $\pi_{V_0}^{*} \pt_Y$, with 
$\deg \alpha_{E_1} \geq 1$, and $\deg \alpha_{E_2} \geq 1$, 
the condition $N^{{\rm loc},h}_d(Y(D)) \neq 0$ implies that  
$ \deg \alpha_{E_1}=\deg \alpha_{E_2}=1$
and so the classes $\alpha_{E_1}$ and $\alpha_{E_2}$
are scalar-multiple of $H_{1,E_1}$ and $H_{2,E_2}$
respectively.
\end{proof}

For every $1 \leq j \leq 2$, let $\Gamma_j$ be the subset of
$\Gamma$ described by the flow from $V_j$ to $V_0$.

\begin{lem} \label{lem_gamma_intersection}
We have $\Gamma_1 \cap \Gamma_2=\{V_0\}$.
\end{lem}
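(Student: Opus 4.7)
The plan is to argue by contradiction. Suppose there exists a vertex $W \in (\Gamma_1 \cap \Gamma_2) \setminus \{V_0\}$. Since the chosen flow on $\Gamma$ endows every vertex distinct from $V_0$ with a unique outgoing edge, the forward trajectory from $W$ to $V_0$ is uniquely determined; in particular, the final segments of $\Gamma_1$ and $\Gamma_2$ starting from $W$ must coincide, so $\Gamma_1$ and $\Gamma_2$ enter $V_0$ through the same bounded edge.

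The strategy is then to show that, on the contrary, $\Gamma_1$ and $\Gamma_2$ must enter $V_0$ through two \emph{distinct} bounded edges. By Lemma~\ref{lem_unique} (and its proof), the outgoing edge $E_{V_j}$ of $V_j$ satisfies $\alpha_{E_{V_j}} = \mu_j H_{j, E_{V_j}}$ for some non-zero scalar $\mu_j$, since $S(V_j) \notin V_j(h)$ forces $d_{V_j} \cdot D_{j,V_j}^{\partial} > 0$. Iterating Lemma~\ref{lem_H_propagation} along $\Gamma_j$ from $V_j$ down to the predecessor of $V_0$, one finds that every edge on $\Gamma_j$ carries an $\alpha$-class which is a non-zero scalar multiple of $H_j$; in particular the final edge $E^{(j)}$ of $\Gamma_j$ into $V_0$ satisfies $\alpha_{E^{(j)}} = \mu_j' H_{j, E^{(j)}}$ for some non-zero $\mu_j' \in \bbQ$. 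By Lemma~\ref{lem_key_l_2}, the two bounded edges incident to $V_0$ are $E_1, E_2$ with $\alpha_{E_j} = \lambda_j H_{j, E_j}$; since $H_1$ and $H_2$ are independent classes in the cohomology of the fibre-product projective bundle $X_E$ (they are pulled back from the two independent $\PP^1$-factors), a non-zero scalar multiple of $H_1$ cannot coincide with a non-zero scalar multiple of $H_2$. We deduce $E^{(j)} = E_j$, and since $E_1 \neq E_2$, the two paths enter $V_0$ via different edges, contradicting the previous paragraph.

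The main point to verify carefully, though a routine one, is the iterated propagation of $H_j$ along $\Gamma_j$: at each intermediate vertex of $\Gamma_j$ there may be additional ingoing edges from other branches of the flow, but Lemma~\ref{lem_H_propagation} applies as soon as at least one ingoing edge (namely the one along $\Gamma_j$) already has $\alpha$-class proportional to $H_j$. Once this check is performed, the contradiction closes immediately and yields $\Gamma_1 \cap \Gamma_2 = \{V_0\}$.
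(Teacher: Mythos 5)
Your proof is correct and follows essentially the paper's own argument: both rest on iterating Lemma~\ref{lem_H_propagation} along the flow and on the structure of the bounded edges at $V_0$ given by Lemma~\ref{lem_key_l_2}. The only (minor) difference is that the paper propagates the product class $H_{1,E}H_{2,E}$ from the common vertex down to $V_0$ and contradicts Lemma~\ref{lem_key_l_2} directly, whereas you propagate $H_1$ and $H_2$ separately along $\Gamma_1$ and $\Gamma_2$, identify their final edges with $E_1 \neq E_2$, and contradict the uniqueness of the flow out of the common vertex, which forces those final edges to coincide.
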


\begin{proof}
Assume by contradiction that there exists a vertex $V$ of $\Gamma$ with $V \neq V_0$ and $V \in \Gamma_1 \cap \Gamma_2$.
By Lemma \ref{lem_H_propagation}, $\alpha_{E_V}$
is a non-zero multiple of $H_{1,E_V}H_{2,E_V}$. By iterative application of Lemma \ref{lem_H_propagation} along the flow from $V$ to $V_0$, we would deduce that there exists a bounded edge $E$
incident to $V_0$ with 
$\alpha_{E}$ a non-zero multiple of 
$H_{1, E}H_{2,E}$, which is not possible by Lemma 
\ref{lem_key_l_2}.
\end{proof}

\begin{lem} \label{lem_int}
Let $1 \leq j \leq 2$. For every vertex $V$ of 
$\Gamma_j$ with $V \neq V_j$, we have
$h(V) \in \Delta - \partial \Delta$.
\end{lem}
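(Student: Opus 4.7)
The plan is to argue by contradiction, leveraging the uniqueness statement of Lemma \ref{lem_unique} together with the disjointness property of Lemma \ref{lem_gamma_intersection}. Suppose that $V \in \Gamma_j$ with $V \neq V_j$ and $h(V) \in \partial \Delta$; then $h(V) \in (\partial \Delta)_k$ for some $1 \leq k \leq 2$, i.e.\ $V \in V_k(h)$. The whole point is to produce a descendant of $V$ that violates one of these two structural constraints, depending on whether $k=j$ or $k \neq j$.

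First I would consider the chain of successors $V, S(V), S^2(V), \dots$ along the flow. Since this chain terminates at the sink $V_0$ and $h(V_0) = v_Y \notin \partial \Delta$, there must exist a (weak) descendant $W$ of $V$ in the chain with $W \in V_k(h)$ but $S(W) \notin V_k(h)$. By Lemma \ref{lem_unique}, such an exit vertex is unique, so $W = V_k$. In particular, $V_k$ is a descendant of $V$ along the flow on $\Gamma$.

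Now I split into two cases. If $k = j$, then $V_j$ is a descendant of $V$; but by the definition of $\Gamma_j$ and the assumption $V \neq V_j$, the vertex $V$ is itself a strict descendant of $V_j$. This contradicts the acyclicity of the flow (every vertex has at most one outgoing edge and the graph is a tree with sink $V_0$). If $k \neq j$, then $V_k$ lies on the flow from $V$ to $V_0$, and since $V \in \Gamma_j$, this flow is contained in $\Gamma_j$; hence $V_k \in \Gamma_j$. On the other hand, $V_k$ is by definition the starting vertex of $\Gamma_k$, so $V_k \in \Gamma_j \cap \Gamma_k$. By Lemma \ref{lem_gamma_intersection}, this forces $V_k = V_0$, which is incompatible with $h(V_k) \in (\partial \Delta)_k$ and $h(V_0) = v_Y \notin \partial \Delta$.

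The argument is essentially combinatorial and no real obstacle is anticipated: both cases are pinned down by a single application of the respective previous lemma, once the successor-chain reduction has been performed. The only point requiring care is the acyclicity argument in the $k=j$ case, where one must distinguish clearly between $V$ being a descendant of $V_j$ (from $V \in \Gamma_j \setminus \{V_j\}$) and $V_j$ being a descendant of $V$ (from the successor chain), and invoke the tree structure of $\Gamma$ oriented by the flow to conclude.
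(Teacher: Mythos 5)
Your proof is correct and takes essentially the same route as the paper: your successor-chain argument identifying the exit vertex with $V_k$ via Lemma \ref{lem_unique} is exactly the content of Lemma \ref{lem_anc}, which the paper invokes for the case $k \neq j$ before concluding with Lemma \ref{lem_gamma_intersection}, and your acyclicity argument for $k = j$ is just a more explicit version of the paper's ``by definition of $V_j$'' step.
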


\begin{proof}
Up to exchanging $1$ and $2$ in the following argument, we can 
assume that $j=1$. By definition of $V_1$, we have 
$V \notin (\partial \Delta)_1$ for every vertex $V$ of 
$\Gamma_1$ with $V  \neq V_1$. 

Assume by contradiction that there exists a vertex $V$ of $\Gamma_1$ with $V \in (\partial \Delta)_2$.
Then $V$ is an ancestor of $V_2$ by 
Lemma \ref{lem_anc} and so $V_2 \in \Gamma_1$.
Therefore, we have $V_2 \in \Gamma_1 \cap \Gamma_2$
and this contradicts Lemma
\ref{lem_gamma_intersection}.
\end{proof}

\begin{lem} \label{lem_away}
Let $1 \leq j \leq 2$. Let $V$ be a vertex of $\Gamma_j$ with $V \neq V_j$
and $V \neq V_0$.
Let $E$ be an ingoing edge incident to $V$. Then, either $E$ is a descendant of $V_j$,
or $E=E_{V'}$ for $V'$ a source with $h(V)=v_Y$.
\end{lem}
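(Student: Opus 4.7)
The plan I will follow is to apply \cref{lem_H_everywhere} to the edge $E$ and leverage the location of $V$ inside $\Gamma_j$. Let $V'$ denote the tail of $E$, so that $E = E_{V'}$. By \cref{lem_H_everywhere}, either $E = E_{V^{*}}$ for a source $V^{*}$ with $h(V^{*}) = v_Y$ (in which case $V' = V^{*}$ already gives the second alternative of the statement), or $\alpha_E$ is a non-zero multiple of $H_{k, E}$ for some $1 \leq k \leq l$. The remaining task is to show that this second case forces $E$ to be a descendant of $V_j$.

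To handle the second case, assume $\alpha_E = \lambda H_{k, E}$ with $\lambda \neq 0$. I will first show, by rerunning the opening paragraph of the proof of \cref{lem_H_everywhere} combined with \cref{lem_source_2} and iterated use of \cref{lem_H_propagation}, that there must exist a source ancestor $V^{*}$ of $E$ with $h(V^{*}) \in (\partial \Delta)_{k}$. The point is that if all source ancestors of $E$ were mapped to $v_Y$ then $\alpha_E$ could not be a non-zero multiple of any $H_{k, E}$; and two source ancestors with images in distinct $(\partial \Delta)_k$ and $(\partial \Delta)_{k'}$ would propagate $\alpha_E$ to a non-zero multiple of two linearly independent classes, which is impossible. \Cref{lem_anc} then places $V^{*}$ as an ancestor of $V_k$, so the unique flow path $V^{*} \to \cdots \to V' \to V \to \cdots \to V_0$ must traverse $V_k$.

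I will then finish by a short case analysis on where $V_k$ sits along this flow path. If $k = j$, the fact that $V_j$ is an ancestor of $V$ (since $V \in \Gamma_j$) forces $V_j \in \{V^{*}, \dots, V'\}$, so that $E = E_{V'}$ is a descendant of $V_j$, as required. If $k \neq j$, two sub-cases arise: either $V_k$ lies on the $\Gamma_j$-segment from $V$ to $V_0$, in which case $V_k \in \Gamma_j \cap \Gamma_k = \{V_0\}$ by \cref{lem_gamma_intersection}, contradicting $h(V_k) \in (\partial \Delta)_k \not\ni v_Y = h(V_0)$; or $V_k$ lies on the initial segment from $V^{*}$ to $V'$, in which case the flow from $V_k$ reaches $V$, forcing $V \in \Gamma_k$ and hence $V \in \Gamma_j \cap \Gamma_k = \{V_0\}$, contradicting $V \neq V_0$. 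The main subtlety I expect to track carefully is the uniqueness of the index $k$ in the trace-back producing $V^{*}$, which rests on the linear independence of the classes $H_{k, E}$ in $\hhh^{*}(X_E)$; once that is in place, the case analysis above cleanly combines \cref{lem_anc} and \cref{lem_gamma_intersection} to close the argument.
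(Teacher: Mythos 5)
There is a genuine gap in the middle step of your argument, namely the claim that when $\alpha_E$ is a non-zero multiple of $H_{k,E}$ there must exist a \emph{source} ancestor $V^{*}$ of $E$ with $h(V^{*})\in(\partial\Delta)_k$. The lemmas you cite only give the forward implication: a source ancestor mapped to the boundary forces $\alpha_E$ to be a non-zero multiple of some $H_{j,E}$ (\cref{lem_source_2} plus \cref{lem_H_propagation}, which is exactly how \cref{lem_H_everywhere} is proved). You need the converse, and your justification for it ("if all source ancestors of $E$ were mapped to $v_Y$ then $\alpha_E$ could not be a non-zero multiple of any $H_{k,E}$") is not correct as stated: an $H_k$-factor can be created at a \emph{non-source} vertex, namely at any vertex $W$ with $h(W)\in(\partial\Delta)_k$ fed by an ingoing edge not contained in $(\partial\Delta)_k$ --- this is precisely the mechanism of \cref{lem_repulsive_2} --- and such a $W$ may well have all of its source ancestors mapped to $v_Y$. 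The "linear independence" remark you invoke to pin down the index $k$ also does not do the intended work: "multiple" here means multiple in the cohomology ring $\hhh^{*}(X_E)$, and the product $H_{1,E}H_{2,E}$ need not vanish on $X_E$ (it is excluded only at the sink, by the dimension count in \cref{lem_key_l_2}), so a class can be a non-zero multiple of both $H_{1,E}$ and $H_{2,E}$ simultaneously.

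The fix is to trace the $H_k$-factor back not to a source but to its creation point: since $\ccL_k$ is trivial on all strata away from $(\partial\Delta)_k$, a non-zero $H_{k,E}$-component of $\alpha_E$ can only originate at an ancestor vertex $W$ of $E$ with $d_W\cdot D_{k,W}^{\partial}>0$, hence $W\in V_k(h)$. Then \cref{lem_anc} makes $W$ an ancestor of $V_k$, \cref{lem_unique} says the flow can leave $(\partial\Delta)_k$ only at $V_k$, and since $h(V)\notin\partial\Delta$ by \cref{lem_int} the flow from $W$ through $E$ into $V$ must traverse $V_k$ before reaching $V$; your concluding case analysis (with \cref{lem_anc} and \cref{lem_gamma_intersection}), which is sound, then closes the argument. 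With that substitution your route becomes essentially the paper's proof read in the opposite direction: the paper argues contrapositively that if $E$ is not a descendant of $V_j$ then $\alpha_E$ is not a non-zero multiple of $H_{1,E}$, rules out $H_{2,E}$ via \cref{lem_gamma_intersection}, and concludes by \cref{lem_H_everywhere}.
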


\begin{proof}
Up to exchanging $1$ and $2$ in the following argument, we can 
assume that $j=1$.
Remark first that $V \in \Delta -\partial \Delta$ by 
Lemma \ref{lem_int}.
Assume that $E$ is not a descendant of $V_1$. Then 
$\alpha_E$ is not a non-zero multiple of $H_{1,E}$. On the other hand, by Lemma 
\ref{lem_gamma_intersection}, $E$ is not a descendant of $V_2$, and so
$\alpha_E$ is not a non-zero multiple of $H_{2,E}$.
Therefore, the result follows from Lemma \ref{lem_H_everywhere}.
\end{proof}

We say that an edge $E$ of $\Gamma$
is \emph{radial} if 
$h(E) \not\subset \partial \Delta$ and the direction of 
$h(E)$ passes through $v_Y$.

\begin{lem} \label{lem_nobody}
For every $1 \leq j \leq 2$, we have $h(\Gamma_j)=[V_j,V_0]$.
\end{lem}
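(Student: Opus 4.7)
The plan is to establish the identity $h(\Gamma_j)=[V_j,V_0]$ by descending induction along the flow from $V_0$ to $V_j$, using the structural results already collected together with toric homological balancing and the rigidity of $h$.

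First, I would set up the inductive framework. By \cref{lem_int}, every intermediate vertex $V \in \Gamma_j\setminus\{V_j,V_0\}$ satisfies $h(V)\in\Delta-\partial\Delta$, and by \cref{lem_gamma_intersection} such a $V$ lies in $\Gamma_j$ but not in $\Gamma_{3-j}$; hence by iterative application of \cref{lem_H_propagation} along the flow from $V_j$, the class $\alpha_{E_V}$ is a non-zero multiple of $H_{j,E_V}$. At $V_0$ itself, \cref{lem_key_l_2} already identifies the flow-ingoing edge of $\Gamma_j$ as the unique bounded edge $E_j$ incident to $V_0$ with $\alpha_{E_j}=\lambda_j H_{j,E_j}$.

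Second, at each intermediate $V \in \Gamma_j\setminus\{V_j,V_0\}$, I would classify the ingoing incident edges via \cref{lem_away}: each is either part of the flow from $V_j$ (and hence $h$-images of such edges are already controlled inductively) or of the form $E=E_{V'}$ for a source $V'$ with $h(V')=v_Y$. In the latter case, $h(E)$ is a straight segment from $v_Y$ to $h(V)$, so its direction at $h(V)$ lies on the line through $v_Y$ and $h(V)$ — i.e., it is radial. Since $h(V)\in\Delta-\partial\Delta$, toric homological balancing applies at $h(V)$: the sum of the (weighted) primitive direction vectors of all incident edges vanishes. Subtracting the radial contribution of the source edges forces the vector sum of the ingoing flow edge and the outgoing flow edge $E_V$ to be radial.

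Third, I would upgrade this to the statement that both the flow-in and the flow-out edges at $V$ are themselves radial, by invoking the rigidity of $h$. If the flow were to make a non-radial bend at $V$, one could slide $h(V)$ along a one-parameter family of positions (absorbing the extra degree of freedom into the lengths of the radial source edges and the flow edges) while preserving all other vertex images, producing a positive-dimensional family of parametrised tropical curves marked by $h$ — contradicting rigidity. Hence $h(E_V)$ lies on the ray from $h(V)$ towards $v_Y$. Iterating the argument from $V_0$ back to $V_j$ confines every edge of $\Gamma_j$ to the radial segment ending at $v_Y$; the base of the induction is fixed by the direction of $E_j$ at $v_Y$, which by the toric local picture at $v_Y$ in \cref{figure_fan_3} must run along one of the edges $e_{Y,1},e_{Y,2}$ and hence into the half-space containing $h(V_j)$.

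The main obstacle is the rigidity step: one must verify carefully that the apparent freedom introduced by the source edges terminating at intermediate vertices does not give enough room to accommodate a genuine non-radial bend. The key observation that unlocks this is that all source edges ending at a given $V$ necessarily share the radial direction through $v_Y$, so they cannot cancel a transverse component coming from a bend of the flow path without violating either balancing or the rigidity of the position of $h(V)$ in $\Delta^h$.
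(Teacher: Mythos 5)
Your setup is on the right track: using \cref{lem_int} for the interior images of the intermediate vertices, \cref{lem_away} to see that every non-flow ingoing edge at such a vertex is a source edge emitted from $v_Y$ (hence radial), and the balancing condition to conclude that the transverse (non-radial) components of the flow-in and flow-out edges at each intermediate vertex must be positively proportional. The gap is in your third step. The deformation you invoke does not exist: in a parametrised tropical curve the directions and weights of the edges are fixed data, so if the flow genuinely bends at $V$ (distinct in/out directions) the point $h(V)$ is pinned as the intersection of the two lines of prescribed directions through the images of its neighbours, and it cannot slide ``while preserving all other vertex images''; absorbing the motion into edge lengths is not available. More fundamentally, a non-radial bend at a single interior vertex is locally perfectly compatible with both balancing and rigidity, so no purely local argument at $V$ can rule it out — what rules it out is global propagation along the flow, which your write-up never actually carries out (and your closing ``key observation'' is circular, since a bend whose transverse in/out components cancel each other asks nothing of the source edges).

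The missing ingredient is the elementary observation the paper's proof hinges on: the final edge of $\Gamma_j$, the one entering $V_0$, is automatically radial, because its image terminates at $v_Y$ (and is not contained in $\partial \Delta$). Granting this, your balancing computation at interior vertices (source edges contribute only radially, so the flow-out edge is non-radial whenever the flow-in edge is) shows that a single non-radial edge of $\Gamma_j$ would propagate non-radiality all the way down the flow and make the edge entering $V_0$ non-radial — a contradiction. Equivalently, radiality propagates backwards from $V_0$, which is the correct form of your descending induction and requires no appeal to rigidity (rigidity is used earlier, in \cref{lem_H_everywhere}, not here). Note also that your proposed base case — that the direction of the edge entering $V_0$ ``must run along $e_{Y,1}$ or $e_{Y,2}$'' — is neither needed nor justified at this stage, since $h(V_j)=v_j$ is only established later, in \cref{lem_fibre}.
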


\begin{proof}
If $E$ were a non-radial edge of $\Gamma_j$, then by Lemma
\ref{lem_away},
no descendant edge of $E$ can be radial, which is a contradiction because the final edge of 
$\Gamma_j$ entering $V_0$ is radial.
\end{proof}

\begin{lem} \label{lem_source_3}
Let $V$ be a source with $h(V) \in \partial \Delta$. Then, $V \in \{V_1, V_2\}$.
\end{lem}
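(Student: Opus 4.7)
The plan is to reduce the statement to the uniqueness clause of Lemma~\ref{lem_unique} by showing that any source $V$ with $h(V) \in \partial\Delta$ sits inside exactly one of the $V_j(h)$ and has successor outside that set.

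First, since $h(V) \in \partial\Delta$, Lemma~\ref{lem_source_2} supplies some $j \in \{1,2\}$ such that $\alpha_{E_V}$ is a non-zero multiple of $H_{j,E_V}$. In particular $V \in V_j(h)$ (else there would be no way for $d_V$ to have positive intersection with $D_{j,V}^{\partial}$), so by Lemma~\ref{lem_anc} the vertex $V$ is an ancestor of the distinguished vertex $V_j$. It therefore suffices to prove that $S(V) \notin V_j(h)$, for then the uniqueness clause of Lemma~\ref{lem_unique} forces $V = V_j$.

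Suppose first that $h(V) \in (\partial \Delta)_j$ but $h(V) \neq v_p$ for every double point $p \in D_j \cap D_{j'}$. This is precisely the hypothesis of Lemma~\ref{lem_source_4}, which asserts that $E_V \not\subset (\partial \Delta)_j$. Hence the immediate successor $S(V)$ lies outside $V_j(h)$, and the argument above finishes this case.

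The remaining, more delicate case is $h(V) = v_p$ for some $p \in D_j \cap D_{j'}$, where $V$ is a source at the interior vertex of $\Delta$ corresponding to the exceptional surface $Y_V = \bS_p \simeq \mathbb{P}^1 \times \mathbb{P}^1$; this is the main obstacle, because a priori the single outgoing edge $E_V$ could travel along either $(\partial \Delta)_j$ or $(\partial \Delta)_{j'}$. The plan is to exploit the fact that $V$ is a source with a \emph{single} incident edge: homological toric balancing on $Y_V$ (whose fan has four rays corresponding to $D_{j,p}^{\partial}$, $D_{j',p}^{\partial}$, and the two gluing divisors to $\PP(\cV_{0,j})$ and $\PP(\cV_{0,j'})$) then forces $d_V$ to be proportional to a fibre class transverse to one of the two boundary divisors, say $D_{j,p}^{\partial}$ after relabelling. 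Consequently $d_V \cdot D_{j,V}^{\partial} > 0$, which recovers $\alpha_{E_V}$ as a non-zero multiple of $H_{j,E_V}$, and the direction of $E_V$ is forced to leave $(\partial \Delta)_j$. Thus $S(V) \notin V_j(h)$, and Lemma~\ref{lem_unique} again yields $V = V_j \in \{V_1, V_2\}$, completing the proof.
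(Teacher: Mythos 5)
Your reduction to the uniqueness clause of \cref{lem_unique}, and your treatment of the non-corner case via \cref{lem_source_2} and \cref{lem_source_4}, are fine and parallel the paper. The gap is the pivotal claim in the corner case $h(V)=v_p$: homological toric balancing at a one-valent vertex mapping to $v_p$ does \emph{not} force $d_V$ to be proportional to a fibre class of $\bS_p$. The local structure of $\Delta$ at $v_p$ is the corner of the two-dimensional face $g_p$, and nothing in the balancing condition prevents the unique edge $E_V$ from pointing into the interior of $g_p$: balancing only says that the degrees of $d_V$ on the two rulings of $\bS_p\simeq\PP^1\times\PP^1$ are the components of the weighted direction of $E_V$ along $e_{p,j}$ and $e_{p,j'}$, so a class with positive degree on both rulings (e.g.\ a multiple of the diagonal), with $E_V$ leaving both $(\partial\Delta)_j$ and $(\partial\Delta)_{j'}$, is perfectly consistent with balancing. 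The paper does not dispose of this configuration by balancing; it rules it out by observing that it would force $V=V_j=V_{j'}$, contradicting \cref{lem_gamma_intersection}, and it then eliminates the remaining configuration (with $E_V$ contained in one boundary piece) by propagating the class $H_{j'}$ along the flow (\cref{lem_H_propagation}) down to $V_0$ and contradicting \cref{lem_key_l_2}, so that in the paper the corner case never occurs at all. As written, your argument rests on an assertion that is false without further input, so there is a genuine gap at its central step.

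That said, your overall strategy is repairable without the fibre-class claim: since $(\partial\Delta)_1\cap(\partial\Delta)_2$ consists only of the corner vertices, the single edge $E_V$ can be contained in at most one of the two boundary pieces, so there is always some $j$ with $h(E_V)\not\subset(\partial\Delta)_j$; as $h(V)=v_p\in(\partial\Delta)_j$, the successor of $V$ then lies outside $V_j(h)$, and the uniqueness clause of \cref{lem_unique} gives $V=V_j$ directly (in your sub-case where $E_V$ runs along $e_{p,j}$, this is in effect what you do, and that part is correct). Such a repair would actually be shorter than the paper's proof of the corner case, but it proves strictly less: the paper's argument shows in addition that a source can never map to a corner $v_p$, which is the stronger structural fact its analysis is organised around.
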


\begin{proof}
If there exists a unique $j$ such that $h(V) \in (\partial \Delta)_j$, then $h(E_V) \not\subset 
(\partial \Delta)_j$ by Lemma \ref{lem_source_4},
and so $V=V_j$ by Lemma \ref{lem_unique}.

If not, then $h(V)=v_p$ for some $p \in D_j \cap D_{j'}$.
We have either $h(E_V) \subset (\partial \Delta)_j$ or 
$h(E_V) \subset (\partial \Delta)_{j'}$: else, we would have 
$V=V_j=V_{j'}$, in contradiction with Lemma \ref{lem_gamma_intersection}.
Therefore, up to relabeling $j$ and $j'$,
we can assume that 
$h(E_V) \subset (\partial \Delta)_j$. By toric homological balancing, it follows that $\alpha_{E_V}$ is a non-zero multiple of 
$H_{j',E_V}$. The flow descendant from $V$ have to go out from 
$(\partial \Delta)_j$, necessarily at $V_j$ by Lemma \ref{lem_unique}, in contradiction with the fact that
$\alpha_{E_{V_j}}$ is not a non-zero multiple of $H_{j',E_{V_j}}$ by Lemma \ref{lem_key_l_2} 
(and we used Lemma \ref{lem_H_propagation}).
\end{proof}

\begin{lem} \label{lem_vertex}
For every $1 \leq j \leq 2$, we have $V_j(h)=\{V_j \}$.
\end{lem}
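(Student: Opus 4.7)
The plan is to argue by contradiction, choosing a minimal element of $V_j(h)\setminus\{V_j\}$ in the ancestor partial order and splitting cases by whether it is a source or not.

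Suppose toward contradiction that there exists $V \in V_j(h)\setminus\{V_j\}$. By Lemma \ref{lem_anc} every element of $V_j(h)$ is an ancestor of $V_j$, and by Lemma \ref{lem_unique} the entire flow chain $V = U_0 \to U_1 \to \cdots \to U_k = V_j$ (where $U_{i+1} = S(U_i)$) lies in $V_j(h)$. Choose $V^*$ to be a minimal element of $V_j(h)\setminus\{V_j\}$ with respect to the ancestor order, i.e.\ no proper ancestor of $V^*$ lies in $V_j(h)$.

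First I would dispose of the source case. If $V^*$ is a source, Lemma \ref{lem_source_1} forces $h(V^*) \in \{v_Y\} \cup \partial\Delta$, and since $V^* \in V_j(h)$ this reduces to $h(V^*) \in \partial\Delta$. Lemma \ref{lem_source_3} then gives $V^* \in \{V_1, V_2\}$; the case $V^* = V_j$ contradicts $V^* \neq V_j$, so $V^* = V_{j'}$ with $j' \neq j$. Because $V^* = V_{j'}$ is a strict ancestor of $V_j$, the flow $\Gamma_{j'}$ from $V_{j'}$ down to $V_0$ must pass through $V_j$, giving $V_j \in \Gamma_j \cap \Gamma_{j'}$. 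Lemma \ref{lem_gamma_intersection} forces $V_j = V_0$, contradicting $h(V_j) \in (\partial\Delta)_j$ while $h(V_0) = v_Y \notin \partial\Delta$.

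Next I would handle the non-source case. If $V^*$ has incident ingoing edges, none can be unbounded (else $h(V^*) = v_Y \notin \partial\Delta$), so all ingoing edges $E$ are bounded with predecessor $V'$ a proper ancestor of $V^*$. Minimality gives $V' \notin V_j(h)$, so $h(V') \notin (\partial\Delta)_j$ and hence $h(E) \not\subset (\partial\Delta)_j$. Lemma \ref{lem_repulsive_3} then yields $h(E_{V^*}) \not\subset (\partial\Delta)_j$. On the other hand, since $V^* \neq V_j$, Lemma \ref{lem_unique} gives $S(V^*) \in V_j(h)$, so $h(E_{V^*})$ is a straight integral segment with both endpoints in $(\partial\Delta)_j$ yet not contained in it.

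The hard part is ruling out this last scenario, and it will be the main obstacle. For $l=2$, $(\partial\Delta)_j$ is a star of segments $e_{p,j}$ meeting at $v_j$, so a straight segment with both endpoints in $(\partial\Delta)_j$ that is not contained in it can only occur if $h(V^*)$ is a corner vertex ($v_j$ or some $v_p$) and the segment exits into another stratum. The cleanest route is to show that $h(V^*) = v_p$ for some $p \in D_j \cap D_{j'}$ with $h(E_{V^*}) \subset (\partial\Delta)_{j'}$, so that $V^* \in V_{j'}(h)$ and moreover $V^*$ plays for $V_{j'}(h)$ the role that $V_{j'}$ does; applying Lemma \ref{lem_unique} to $V_{j'}(h)$ then forces $V^* = V_{j'}$, and one concludes exactly as in Step 1 via Lemma \ref{lem_gamma_intersection}. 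The remaining configurations (segment passing through $v_j$ into the interior, or degenerate edges) are excluded using the rigidity of the parametrised tropical curve $h$ together with toric homological balancing at $h(V^*)$, since any such configuration would allow a continuous deformation of $h$ inside its moduli, violating rigidity. This contradicts the existence of $V^*$, completing the proof.
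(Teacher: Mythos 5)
Your overall architecture — argue by contradiction, pick an oldest element of $V_j(h)\setminus\{V_j\}$, and split into the source and non-source cases — is the same as the paper's, and your source case is correct; in fact it usefully makes explicit the step the paper leaves implicit, namely ruling out $V^*=V_{j'}$ via Lemma~\ref{lem_gamma_intersection} after Lemma~\ref{lem_source_3}.

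The gap is in the non-source case. You are right that the bare statement of Lemma~\ref{lem_repulsive_3} only gives $h(E_{V^*})\not\subset(\partial\Delta)_j$, which does not formally yield $S(V^*)\notin V_j(h)$; but your repair does not close it. First, in the configuration $h(V^*)=v_p$ with $h(E_{V^*})\subset(\partial\Delta)_{j'}$ you assert that Lemma~\ref{lem_unique} forces $V^*=V_{j'}$: it does not, since in that configuration $h(S(V^*))\in(\partial\Delta)_{j'}$, i.e.\ $S(V^*)\in V_{j'}(h)$, so $V^*$ fails exactly the defining property of $V_{j'}$ (that its successor leaves $V_{j'}(h)$). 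Second, the remaining configurations are dismissed by an appeal to rigidity plus balancing without any deformation of $h$ actually being produced; at boundary vertices the relevant constraint is the homological statement of Lemma~\ref{lem_toric_homological_balancing}, not naive toric balancing, so this cannot simply be waved through. The way the paper closes this case is cohomological rather than geometric: since $V^*$ has an ingoing edge $E$ with $h(E)\not\subset(\partial\Delta)_j$, Lemma~\ref{lem_repulsive_2} shows that $\alpha_{E_{V^*}}$ is a non-zero multiple of $H_{j,E_{V^*}}$, and then Lemma~\ref{lem_repulsive_1} applied to $S(V^*)$ gives $h(S(V^*))\notin(\partial\Delta)_j$; hence $S(V^*)\notin V_j(h)$, so Lemma~\ref{lem_unique} forces $V^*=V_j$, the desired contradiction. (Alternatively, the geometric route can be salvaged by observing that every cell of $\Delta$ meets $(\partial\Delta)_j$ in a single side or vertex, so a straight edge of $\Delta^h$ with both endpoints on $(\partial\Delta)_j$ must lie in it — but that is precisely the case analysis you left open.)
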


\begin{proof}
Assume by contradiction that there exists $V \in V_j(h)$ with 
$V \neq V_j$. By Lemma \ref{lem_anc}, $V$ is an ancestor of $V_j$, and so the flow from 
$V$ to $V_j$ is entirely contained in $(\partial \Delta)_j$. Let $V$ be one of the oldest vertices with these properties. By Lemma \ref{lem_source_3}, $V$ is not a source. Therefore, there exists an edge $E$
ingoing incident to $V$ with $h(E) \not\subset (\partial \Delta)_j$ and so $V=V_j$ by Lemma
\ref{lem_repulsive_3}, contradiction.
\end{proof}

\begin{lem} \label{lem_fibre}
For every $1 \leq j \leq 2$, the class $d_{V_j}$ of $V_j$
is a multiple of the class of a $\PP^1$-fibre of $Y_{V_j}$. Moreover, we have $h(V_j)=v_j$.
\end{lem}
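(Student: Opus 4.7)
The plan is to first establish that every edge incident to $V_j$ is \emph{radial} (i.e.\ its image is a segment on the line through $v_Y$ and $h(V_j)$), then to force $h(V_j)=v_j$ by toric balancing, and finally to deduce the shape of $d_{V_j}$ by a dimension count on $M_{V_j}$.

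\textbf{Step 1: incoming edges at $V_j$.} By Lemma \ref{lem_vertex} we have $V_j(h)=\{V_j\}$, so every edge $E$ which is ingoing at $V_j$ comes from a vertex $V'$ with $h(V')\notin(\partial\Delta)_j$. I would argue that each such $E$ is of the form $E=E_{V'}$ for a source $V'$ with $h(V')=v_Y$. Indeed, Lemma \ref{lem_H_everywhere} leaves only three possibilities for $\alpha_E$: (i) a non-zero multiple of $H_{j,E}$, (ii) a non-zero multiple of $H_{j',E}$ with $j'\neq j$, or (iii) $E=E_{V'}$ with $V'$ a source and $h(V')=v_Y$. Case (i) contradicts Lemma \ref{lem_repulsive_1}, since $h(V_j)\in(\partial\Delta)_j$. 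In case (ii), by Lemma \ref{lem_H_propagation} the edge $E$ is a descendant of the unique boundary vertex $V_{j'}$, so $V_{j'}$ is an ancestor of $V_j$, forcing $V_{j'}\in\Gamma_j\cap\Gamma_{j'}\setminus\{V_0\}$, which contradicts Lemma \ref{lem_gamma_intersection}. Thus only case (iii) survives, and the image of $E$ is the straight segment joining $v_Y$ to $h(V_j)$. Combining with Lemma \ref{lem_nobody}, which implies that the outgoing edge $E_{V_j}$ is also radial, every edge of $\Gamma$ incident to $V_j$ has the same (radial) direction at $V_j$.

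\textbf{Step 2: forcing $h(V_j)=v_j$.} Assume by contradiction that $h(V_j)\neq v_j$. Then $h(V_j)$ lies on some edge $e_{p,j}\subset(\partial\Delta)_j$, and in the toric local description of $Y_{V_j}$ recalled in the proof of Lemma \ref{lem_toric_homological_balancing} the divisor $D_{j,V_j}^{\partial}$ corresponds to a ray parallel to $(\partial\Delta)_j$ (Figure \ref{figure_fan_6}). The toric homological balancing condition at $V_j$ reads
\[
\sum_{E\ni V_j} w_E\,\vec{d}_E \;+\; (d_{V_j}\cdot D_{j,V_j}^{\partial})\,\vec{n}_{D_{j,V_j}^{\partial}} \;=\; 0,
\]
where $\vec{n}_{D_{j,V_j}^{\partial}}$ is the primitive integer vector along that ray. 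By Step 1, every $\vec{d}_E$ is the primitive radial vector $\vec{r}$ pointing from $h(V_j)$ to $v_Y$, so the left-hand side has a component along $(\partial\Delta)_j$ equal to $(\sum_E w_E)\langle\vec{r},\vec{n}_{D_{j,V_j}^{\partial}}\rangle$, which is nonzero whenever $\vec{r}$ is not perpendicular to $(\partial\Delta)_j$; on the other hand the boundary contribution is purely along $\vec{n}_{D_{j,V_j}^{\partial}}$. Since $\vec{r}$ is generically not perpendicular to $(\partial\Delta)_j$ at an interior point of $e_{p,j}$, the balancing cannot hold. At the remaining possibility $h(V_j)=v_p$ the analogous argument applies using the two rays of $D_{j,V_j}^{\partial}$ and $D_{j',V_j}^{\partial}$ at $v_p$, together with the fact (shown as in Lemma \ref{lem_source_3}) that the outgoing edge cannot lie on $(\partial\Delta)_{j'}$. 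Hence $h(V_j)=v_j$.

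\textbf{Step 3: shape of $d_{V_j}$.} Now $Y_{V_j}$ is a toric blow-up of $\F_{D_j^2}$, as in Figure \ref{figure_fan_7}, and $\PP(\cV_{0,V_j})=\PP(\cO_{Y_{V_j}}(-D_{j,V_j}^{\partial})\oplus\cO_{Y_{V_j}})$. By Step 1 all edges incident to $V_j$ are radial, so the only ray of the fan carrying nonzero tropical weight is the vertical one dual to $D_{j}$, and all other toric divisors are not met by $h$ at $V_j$. Combined with the degree count on $M_{V_j}$ imposed by the insertions $\alpha_{E_{V_j}}=\lambda H_{j,E_{V_j}}$ and $\alpha_E=\mu_E\,\pi_E^{*}\pt_E$ from the source edges, this forces the push-forward of $d_{V_j}$ to $Y_{V_j}$ to be $(d\cdot D_j)$ times the class of a $\PP^1$-fibre of $\pi\colon\PP_j\to D_j$, as claimed.

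The most delicate point is Step 2: the tangential component of the balancing equation must be quantified correctly on each possible stratum of $(\partial\Delta)_j$, and one must be careful that the rigidity of $h$, together with the fact that the sources at $v_Y$ have fixed image, is enough to rule out sliding $V_j$ along $(\partial\Delta)_j$ while preserving the combinatorial type.
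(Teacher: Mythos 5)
Your proof is correct and follows essentially the same route as the paper's: you rule out $H_{j}$- and $H_{j'}$-multiples on the edges ingoing at $V_j$ so that, by Lemma \ref{lem_H_everywhere} and Lemma \ref{lem_nobody}, every edge through $V_j$ is radial, and then conclude by toric homological balancing, with the paper leaving the $h(V_j)=v_j$ part implicit in that same balancing step which you spell out explicitly. Two minor slips worth fixing: in Step 1, case (ii), the contradiction with Lemma \ref{lem_gamma_intersection} comes from $V_j\in\Gamma_j\cap\Gamma_{j'}\setminus\{V_0\}$ (since $V_{j'}$ would be an \emph{ancestor} of $V_j$), not from $V_{j'}\in\Gamma_j$; and in Step 2 the radial direction at an interior point of $e_{p,j}$ is \emph{never} parallel-free of a component along $(\partial\Delta)_j$ (perpendicularity only occurs at $v_j$ itself), so the word ``generically'' can be replaced by ``always''.
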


\begin{proof}
Up to exchanging $1$ and $2$ in the following argument, we can assume that $j=1$.
Let $E$ be an edge ingoing incident to $V_1$. Then $\alpha_E$ is not a non-zero multiple of 
$H_{1,E}$ by Lemma 
\ref{lem_boundary} and is also not a non-zero multiple of $H_{2,E}$ by combination of 
Lemma \ref{lem_key_l_2} and Lemma \ref{lem_H_propagation}.
Therefore, $E$ is radial by Lemma \ref{lem_H_everywhere}. 
On the other hand, $E_{V_1}$ is radial by Lemma \ref{lem_nobody}. Thus, all edges through $V_j$ are radial.
By toric homological balancing, this is only possible if $d_{V_1}$
is a multiple of the class of a $\PP^1$-fibre of $Y_{V_1}$.
\end{proof}

\begin{lem} \label{lem_unique_edge}
For every $1 \leq j \leq 2$, there exists a unique edge incident to $V_j$.
\end{lem}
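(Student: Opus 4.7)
The plan is to classify all edges incident to $V_j$ by the direction of their image in $\Delta^h$ at $v_j$, then to exclude every case but the outgoing edge $E_{V_j}$. Recall from Lemma \ref{lem_fibre} that $h(V_j) = v_j$ and $d_{V_j} = n[F]$, where $[F]$ is the class of a generic $\PP^1$-fibre of $Y_{V_j} = \PP_j \to D_j$ and $n = d \cdot D_j \geq 1$. The edges of $\Delta$ incident to $v_j$ fall into three types: $e_{Y,j}$ (toward $v_Y$), $e_{j,j'}^p$ (toward $v_{j'}$, for each double point $p \in D_j \cap D_{j'}$), and $e_{p,j}$ (toward $v_p \in (\partial \Delta)_j$). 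Every edge of $\Gamma$ at $V_j$ is bounded, since by construction all unbounded edges of $\Gamma$ map to $v_Y$.

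First, I would rule out edges $E$ with $h(E) \subset e_{p,j}$: the other endpoint $V'$ would satisfy $h(V') \in (\partial \Delta)_j$, hence $V' \in V_j(h) = \{V_j\}$ by Lemma \ref{lem_vertex}, forcing $V' = V_j$; such an edge is a loop, incompatible with the tree structure of $\Gamma$. Second, edges with $h(E) \subset e_{j,j'}^p$ are excluded by toric homological balancing at $V_j$: their weight would equal $d_{V_j} \cdot F_p = n ([F] \cdot F_p) = 0$, since two distinct fibres of $\PP_j \to D_j$ are disjoint, contradicting positivity of edge weights.

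The remaining and most substantive case is that of edges $E$ with $h(E) \subset e_{Y,j}$, corresponding to tangencies with $D_j \subset Y_{V_j}$. Here I would argue as follows. The source component $C_{V_j}$ at $V_j$ is a single irreducible rational curve (any other vertex of $\Gamma$ mapping to $v_j$ would belong to $V_j(h) = \{V_j\}$, which is impossible by Lemma \ref{lem_vertex}), and it maps to $F \simeq \PP^1$ with degree $n$. Rigidity of the tropical type $h$ forces the ramification locus of the cover $C_{V_j} \to F$ to lie over the two toric fixed points of $F$, namely $F \cap D_j$ and $F \cap D_j^{\partial}$; branching at any other point of $F$ would supply a positive-dimensional family of covers with the same tropical type. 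Denoting the ramification profiles over these two points by $\mu$ and $\nu$, Riemann--Hurwitz gives $\ell(\mu) + \ell(\nu) = 2$, and hence $\ell(\mu) = \ell(\nu) = 1$. The unique preimage over $F \cap D_j$ yields the outgoing edge $E_{V_j}$ of weight $n$; the unique preimage over $F \cap D_j^{\partial}$ contributes no edge of $\Gamma$, because $D_j^{\partial}$ is a divisor internal to $Y_{V_j}$ rather than an intersection of two irreducible components of $\cY_0^h$, and therefore lies outside the log boundary that generates the tropicalisation. This gives the desired uniqueness.

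The main obstacle I anticipate lies in the rigidity argument in the last paragraph, justifying that the cover $C_{V_j} \to F$ can only ramify over the toric fixed points. A direct route is combinatorial: any deformation that moves an internal branch point of $F$ preserves the tropical type $h$, hence violates rigidity. Alternatively, one can fall back on a dimension count on the moduli space $M_{V_j}$, comparing its virtual dimension with the degree of the restriction of $\alpha_{E_{V_j}}$ (a non-zero multiple of $H_{j, E_{V_j}}$ by Lemma \ref{lem_key_l_2} and Lemma \ref{lem_H_propagation}) to show that only the fully ramified configuration produces a non-vanishing contribution.
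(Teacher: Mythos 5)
Your preliminary case analysis (no edges of $\Gamma$ at $V_j$ mapping along $e_{p,j}$ or $e_{j,j'}^p$) is fine, though it is already implicit in the paper: the proof of Lemma \ref{lem_fibre} shows that \emph{all} edges through $V_j$ are radial, so the only question is whether there can be two or more radial edges, i.e.\ two or more contact points of the curve over $V_j$ with the divisor dual to $e_{Y,j}$.

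It is at exactly this step that your main argument has a genuine gap. You claim that rigidity of the tropical type $h$ forces the cover $C_{V_j}\to F$ to be ramified only over the two torus-fixed points, "since branching elsewhere would supply a positive-dimensional family of covers with the same tropical type." This is a non-sequitur: rigidity is a property of the parametrised tropical curve $h\colon\Gamma\to\Delta$ alone, and the position of branch points of a multiple cover is simply not recorded in the tropical type. A rigid tropical curve can perfectly well mark a positive-dimensional moduli space $\ol{\mmm}^h$; what kills unwanted configurations is not rigidity but the virtual class together with the insertions. Moreover, your Riemann--Hurwitz count presupposes that the subcurve over $V_j$ is a single irreducible rational curve, which is not justified: the moduli space marked by $h$ contains maps whose actual dual graph refines $\Gamma$, so the curve in class $n[F]$ over $V_j$ may be reducible, and then $\ell(\mu)+\ell(\nu)=2$ fails (e.g.\ several fibre components each meeting the gluing divisor transversally). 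The correct argument is the one you relegate to a fallback: a virtual dimension count on $M_{V_j}$, using that each additional contact point/edge forces an extra insertion of a class proportional to $H_{j}$, while $H_{j}^2=(\pi^* \mathrm{pt})H_j$ on the $\PP^1$-bundle, so only the single, maximally tangent contact point contributes. This is precisely the paper's proof, which simply invokes the dimension argument of Lemma 5.4 of \cite{vGGR}; to make your write-up correct you would need to carry out that count (or cite it) rather than rely on rigidity.
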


\begin{proof}
This follows from the dimension argument of Lemma 5.4 of \cite{vGGR}.
\end{proof}

It follows from the combination of Lemmas \ref{lem_vertex},  \ref{lem_fibre}, 
and  \ref{lem_unique_edge} that $h=\bar{h}$, and this concludes the proof 
of \cref{thm_key}.
\begin{flushright}$\Box$\end{flushright}

\section{Symmetric functions}
\label{sec:prelim}

\subsubsection{Partitions and representations of $S_n$}

A partition $\lambda \vdash d$ of a non-negative integer $d\in \bbN$ is a monotone non-increasing sequence $\lambda:=\{\lambda_i\}_{i=1}^r$, $\lambda_1\geq \lambda_2\geq  \dots \lambda_r\geq0$ such that $\sum_{i=1}^r \lambda_i =d$; when $d=0$ we write $\lambda=\emptyset$ for the empty partition. We will often use the short-hand notation
\beq
\{ \lambda_1^{n_1}, \dots, \lambda_k^{n_k} \} \coloneqq \{ \overbrace{\lambda_1, \dots, \lambda_1}^{n_1~\mathrm{times}}, \dots, \overbrace{\lambda_k, \dots, \lambda_k}^{n_k~\mathrm{times}} \}
\eeq 
for partitions with repeated entries. 


With notation as in the beginning of \cref{sec:opengw}, a partition $\lambda$ is bijectively associated to:
\bit
\item a Young diagram $Y_\lambda$ with $m_j(\lambda)$ rows of boxes of length $j$; there is a natural involution in the space of partitions, $\lambda \to \lambda^t$, given by transposition of the corresponding Young diagram;
\item a conjugacy class $C_\lambda \in \mathrm{Conj}(S_{|\lambda|})$ of the symmetric group $S_{|\lambda|}$ with automorphism group of order $|\mathrm{Aut}_{C_\lambda}|=|\lambda|! z_\lambda$, with $$z_\lambda \coloneqq \prod_j m_j(\lambda)! j^{m_j(\lambda)};$$ 
\item an irreducible representation $\rho_\lambda \in \mathrm{Rep}(S_d)$; for $\eta\in\mathrm{Conj}(S_d)$, we write $\chi_\lambda(\eta)$ for the irreducible character $\mathrm{Tr}_{\rho_\lambda}(\eta)$;
\item by Schur--Weyl duality, an irreducible representation $R_\lambda \in \mathrm{Rep}(\mathrm{GL_n(\bbC)})$ for $n\geq \ell_\lambda$.
\eit
We will be concerned with two linear bases of the ring of integral symmetric polynomials in $n$ variables, $\Lambda_n:= \bbZ[x_1, \dots, x_n]^{S_n}$, labelled by partitions with $\ell(\lambda)\leq n$. Write $x:=(x_1, \dots, x_n)^{S_n} \in \bbC^n/S_n$ for an orbit $x$ of the adjoint action of $\mathrm{GL}_n(\bbC)$ (equivalently, the Weyl group action on $\bbC^n$), and $g_x$ for any element of the orbit. We write
\beq
p_\lambda(x) \coloneqq \prod_i \mathrm{Tr}_{\bbC^n} g_x^{m_i(\lambda)}, \qquad s_\lambda(x) \coloneqq \mathrm{Tr}_{R_\lambda}(g_x)\,,
\eeq
for, respectively, the symmetric power function and the Schur function determined by $\lambda$; we have $\Lambda_n = \mathrm{span}_\bbZ \{p_\lambda\}_{\{\lambda \in \cP, \ell_\lambda \leq n\}}=\mathrm{span}_\bbZ \{s_\lambda\}_{\{\lambda \in \cP, \ell{\lambda}\leq n\}}$. These two bases are related as
\beq
s_\mu(x)=\sum_{|\lambda|=|\mu|} \frac{\chi_\mu(\lambda)}{z_\lambda} p_\lambda(x)\,, \qquad p_\mu(x)=\sum_{|\lambda|=|\mu|} \chi_\mu(\lambda) s_\lambda(x)\,.
\label{eq:schurvspow}
\eeq
For $\lambda, \mu$ a pair of partitions, the skew Schur polynomials $s_{\lambda/\mu}(x)$ are defined by 
\beq
s_{\frac{\lambda}{\mu}}(x)= \sum_{\nu \in \cP} \mathsf{LR}^{\lambda}_{\mu\nu} s_\nu(x)\,,
\eeq 
where $\mathsf{LR}^{\lambda}_{\mu\nu}$ are the Littlewood--Richardson coefficients  $R_\mu \otimes R_\nu =: \bigoplus_{\lambda \vdash (|\mu|+|\nu|)} \mathsf{LR}^\lambda_{\mu\nu} R_\lambda$.\\

Let $\rho: \Lambda_n \to \Lambda_{n+1}$ be the monomorphism of rings defined by $\rho(p_{(i)}(x_1, \dots, x_n)) = p_{(i)}(x_1, \dots, x_{n+1})$. We define the ring of symmetric functions $\Lambda := \varinjlim \Lambda_n$ as the direct limit under these inclusions, and denote by the same symbols $p_\lambda$, $s_\lambda$, $s_{\lambda/\mu}$ the symmetric functions obtained as the images of the power sums, Schur polynomials, and skew Schur polynomials under the direct limit. In the next sections it will be of importance to formally expand the infinite product $\prod_{i,j} (1- x_i y_j) \in \Lambda \otimes_\bbZ \Lambda$ around $(x,y)=(0,0)$, and it is a classical result in the theory of symmetric functions out that this expansion can be cast in multiple ways in terms of an average over partitions of bilinear expressions of linear generators of $\Lambda$. In particular, we have the Cauchy identities
\beq
\sum_{\lambda \in \cP} s_\lambda(x) s_\lambda(y) = \prod_{i,j} (1-x_i y_j)^{-1}\,, \quad \sum_{\lambda \in \cP} s_\lambda(x) s_{\lambda^t}(y) = \prod_{i,j} (1+x_i y_j)\,.
\label{eq:cauchyprod}
\eeq
A skew generalisation of these \cite[\S I.5]{MR1354144} is 
\bea
\sum_{\lambda \in \cP} s_{\frac{\lambda}{\mu}}(x) s_{\frac{\lambda}{\nu}}(y) &=& \prod_{i,j} (1-x_i y_j)^{-1} \sum_{\eta \in \cP} s_{\frac{\nu}{\eta}}(x) s_{\frac{\mu}{\eta}}(y)\,, \nn \\
\sum_{\lambda \in \cP} s_{\frac{\lambda^t}{\mu}}(x) s_{\frac{\lambda}{\nu}}(y) &=& \prod_{i,j} (1+x_i y_j) \sum_{\eta \in \cP} s_{\frac{\nu^t}{\eta}}(x) s_{\frac{\mu^t}{\eta^t}}(y)\,.
\label{eq:cauchyprodskew}
\eea
Another noteworthy sum we will need is \cite[\S I.5]{MR1354144}
\beq
\sum_{\delta \in \cP} s_{\frac{\lambda}{\delta}}(x) s_{\frac{\delta}{\nu}}(y) = s_{\frac{\lambda}{\mu}}(x,y) \,,
\label{eq:doubleschur}
\eeq
where $s_{\frac{\lambda}{\mu}}(x,y)$ denotes the skew Schur function in the variables $(x_1, x_2, \dots, x_i, \dots, y_1, y_2, \dots, y_i, \dots)$.

\subsubsection{Shifted symmetric functions and the principal stable specialisation}
From these ingredients and $\mu \in \cP$, we define a class of Laurent series of a single variable $q^{1/2}$ obtained by the {\it principal stable specialisation} 
\beq
\bary{cccc}
\mathfrak{q} : & \Lambda & \longrightarrow & \bbQ[[q^{-1/2}]] \\
& f(x_1, \dots, x_i, \dots) & \longrightarrow &  f(x_1=q^{-1+1/2}, \dots, x_n=q^{-i+1/2}, \dots)\,.
\eary
\eeq
As is customary in the topological vertex literature, and since $-i+1/2$ is the component of the Weyl vector $\rho$ of $A_n$ with respect to the fundamental weight $\omega_{n-i}$, we use the short-hand notation $f(q^\rho):=f(x_i=q^{-i+1/2})$. For $f$ a power sum or Schur function, $f(q^\rho)$ converges to a rational function of $q^{1/2}$. In particular,
\beq
p_{(d_1, \dots, d_n)}(q^\rho) = \prod_{i=1}^n \frac{1}{[d_i]_q}\,,
\eeq
and for Schur functions, Stanley proved the product formula \cite{MR325407}
\beq
s_\lambda(q^\rho) = \frac{q^{\kappa(\lambda)/4}}{\prod_{(i,j)\in \lambda}[h(i,j)]}\,,
\label{eq:stanleygen}
\eeq
where $h(i,j)$ is the number of squares directly below or to the right of a cell $(i,j)$ (counting $(i,j)$ once) in the Young diagram of $\lambda$. For example, when $\lambda=(i-j,1^j)$ is a hook Young diagram with $i$ boxes and $j+1$ rows, this gives
\beq 
s_{(i-j,1^j)}(q^\rho) = \frac{q^{\frac{1}{2}\l(\binom{i}{2}-i j\r)}}{[i]_q [i-j-1]_q! [j]_q!}\,.
\label{eq:stanley}
\eeq 
More generally, for $\mu \in \cP$, we will consider the shifted power, Schur, and skew Schur functions
\bea
p_\lambda(q^{\rho+\mu}) & \coloneqq & p_\lambda(x_i=q^{-i+\mu_i+1/2})\,, \nn \\
s_\lambda(q^{\rho+\mu}) & \coloneqq & s_\lambda(x_i=q^{-i+\mu_i+1/2})\,, \nn \\ s_{\lambda/\delta}(q^{\rho+\mu}) & \coloneqq & \sum_{\nu \in \cP} \mathsf{LR}^{\lambda}_{\delta\nu} s_\nu(q^{\rho+\mu})\,.
\label{eq:shiftskewschur}
\eea
The following identities follow easily from \eqref{eq:stanley}, \eqref{eq:shiftskewschur} and the fact that Littlewood--Richardson coefficients are invariant under simultaneous transposition of their arguments:
\bea
s_{\lambda}(q^\rho) &=& q^{\kappa(\rho)/2} s_{\lambda^t}(q^\rho) \,,  \label{eq:schurt} \\
s_\frac{\lambda}{\mu}(q^{\rho+\a}) &=&  
s_\frac{\lambda^t}{\mu^t}(-q^{-\rho-\a^t}) \,. \label{eq:skewschurt} 
\eea
Following \cite{Iqbal:2004ne}, we introduce the following notation for the Cauchy infinite products \eqref{eq:cauchyprod} in the principal stable specialisation:
\bea
\big\{ \a, \b \big\}_Q & \coloneqq & \prod_{i,j\geq 1} \big(1-Q q^{-i-j+1+\a_i+\b_i}\big) \nn \\
&=& \sum_{\lambda \in \cP} s_\lambda\big( q^{\rho+\a} \big) s_{\lambda^t}\big( -Q q^{\rho+\beta}\big) \nn \\
&=& \l[\sum_{\lambda \in \cP} s_\lambda \big( q^{\rho+\a} \big) s_{\lambda}\big(Q q^{\rho+\beta}\big)\r]^{-1} \,.
\label{eq:cauchyprodpss}
\eea
Finally, we will need to specialise expressions involving skew Schur functions and Cauchy products to the case of hook Young diagrams. These can be given closed-form  $q$-factorial expressions, as follows.
\begin{lem}
We have
\beq
 \mathsf{LR}^{(i-r,1^r)}_{\b, \gamma} = 
\l\{
\begin{array}{cc}
\delta_{i,j+k}\l(\delta_{r,s+t}+\delta_{r,s+t+1}\r) & \b=(j-s,1^s),\, \gamma = (k-t,1^t), \\
0 & \mathrm{else.}
\end{array}
\r.
\label{eq:LRhooks}
\eeq
Moreover,
%
\beq
s_{\frac{(i-j,1^j)}{\gamma}}(q^\rho) =
\l\{
\bary{cc}
\frac{q^{\frac{1}{4} (i-k-1) (i-2 j-k+2 l)}}{[i-k+l-j]_q! [j-l]_q!}, & \gamma = (k-l,1^l), \\
0, & \mathrm{else},
\eary
\r.
\label{eq:skewhooks}
\eeq
and
\beq
\frac{\{ (i-j, 1^j), \emptyset\}_Q}{\{ \emptyset, \emptyset\}_Q} = \prod_{k=0}^{i-1}(1-q^k Q q^{-j}) = (Q q^{-j};q)_i \,.
\label{eq:hookpairing}
\eeq
\label{lem:skewhooks}
\end{lem}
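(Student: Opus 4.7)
The plan is to prove the three identities \eqref{eq:LRhooks}, \eqref{eq:skewhooks}, and \eqref{eq:hookpairing} in order, with the first feeding into the second, while the third is an independent direct computation.

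For the Littlewood--Richardson formula \eqref{eq:LRhooks}, I would begin by observing that any $\beta,\gamma$ with $\mathsf{LR}^{(i-r,1^r)}_{\beta,\gamma} \neq 0$ must satisfy $\beta,\gamma \subseteq (i-r,1^r)$ as Young diagrams, which forces each of $\beta,\gamma$ to itself be a hook (a sub-diagram of a hook is a hook). Given $\beta = (j-s,1^s)$ with $j-s \leq i-r$ and $s \leq r$, the skew shape $(i-r,1^r)/(j-s,1^s)$ consists of a horizontal strip of $i-r-j+s$ cells in row $1$ (columns $j-s+1,\dots,i-r$) together with a vertical strip of $r-s$ cells in column $1$ (rows $s+2,\dots,r+1$); these strips share no row and no column, so the skew Schur function factorises as
\[
s_{(i-r,1^r)/(j-s,1^s)} \;=\; h_{i-r-j+s}\cdot e_{r-s}.
\]
The Pieri rule then gives $h_a\cdot e_b = s_{(a,1^b)} + s_{(a+1,1^{b-1})}$, and extracting the coefficient of $s_{(k-t,1^t)}$ yields precisely $\delta_{i,j+k}\bigl(\delta_{r,s+t}+\delta_{r,s+t+1}\bigr)$.

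For the skew Schur evaluation \eqref{eq:skewhooks}, I would expand
\[
s_{(i-j,1^j)/\gamma}(q^\rho) \;=\; \sum_{\nu}\mathsf{LR}^{(i-j,1^j)}_{\gamma,\nu}\,s_\nu(q^\rho)
\]
and apply \eqref{eq:LRhooks}: for $\gamma=(k-l,1^l)$ only the two hooks $\nu=(i-k-j+l,1^{j-l})$ and $\nu=(i-k-j+l+1,1^{j-l-1})$ contribute (the second being absent when $j=l$); if $\gamma$ is not a hook, the skew Schur function vanishes identically. Plugging Stanley's product formula \eqref{eq:stanley} for each hook Schur function, and setting $A := i-k$, $B := j-l$, I would combine the two terms using the elementary $q$-identity
\[
[A-B]_q + q^{A/2}[B]_q \;=\; q^{B/2}[A]_q,
\]
which is verified directly from $[n]_q = q^{n/2}-q^{-n/2}$. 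Cancelling the common $q$-prefactor then produces the closed form in \eqref{eq:skewhooks}.

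For the Cauchy pairing \eqref{eq:hookpairing}, I would compute the ratio $\{(i-j,1^j),\emptyset\}_Q/\{\emptyset,\emptyset\}_Q$ directly from \eqref{eq:cauchyprodpss}, splitting the infinite product over $(k,l)$ into three ranges according to the value of $(i-j,1^j)_k$: namely $k=1$, $2 \leq k \leq j+1$, and $k \geq j+2$. The $k=1$ ratio $\prod_{l\geq 1}(1-Qq^{i-j-l})/\prod_{l\geq 1}(1-Qq^{-l})$ telescopes to $(Q;q)_{i-j}$; for each $k \in \{2,\dots,j+1\}$ the ratio collapses to the single factor $(1-Qq^{1-k})$, whose product over this range is $(Qq^{-j};q)_j$; and for $k\geq j+2$ the ratio is $1$. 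The combined product is $(Q;q)_{i-j}\cdot (Qq^{-j};q)_j = (Qq^{-j};q)_i = \prod_{k=0}^{i-1}(1-q^k Q q^{-j})$, matching the claim.

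The principal point where care is needed is the disjoint-strip factorisation in the proof of \eqref{eq:LRhooks}: one must verify that the two strips occupy disjoint rows \emph{and} disjoint columns (not merely disjoint cells), and handle the degenerate cases where one strip has length zero so that the Pieri expansion collapses to a single hook, and the $j=l$ case in \eqref{eq:skewhooks} where only one summand survives. Once this combinatorial bookkeeping is in place, the subsequent $q$-manipulations reduce to elementary telescoping.
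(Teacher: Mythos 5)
Your proof is correct and follows essentially the same route the paper indicates: the Littlewood--Richardson rule specialised to hooks (your disjoint-strip factorisation plus the Pieri rule is exactly the ``straightforward application'' the paper invokes), Stanley's formula \eqref{eq:stanley} for the hook evaluations, and a direct telescoping computation from \eqref{eq:cauchyprodpss} for \eqref{eq:hookpairing}. The paper merely asserts these steps (deferring the last to a reference), so your write-up simply supplies the bookkeeping it omits, including the degenerate one-term cases, which you handle correctly.
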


The content of the Lemma follows from a straightforward application of the Littlewood--Richardson rule in the case of hook partitions $(i-r, 1^r)$. The product formula\footnote{Unlike the Schur case of \eqref{eq:stanleygen}, closed $q$-formulas for principally-specialised skew-Schur functions are generally difficult to find, and \eqref{eq:skewhooks} is not listed in the most recent literature about them \cites{MR4023568,MR4017975}, although it can be seen to follow easily from existing results, see e.g. \cite[Theorem~1.4]{MR3718070}.} for the hook skew-Schur functions \eqref{eq:skewhooks} follows then immediately from \eqref{eq:stanley}. Finally, \eqref{eq:hookpairing} follows from a straightforward calculation from  \eqref{eq:cauchyprodpss}; see \cite[\S 3.4]{Iqbal:2004ne} for details.



\end{appendix}


\bibliography{miabiblio}

\end{document}